\documentclass[reqno]{amsart}
\usepackage[utf8x]{inputenc}
\usepackage{amssymb,amsmath,amsthm,amsfonts,mathtools}
\usepackage[normalem]{ulem}
\usepackage{hyperref}
\hypersetup{colorlinks=true,allcolors=blue}
\usepackage{cleveref}
\usepackage{enumitem}
\usepackage[dvipsnames]{xcolor}
\usepackage{microtype}
\usepackage{tikz,faktor}
\usepackage{float,array}
\usepackage{subcaption}
\usetikzlibrary{angles,quotes,calc}

\usepackage[margin = 1.2in]{geometry}
\usepackage{graphicx}

\newtheorem*{ack}{Acknowledgements}

\newenvironment{claim}[1]{\par\noindent\underline{Claim}:\space#1}{}
\newenvironment{claimproof}[1]{\par\noindent\underline{Proof}:\space#1}{\hfill $\blacksquare$}

\newcommand{\R}{\mathbb R}
\newcommand{\N}{\mathbb N}
\newcommand{\relmiddle}[1]{\mathrel{}\middle#1\mathrel{}}
\newcommand{\newtau}{\scalebox{1.44}{$\tau$}}

\newcommand{\LpLS}{Lorentzian pre-length space }
\newcommand{\LLS}{Lorentzian length space }

\DeclareMathOperator{\CBA}{CBA}

\DeclareMathOperator{\CAT}{CAT}
\newcommand{\ma}{\measuredangle}

\usepackage{pifont}
\newcommand{\xmark}{\ding{55}\,}
\newcommand{\bd}{\partial}

\makeatletter
\def\l@subsection{\@tocline{2}{0pt}{2.5pc}{5pc}{}}
\makeatother

\title{Timelike ideal boundary of non-positively curved Lorentzian spaces}

\author[Burgos]{Sa\'ul Burgos}
\address[Burgos]{Departamento de Geometr\'ia y Topolog\'ia \& IMAG, Universidad de Granada, Spain}
\email{sburgos@ugr.es}

\author[Che]{Mauricio Che}
\address[Che]{Faculty of Mathematics, University of Vienna, Austria}
\email{mauricio.adrian.che.moguel@univie.ac.at}

\author[Prados--Abad]{Miguel Prados--Abad}
\address[Prados--Abad]{Faculty of Mathematics, University of Vienna, Austria}
\email{miguel.prados.abad@univie.ac.at}

\date{\today}

\usepackage{thmtools}

\declaretheorem[
	name=Theorem,
	numberwithin=section
	]{thm}
\declaretheorem[
	name=Lemma,
	numberlike=thm,
	]{lem}
\declaretheorem[
	name=Lemma,
	numbered=no,
	]{lem*}
\declaretheorem[
	name=Proposition,
	sibling=thm,
	]{pop}
\declaretheorem[
	name=Corollary,
	sibling=thm,
	]{cor}

\declaretheorem[
	name=Definition,
	style=definition,
        numberlike=thm,
	]{dfn}

\declaretheorem[
	name=Remark,
	numberlike=thm,
        style=definition,
	]{rem}

\begin{document}

\begin{abstract}
We introduce the notion of timelike ideal boundary of a Lorentzian length space as the set of asymptotic classes of future or past-directed timelike geodesic rays, a construction complementary to the causal boundary in the sense of Geroch–Kronheimer–Penrose and akin to the concept of ideal boundary of a metric space. We endow this timelike ideal boundary with a natural cone topology and an angular metric, and establish upper curvature bounds for the resulting metric space. Finally, we consider generalized cones as a model and study the relation between the timelike ideal boundary and both the metric ideal boundary of the fiber and the asymptotic behavior of the warping function.
\medskip

\noindent
\emph{Keywords:} Lorentzian length spaces, metric geometry, ideal boundary, Lorentzian geometry, synthetic curvature bounds
\medskip

\noindent
\emph{MSC2020:} 
    51K10, 
    53B30, 
    53C23, 
    53C50. 
\end{abstract}

\maketitle

\setcounter{tocdepth}{1}
\tableofcontents

\section{Introduction}

The notion of \textit{boundary} of a spacetime is both compelling and elusive, and has given rise to a variety of rigorous mathematical formulations over the years. Among these, the {\em conformal boundary}, introduced by Penrose \cite{penrose_asymptotic_1963}, is perhaps the most widely used. It provides the framework in which notions such as null infinity and black holes are defined. Despite its importance and utility, the conformal boundary has certain limitations: a spacetime may fail to admit a conformal compactification, and even when such a compactification exists, there is generally no canonical way to construct it. A classical alternative that addresses some of these issues is the work of Geroch,~Kronheimer, and Penrose \cite{geroch_ideal_1972}, who defined the \textit{causal boundary} of a spacetime (see also \cite{flores_final_2011} for a systematic and modern treatment solving the subtleties of the original construction). In this framework, boundary points are defined in terms of the timelike pasts (respectively, futures) of future-directed (respectively, past-directed) inextendible timelike curves. These sets are referred to as \textit{terminal indecomposable pasts} (TIPs) and \textit{terminal indecomposable futures} (TIFs). In this way, every inextendible timelike curve acquires an endpoint in the resulting completion, which justifies the terminology. Alternative approaches to spacetime boundaries have also been proposed: among others, Geroch's \textit{$g$-completion} \cite{geroch_local_1968}, Schmidt's \textit{bundle completion} \cite{schmidt_new_1971}, Scott--Szekeres' \textit{abstract completion} \cite{scott_abstract_1994}, the \textit{isocausal extensions} by García-Parrado--Senovilla \cite{garcia-parrado_causal_2003} or, more recently, the \textit{directed completion} of a partially ordered set \cite{zhao_dcpo-completion_2010, gigli_hyperbolic_2025} and the \textit{forward completion} of a Lorentzian pre-length space \cite{mondino-saemann-2025}.

On the other hand, in recent years, a synthetic approach to the study of spacetimes and General Relativity has developed rapidly, particularly through the framework of \textit{Lorentzian pre-length spaces}. Introduced by Kunzinger--Sämann in \cite{kunzinger-saemann2018}, this setting abstracts key structural features of spacetimes, including the causal and timelike relations as well as the time separation function. It thereby provides a flexible framework in which to investigate problems in General Relativity that require low regularity assumptions. Within this context, several classical constructions and notions from Lorentzian geometry have been extended and systematically studied. An important line of research within this theory, motivated by the classical energy conditions in General Relativity, concerns the formulation of synthetic lower curvature bounds. A notion of lower timelike sectional curvature bounds via triangle comparison was already introduced in \cite{kunzinger-saemann2018}. The first synthetic notions of lower timelike Ricci curvature bounds for measured Lorentzian pre-length spaces followed a few years later \cite{cavalletti-mondino} after similar constructions in the smooth setting \cite{mccann_displacement_2020,mondino_suhr_ot_2022}. Complementary to this, the study of Lorentzian pre-length spaces with upper timelike curvature bounds, also introduced in \cite{kunzinger-saemann2018}, has largely been driven, not by General Relativity, but by analogies with the theory of $\CAT(K)$ spaces in metric geometry. Recent developments in this vein include Alexandrov's Patchwork \cite{beran-napper-rott2025}, a Lorentzian Cartan--Hadamard Theorem \cite{eros-gieger-2025}, and a Lorentzian analogue of the Splitting Theorem for $\CAT(0)$ spaces \cite{barton-beran-che-gieger-roehrig-rott}. 

In the context of $\CAT(0)$ spaces, there exists a notion of \textit{ideal boundary}, which was introduced by Eberlein--O'Neill in \cite{eberlein_visibility_1973} (see also \cite{BGS,BH}). This notion consists of a set of equivalence classes of asymptotic  geodesic rays and can be intuitively understood as the set of directions one can visualize at infinity, which explains why this concept has also been referred to as \textit{visual boundary} in the literature. Given a $\CAT(0)$ space $X$, its ideal boundary $\bd X$ can be endowed with a natural topology (called the \textit{cone topology}) and a natural metric (called the \textit{angular distance}) which make $\bd X$ into a complete and $\pi$-geodesic space (i.e., there exist minimizing geodesics in $\bd X$ connecting ideal points at angular distance $\leq \pi$) which satisfies the $\CAT(1)$ condition.

In this manuscript, we introduce the notion of \textit{timelike ideal boundary} for Lorentzian pre-length spaces in the sense of Kunzinger--S\"amann, inspired by the definition of the ideal boundary of metric spaces due to Eberlein--O'Neill. Intuitively speaking, the timelike ideal boundary admits the following natural physical interpretation: its points correspond to equivalence classes of ``asymptotic'' freely falling observers. More precisely, each future-directed timelike geodesic ray represents the worldline of an ideal free falling observer, and two such observers define the same ideal boundary point if, up to a finite shift on their proper time, they remain timelike related. Analogously for past-directed timelike geodesic rays. 

While the idea of asymptotic timelike geodesic rays already appears in \cite{barton-beran-che-gieger-roehrig-rott} (see \autoref{def:asymptotic}), the approach developed here provides a natural and systematic framework for organizing these objects into a notion of timelike ideal boundary. Furthermore, it is worth noting that, to the best of the authors' knowledge, this construction has not been systematically developed even in the smooth spacetime setting.

\begin{dfn}\label{def:timelike ideal boundary}
Given a Lorentzian pre-length space $Y$, the \textit{future timelike ideal boundary} of $Y$ is the set $\bd^+Y$ of equivalence classes of future-directed timelike geodesic rays under the relation of being asymptotic. The \textit{future timelike ideal completion} of $Y$ is the set $Y^*=Y\cup \bd^+Y$. We call elements of $\bd^+Y$ \textit{future timelike boundary points}. By reversing causal and timelike relations we obtain the notion of \textit{past} timelike ideal boundary, denoted by $\bd^-Y$, and so on.
\end{dfn}

Observe that, in contrast with the notion of causal boundary, the present construction is formulated using timelike geodesic rays instead of more general timelike curves. Rather than employing the associated timelike pasts or futures, we consider an equivalence relation on rays themselves, identifying those that are ``asymptotically aligned'' in the sense of timelike relation up to a finite time shift. The two constructions also differ in the ``granularity'' of the information they encode. On the one hand, the causal boundary includes terminal indecomposable pasts and futures which cannot arise as the timelike pasts or futures of timelike geodesic rays. Intuitively speaking, these are pasts or futures of timelike curves that are asymptotically null. On the other hand, the timelike ideal boundary is typically finer: distinct equivalence classes of timelike geodesic rays may give rise to the same terminal indecomposable past or future, so that a single point of the causal boundary may correspond to multiple points of the ideal boundary (see \autoref{cor:components of bd are inside TIFs}).

Similarly to the metric setting, we mainly work with Lorentzian pre-length spaces with timelike curvature bounded above by zero globally in the sense of \cite{kunzinger-saemann2018}. We prove that, given such a Lorentzian pre-length space $Y$, the future timelike ideal boundary $\bd^+Y$ can be endowed with a natural \textit{cone topology} (\autoref{pop:cone_top}). Moreover, we endow $\bd^+Y$ with a natural \textit{angular metric} $\ma$ (\autoref{pop:main1}), and obtain the following result as a consequence of \autoref{pop:completeness}, \autoref{cor:geodesicity}, and \autoref{thm:main2}. 

\begin{thm}
Let $Y$ be a proper, globally hyperbolic, strongly causal, locally causally closed and regular \LpLS satisfying $\CBA(0)$ globally. Then $(\bd^+ Y,\ma)$ is a complete, geodesic $\CAT(-1)$ space.
\end{thm}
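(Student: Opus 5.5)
The theorem is assembled from three ingredients: completeness of $(\bd^+Y,\ma)$ (\autoref{pop:completeness}), geodesicity (\autoref{cor:geodesicity}), and the $\CAT(-1)$ comparison (\autoref{thm:main2}). I would establish them in that order, using \autoref{pop:main1}, which shows that $\ma$ is a metric.

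\textbf{Setting up the angle.} Fix a base point $p\in Y$. In a globally hyperbolic, regular, $\CBA(0)$ space, every future boundary point $\xi$ should be represented by a unique future-directed timelike ray $\gamma_\xi$ issuing from $p$. I would construct it by taking maximizers from $p$ to $\gamma(t)$ and passing to a limit; this uses properness, global hyperbolicity (limit curve theorems) and asymptoticity. For $\xi,\eta\in\bd^+Y$ I would take $\ma(\xi,\eta)$ to be the supremum over base points $p$ of the hyperbolic angle at $p$ between $\gamma_\xi$ and $\gamma_\eta$. Equivalently, it is the limit as $t\to\infty$ of comparison angles in Minkowski triangles $\bar\Delta(p,\gamma_\xi(t),\gamma_\eta(t))$; $\CBA(0)$ gives monotonicity in $t$, so the limit exists. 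This angle lives in $[0,\infty]$ and behaves like a distance on the hyperbolic space $\mathbb H^n$ of unit timelike vectors. That is why one should expect $\CAT(-1)$ rather than the $\CAT(1)$ of the metric case.

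\textbf{Completeness.} Take a $\ma$-Cauchy sequence $(\xi_n)$. The rays $\gamma_{\xi_n}$ from $p$ have initial "directions" forming a Cauchy sequence in the angle. I would use properness together with the limit curve theorem to extract a limit ray $\gamma$ from $p$, and check that $\gamma$ is timelike rather than null. A uniform angle bound keeps the rays inside a compact set of hyperbolic directions, so by $\CBA(0)$ comparison the time separation $\tau(p,\gamma_{\xi_n}(t))$ is controlled from below. Lower semicontinuity of angles under limits, plus comparison, then gives $\ma(\xi_n,[\gamma])\to 0$.

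\textbf{Geodesics and the $\CAT(-1)$ inequality.} For geodesicity, given $\xi,\eta$ I would form, for each $t$, the timelike geodesic segment $\sigma_t$ joining $\gamma_\xi(t)$ and $\gamma_\eta(t)$. If these points are not timelike related, I would instead project through the cone over $p$. Rays from $p$ through points of $\sigma_t$, normalized at a proper-time level, form a curve of directions; letting $t\to\infty$ and using Arzelà–Ascoli yields a path in $\bd^+Y$ whose length equals the Minkowski comparison angle. Completeness combined with midpoint approximation could serve as a fallback. For $\CAT(-1)$, take a geodesic triangle in $\bd^+Y$ and realize it as the limit of triangles of rays from $p$. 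The key identity is that the Euclidean cone over $\mathbb H^n$, namely the future timelike cone of Minkowski space, is flat exactly when the link is $\CAT(-1)$. This is the Lorentzian analogue of Berestovskii's theorem, in which $C_0X$ is $\CAT(0)$ iff $X$ is $\CAT(1)$. So I would show that the "asymptotic cone" $C\bd^+Y$ with its Lorentzian cone time separation satisfies timelike $\CBA(0)$, being a limit of rescalings of $Y$ from $p$. Then I would prove the Lorentzian Berestovskii direction: a cone over a metric space $X$ is $\CBA(0)$ implies $X$ is $\CAT(-1)$.

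The main obstacle is this last step. It requires making the rescaling limit rigorous in the Lorentzian setting, where causality can degenerate and null directions must be excluded. It also requires the cone-to-link implication, which calls for a careful comparison of hyperbolic law of cosines with Minkowski triangles.
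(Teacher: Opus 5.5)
Your decomposition into completeness, midpoints/geodesicity and the $\CAT(-1)$ inequality matches the paper's. However, the setup shared by all three parts fails at two concrete points.

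\textbf{No global base point.} A ray from $p$ asymptotic to $\xi$ exists only if $p\in I^-(\xi)$: if $\alpha(0)=p$ and $\alpha\sim\gamma$, then $p\ll\gamma(c)$. Moreover $\ma$ is an \emph{extended} metric, equal to $\infty$ whenever $I^-(\xi)\neq I^-(\xi')$ (\autoref{pop:different-pasts-infinite-angle}). So all three properties are really statements about finiteness components. The paper's completeness proof begins by using \autoref{pop:different-pasts-infinite-angle} to see that a tail of the Cauchy sequence has a single common past, and only then fixes the base point there. Without this step, ``Cauchy initial directions at $p$'' has no meaning.

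\textbf{Equal-parameter triangles are never timelike.} Since $\tau(p,\gamma_\xi(t))=\tau(p,\gamma_\eta(t))=t$, a relation $\gamma_\xi(t)\ll\gamma_\eta(t)$ would give $t\geq t+\tau(\gamma_\xi(t),\gamma_\eta(t))>t$ by the reverse triangle inequality, and symmetrically. So $\bar\Delta(p,\gamma_\xi(t),\gamma_\eta(t))$ is never a timelike triangle. Your description of $\ma$ through equal-parameter comparison angles is therefore void. The segment $\sigma_t$ in your geodesic construction also never exists: your ``fallback'' case is the only case, and it is left unspecified. The correct quantity is the supremum of $\widetilde\ma_x(\xi_x(t),\xi'_x(t'))$ over timelike-related pairs $(t,t')$. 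It can be realized along $t'=ct$ with $c>c_o$ as in \autoref{lem:minkowski-situation-bounded-angles}. That it equals $\sup_x\ma_x(\xi,\xi')$ and is independent of $x$ in the common past is \autoref{pop:properties of angle}. You assert this as an ``equivalently'', but it needs the asymptoticity shifts and the common past.

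\textbf{Geodesicity.} What actually works is your fallback, completeness plus midpoints, but the midpoint is where the content lies and your sketch omits it. One sets $x_n=\xi_p(t_n)\ll\eta_p(ct_n)=y_n$ and picks $m_n\in[x_n,y_n]$ bisecting the comparison angle at $p$. $\CBA(0)$ then gives $\widetilde\ma_p(x_n,m_n)=\widetilde\ma_p(m_n,y_n)\leq\frac12\widetilde\ma_p(x_n,y_n)$. One must show that $[p,m_n]$ subconverges to a timelike \emph{ray}, excluding both a null limit and a limit of finite $\tau$-length (\autoref{lem:angle bisectors converge to midpoint ray}). Finally one needs the lower semicontinuity $\liminf_n\widetilde\ma_p(x_n,y_n)\geq\ma(\xi,\eta)$ along cone-convergent sequences (\autoref{pop:lower semicontinuity of angle}). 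That lemma is indispensable. Limit-curve or Arzel\`a--Ascoli arguments only give convergence in the cone topology, which is in general strictly coarser than the $\ma$-topology (\autoref{rem:cone-topology-angular-topology}).

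\textbf{The $\CAT(-1)$ step is a programme, not a proof.} Neither of your two ingredients is established:
\begin{enumerate}
\item The $\CBA(0)$ property of a rescaled ``asymptotic cone''. This needs a convergence theory for Lorentzian pre-length spaces under which $\CBA(0)$ is stable. It also needs a proof that rescaled segments $[\xi_0(ts),\xi_1(t's)]$ track cone geodesics through $\bd^+Y$.
\item The cone-to-link implication. The paper only records the opposite direction (fiber curvature $\leq-1$ $\Rightarrow$ cone $\CBA(0)$), via \autoref{thm:curvature_cones} with $f(t)=t$.
\end{enumerate}
The paper bypasses both. With the base point in the common past, it verifies $\ma(\xi_2,\mu)\leq d_{\mathbb{H}^2}(\bar\xi_2,\bar\mu)$ for midpoints directly:
\begin{enumerate}
\item It realizes $\mathbb{H}^2$ as the unit future timelike vectors of $\mathbb{R}^{2,1}$ and expands $\tau_{\mathbb{R}^{2,1}}(\bar\xi_2,c\bar\mu)^2$ by a Stewart-type identity.
\item It transfers this to $Y$ via $\frac{1}{s^2}\tau(\xi_x(s),\xi'_x(cs))^2\to1+c^2-2c\cosh\ma(\xi,\xi')$ (\autoref{pop:asymptotic time separation}) and $\CBA(0)$ on the triangle $\xi_2(s),\xi_0(ts),\xi_1(t's)$.
\item It uses \autoref{pop:almost intermediate points are close} to place $\mu(cs)$ in a thin diamond around the corresponding point of $[\xi_0(ts),\xi_1(t's)]$.
\end{enumerate}
These are exactly the estimates your rescaling limit would have to supply.
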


In the final sections, as examples of our construction, we show that the future timelike ideal boundary of a product space of the form $\prescript{-}{}I\times X$, where $I=(a,\infty)$ for some $-\infty\leq a <\infty$ and $X$ is a $\CAT(0)$ space, is a warped product of the form $[0,\infty)\times_{\sinh} \bd X$, where $\bd X$ is the ideal boundary of $X$ in the metric sense (\autoref{thm:minkowski product}), as one would expect from the positive-signature case. We also obtain a partial converse of such a result: if both the future and the past timelike ideal boundaries of a \LpLS with timelike curvature globally bounded above by zero agree with those of a generalized product and there is a natural compatibility relation between the future and past apexes, then the space itself must be a generalized cone (\autoref{thm:rigidity}).

Finally, we consider generalized cones of the form $Y=\prescript{-}{}{\mathbb{R}}\times_f X$, in the sense of \cite{alexander_generalized_2023}, and under suitable conditions on the asymptotic behavior of the corresponding warping function $f$, we describe the underlying set of the timelike ideal boundary $\bd^+Y$ and, in some cases, we also describe the metric. Concretely, we obtain the following statement, with the notation from \autoref{def:nomenclature}:

\begin{thm}
Let $(X,d)$ be a complete geodesic space and let $f\colon \R\to (0,\infty)$ be convex. The following implications about the future timelike ideal boundary $\bd^+Y$ of the generalized product $Y=\prescript{-}{}{\mathbb{R}}\times_f X$ hold:
\begin{enumerate}
    \item If $\lim_{t\to\infty}f(t) = 0$ quickly, it consists of just a point.
    \item If $\lim_{t\to\infty}f(t) = 0$ slowly and $X$ is locally compact and $\CAT(-\varepsilon)$ for some $\varepsilon>0$, then $\bd^+Y$ is isometric to the quotient of the set $[0,\infty)\times \bd X$ under the equivalence relation $(0,\xi)\sim(0,\xi')$, endowed with the discrete distance with value infinity.
    \item If $\lim_{t\to\infty}f(t) = L>0$ and $X$ is locally compact and $\CAT(-\varepsilon)$ for some $\varepsilon>0$, then $\bd^+Y$ is isometric to the metric warped product $[0,\infty)\times_{\sinh} \bd X$.
    \item If $\lim_{t\to\infty}f(t) = \infty$ slowly, then $\bd^+Y$ is in bijection with the set $X$. 
    \item If $\lim_{t\to\infty}f(t) = \infty$ quickly, then $\bd^+Y$ is isometric to the set $X$ endowed with the discrete infinite distance. 
\end{enumerate}
\end{thm}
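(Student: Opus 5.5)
This theorem collects the later results on generalized cones, so the plan is to prove each item separately. All five rest on one common analysis of future-directed timelike geodesic rays in $Y=\prescript{-}{}{\mathbb{R}}\times_f X$, followed by a case split on the asymptotics of $f$.

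\emph{Common step.} By the theory of generalized cones in \cite{alexander_generalized_2023}, a timelike geodesic $\gamma=(\alpha,\beta)$ has a fiber component $\beta$ that is a reparametrized geodesic of $X$. Its base component $\alpha$ solves a one-dimensional conservation law: the ``angular momentum'' $f(\alpha)^2|\dot\beta| = c$ is constant. Hence
\[
\dot\alpha^2 = 1 + \frac{c^2}{f(\alpha)^2}
\]
along a unit-speed parametrization. Convexity of $f$ means $f$ is eventually monotone with a limit in $[0,\infty]$. This lets me decide which rays are future complete, and compute the total fiber length travelled,
\[
\ell(c)=\int_{t_0}^\infty \frac{c\,dt}{f(t)\sqrt{f(t)^2+c^2}} .
\]
I expect the dichotomies ``quickly/slowly'' in \autoref{def:nomenclature} to be precisely convergence or divergence of integrals of this type, e.g.\ $\int^\infty dt/f(t)$.

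\emph{Asymptotic classes.} Next I decide when two rays are asymptotic in the sense of \autoref{def:asymptotic}. That is, I must determine when $\gamma_1(s)\ll\gamma_2(s+T)$ for all large $s$, and vice versa. Timelike relatedness in $Y$ is governed by the base-time difference and the fiber distance, comparing $d(\beta_1,\beta_2)$ with $\int dt/f$.

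If $f\to 0$ quickly, $\int^\infty dt/f=\infty$ with enough room that every point becomes timelike related to every ray after a finite shift. So all rays are asymptotic, which gives item (1).

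If $f\to\infty$ quickly, $\ell(c)<\infty$, so each ray has a fiber endpoint $x\in X$. Two rays are asymptotic if and only if their fiber endpoints coincide, which yields the bijection with $X$. The infinite angular distance follows because the comparison angles between distinct classes blow up; item (5) shares the bijection with item (4).

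In the slowly-diverging case (4) I expect only the bijection to survive, which is all that is claimed.

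For a positive limit $L$ (item (3)), $Y$ is asymptotically a product, and I reduce to \autoref{thm:minkowski product}. Rays with $c>0$ have $\beta$ converging to a point $\xi\in\bd X$, and rapidity $\operatorname{arsinh}$-type parameter $r\in[0,\infty)$ determined by $c/L$. The $\CAT(-\varepsilon)$ assumption, together with local compactness, makes this decomposition well defined. Any bounded difference in base time is absorbed, because divergence in $\CAT(-\varepsilon)$ is exponential while the mismatch in $f$ is only $o(1)$. The angle computation then proceeds as in \autoref{thm:minkowski product}, giving $[0,\infty)\times_{\sinh}\bd X$.

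For $f\to0$ slowly (item (2)), the same parametrization by $(r,\xi)$ survives, with all $r=0$ rays identified. Because $f\to 0$, the rescaled distances blow up, so every angle between distinct classes is infinite.

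\emph{Main obstacle.} The hardest step is showing that two rays with the same $\xi$ but different $r$, or with different $\xi$, are not asymptotic, while rays with the same data are. This requires a two-sided estimate on timelike relatedness in the warped product, combined with exponential divergence of geodesic rays in $\CAT(-\varepsilon)$ spaces. I would approach it through the explicit comparison with the cone over $X$. Computing the exact angles in (3) is the second delicate point, and there I would lean on the continuity of $\ma$ under convergence $f\to L$.
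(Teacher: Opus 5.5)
Your proposal has genuine gaps in the $f\to0$ regime. In item (1) you never use the hypothesis that defines ``quickly'' there, namely $\int^\infty f<\infty$; note that $\int^\infty dt/f=\infty$ holds for every $f\to0$. Moreover, ``every point is eventually in the past of every ray'' is much weaker than asymptoticity, which requires one $c$ with $\gamma_i(t)\ll\gamma_j(t+c)$ for all $t$. Your reasoning would equally collapse $\bd^+Y$ to a point when $f\to0$ slowly, and that is false. For $A_1>A_2>0$ the function $\alpha_2^{-1}-\alpha_1^{-1}$ has derivative $\approx f\,(1/A_2-1/A_1)$, which is not integrable, so the two rays are not asymptotic (\autoref{lem:asymptotic-thus-same-A}). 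The missing idea is \autoref{lem:vertical-rays-only-ones}. For $A>0$, a unit-speed geodesic runs for proper time at most $\int\frac{f(u)\,du}{\sqrt{f(u)^2+A^2}}<\frac1A\int^\infty f<\infty$ before $\alpha$ reaches $+\infty$. Hence only vertical rays exist, and these are pairwise asymptotic since $f$ is bounded on $[t_0,\infty)$ (\autoref{lem:vertical-rays-asymptotic}).

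In item (2), the step ``every angle between distinct classes is infinite'' fails for two classes with the same $\xi\in\bd X$ and momenta $A_1\neq A_2>0$. Freezing the warping at $f(t)$, as in the proof of \autoref{pop:restricted-cone-2}, gives at $y=(t,x)$
\[
\cosh\ma_y=\cosh\theta_1\cosh\theta_2-\sinh\theta_1\sinh\theta_2\cos\ma^X_x(\tilde\beta_1,\tilde\beta_2),\qquad \sinh\theta_i=\frac{A_i}{f(t)}.
\]
Both representatives project to the same metric ray from $x$, so the cosine is $1$ and $\ma_y=|\theta_1-\theta_2|$. But $\sinh^{-1}(A_1/f(t))-\sinh^{-1}(A_2/f(t))$ increases only to $\log(A_1/A_2)$, so $\ma(\xi_1,\xi_2)=|\log(A_1/A_2)|<\infty$. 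The blow-up of $1/f$ only separates the apex from everything else, and classes with different $\xi$ from each other. The paper's proof of \autoref{lem:isometry-product-idealboundary-case0} makes the same slip: it claims $\cosh(\theta_1^{f(t)}-\theta_2^{f(t)})\to\infty$. So item (2) cannot be proved as stated. The computation actually gives an isolated apex plus, for each $\xi\in\bd X$, a copy of $\R$ with coordinate $\log A$, distinct copies at infinite distance.

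For the metric parts of items (3) and (5), the mechanisms you invoke do not carry the argument. There is no ``continuity of $\ma$ under $f\to L$'' to lean on. Also, $\CAT(-\varepsilon)$ is not what makes the $(r,\xi)$-decomposition well defined: that needs only \autoref{lem:asymptotic-thus-same-A} and \autoref{lem:lorentzian-asymptotic-thus-metric-asymptotic-caseL} with $X$ geodesic, and no exponential divergence enters. The role of $\CAT(-\varepsilon)$ is that $f'\to0$, so $Y_a$ satisfies $\CBA(0)$ globally for large $a$ (\autoref{rem:cones-global-curvature}). That is what makes $\ma$ defined, and \autoref{pop:restricted-cone-2} shows it is unchanged by passing to $Y_a$.

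The angle at $(t,x)$ is then computed exactly via the sandwich $\tau_{f(t)}\le\tau_f\le\tau_{f(t)-\varepsilon}$ near that point, giving the displayed formula. Since $f$ is non-increasing, this angle is non-decreasing in $t$, so the supremum over $y$ is the limit as $f(t)\to L$. Taking the supremum over $x$ turns $\ma^X_x$ into the angular metric of $\bd X$, which yields exactly $[0,\infty)\times_{\sinh}\bd X$.

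For item (5), ``comparison angles blow up'' restates the conclusion rather than proving it. The actual mechanism is that distinct classes have distinct timelike pasts (\autoref{lem:unique-vertical-same-past}, \autoref{lem:vertical-not-asymptotic}), so \autoref{pop:different-pasts-infinite-angle} applies. Finally, the direction ``same fiber endpoint $\Rightarrow$ asymptotic'' in items (4)--(5) is where convexity is needed. It enters through $f(u)\ge f(v)+m_0(u-v)$ and through the finiteness of $\lim_{t\to\infty}(\alpha(t)-t)$, which follows from $\int^\infty f^{-2}<\infty$ (\autoref{lem:same-past-thus-asymptotic}).
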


\begin{ack}
The authors would like to express their gratitude with Tobias Beran, Jose Luis Flores Dorado, Stacey Harris, J\'onatan Herrera, and Roland Steinbauer  for very fruitful conversations at an early stage of this project. MC and MPA were funded by the Austrian Science Fund (FWF) [Grant DOI’s: \href{https://www.fwf.ac.at/forschungsradar/10.55776/EFP6}{10.55776/EFP6}, \href{https://www.fwf.ac.at/forschungsradar/10.55776/STA32}{10.55776/STA32}]. 
SB acknowledges the support by the IMAG-María de Maeztu grant CEX2020-001105-MCIN/AEI/10.13039/501100011033 as well as the project PID2024-156031NB-I00 funded by MICIU/AEI/10.13039/{501100011033/ERDF/EU.}
For open access purposes, the authors have applied a CC BY public copyright license to any author-accepted manuscript version arising from this submission.
\end{ack}

\section{Preliminaries}

\subsection{The metric ideal boundary}\label{subs:metric-ideal-boundary}

In this subsection we recall the notion of the ideal boundary of a $\CAT(0)$ space, following \cite[Ch.~II.8-9]{BH}. This construction serves as inspiration for the definition of the Lorentzian ideal boundary and some results from the metric setting here mentioned will be adapted to the Lorentzian framework in the following sections.

\begin{dfn}\label{def:metric-geodesic-rays-asymptoticity}
    A \textit{geodesic ray} in a metric space $(X,d)$ is an isometric embedding $\rho \colon [0,\infty) \to X$, i.e., $d(\rho(s),\rho(t))=|t-s|$, for all $s,t\in[0,\infty)$. Two geodesic rays $\rho_1,\rho_2$ are called \textit{asymptotic} if their images remain at bounded distance from each other, i.e., if there exists some constant $K\in \R^+$ such that
    \[
    d\bigl(\rho_1(t),\rho_2(t)\bigr) < K, \qquad \forall t\in[0,\infty).
    \]
\end{dfn}

\begin{dfn}\label{def:metric-ideal-boundary}
    The \textit{ideal boundary} $\bd X$ of a metric space $(X,d)$ is defined as the set of equivalence classes of asymptotic geodesic rays in $X$. The \textit{ideal completion} is the disjoint union $\overline{X}=X\sqcup \bd X$. We will call the points in $\bd X$ \textit{boundary points}. Given a geodesic ray $\rho$, its equivalence class will be denoted by $\rho(\infty)$.
\end{dfn}

Usually, one works with metric spaces with infinite diameter; otherwise there are no geodesic rays and therefore the ideal boundary is empty. On the other hand, the ideal boundary might be empty even when the diameter of $X$ is infinite \cite[Theorem~3.4]{buckley_natural_2013}. The ideal boundary is usually considered in the framework of complete $\CAT(0)$ spaces. One can show \cite[Proposition~II.8.2]{BH} that in such a setting, for every $\xi\in\bd X$ and every point $x\in X$ there is a unique representative $\rho\in\xi$ (i.e., $\rho(\infty)=\xi$) starting at $x$. We usually denote such a representative by $\xi_x$. 

The ideal completion $\overline{X}$ of a complete $\CAT(0)$ space $X$ can be endowed with a natural topology, called the \textit{cone topology} satisfying a number of interesting properties; e.g., its restriction to $X$ is the metric topology, $X$ is open and dense in $\overline{X}$, and the extension to $\overline{X}$ of every isometry of $X$ is a homeomorphism \cite[Corollary~II.8.9]{BH}.

Moreover, a natural distance can be defined in $\bd X$ making use of the angle between geodesic rays at their starting point. Specifically, the \textit{angular distance} $\ma$ in $\bd X$ is defined as
\begin{equation}\label{eq:def-metric-angular-distance}
\ma(\xi,\eta)=\sup_{x\in X} \ma_x(\xi_x,\eta_x), 
\end{equation}
where $\xi_x,\eta_x$ are the unique representatives of $\xi,\eta$, respectively, starting at $x$, and $\ma_x$ is the upper angle between curves in a metric space, defined by means of comparison angles in $\R^2$ \cite[Ch.~I.1]{BH}, i.e., 
\begin{equation}\label{eq:def-metric-angle-between-curves}
\ma_x(\xi_x,\eta_x)=\limsup_{s,t\searrow0} \widetilde{\ma}_x\bigl(\xi_x(s),\eta_x(t)\bigr),
\end{equation}
where, in turn, the comparison angles (see \autoref{def:comparison-triangle} for the Lorentzian analogue) are defined as 
\begin{equation}\label{eq:def-metric-comparison-angle}
\widetilde{\ma}_x\bigl(\xi_x(s),\eta_x(t)\bigr)=
\ma^{\R^{2}}_{\bar{x}}\bigl(\overline{\xi_x(s)}, \overline{\eta_x(t)}\bigr)=\cos^{-1}
\left(
\frac{s^2+t^2-d(\xi_x(s),\eta_x(t))^2}{2st}\right).
\end{equation}

With this definition, the angular distance induces a topology in $\bd X$ which is finer than the restriction to $\bd X$ of the cone topology. Moreover, if $X$ is a complete $\CAT(0)$ space, then $(\bd X,\ma)$ is a complete $\CAT(1)$ space \cite[Proposition~II.9.7, Theorem~II.9.13]{BH} which need not be a length space. However, being it $\pi$-geodesic, the intrinsic metric associated to $\ma$, called the \textit{Tits metric} $d_T$, coincides locally with $\ma$. Consequently, $(\bd X,d_T)$ is also a complete $\CAT(1)$ space and any pair of points at finite Tits distance can be joined by a geodesic \cite[Theorem~II.9.20]{BH}.

\subsection{Lorentzian pre-length spaces and timelike curvature bounds}\label{subs:LLS}
In this subsection we recall basic notions about Lorentzian pre-length spaces, based on \cite{kunzinger-saemann2018} and \cite{beran-saemann2023}. Throughout this and the following sections we will denote a Lorentzian pre-length space $(Y,d,\ll,\leq,\tau)$ simply by $Y$. We also use the classical notation for chronological and causal pasts and futures. Namely, given $P\subset Y$, define
\begin{align*}
I^-(P)&=\{q\in Y: q\ll p\ \text{ for some }\ p\in P\},\\
J^-(P)&=\{q\in Y: q\leq p\ \text{ for some }\ p\in P\},
\end{align*}
and analogously with $I^+(P)$ and $J^+(P)$. We also write $I^\pm(p)$ and $J^\pm(p)$ instead of $I^\pm(\{p\})$ and $J^\pm(\{p\})$ for any $p\in Y$. Given $p\leq q$, the corresponding causal diamond is denoted by $J(p,q)$.

\begin{dfn}
Let $Y$ be a Lorentzian pre-length space and $U \subseteq Y$ be an open set.  
\begin{itemize}
    \item $U$ is called \textit{timelike geodesically connected} if whenever $x,y \in U$ with $x \ll y$, there exists a future-directed maximal geodesic in $U$ from $x$ to $y$.  
    \item $U$ is called \textit{strictly timelike geodesically connected} if whenever $x,y \in U$ with $x \ll y$, there exists a future-directed maximal geodesic in $U$ from $x$ to $y$, and moreover any future-directed maximal geodesic in $U$ from $x$ to $y$ is timelike.  
    \item $Y$ is called \textit{locally strictly timelike geodesically connected} if it is covered by strictly timelike geodesically connected neighborhoods.
    \item $Y$ is called \textit{regular} if for all $x, y \in Y$ such that $x \ll y$ all geodesics connecting $x$ and $y$ are timelike.
\end{itemize}
\end{dfn}

\begin{dfn}\label{def:comparison-triangle}
Let $Y$ be a Lorentzian pre-length space, and let $\triangle p_1 p_2 p_3$ be a timelike triangle in $Y$, i.e., $p_1\ll p_2\ll p_3$. A \textit{comparison triangle} for $\triangle p_1p_2p_3$ in the Minkowski plane is a timelike triangle $\triangle \bar{p}_1 \bar{p}_2 \bar{p}_3$ in $\mathbb{R}^{1,1}$ such that
\[
\tau(p_i, p_j) = \tau(\bar{p}_i,\bar{p}_j) \quad \text{for } i < j,
\]
and the \textit{comparison angle} $\widetilde{\ma}_{p_i}(p_j,p_k)$ is given by
\[
\widetilde{\ma}_{p_i}(p_j,p_k) = \ma_{\bar{p}_i}^{\mathbb{R}^{1,1}}( \bar{p}_j, \bar{p}_k)=\cosh^{-1}\biggl(\frac{\tau_{i,j}^2+\tau_{i,k}^2-\tau_{j,k}^2}{2\sigma\tau_{i,j}\tau_{i,k}}\biggr),
\]
where $\sigma=1$ if $i\in\{1,3\}$ and $\sigma=-1$ otherwise, and $\tau_{i,j}=\max\{\tau(p_i,p_j),\tau(p_j,p_i)\}$.
\end{dfn}

The following is an elementary result about Minkowski space, analogous to the fact that the sum of interior angles of a Euclidean triangle equals $\pi$. See \cite[Lemma~2.4]{barton-beran-che-gieger-roehrig-rott}.

\begin{lem}\label{lem:sum of angles of triangle}
Let $a\ll b\ll c$ be points in the Minkowski plane, $\mathbb{R}^{1,1}$. Then
\[\ma_a(b, c) + \ma_c(a, b) = \ma_b(a, c).\]
\end{lem}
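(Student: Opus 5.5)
Since every notion in the statement is invariant under Poincaré transformations of $\mathbb{R}^{1,1}$, I will first normalize the configuration and then compute directly with rapidities. I would apply a translation to place $b$ at the origin. The remaining two vectors are then $u:=a-b$, which is past-directed timelike, and $v:=c-b$, which is future-directed timelike. For a future-directed unit timelike vector I write $(\cosh\theta,\sinh\theta)$ and call $\theta$ its \emph{rapidity}. The basic fact I use is that for future-directed unit timelike vectors $e_1,e_2$ of rapidities $\theta_1,\theta_2$,
\[
\langle e_1,e_2\rangle=-\cosh(\theta_1-\theta_2).
\]
Consequently the hyperbolic angle between them is $|\theta_1-\theta_2|$. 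By the law of cosines in $\mathbb{R}^{1,1}$, this agrees with the comparison-angle formula of \autoref{def:comparison-triangle}, including the sign convention $\sigma$.

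I would then express each of the three angles through rapidities. Let $\alpha$ be the rapidity of $b-a$, let $\gamma$ be the rapidity of $c-b$, and let $\delta$ be the rapidity of $c-a$; all three vectors are future-directed timelike.

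At the vertex $b$, the angle is formed by $-u=b-a$ and $v$, so $\ma_b(a,c)=|\alpha-\gamma|$. The sign $\sigma=-1$ in the definition accounts exactly for the reversal of $u$.

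At the vertex $a$, the two vectors are $b-a$ and $c-a$, so $\ma_a(b,c)=|\alpha-\delta|$.

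At the vertex $c$, the two vectors are $a-c$ and $b-c$. Reversing both gives the future-directed vectors $c-a$ and $c-b$, so $\ma_c(a,b)=|\delta-\gamma|$.

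It remains to show that $\delta$ lies between $\alpha$ and $\gamma$; given this, $|\alpha-\delta|+|\delta-\gamma|=|\alpha-\gamma|$ and the identity follows. This betweenness claim is the only nontrivial step. I would prove it from
\[
c-a=(b-a)+(c-b)=\lambda(\cosh\alpha,\sinh\alpha)+\mu(\cosh\gamma,\sinh\gamma),\qquad \lambda,\mu>0.
\]
The velocity $\tanh\delta$ of $c-a$ is then the ratio
\[
\frac{\lambda\sinh\alpha+\mu\sinh\gamma}{\lambda\cosh\alpha+\mu\cosh\gamma}.
\]
This is a weighted average of $\tanh\alpha$ and $\tanh\gamma$, with positive weights $\lambda\cosh\alpha$ and $\mu\cosh\gamma$. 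Since $\tanh$ is monotone, $\delta$ lies between $\alpha$ and $\gamma$.

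Degenerate cases, where some of the points are collinear, give equalities and cause no difficulty. They are in any case excluded by the strict timelike hypothesis only in the sense that the angles remain well defined.
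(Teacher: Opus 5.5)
Your proof is correct and complete. The paper gives no proof of its own here; it simply defers to \cite[Lemma~2.4]{barton-beran-che-gieger-roehrig-rott}. Your rapidity computation is therefore a self-contained verification, and it runs along the natural route: angles in the Minkowski plane are differences of rapidities.

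The individual steps check out. You verify that the $\cosh^{-1}$ formula with the sign $\sigma$ from \autoref{def:comparison-triangle} gives the hyperbolic angle between $b-a$ and $c-b$ at the middle vertex. At $a$ and $c$ it gives the angle between the two future-directed edge vectors, respectively the two reversed past-directed ones. The weighted-average expression for $\tanh\delta$ is exactly what is needed to place the rapidity of $c-a$ between those of $b-a$ and $c-b$.

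The one thing to fix is your final remark, which is garbled. Collinear configurations are \emph{not} excluded by $a\ll b\ll c$, since three points on a single timelike line are allowed. They need no separate treatment, though: then $\alpha=\gamma=\delta$, and your argument applies verbatim with all three angles equal to $0$.
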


\begin{dfn}\label{def:upper-angle}
Let $Y$ be a Lorentzian pre-length space, where $\tau$ is locally finite valued.  Let $\alpha, \beta \colon [0,\varepsilon) \to Y$ be two future-directed or past-directed timelike curves with 
\[
\alpha(0) = \beta(0) =: x.
\]
Let
\begin{equation}\label{eq:set-upper-angle}
    A := \bigl\{ (s,t) \in (0,\varepsilon)^2 :
        \alpha(s) \leq \beta(t)\ \text{ or }\ \beta(t) \leq \alpha(s)
    \bigr\}.
\end{equation}
The \textit{upper angle} between $\alpha$ and $\beta$ at $x$ is defined as
\begin{equation}
    \ma_{x}(\alpha,\beta) 
    := \limsup_{\substack{(s,t) \in A \\ s,t \searrow 0}} 
       \widetilde{\ma}_{x}\bigl(\alpha(s),\beta(t)\bigr).
\end{equation}
\end{dfn}

\begin{dfn}\label{def:comparison_neighb}
Let $Y$ be a Lorentzian pre-length space. An open subset $U$ is called a \textit{non-positive timelike comparison neighborhood}, if
\begin{enumerate}
    \item $\tau|_{U \times U}$ is finite and continuous;
    \item $U$ is strictly timelike geodesically connected; and
    \item for all timelike triangles $\Delta p_1 p_2 p_3$ in $U$, for $q_1, q_2$ two points on different sides $\alpha$ and $\beta$, and for all comparison situations 
    $\Delta \bar{p}_1 \bar{p}_2 \bar{p}_3, \bar{q}_1, \bar{q}_2$ in the Minkowski plane, the time separations satisfy
    \[
       \tau(q_1,q_2) \;\geq\; \overline{\tau}(\bar{q}_1,\bar{q}_2).
    \]
\end{enumerate}
If every $x\in Y$ has a non-positive timelike comparison neighborhood, then \textit{$Y$ satisfies $\CBA(0)$}. Moreover, if $Y$ itself is a non-positive timelike comparison neighborhood, we say that \textit{$Y$ satisfies $\CBA(0)$ globally}. 
\end{dfn}

\begin{rem}
One can analogously define $\CBA(K)$ for general $K\in\mathbb{R}$. Since we are mainly interested in Lorentzian pre-length spaces satisfying $\CBA(0)$, \autoref{def:comparison_neighb} is sufficient for our purposes, even though we make reference to the general $\CBA(K)$ condition in some parts of the manuscript. We refer the reader to \cite{kunzinger-saemann2018} for this general treatment.
\end{rem}

\begin{dfn}\label{def:monotonicity}
Let $Y$ be a Lorentzian pre-length space. We say that $Y$ satisfies \textit{non-positive timelike monotonicity comparison} if every point in $Y$ possesses a neighborhood $U$ such that: 
\begin{enumerate}
    \item $\tau|_{U \times U}$ is finite and continuous;
    \item $U$ is timelike geodesically connected; and
    \item whenever $\alpha \colon [0, a] \to U$, $\beta \colon [0, b] \to U$ are timelike maximal geodesics in $U$ with $x := \alpha(0) = \beta(0)$, the function
    \[
        \theta \colon A \to \mathbb{R},\qquad        \theta(s,t) := \widetilde{\ma}_{x} \bigl(\alpha(s), \beta(t)\bigr),
    \]
    where $A$ is as in Equation~\eqref{eq:set-upper-angle} for $\varepsilon<\min\{a,b\}$, is non-decreasing in $s$ and $t$ if $\alpha$ and $\beta$ have the same time-orientation, and non-increasing if they have opposite time-orientations.
\end{enumerate}
\end{dfn}

Notice that in the last item of \autoref{def:monotonicity} we are using \textit{unsigned} angles instead of the signed angles used originally in \cite{beran-saemann2023}. As a consequence, we obtain one condition for curves with the same time orientation and the opposite condition for curves with different time orientations, whereas in the aforementioned paper there is no difference, as it is ``absorbed'' by the sign.

\begin{thm}[{\cite[Theorem~4.13]{beran-saemann2023}}]\label{thm:monotonicity of comparison angles}
Let $Y$ be a locally strictly timelike geodesically connected Lorentzian pre-length space. Then $Y$ satisfies $\CBA(0)$ if and only if it satisfies non-positive timelike monotonicity comparison.
\end{thm}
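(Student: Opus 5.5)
Since this is \cite[Theorem~4.13]{beran-saemann2023}, I would not re-derive it from scratch. Instead I would reconstruct the argument as a direct translation between triangle comparison and angle monotonicity, using the Minkowski law of cosines from \autoref{def:comparison-triangle}. The key elementary fact is this: in a Minkowski comparison configuration where two sides from a vertex $\bar x$ have fixed lengths $s,t$, the third time separation is strictly monotone in the hyperbolic angle at $\bar x$. For two sides of the same time orientation, a larger opposite $\tau$ means a smaller angle. For opposite orientations, with $\sigma=-1$ and $x$ the middle vertex, the dependence is reversed. This sign flip is exactly why \autoref{def:monotonicity} asks for "non-decreasing" in one case and "non-increasing" in the other. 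Both implications reduce to this monotone dependence.

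\emph{$\CBA(0)\Rightarrow$ monotonicity.} Take a non-positive comparison neighbourhood $U$ and timelike maximal geodesics $\alpha,\beta$ in $U$ from $x$. Fix $(s,t)\in A$ and $s'<s$ with $(s',t)\in A$; by symmetry this suffices. Consider the timelike triangle with vertices $x,\alpha(s),\beta(t)$, ordered according to the causal relations; it is a genuine triangle because $U$ is strictly timelike geodesically connected, so $\alpha,\beta$ and the segment between their endpoints are timelike. The point $\alpha(s')$ lies on one side and $\beta(t)$ is a vertex. The $\CBA(0)$ inequality gives $\tau(\alpha(s'),\beta(t))\ge\bar\tau(\bar\alpha(s'),\bar\beta(t))$, or the reversed-order version. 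In the comparison triangle, the angle at $\bar x$ between $\bar\alpha(s')$ and $\bar\beta(t)$ equals $\widetilde\ma_x(\alpha(s),\beta(t))$. On the other hand, $\widetilde\ma_x(\alpha(s'),\beta(t))$ is the angle of the Minkowski triangle with sides $s',t,\tau(\alpha(s'),\beta(t))$. The monotone dependence above then yields $\theta(s',t)\le\theta(s,t)$ for equal orientations and $\ge$ for opposite ones. Several cases have to be distinguished, according to the position of $x$ among the three vertices and to which of $\alpha(s')\le\beta(t)$ or $\beta(t)\le\alpha(s')$ holds. This is bookkeeping, but each case follows the same pattern.

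\emph{Monotonicity $\Rightarrow\CBA(0)$.} First, I would intersect a monotonicity neighbourhood with a strictly timelike geodesically connected neighbourhood, shrinking until the sides of triangles stay inside. Then, given a timelike triangle $\Delta p_1p_2p_3$ and points $q_1,q_2$ on two different sides, I would use that these sides share a vertex $p$, say $q_1=\alpha(s')$ and $q_2=\beta(t')$ with $\alpha,\beta$ emanating from $p$. Monotonicity compares $\theta(s',t')$ with $\theta$ at the endpoints, and the latter equals the comparison-triangle angle at $\bar p$. The comparison points $\bar q_1,\bar q_2$ realise exactly that angle with the same side lengths $s',t'$. Hence, via the law of cosines, the angle inequality turns into $\tau(q_1,q_2)\ge\bar\tau(\bar q_1,\bar q_2)$.

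The main obstacle is that monotonicity is only available on the set $A$ of causally related parameters. The case $\bar\tau(\bar q_1,\bar q_2)>0$ but $q_1\not\le q_2$ must therefore be excluded separately. I would handle it by a continuity argument along $\beta$. Start from parameters where $\alpha(s')\ll\beta(\cdot)$ is known, using the endpoint vertex. Then move toward $t'$ and use continuity of $\tau$ on $U$ together with the angle bound to show that $\tau(\alpha(s'),\beta(\cdot))$ stays bounded below by the positive comparison value. This keeps the pair inside $A$ up to $t'$. If $\bar\tau=0$, the inequality is trivial.
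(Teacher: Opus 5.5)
The paper gives no proof of this statement; it is quoted from \cite[Theorem~4.13]{beran-saemann2023}. Your strategy is the natural one and its core is sound: triangle comparison at a vertex is equivalent to monotone dependence of the opposite time separation on the angle there, any two sides carrying $q_1,q_2$ share a vertex, and a connectedness argument keeps you inside $A$. The main gap is the neighbourhood step, which fails as written. Intersecting a monotonicity neighbourhood with a strictly timelike geodesically connected one and ``shrinking'' does not give a set that is itself strictly timelike geodesically connected, as the definition of a comparison neighbourhood requires. Nothing in the hypotheses (no strong causality, no causally convex neighbourhoods) lets you shrink while keeping maximal geodesics inside. Strictness is genuinely needed, because monotonicity only speaks about \emph{timelike} geodesics, so triangle sides must be timelike. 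Use the hypothesis globally instead. Inside the monotonicity neighbourhood $U$, where $\tau$ is finite and continuous, every maximal causal curve $\gamma$ between timelike related points is timelike. Otherwise $\ell(r)=\tau(\gamma(0),\gamma(r))$ is constant on a nondegenerate maximal interval $[u_0,v_0]$. Suppose $v_0$ is not the final parameter (otherwise argue at $u_0$). Then a short subarc $\gamma|_{[v_0-\delta,v_0+\delta]}$ lies in a strictly timelike geodesically connected set and is maximal. Its endpoints are timelike related, since its length $\ell(v_0+\delta)-\ell(v_0-\delta)$ is positive. Hence it is timelike, which contradicts $\tau(\gamma(v_0-\delta),\gamma(v_0))=0$. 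So $U$ itself is strictly timelike geodesically connected and serves directly as the comparison neighbourhood.

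There are smaller gaps in the other direction. $A$ is defined with $\le$, so $(s,t)\in A$ may have $\tau(\alpha(s),\beta(t))=0$. Then $x,\alpha(s),\beta(t)$ is not a timelike triangle, and strict geodesic connectedness says nothing, so your ``genuine triangle'' justification is false there. Reduce to the timelike case by push-up and continuity of $\tau$ on $U$: for instance $\alpha(s-\epsilon)\ll\alpha(s)\le\beta(t)$, then let $\epsilon\to0$. When the comparison points $\overline{\alpha(s')},\overline{\beta(t)}$ are spacelike, the $\CBA(0)$ inequality is vacuous. The angle bound then comes from the signed law of cosines: $s'^2+t^2-2s't\cosh\omega<0\le\max\{\tau(\alpha(s'),\beta(t)),\tau(\beta(t),\alpha(s'))\}^2$ with $\omega=\theta(s,t)$. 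Your continuity argument also needs two details. Take the hinge at $p_1$, with $\alpha\colon[0,a]$ ending at $p_2$ and $\beta\colon[0,b]$ ending at $p_3$. Monotonicity is only separate in $s$ and $t$ on $A$, so the bound $\theta(s',t)\le\theta(a,b)$ must be routed through $(s',b)$, which lies in $A$ since $\alpha(s')\le p_2\ll p_3$. Moreover, the order $\bar q_2\ll\bar q_1$ cannot be started at the endpoint vertex. Start instead at small $t$, where $\beta(t)\ll q_1$ by openness of $I^-(q_1)$. Then run the same open--closed argument on $(0,t']$, where $s'^2+t^2-2s't\cosh\theta(a,b)$ stays bounded away from $0$.
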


\begin{thm}[{\cite[Lemma~4.10]{beran-saemann2023}}]\label{thm:monotonicity implies existence of angles}
Let $Y$ be a Lorentzian pre-length space. If $Y$ satisfies $\CBA(K)$ for some $K\in\mathbb{R}$, then the angle between any two timelike geodesics starting at the same point $x$ which are both future-directed or both past-directed, exists.
\end{thm}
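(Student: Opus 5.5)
The plan is to show that, near the common point $x$, the comparison angle is monotone along the set $A$ with respect to the product order. The upper angle, a $\limsup$, should then be an honest limit, in fact an infimum. Let $\alpha,\beta$ both be future-directed (the past case follows by reversing the time orientation). Take a timelike $K$-comparison neighbourhood $U$ of $x$, shrunk so that $\tau|_{U\times U}$ is finite and continuous and the initial pieces of $\alpha,\beta$ lie in $U$.

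\emph{Step 1: monotonicity for the $K$-comparison angle.} On $U$ I would use the $\CBA(K)$ analogue of non-positive timelike monotonicity comparison. This is the $K$-version of \autoref{thm:monotonicity of comparison angles}, proved the same way by the usual straightening argument: glue two comparison triangles in the model plane of curvature $K$ and apply the Lorentzian Alexandrov lemma. Concretely, for geodesics with the same time orientation, the $K$-comparison angle $\theta_K(s,t)=\widetilde{\ma}^K_x(\alpha(s),\beta(t))$ is non-decreasing in each variable on $A$. Only the partial statement on a small neighbourhood is needed, and this is where the $\CBA(K)$ hypothesis is used.

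\emph{Step 2: monotonicity along $A$ in the product order.} Let $(s,t),(s',t')\in A$ with $s'\le s$ and $t'\le t$, and say $\alpha(s)\le\beta(t)$. Since $\alpha$ is future-directed, $\alpha(s')\le\alpha(s)\le\beta(t)$, so $(s',t)\in A$. Then $(s',t)$ and $(s',t')$ differ only in the second coordinate, and both lie in $A$. The other case, $\beta(t)\le\alpha(s)$, is symmetric: it gives $(s,t')\in A$. Applying Step 1 twice gives $\theta_K(s',t')\le\theta_K(s,t)$. This is the main obstacle. The monotonicity in Step 1 is only along slices inside $A$, and one must check that $A$ is closed under moving one coordinate toward $0$ enough for the chain to stay in $A$. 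The causal transitivity trick above is what I would use.

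\emph{Step 3: existence of the limit.} By Step 2, $\theta_K$ is bounded below by $0$ and monotone as $(s,t)\searrow 0$ within $A$. Any two points of $A$ near $(0,0)$ have a common point of $A$ below both in the product order: take a point of $A$ with both coordinates smaller than the coordinates of either point. It follows that $\lim_{(s,t)\in A,\,s,t\searrow0}\theta_K(s,t)=\inf_A\theta_K$ exists and is finite. If $A$ does not accumulate at $(0,0)$, the angle is vacuous and handled by the paper's convention.

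\emph{Step 4: passing from $K$ to the flat comparison angle.} The upper angle in \autoref{def:upper-angle} uses Minkowski comparison triangles, with the flat angle $\widetilde{\ma}_x$. The side lengths $\tau(x,\alpha(s))$, $\tau(x,\beta(t))$, $\tau(\alpha(s),\beta(t))$ all tend to $0$ by continuity of $\tau$ on $U$. The law of cosines in the $K$-model plane therefore agrees with the Minkowski one up to an error that vanishes as the triangle shrinks. Hence $|\theta_K-\widetilde{\ma}_x|\to0$ along $A$, and the flat $\limsup$ equals the flat $\liminf$, both equal to the limit of $\theta_K$. So the angle exists.
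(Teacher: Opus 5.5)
Your proof is correct and is the standard argument behind the cited lemma of Beran--S\"amann (the paper itself only quotes it). The $\CBA(K)$ condition makes the $K$-comparison angle monotone on $A$, so the $\limsup$ in \autoref{def:upper-angle} is a monotone limit, and the $K$-model and Minkowski comparison angles agree asymptotically on shrinking triangles.

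The one step that needs a line more care is Step 4. Hyperbolic angles are unbounded, and thin triangles have arbitrarily large $\cosh$ of the comparison angle, so an additive error estimate for the law of cosines is not enough. You should record the multiplicative one, $\cosh\widetilde{\ma}^K_x=\bigl(1+O(|K|r^2)\bigr)\cosh\widetilde{\ma}_x$ with $r$ the longest side, which follows from expanding the model law of cosines using the reverse triangle inequality. This gives $|\theta_K-\widetilde{\ma}_x|\to0$ uniformly on $A$.
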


The following statement is a summary of parts of \cite[Theorems~3.1 and 4.5]{beran-saemann2023} and \cite[Lemma~4.14]{beran-kunzinger-rott2024} that will be useful in the following sections.
\begin{thm}\label{thm:triangle inequality for angles}
Let $Y$ be a strongly causal and locally causally closed Lorentzian pre-length space with $\tau$ locally finite valued and locally continuous. Let $\alpha, \beta, \gamma \colon [0,\varepsilon) \to Y$ be timelike curves starting at 
$x := \alpha(0) = \beta(0) = \gamma(0)$.
\begin{enumerate}
\item\label{item:same orientation triangle inequality} If $\alpha,\beta,\gamma$ have the same time orientation, then 
    \[
    \ma_{x}(\alpha,\gamma) \;\leq\; 
    \ma_{x}(\alpha,\beta) + \ma_{x}(\beta,\gamma).
    \]
\item If $\alpha,\beta$ are both future-directed, $\gamma$ is past-directed, and $\ma_x(\alpha,\gamma)$ exists, then 
    \[
    \ma_{x}(\alpha,\gamma)\leq \ma_{x}(\alpha,\beta) + \ma_x(\beta,\gamma)\, .    
    \]
\item\label{item:special case of triangle inequality} If $Y$ satisfies $\CBA(K)$ for some $K\in \R$, $\alpha,\beta$ are future-directed, $\gamma$ is past-directed, and the concatenation of $\gamma$ and $\beta$ is a geodesic, then
    \[
    \ma_{x}(\alpha,\gamma)\leq \ma_{x}(\alpha,\beta).    
    \]
\end{enumerate}
\end{thm}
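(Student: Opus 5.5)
Since this result collects statements from \cite{beran-saemann2023} and \cite{beran-kunzinger-rott2024}, I would reprove it by adapting the Euclidean argument for the triangle inequality of Alexandrov angles \cite[Proposition~I.1.14]{BH} to the Minkowski plane. The guiding principle is the hyperbolic law of cosines in \autoref{def:comparison-triangle}. For two future-directed sides of fixed lengths, a \emph{larger} hyperbolic angle corresponds to a \emph{larger} time separation between the endpoints. With opposite orientations the monotonicity reverses, which is the same sign switch as in \autoref{def:monotonicity}.

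For item~\ref{item:same orientation triangle inequality} I would argue by contradiction. Assume, say for future-directed curves, that $\ma_x(\alpha,\gamma)>\ma_x(\alpha,\beta)+\ma_x(\beta,\gamma)+3\delta$ for some $\delta>0$, and fix $\varepsilon'$ so small that all comparison angles with parameters below $\varepsilon'$ lie within $\delta$ of their $\limsup$ bounds. Choose $(s,t)\in A$ arbitrarily small with, say, $\alpha(s)\ll\gamma(t)$ and $\widetilde{\ma}_x(\alpha(s),\gamma(t))\geq \ma_x(\alpha,\beta)+\ma_x(\beta,\gamma)+2\delta$. In $\mathbb{R}^{1,1}$, draw the comparison triangle $\bar x,\bar a,\bar c$. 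Consider the timelike ray from $\bar x$ making hyperbolic angle $\ma_x(\alpha,\beta)+\delta$ with $\bar x\bar a$; it lies between the two sides and meets the timelike segment $\bar a\bar c$ at some $\bar b$. Put $r:=\tau(\bar x,\bar b)$ and $b:=\beta(r)$.

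The comparison angle bounds, together with the monotonicity of the law of cosines, should give $\tau(\alpha(s),b)<\tau(\bar a,\bar b)$ and $\tau(b,\gamma(t))<\tau(\bar b,\bar c)$. Combined with the reverse triangle inequality, this yields
\[
\tau(\alpha(s),\gamma(t))\geq \tau(\alpha(s),b)+\tau(b,\gamma(t)).
\]
This would give a contradiction once I check the direction of each inequality carefully, keeping in mind that in the Lorentzian setting the roles of ``short'' and ``long'' sides are swapped relative to \cite{BH}.

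\textbf{Main obstacle.} The delicate point is causality. The upper angle only ranges over pairs in the set $A$, so I must guarantee $\alpha(s)\leq\beta(r)\leq\gamma(t)$ in order to use the comparison angles and the reverse triangle inequality. I would obtain these relations from the Minkowski picture, where $\bar a\ll\bar b\ll\bar c$, and transfer them to $Y$ using strong causality and local causal closedness. The idea is to approximate, using continuity of $\tau$ near $x$, so that $\tau(\alpha(s),\beta(r))>0$ is forced by the angle bound. If necessary, I would perturb $r$ slightly.

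Item~2 follows the same scheme with mixed orientations. Here I use the assumed existence of $\ma_x(\alpha,\gamma)$ so that the angle may be realised along a single sequence of parameters. For item~\ref{item:special case of triangle inequality}, I would reduce to item~2 as follows.
\begin{itemize}
\item Under $\CBA(K)$, the monotonicity of comparison angles (the analogue of \autoref{thm:monotonicity of comparison angles}, cf.\ \autoref{thm:monotonicity implies existence of angles}) gives existence of $\ma_x(\alpha,\gamma)$ for the opposite-orientation pair.
\item Because the concatenation of $\gamma$ and $\beta$ is a geodesic, we have $\tau(\gamma(t),\beta(s))=\tau(\gamma(t),x)+\tau(x,\beta(s))$. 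The comparison triangle is therefore degenerate, its angle at $\bar x$ is $\cosh^{-1}(1)=0$, and hence $\ma_x(\beta,\gamma)=0$.
\end{itemize}
Plugging $\ma_x(\beta,\gamma)=0$ into item~2 gives $\ma_x(\alpha,\gamma)\leq\ma_x(\alpha,\beta)$.
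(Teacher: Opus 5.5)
The paper gives no proof of this statement (it quotes \cite{beran-saemann2023,beran-kunzinger-rott2024}), so your sketch has to stand on its own. For item~1 it does not yet, for two reasons.

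First, your guiding principle has the sign reversed. At a vertex where both sides have the same time orientation, $\tau_{j,k}^2=\tau_{i,j}^2+\tau_{i,k}^2-2\tau_{i,j}\tau_{i,k}\cosh\widetilde{\ma}$. So a \emph{smaller} angle means a \emph{larger} time separation, and the monotonicity flips for opposite orientations. The angle bounds therefore give $\tau(\alpha(s),b)>\tau(\bar a,\bar b)$ and $\tau(b,\gamma(t))>\tau(\bar b,\bar c)$. With your ``$<$'' nothing follows. With ``$>$'' the reverse triangle inequality gives $\tau(\alpha(s),\gamma(t))>\tau(\bar a,\bar c)=\tau(\alpha(s),\gamma(t))$, which is the contradiction you want.

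Second, and more seriously, the causal step is missing and the mechanism you propose is circular. The bound $\widetilde{\ma}_x(\alpha(s),\beta(r))<\ma_x(\alpha,\beta)+\delta$ is only known for $(s,r)\in A$, so it cannot be what puts $(s,r)$ into $A$. Nor can anything be ``transferred'' from the Minkowski triangle: items 1--2 assume no curvature bound, and $\beta(r)$ does not even lie on $\triangle x\,\alpha(s)\gamma(t)$.

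The missing idea is a connectedness argument along $\beta$, applying the angle bound to timelike pairs that approach the boundary of $I^+(\alpha(s))$.
\begin{itemize}
\item Put $M_1:=\ma_x(\alpha,\beta)+\delta$ and $J:=\{r'\in(0,\varepsilon'):\tau(x,\beta(r'))>e^{M_1}\tau(x,\alpha(s))\}$. This is an interval, since $r'\mapsto\tau(x,\beta(r'))$ is continuous and increasing.
\item $S:=\{r'\in J:\alpha(s)\ll\beta(r')\}$ is open. Suppose $r_0\in J\setminus S$ were a limit of points of $S$. Then $\tau(\alpha(s),\beta(r_0))=0$, and continuity of $\tau$ gives timelike pairs in $A$ whose comparison angles tend to $\log\bigl(\tau(x,\beta(r_0))/\tau(x,\alpha(s))\bigr)>M_1$, a contradiction. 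Hence $S$ is clopen in $J$.
\item $S$ is nonempty once $s$ is chosen small relative to a fixed $r_1$, by openness of $I^-(\beta(r_1))\ni x$.
\item Your $r$ lies in $J$, because $\bar a\ll\bar b$ with $\ma_{\bar x}(\bar a,\bar b)=M_1$ forces $\tau(\bar x,\bar b)>e^{M_1}\tau(\bar x,\bar a)$. So $\alpha(s)\ll\beta(r)$.
\item Symmetrically, with $M_2:=\ma_x(\beta,\gamma)+\delta$, you get $\beta(r)\ll\gamma(t)$ on $\{r':\tau(x,\beta(r'))<e^{-M_2}\tau(x,\gamma(t))\}$, starting from small $r'$.
\end{itemize}

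Item~2 is not ``the same scheme''. Let $\theta:=\widetilde{\ma}_x(\alpha(s),\gamma(t))$. Future-directed rays from $\bar x$ making angles $M_1$ with $\bar a$ and $\theta-M_1$ with $\bar c$ do not meet $[\bar c,\bar a]$ at all. Moreover, any $b$ with $\gamma(t)\ll b\ll\alpha(s)$ only bounds $\tau(\gamma(t),\alpha(s))$ from below, whereas a large opposite-orientation angle means this separation is large. You must instead place $\bar b$ on the extension of $[\bar c,\bar a]$ beyond $\bar a$. Then $\gamma(t)\ll\alpha(s)\ll\beta(r)$ and $\tau(\gamma(t),\alpha(s))\le\tau(\gamma(t),\beta(r))-\tau(\alpha(s),\beta(r))$. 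Along that extension, however, $\ma_{\bar x}(\bar a,\bar b)$ only ranges up to the angle between $\bar a-\bar x$ and $\bar a-\bar c$. That angle is close to $\theta$ only when $\tau(x,\alpha(s))\ll\tau(\gamma(t),x)$. This is the actual role of the existence of $\ma_x(\alpha,\gamma)$: it lets you evaluate the angle along pairs with this ratio, not just along one sequence.

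In item~3, your claim that $\CBA(K)$ yields existence of $\ma_x(\alpha,\gamma)$ is unjustified. \autoref{thm:monotonicity implies existence of angles} concerns equal orientations, and monotonicity of comparison angles holds only for geodesics, whereas $\alpha$ is an arbitrary timelike curve. For a geodesic $\alpha$ this can be repaired: for opposite orientations $A$ is the full square and the comparison angle is monotone in each variable, so it converges to its supremum. For general $\alpha$, the reduction to item~2 does not go through. Your computation $\ma_x(\beta,\gamma)=0$ from the degenerate comparison triangle is correct.
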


\begin{thm}[{\cite[Proposition~3.12]{beran-saemann2023}}]\label{thm:continuity of angles}
Let $Y$ be a Lorentzian pre-length space satisfying $\CBA(K)$ for some $K\in\mathbb{R}$. Then angles are continuous for geodesics, that
is, for $x\in Y$ and $\alpha,\alpha_n,\beta,\beta_n\colon [0,b]\to Y$ all future or all past-directed timelike geodesics starting
at $x$ with $\alpha_n\to \alpha$, $\beta_n\to \beta$ pointwise, then
\[
\ma_x(\alpha,\beta) = \lim_{n\to\infty} \ma_x(\alpha_n,\beta_n).
\]
\end{thm}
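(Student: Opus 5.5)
The plan is to prove upper semicontinuity directly from monotonicity of comparison angles, and then get lower semicontinuity from the triangle inequality for angles. Throughout I treat the case where $\alpha,\beta$ and all $\alpha_n,\beta_n$ are future-directed; the past-directed case is symmetric. First I localise. Fix a comparison neighbourhood $U$ of $x$ in which $\tau$ is finite and continuous. By pointwise convergence, after shrinking the parameter interval to some $[0,b']$ with $b'\le b$, all curves lie in $U$ for $n$ large. For $K=0$ I use the monotonicity characterisation of \autoref{thm:monotonicity of comparison angles}. For general $K$ I use its $K$-analogue from \cite{beran-saemann2023}, with comparison angles taken in the $K$-model plane. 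For two same-oriented timelike geodesics from $x$, the comparison angle $\theta(s,t)=\widetilde{\ma}_x(\alpha(s),\beta(t))$ on the set $A$ is then non-decreasing in $s$ and $t$. So the limit defining the angle exists (\autoref{thm:monotonicity implies existence of angles}) and equals the infimum of $\theta$ over $A$.

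\textbf{Upper semicontinuity.} Fix $(s,t)\in A$ with $\alpha(s)\ll\beta(t)$, or the reverse, or strictly timelike with respect to $x$; enough such pairs exist near $0$ because the curves are timelike. For $n$ large the points $\alpha_n(s),\beta_n(t)$ remain causally related. Here I argue via openness of $\ll$ and continuity of $\tau$ on $U$, using $\tau(\alpha(s),\beta(t))>0$ or a nearby strictly related pair. Continuity of $\tau$ then gives $\widetilde{\ma}_x(\alpha_n(s),\beta_n(t))\to\widetilde{\ma}_x(\alpha(s),\beta(t))$. Hence
\[
\limsup_{n}\ma_x(\alpha_n,\beta_n)\le \limsup_n \widetilde{\ma}_x(\alpha_n(s),\beta_n(t)) = \widetilde{\ma}_x(\alpha(s),\beta(t)).
\]
Taking the infimum over admissible $(s,t)$ gives $\limsup_n \ma_x(\alpha_n,\beta_n)\le\ma_x(\alpha,\beta)$.

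\textbf{Lower semicontinuity.} By \autoref{thm:triangle inequality for angles}\eqref{item:same orientation triangle inequality},
\[
\ma_x(\alpha,\beta)\le \ma_x(\alpha,\alpha_n)+\ma_x(\alpha_n,\beta_n)+\ma_x(\beta_n,\beta),
\]
so it suffices to show $\ma_x(\alpha,\alpha_n)\to0$, and likewise for $\beta$. By monotonicity, for any fixed $0<s<t$ we have $\ma_x(\alpha,\alpha_n)\le\widetilde{\ma}_x(\alpha(s),\alpha_n(t))$ as soon as $\alpha(s)\ll\alpha_n(t)$. That relation holds for $n$ large, since $\alpha(s)\ll\alpha(t)$ and the chronological relation is stable under the convergence $\alpha_n(t)\to\alpha(t)$ in $U$. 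As $n\to\infty$, this comparison angle tends to $\widetilde{\ma}_x(\alpha(s),\alpha(t))$. That limit is $0$, because $\alpha$ is a maximal geodesic: $\tau(x,\alpha(t))=\tau(x,\alpha(s))+\tau(\alpha(s),\alpha(t))$, so the comparison triangle is degenerate. Hence $\limsup_n\ma_x(\alpha,\alpha_n)\le 0$. Combining with the triangle inequality gives $\ma_x(\alpha,\beta)\le\liminf_n\ma_x(\alpha_n,\beta_n)$, which finishes the proof.

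\textbf{Main obstacle.} The delicate point is the causal bookkeeping: guaranteeing that the pairs of points used lie in the set $A$ (or are even chronologically related) uniformly for large $n$. This is what lets us pass limits through $\tau$ inside the comparison-angle formula, and it also keeps the comparison triangles non-degenerate so that the $\cosh^{-1}$ expression is continuous there. I would handle it by always choosing parameters with a strict chronological gap, such as $s<t$ along the same geodesic, and by using the openness of $I^\pm$ inside the comparison neighbourhood.
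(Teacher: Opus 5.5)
The paper quotes this result from \cite{beran-saemann2023} without proof, so there is no in-paper argument to compare against. Your route is the standard one, the Lorentzian transcription of the $\CAT(\kappa)$ argument in \cite{BH}.
\begin{itemize}
\item For the upper bound, the angle is the infimum of comparison angles over strictly related pairs, and each of these is continuous in the endpoints.
\item For the lower bound, you combine $\ma_x(\alpha,\alpha_n)\le\widetilde{\ma}_x(\alpha(s),\alpha_n(t))\to\widetilde{\ma}_x(\alpha(s),\alpha(t))=0$ with the triangle inequality.
\end{itemize}
These computations are correct. Strictly related pairs do accumulate at $(0,0)$: for fixed $s$, $\beta(t)\ll\alpha(s)$ for all small $t$ because $I^-(\alpha(s))$ is open. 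Once $\ma_x(\alpha,\alpha_n)\to 0$ is known, the triangle inequality gives both inequalities at once. Your separate upper-semicontinuity paragraph is still worth keeping, because it does not use the triangle inequality.

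Two steps, however, use more than the statement provides.

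(i) The localisation does not follow from pointwise convergence. Knowing $\alpha_n(t)\to\alpha(t)$ for each fixed $t$ does not put the whole segment $\alpha_n|_{[0,t]}$ inside $U$. Yet you need exactly that, for $\alpha_n,\beta_n$ and for the pair $(\alpha,\alpha_n)$, to apply monotonicity at a fixed $(s,t)$. The local $\CBA(K)$ and monotonicity conditions only control geodesics lying in $U$. Under strong causality this is easily repaired. Pick a neighbourhood $x\in V=\bigcap_i I(p_i,q_i)\subset U$; it is causally convex by push-up. Once $\alpha_n(t)\in V$, the causal curve $\alpha_n|_{[0,t]}$ from $x$ to $\alpha_n(t)$ lies in $V$.

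(ii) Your lower bound rests on \autoref{thm:triangle inequality for angles}(1). That result is stated only for strongly causal, locally causally closed spaces, and neither assumption appears in the statement you are proving.

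So, as written, you have proved the theorem under these additional hypotheses, not for an arbitrary $\CBA(K)$ Lorentzian pre-length space. The extra hypotheses do hold in every place the paper uses this result, where $\CBA(0)$ is even global, so $U=Y$ and (i) is vacuous.

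A smaller point: for $K\neq 0$ you silently identify the upper angle, which is defined via Minkowski comparison angles, with the limit of $K$-comparison angles. This is true for small triangles but needs a reference or a short argument.
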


For several results in Sections~\ref{sec:ideal-boundary}-\ref{sec:angular-metric} we will need to assume that the starting space satisfies a timelike curvature from above by $0$ globally. Recent developments provide conditions under which local curvature bounds extend to global bounds. We summarise these results here.

\begin{thm}[{\cite[Theorem~4.5]{beran-napper-rott2025}}]
Let $Y$ be a strongly causal, non-timelike locally isolating (see \cite[Definition~3.2.3]{beran_gluing_2024}) and regular \LpLS with (local) timelike curvature bounded above by $0$. Suppose that $\tau$ is continuous on $Y\times Y$ and that for every pair of timelike related points $x\ll y$ there is a unique geodesic $\gamma_{xy}$ joining them. Assume moreover that the geodesic map $G$ of $Y$ given by
\[
G\colon \ll \times [0,1] \to Y, \qquad G(x,y;t)=\gamma_{xy}(t)
\]
is continuous. Then $Y$ satisfies $\CBA(0)$ globally. 
\end{thm}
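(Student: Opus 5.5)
This statement is quoted from \cite[Theorem~4.5]{beran-napper-rott2025}. To reprove it I would imitate the classical proof that a uniquely geodesic space whose geodesics vary continuously with their endpoints and which is locally $\CAT(0)$ is globally $\CAT(0)$, i.e.\ the Alexandrov patchwork argument. Since $\tau$ is continuous on $Y\times Y$ and timelike geodesics are unique, conditions (1) and (2) of \autoref{def:comparison_neighb} hold on all of $Y$. Here regularity ensures the geodesic $\gamma_{xy}$ is timelike, hence $Y$ is strictly timelike geodesically connected. So only the four-point comparison (3) has to be globalized.

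By \autoref{thm:monotonicity of comparison angles} it is equivalent to establish monotonicity of comparison angles, and I would work with an equivalent ``thin triangle'' form. Fix a timelike triangle $\Delta p_1p_2p_3$ and use the geodesic map $G$ to fill it by the one-parameter family of geodesics $\gamma_{p_1,\sigma(s)}$, where $\sigma$ is the side $p_2p_3$. Continuity of $G$ makes the resulting map from a compact parameter domain continuous. Strong causality and the non-timelike local isolation hypothesis then let me cover its image by finitely many non-positive comparison neighborhoods (a Lebesgue number argument). Hence subdividing the parameter domain finely enough decomposes $\Delta p_1p_2p_3$ into small timelike triangles, each contained in a comparison neighborhood and therefore satisfying $\CBA(0)$.

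The key step is a Lorentzian Alexandrov gluing lemma. Suppose two triangles $\Delta p\,q\,r$ and $\Delta p\,r\,s$ share the side $pr$, both satisfy the comparison, and the angles at $r$ add up correctly, so that $q,r,s$ lie on a geodesic. The angle condition is controlled using \autoref{thm:triangle inequality for angles}, in particular item~\ref{item:special case of triangle inequality}, together with \autoref{lem:sum of angles of triangle}. Under these hypotheses the big triangle $\Delta p\,q\,s$ also satisfies the comparison.

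I would prove the gluing lemma in the Minkowski plane. Place the two comparison triangles along the common side and straighten the resulting concave or convex quadrilateral, monitoring the sign conventions for time orientation through the unsigned-angle monotonicity of \autoref{def:monotonicity}. Then induct: first glue the small triangles in strips along each thin triangle $\Delta p_1\sigma(s_i)\sigma(s_{i+1})$, and then glue the thin triangles sharing sides $p_1\sigma(s_i)$.

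I expect the main obstacle to be the gluing step in the Lorentzian setting. The subdividing points must remain timelike related so that every piece is a genuine timelike triangle. This needs regularity, openness of $\ll$, and the non-timelike local isolation hypothesis. The reversed inequality direction in Minkowski comparison also has to be handled in the Alexandrov lemma. If the direct gluing proves delicate, I would first try to prove the statement for ``thin'' triangles, i.e.\ those where one side is short, and then bootstrap by induction on the number of subdivisions.
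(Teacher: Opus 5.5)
Your outline follows the same route as the proof behind this statement: the paper does not reprove it but quotes \cite[Theorem~4.5]{beran-napper-rott2025}, which adapts the metric Alexandrov patchwork argument of Alexander--Kapovitch--Petrunin, i.e.\ a fan of geodesics produced by the continuous map $G$, a compactness/Lebesgue-number argument giving small triangles inside comparison neighborhoods, and repeated use of a Lorentzian Alexandrov gluing lemma. The step you flag as the main obstacle is resolved by staggering the mesh: points $G(p_1,\sigma(s_i);t_j)$ and $G(p_1,\sigma(s_{i+1});t_j)$ need not be causally related a priori, but $\ll$ is open (it equals $\{\tau>0\}$ and $\tau$ is continuous), so compactness gives $G(p_1,\sigma(s_i);t_j)\ll G(p_1,\sigma(s_{i+1});t_{j+1})$ once the $s$-mesh is fine enough, and these diagonal relations, together with $\sigma(s_i)\ll\sigma(s_{i+1})$, suffice to tile each strip by timelike triangles and glue them one at a time.
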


In this result, apart from the usual causality conditions, one needs to require two global results: namely (1) unique timelike geodesic connectedness and (2) geodesics depend continuously on their endpoints. These conditions are guaranteed by a geometric assumption: the local curvature bound; together with a topological assumption: future one-connectedness.

\begin{thm}[{\cite[Theorem~4.8]{eros-gieger-2025}}]
    Let $Y$ be a globally hyperbolic, strongly causal and locally causally closed regular Lorentzian pre-length space. If $Y$ is locally concave and future one-connected, then (1) every pair of timelike related points is connected by a unique geodesic, and (2) geodesics vary continuously with their endpoints.
\end{thm}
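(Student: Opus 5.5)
The plan is to follow the metric Cartan--Hadamard strategy of Alexander--Bishop, transplanted to the Lorentzian setting. The idea is to study the space of \emph{local} timelike geodesics emanating from a fixed point and show that the endpoint map is a covering onto the chronological future. Future one-connectedness then forces this covering to be injective. Existence of \emph{some} maximal geodesic between $x\ll y$ is the easy part: global hyperbolicity together with strong causality and local causal closedness gives compact causal diamonds $J(x,y)$ and an Avez--Seifert type statement, as in Kunzinger--S\"amann, so a maximizer exists. By regularity this maximizer is timelike. The real content is uniqueness and continuous dependence, and the maximizer we produce need only be a particular local geodesic in a given timelike homotopy class.

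First I would extract the local information from local concavity, i.e.\ local $\CBA(0)$. Every point has a comparison neighbourhood in which timelike related points are joined by a unique geodesic. The monotonicity comparison (\autoref{thm:monotonicity of comparison angles}) gives a quantitative estimate: if $\gamma,\gamma'$ are geodesics in such a neighbourhood, then $\tau$-type distances between $\gamma(t)$ and $\gamma'(t)$ are controlled by those between the endpoints, a Lorentzian analogue of convexity of the distance function. From this I would derive the key stability lemma. Let $c\colon[0,1]\to Y$ be a local timelike geodesic, meaning it is locally maximizing, from $x$ to $y$, and cover it by finitely many comparison neighbourhoods. Then for every $y'$ sufficiently close to $y$ with $x\ll y'$ there is a unique local geodesic $c'$ from $x$ to $y'$ that stays uniformly close to $c$, and $c'$ depends continuously on $y'$. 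I would build $c'$ by subdividing $c$ at points $x=t_0<\dots<t_N=1$, moving the last node to $y'$, and iterating a ``midpoint relaxation'' that replaces each pair of consecutive segments by the unique local geodesic in the corresponding neighbourhood. The convexity estimate shows this iteration is a contraction, which yields both existence and uniqueness of the nearby local geodesic.

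With the stability lemma in hand, define $\mathcal{G}_x$ as the set of local timelike geodesics from $x$, with the topology of uniform convergence, and let $E\colon\mathcal{G}_x\to I^+(x)$ be the endpoint map. By the stability lemma, $E$ is a local homeomorphism. Next I would show that $E$ has the unique path lifting property for timelike homotopies. Given a continuous family of endpoints $y_s$ and a lift at $s=0$, the set of $s$ up to which a continuous lift exists is open by the stability lemma. It is closed because lifts through a convergent sequence of endpoints stay in the compact set $J(x,\sup_s y_s)$. There, local concavity gives equicontinuity, and limit curves of local geodesics are again local geodesics, using the continuity of $\tau$ in comparison neighbourhoods.

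Now let $c_0,c_1$ be two geodesics from $x$ to $y$. Future one-connectedness provides a timelike homotopy between them with fixed endpoints. Lifting the intermediate curves through the family of local geodesics from $x$ to the points along the homotopy shows, by uniqueness of lifts and discreteness of fibres, that $c_0=c_1$. This proves (1). Statement (2) then follows from the stability lemma applied simultaneously at both endpoints: the unique geodesic is the unique local geodesic near a given one, and that local geodesic depends continuously on $(x,y)$.

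I expect the main obstacle to be the stability lemma, specifically the contraction estimate. In the metric case one uses convexity of $t\mapsto d(\gamma(t),\gamma'(t))$. Here $\tau$ only measures timelike separation and two nearby geodesics need not be timelike related pointwise, so I would first try to control the background metric $d$ via the comparison of $\tau$ between shifted points $\gamma(t-\delta)$, $\gamma'(t+\delta)$. That argument would combine monotonicity of angles with continuity of angles (\autoref{thm:continuity of angles}), and would use strong causality to translate $\tau$-closeness into $d$-closeness.
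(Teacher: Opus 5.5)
Your architecture (stability of nearby local geodesics, lifting along the timelike homotopy, discreteness of fibres) is the right Alexander--Bishop template, but two steps do not work as written.

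\textbf{Lifting can degenerate to null curves.} Your closedness argument uses only compactness of diamonds and the fact that limits of local geodesics are local geodesics. It never uses that the endpoints come from a timelike homotopy, so it would prove path lifting for \emph{every} path in $I^+(x)$. That is false even when every hypothesis except future one-connectedness holds (you only invoke that afterwards).

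Take the flat cylinder $\mathbb{R}\times S^1$, $S^1=\mathbb{R}/2\pi\mathbb{Z}$, with metric $-dt^2+d\theta^2$, and let $x=(0,0)$, $y_s=(4,s)$ for $0\le s\le 2\pi$. All $y_s$ lie in $I^+(x)\cap J^-((10,0))$. Yet the lift starting at the straight segment is, in the universal cover, the segment from $(0,0)$ to $(4,s)$. As $s\nearrow 4$ these converge to a \emph{null} segment, which is a local geodesic but not an element of $\mathcal{G}_x$, so the lift cannot be continued.

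The missing idea is non-degeneracy: you must show lifts cannot collapse onto null curves, and this is exactly where timelikeness of the homotopy $H\colon[0,1]^2\to Y$ has to enter. One way is to lift only along the timelike curves $u\mapsto H(s,u)$. Write $\Lambda(s,u)\in\mathcal{G}_x$ for the lift ending at $H(s,u)$.
\begin{itemize}
\item For $u<u'$, concavity of $\tau$, propagated along the chain of comparison neighbourhoods, gives $\Lambda(s,u)(t)\ll\Lambda(s,u')(t)$ for $t>0$.
\item Hence a limit of lifts lies pointwise in the chronological future of an earlier lift. Its initial piece is then a maximizer between chronologically related points, so it is timelike by regularity.
\item Regularity propagates timelikeness along the chain. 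For this you also need that a local geodesic contained in a comparison neighbourhood is maximal there, which your claim about limits of local geodesics silently uses as well.
\item Handle the $s$-direction only by openness plus uniqueness of lifts. Then $s\mapsto\Lambda(s,1)$ is a continuous map into a discrete fibre, and no closedness in $s$ is needed.
\end{itemize}

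\textbf{The stability lemma has no contraction to rest on.} This lemma carries all the analytic content, and the mechanism you propose does not deliver it. Concavity of $\tau$ bounds the chronological separation of two geodesics from \emph{below}, and only for pairs whose endpoints are already chronologically related. It gives no upper bound on $d$. A priori, two local geodesics with the same endpoints could be spacelike separated in between, where $\tau=0$ and concavity says nothing.

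So there is no distance in which your midpoint relaxation is a contraction. In particular, uniqueness of the nearby local geodesic does not follow, and with it go discreteness of fibres and uniqueness of lifts.

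You need an order argument instead:
\begin{itemize}
\item The relaxation map is monotone for $\ll$, by concavity.
\item Trap the iterates, and any competitor, between local geodesics ending at points $y^-\ll y'\ll y^+$.
\item Bound $\tau$ across the trap from above via the reverse triangle inequality.
\item Use strong causality to turn thin causal diamonds into small $d$-neighbourhoods.
\end{itemize}
Your remark about shifted points $\gamma(t-\delta)\ll\gamma'(t)\ll\gamma(t+\delta)$ names the target of such a squeeze but gives no way to reach it. As it stands, the key lemma is asserted rather than proved.
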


\begin{cor}[{\cite[Corollary~4.9]{eros-gieger-2025}}]\label{cor:globalization}
    Let $Y$ be a globally hyperbolic, strongly causal, and locally causally closed regular Lorentzian pre-length space in which $\tau$ is continuous. If $Y$ satisfies $\CBA(0)$ and is future one-connected, then it satisfies $\CBA(0)$ globally.
\end{cor}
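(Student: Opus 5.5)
The plan is to chain the two preceding results: first use \cite[Theorem~4.8]{eros-gieger-2025} to produce the global geodesic structure, then feed that into the globalization theorem \cite[Theorem~4.5]{beran-napper-rott2025}. All the causality hypotheses of the former are assumed directly: global hyperbolicity, strong causality, local causal closedness and regularity. What remains is local concavity. I would argue that a local timelike curvature bound above by $0$ implies local concavity. In a non-positive timelike comparison neighborhood $U$, the triangle comparison $\tau(q_1,q_2)\geq\overline{\tau}(\bar q_1,\bar q_2)$ gives concavity of $t\mapsto\tau(\alpha(t),\beta(t))$ along linearly parametrized geodesics in $U$. This is the usual passage from triangle comparison to concavity of the distance, with the inequality reversed, and is essentially how local concavity is introduced in \cite{eros-gieger-2025}. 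Together with future one-connectedness, \cite[Theorem~4.8]{eros-gieger-2025} then yields that every $x\ll y$ is joined by a unique geodesic $\gamma_{xy}$ and that the geodesic map $G(x,y;t)=\gamma_{xy}(t)$ is continuous on $\ll\times[0,1]$.

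Next I would verify the remaining hypotheses of \cite[Theorem~4.5]{beran-napper-rott2025}. Strong causality, regularity, the local bound $\CBA(0)$ and continuity of $\tau$ on $Y\times Y$ are assumed. Unique geodesics and continuity of $G$ come from the previous step. The only extra condition is that $Y$ be non-timelike locally isolating in the sense of \cite[Definition~3.2.3]{beran_gluing_2024}. Roughly, this rules out points that are isolated from the timelike relation locally, i.e.\ every point should lie on a timelike curve within each neighborhood.

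I expect this to be the main obstacle, since it does not appear verbatim among the hypotheses. I would first try to derive it from the fact that $Y$ satisfies $\CBA(0)$, so each point has a strictly timelike geodesically connected comparison neighborhood, combined with regularity and strong causality. The aim would be to show that every point admits timelike points in its future and past inside arbitrarily small neighborhoods. If that fails in full generality, the fallback is to read the corollary, as stated in \cite{eros-gieger-2025}, as implicitly including this standing non-degeneracy assumption (typically part of the definition of a Lorentzian length space). Once all hypotheses are checked, \cite[Theorem~4.5]{beran-napper-rott2025} gives that $Y$ itself is a non-positive timelike comparison neighborhood, i.e.\ $Y$ satisfies $\CBA(0)$ globally.
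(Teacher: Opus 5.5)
Your proposal is the route the paper indicates: the paper gives no proof of its own but cites \cite[Corollary~4.9]{eros-gieger-2025}, presenting it as \cite[Theorem~4.8]{eros-gieger-2025} supplying unique geodesics and a continuous geodesic map $G$ to \cite[Theorem~4.5]{beran-napper-rott2025}, with non-timelike local isolation tacitly counted among ``the usual causality conditions''. Your fallback is the right reading, and your first plan of deriving that condition cannot work: on $\mathbb{R}^{1,1}$ with the Euclidean metric, set $x\ll' y$ iff $\tau(x,y)>1$, set $x\leq' y$ iff $x=y$ or $\tau(x,y)\geq 1$, and take $\tau'=\max\{\tau-1,0\}$; this space is globally hyperbolic, strongly causal, locally causally closed and regular with continuous $\tau'$, and it satisfies $\CBA(0)$ vacuously on balls of diameter less than $1$, yet no point has timelike related points within distance $1$, so the condition must be supplied as a standing assumption (as in the Lorentzian length space setting), exactly as you suggest.
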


Notice that unique timelike geodesic connectedness is not too strong of a condition to assume. In fact, every \LpLS satisfying $\CBA(0)$ globally will satisfy it:

\begin{thm}[{\cite[Theorem~4.7]{beran-napper-rott2025}}]\label{thm:uniqueness of geodesics in cba}
Let $Y$ be a strongly causal and regular Lorentzian pre-length space satisfying $\CBA(0)$ globally. Then geodesics between timelike related points in $Y$ are unique.
\end{thm}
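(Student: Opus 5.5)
This theorem is stated in the preliminaries and credited to \cite[Theorem~4.7]{beran-napper-rott2025}. Its proof there is not repeated, but here is how I would argue it directly from the global $\CBA(0)$ condition. Let $x\ll y$, and let $\gamma_1,\gamma_2$ be two maximal geodesics from $x$ to $y$, parametrized by $\tau$-arclength on $[0,L]$, where $L=\tau(x,y)<\infty$ because $\tau$ is finite on $Y=U$. By regularity both curves are timelike. I will show that $\gamma_1(s)=\gamma_2(s)$ for every $s$.

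\emph{Step 1: degenerate comparison triangles.} Fix $s\in(0,L)$ and set $p=\gamma_1(s)$. Then $x\ll p\ll y$ with $\tau(x,p)+\tau(p,y)=\tau(x,y)$. So $\triangle x p y$ is a timelike triangle whose comparison triangle in $\mathbb{R}^{1,1}$ is degenerate: $\bar p$ lies on the segment $\bar x\bar y$ at proper time $s$ from $\bar x$.

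\emph{Step 2: apply the comparison inequality.} The sides of $\triangle xpy$ can be chosen as the subsegments of $\gamma_1$ for $[x,p]$ and $[p,y]$, and $\gamma_2$ for $[x,y]$. This is allowed because the triangle condition in \autoref{def:comparison_neighb} holds for any choice of geodesic sides, $U=Y$ being globally strictly timelike geodesically connected. Take points $q_1=\gamma_2(s')$ on the side $[x,y]$ and $q_2=\gamma_1(s)=p$ on the sides $[x,p]\cup[p,y]$. In the degenerate comparison triangle, $\bar q_1$ and $\bar q_2$ both lie on the same segment. Hence $\bar\tau(\bar q_1,\bar q_2)=s-s'$ when $s'<s$, and $\bar\tau(\bar q_2,\bar q_1)=s'-s$ when $s'>s$. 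The $\CBA(0)$ inequality then gives $\tau(\gamma_2(s'),\gamma_1(s))\ge s-s'>0$ for $s'<s$, and symmetrically $\tau(\gamma_1(s),\gamma_2(s'))\ge s'-s>0$ for $s'>s$. In particular $\gamma_2(s')\ll\gamma_1(s)\ll\gamma_2(s'')$ for all $s'<s<s''$.

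\emph{Step 3: conclude equality using strong causality.} Letting $s',s''\to s$, the point $\gamma_1(s)$ lies in $I^+(\gamma_2(s-\delta))\cap I^-(\gamma_2(s+\delta))$ for every $\delta>0$. Strong causality makes the timelike diamonds $I(\gamma_2(s-\delta),\gamma_2(s+\delta))$ shrink to $\gamma_2(s)$, since these diamonds form a neighborhood base at $\gamma_2(s)$ along a timelike curve. The space is Hausdorff as a metric space, so $\gamma_1(s)=\gamma_2(s)$.

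I expect Step 3 to be the delicate point. One must justify that nested diamonds around a point of a continuous timelike curve form a neighborhood base, i.e.\ that their intersection is $\{\gamma_2(s)\}$. If this base property is not directly available, I would first try the following route. The $\CBA(0)$ inequalities together with reverse triangle inequalities give
\[
\tau(x,\gamma_1(s))=s=\tau(x,\gamma_2(s)),
\]
and $\gamma_1(s)$, $\gamma_2(s)$ are pinched between points of $\gamma_2$ arbitrarily close to $\gamma_2(s)$. Strong causality (via small causally convex neighborhoods) then forces the two points to coincide.
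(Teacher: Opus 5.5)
Your proof is correct. The paper does not prove this statement: it imports it from \cite[Theorem~4.7]{beran-napper-rott2025}, so there is no in-paper argument to compare against. What you give is a self-contained derivation from the global $\CBA(0)$ condition, using the degenerate triangle $\triangle x\,\gamma_1(s)\,y$ with sides $\gamma_1|_{[0,s]}$, $\gamma_1|_{[s,L]}$ and $\gamma_2$.

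Step 2 is legitimate for two reasons. The comparison condition quantifies over all timelike triangles with any choice of maximal sides. Degenerate triangles are admissible, since their comparison triangles are collinear timelike triangles in $\mathbb{R}^{1,1}$.

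The point you flag in Step 3 is not actually delicate, and your fallback route is unnecessary. Let $U$ be a neighbourhood of $z=\gamma_2(s)$. Strong causality (Alexandrov topology equals metric topology) gives finitely many $p_i\ll z\ll q_i$ with $\bigcap_i I(p_i,q_i)\subset U$. Since $I^+(p_i)$ and $I^-(q_i)$ are open and $\gamma_2$ is continuous, $p_i\ll\gamma_2(s-\delta)$ and $\gamma_2(s+\delta)\ll q_i$ for small $\delta$. Transitivity of $\ll$ then gives $I(\gamma_2(s-\delta),\gamma_2(s+\delta))\subset U$. Hence $\gamma_1(s)$ lies in every neighbourhood of $\gamma_2(s)$, and the two points coincide.

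Two minor remarks:
\begin{enumerate}
\item The $\tau$-arclength parametrization needs only continuity of $\tau$: maximality makes $t\mapsto\tau(x,\gamma_i(t))$ a continuous, strictly increasing bijection onto $[0,L]$. Continuity of $s\mapsto\gamma_2(s)$ is all you use afterwards.
\item In this paper's formulation of global $\CBA(0)$, strict timelike geodesic connectedness of $Y$ already forces $\gamma_1,\gamma_2$ to be timelike. So regularity is not needed for that step.
\end{enumerate}
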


On another note, we reproduce here a result that ensures that under a global curvature bound from above, timelike geodesics that meet forming an angle $0$ can be concatenated to form a geodesic.

\begin{thm}[{\cite[Lemma~5.10]{barton-beran-che-gieger-roehrig-rott}}]\label{thm:concatenation with angle zero}
Let $Y$ be a \LpLS satisfying $\CBA(K)$ globally and let $\alpha:[a,b]\to Y,\beta:[b,c]\to Y$ be two future-directed timelike geodesics with $x=\alpha(b)=\beta(b)$, $\ma_x(\alpha,\beta)=0$ and $\tau(\alpha(a),\alpha(b))+\tau(\beta(b),\beta(c))\leq D_K$, where $D_K$ is the finite diameter of $\mathbb{L}^2(K)$ as in \cite{beran-napper-rott2025}. Then the concatenation of $\alpha,\beta$ is a geodesic.
\end{thm}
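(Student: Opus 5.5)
The plan is to reduce everything to a single comparison triangle with vertices $p:=\alpha(a)$, $x$ and $q:=\beta(c)$, which satisfy $p\ll x\ll q$. We show that its comparison angle at $x$ vanishes, so the comparison triangle is degenerate. The reverse triangle inequality then forces $\tau(p,q)=\tau(p,x)+\tau(x,q)$.

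Since the concatenation $\alpha*\beta$ is a causal curve from $p$ to $q$ of length $L(\alpha)+L(\beta)=\tau(p,x)+\tau(x,q)$, it suffices to prove $\tau(p,q)\leq\tau(p,x)+\tau(x,q)$. The reverse inequality is the reverse triangle inequality. Here $\ma_x(\alpha,\beta)$ is the angle at $x$ between the past-directed geodesic $\bar\alpha(s):=\alpha(b-s)$ and the future-directed geodesic $\beta(b+t)$. These have opposite time orientations.

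\textbf{Step 1: global monotonicity.} The first step is a global version of non-positive (resp.\ $K$-) timelike monotonicity comparison on all of $Y$. The argument of \autoref{thm:monotonicity of comparison angles} (\cite[Theorem~4.13]{beran-saemann2023}) is purely local in the comparison neighbourhood $U$, so I would run it with $U=Y$. This is allowed because $Y$ itself is a comparison neighbourhood: $\tau$ is finite and continuous on $Y\times Y$, and $Y$ is strictly timelike geodesically connected. For $K\neq0$ I would use the $\CBA(K)$ analogue, with comparison angles taken in $\mathbb{L}^2(K)$. The hypothesis $\tau(p,x)+\tau(x,q)\leq D_K$ is exactly what guarantees that all the relevant comparison triangles exist in the model space.

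For every $s\in(0,b-a]$ and $t\in(0,c-b]$ we have $\bar\alpha(s)\ll x\ll\beta(b+t)$, so every such pair lies in the set $A$ of \eqref{eq:set-upper-angle}. Because the orientations are opposite, monotonicity gives that $\theta(s,t)=\widetilde\ma_x(\bar\alpha(s),\beta(b+t))$ is non-increasing in $s$ and $t$. Hence
\[
0\leq \widetilde\ma_x(p,q)=\theta(b-a,c-b)\leq \limsup_{s,t\searrow0}\theta(s,t)=\ma_x(\alpha,\beta)=0 .
\]

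\textbf{Step 2: degenerate comparison triangle.} In the model triangle $\bar p\ll\bar x\ll\bar q$, a vanishing hyperbolic angle at the middle vertex means the two sides are collinear. Concretely, for $K=0$ the Lorentzian law of cosines in \autoref{def:comparison-triangle} with $\cosh 0=1$ gives $\tau_{p,q}^2=(\tau_{p,x}+\tau_{x,q})^2$. For general $K$ the analogous model identity holds, provided the total length is at most $D_K$. Since the comparison triangle realises the same time separations, we get $\tau(p,q)=\tau(p,x)+\tau(x,q)$, and so $\alpha*\beta$ is maximizing.

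\textbf{Main obstacle.} I expect the main difficulty to be Step 1: checking that the monotonicity statement genuinely holds globally and for opposite orientations with unsigned angles. One must confirm that the proof of \cite[Theorem~4.13]{beran-saemann2023} uses nothing beyond the comparison property in the given neighbourhood, and that it covers triangles whose middle vertex is the angle vertex $x$. If that direct route fails, my fallback is a subdivision argument. I would apply the local monotonicity inside small comparison neighbourhoods and propagate along $\alpha$ and $\beta$ using the global triangle comparison, with the $\CBA(K)$ condition applied to the triangle $\triangle p x q$ directly. The point to verify is that the points $\bar\alpha(s)$ and $\beta(b+t)$ on its two sides have comparison separation at least the actual one, which pins down the comparison angle from below by its limit $\ma_x(\alpha,\beta)=0$.
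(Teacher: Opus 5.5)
Your core mechanism is right. For $K\geq 0$, and in particular for $K=0$ (the only case in which the paper uses this result), the argument is complete. At the middle vertex the comparison angle $\theta(s,t)=\widetilde\ma_x(\alpha(b-s),\beta(b+t))$ is non-increasing, so it is bounded by $\ma_x(\alpha,\beta)=0$. A vanishing middle angle in the model triangle then forces $\tau(p,q)=\tau(p,x)+\tau(x,q)$.

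You do not need to re-inspect the proof in \cite{beran-saemann2023}. The monotonicity follows in one line from the global comparison applied to $\triangle\,\alpha(b-s)\,x\,\beta(b+t)$, with the point $\alpha(b-s')$, $s'<s$, on the side $[\alpha(b-s),x]$ and the vertex $\beta(b+t)$. There $\tau\geq\bar\tau$ together with the law of cosines gives $\theta(s',t)\geq\theta(s,t)$. In your fallback this inequality is stated the wrong way round: $\CBA$ says the actual separation dominates the comparison one, not the reverse.

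The genuine gap is the case $K<0$. It sits in your claim that $\tau(p,x)+\tau(x,q)\leq D_K$ guarantees that all relevant comparison triangles exist. The $\CBA(K)$ condition, and the $K$-comparison angle itself, only apply to timelike triangles satisfying the size bound, i.e.\ whose longest side (here $\tau(p,q)$) is $<D_K$. The reverse triangle inequality gives $\tau(p,q)\geq\tau(p,x)+\tau(x,q)$, which is a lower bound. So nothing in the hypothesis rules out $\tau(p,q)\geq D_K$ a priori; excluding this is part of what must be proved. In the borderline case $\tau(p,x)+\tau(x,q)=D_K$, the triangle $\triangle pxq$ never satisfies the size bound. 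Hence $\theta(b-a,c-b)$ need not be defined, and neither Step 1 nor Step 2 can be applied to $\triangle pxq$ directly.

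The repair is a continuity argument, which is what the non-strict inequality in the hypothesis is designed for.
\begin{itemize}
\item Parametrize by $\tau$-arclength and set $g(s,t)=\tau(\alpha(b-s),\beta(b+t))$. It is continuous, since $\tau$ is continuous on $Y\times Y$. By the reverse triangle inequality, $g(s,t)\geq s+t$ and $g$ is non-decreasing in each variable.
\item If $g(s,t)<D_K$, then every subtriangle $\triangle\,\alpha(b-s')\,x\,\beta(b+t')$ with $s'\leq s$, $t'\leq t$ also satisfies the size bound. Your Steps 1 and 2 then apply and give $g(s,t)=s+t$.
\item Fix $(s_0,t_0)$ with $s_0+t_0<D_K$ and follow $\lambda\mapsto g(\lambda s_0,\lambda t_0)$ from $\lambda=0$. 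It equals $\lambda(s_0+t_0)<D_K$ as long as it stays below $D_K$, so by continuity it never reaches $D_K$. Hence $g(s_0,t_0)=s_0+t_0$.
\item The case $s_0+t_0=D_K$ follows by letting $\lambda\nearrow 1$.
\end{itemize}
Applying this with $(s_0,t_0)=(\tau(p,x),\tau(x,q))$ gives $\tau(p,q)=\tau(p,x)+\tau(x,q)$, as required.
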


The following version of the limit curve theorem is a summary of \cite[Theorems~3.7 and 3.14]{kunzinger-saemann2018}.

\begin{thm}\label{thm:limit curve theorem}
Let $Y$ be a locally causally closed Lorentzian pre-length space. Let $(\gamma_n)_{n \in \mathbb{N}}$ be a sequence of future-directed causal curves
$\gamma_n \colon [a,b] \to Y$
that are uniformly Lipschitz continuous, i.e., there exists $L > 0$ such that 
\[
\mathrm{Lip}(\gamma_n) \leq L \quad \text{for all } n \in \mathbb{N}.
\]
Suppose that either there exists a compact set that contains every $\gamma_n([a,b])$, or $d$ is proper (i.e., all closed and bounded sets are compact) and that the curves $(\gamma_n)_n$ accumulate at some point, i.e., there exists $t_0 \in [a,b]$ such that 
\[
\gamma_n(t_0) \to x_0 \in Y.
\]  

Then there exists a subsequence $(\gamma_{n_k})_{k}$ of $(\gamma_n)_n$ and a Lipschitz continuous curve 
$\gamma \colon [a,b] \to Y$
such that $\gamma_{n_k} \to \gamma$ uniformly. Moreover, if $\gamma$ is non-constant, then $\gamma$ is a future-directed causal curve. In particular, if $\gamma_n(a) = p$ and $\gamma_n(b) = q$ for all $n \in \mathbb{N}$, with $p \neq q$, then $\gamma$ is a future-directed causal curve connecting $p$ and $q$.
\end{thm}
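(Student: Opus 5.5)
The plan is to prove the compactness part by Arzelà--Ascoli and the causality part by passing the causal relation to the limit, using local causal closedness. First suppose some compact $K\subseteq Y$ contains every $\gamma_n([a,b])$. The curves are uniformly $L$-Lipschitz, so $(\gamma_n)_n$ is an equicontinuous family of maps from the compact interval $[a,b]$ into the compact metric space $(K,d)$. Arzelà--Ascoli then gives a subsequence $\gamma_{n_k}\to\gamma$ converging uniformly. Passing to the limit in $d(\gamma_{n_k}(s),\gamma_{n_k}(t))\leq L|s-t|$ shows that $\gamma$ is $L$-Lipschitz.

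In the second alternative, $d$ is proper and $\gamma_n(t_0)\to x_0$. I reduce this to the first case. For $n$ large we have $d(\gamma_n(t_0),x_0)\leq 1$. Then for every $t\in[a,b]$,
\[
d\bigl(\gamma_n(t),x_0\bigr)\leq L(b-a)+1 .
\]
So all $\gamma_n([a,b])$, except finitely many which we discard, lie in the closed ball $\overline{B}(x_0,L(b-a)+1)$. This ball is compact by properness, and the first case applies.

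Next I show that $\gamma$ is future-directed causal when it is non-constant. Lipschitz continuity is already established, so it suffices to prove $\gamma(s)\leq\gamma(t)$ for all $s<t$. The relation $\leq$ is only closed locally, so I argue as follows.
\begin{itemize}
\item Cover the compact set $\gamma([a,b])$ by finitely many causally closed open sets $U_1,\dots,U_m$, meaning $\leq$ restricted to $\overline{U_i}\times\overline{U_i}$ is closed.
\item Using a Lebesgue number argument and the uniform continuity of $\gamma$, choose a partition $s=r_0<r_1<\dots<r_N=t$ such that each segment $\gamma([r_j,r_{j+1}])$ lies in a single $U_{i(j)}$ at positive distance from its complement.
\item By uniform convergence, for $k$ large both $\gamma_{n_k}(r_j)$ and $\gamma_{n_k}(r_{j+1})$ also lie in $U_{i(j)}$.
\item Each $\gamma_{n_k}$ is causal, so $\gamma_{n_k}(r_j)\leq\gamma_{n_k}(r_{j+1})$. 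Closedness of $\leq$ on $\overline{U_{i(j)}}\times\overline{U_{i(j)}}$ gives $\gamma(r_j)\leq\gamma(r_{j+1})$.
\item Transitivity of $\leq$ then yields $\gamma(s)\leq\gamma(t)$.
\end{itemize}
If the definition of causal curve requires it, constant stretches are harmless: non-constancy is exactly the extra hypothesis assumed.

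For the final claim, suppose $\gamma_n(a)=p$ and $\gamma_n(b)=q$ for all $n$. Then $\gamma(a)=p\neq q=\gamma(b)$, so $\gamma$ is non-constant. By the previous step it is a future-directed causal curve from $p$ to $q$.

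I expect the main obstacle to be the localization step. Causal closedness holds only inside neighborhoods, so one must ensure that consecutive partition points of both $\gamma$ and the approximating curves stay in a common causally closed neighborhood. This needs the Lebesgue-number choice to be made uniformly with respect to $k$, which uniform convergence provides.
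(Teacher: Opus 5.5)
Your proof is correct. The paper gives no proof of its own here; it only cites \cite[Theorems~3.7 and 3.14]{kunzinger-saemann2018}, and your argument is essentially the standard one behind those results: Arzel\`a--Ascoli gives the Lipschitz uniform limit, the proper case reduces to the compact closed ball $\overline{B}(x_0,L(b-a)+1)$, and $\leq$ passes to the limit via a finite cover by causally closed neighborhoods, a Lebesgue-number partition, and transitivity.
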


\subsection{Metric and Lorentzian generalized products}

In this section we focus on a special kind of spaces, namely \textit{generalized} (or \textit{warped}) products. These have been developed both in the Riemannian \cite{bishop_manifolds_1969} and semi-Riemannian settings \cite{oneill_semi-riemannian_1983}, and further extended to the corresponding metric \cite{chen_warped_1999} and Lorentzian synthetic \cite{alexander_generalized_2023} settings. In the latter case, due to the Lorentzian signature it will be required that the base (i.e., the first factor) be one-dimensional. These objects are called generalized \textit{cones} in \cite{alexander_generalized_2023}, naming that we will follow for the Lorentzian case to distinguish it from the metric case. The Lorentzian ideal boundary of generalized cones will be studied in \autoref{sec:cones} in terms of the ideal boundary of its fiber. For brevity, we introduce here only the synthetic versions.

\begin{dfn}
    Let $(B,d_B)$ and $(F,d_F)$ be length spaces and let $f\colon B\to (0,\infty)$ be continuous. Consider the product $B\times F$ and a curve $\gamma=(\gamma_B,\gamma_F)\colon I\to B\times F$ such that $\gamma_B$ and $\gamma_F$ are $d_B$- and $d_F$-rectifiable, respectively. We define its \textit{length} as 
    \begin{equation}\label{eq:length-structure-cones}
    L(\gamma)=\int_I \sqrt{v_B^2+\left(f\circ \gamma_B\right)^2 v_F^2\,},
    \end{equation}
    where $v_B$ and $v_F$ are the \textit{metric speeds} of $\gamma_B$ and $\gamma_F$, respectively (which exist almost everywhere \cite[Theorem~2.7.6]{BBI}). We define the \textit{warped product} with \textit{base} $B$, \textit{fiber} $F$ and \textit{warping function} $f$, which we denote $B\times_f F$, as the length space $(B\times F, d)$ whose distance is defined by the infimum of lengths of curves.
\end{dfn}

In the case in which the warping function $f$ is allowed to attain the value $0$ some modifications are in order \cite{alexander_warped_2016}. As a topological space, the warped product $B\times_f F$ is the quotient $Z$ of $B\times F$ under the equivalence relation that identifies all the elements in $\{p\}\times F$ whenever $f(p)=0$. To define the length structure in this setting one considers curves $\gamma=(\gamma_B,\gamma_F)\colon I\to Z$ and divides their domain $J$ into $J_0:=(f\circ \gamma_B)^{-1}(0)$ and $J_+=J\backslash J_0$. The length is then defined to be as in Equation~\eqref{eq:length-structure-cones} but defining $v_F=0$ in $J_0$.

We will be particularly interested in the case in which $B\subset \R$ is an interval and $F\in \CAT(0)$, as stated in the following result.
\begin{thm}{\cite[Theorem~1.1]{alexander_curvature_2004}}
    Let $I\subset \R$ be a closed interval and let $X$ be a $\CAT(0)$ space. Assume that $f\colon I \to [0,\infty)$ is $K$-convex (i.e., $f''-Kf\geq 0$). If $f^{-1}(0)=\varnothing$, assume that $K\inf f\geq 0$. Then $I\times_f X$ is a $\CAT(K)$ space.
\end{thm}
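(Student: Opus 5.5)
The plan is to reduce to a smooth two-dimensional fiber, following the strategy of Alexander--Bishop. The starting point is the \emph{fiber-projection property} of warped products: if $\gamma=(\gamma_B,\gamma_F)$ is a geodesic in $I\times_f X$, then $\gamma_F$ is a (reparametrized) pregeodesic in $X$. When $\gamma$ passes through $f^{-1}(0)$, $\gamma_F$ is instead a concatenation of two fiber geodesics at $\pi$-angle distance.

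I would prove this first by a length comparison. Replacing $\gamma_F$ by a $d_F$-geodesic with the same endpoints, reparametrized so that its cumulative speed matches that of $\gamma_F$, does not increase the length in \eqref{eq:length-structure-cones}, since the integrand is monotone in $v_F$. This also shows that $d$ only depends on $d_B$, $f$ and $d_F$ along the two fiber coordinates. In other words, for fixed fiber distance $\delta=d_F(x,y)$, the distance $d\bigl((s,x),(t,y)\bigr)$ is the same as in $I\times_f \R$ between $(s,0)$ and $(t,\delta)$.

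The second step is the key comparison. Take a geodesic triangle $\triangle$ in $I\times_f X$. Its fiber projection is a closed curve in $X$ made of at most three geodesic pieces. Since $X$ is $\CAT(0)$, Reshetnyak's majorization theorem provides a convex region $C\subset\R^2$ and a $1$-Lipschitz map $\phi\colon C\to X$ whose restriction to $\partial C$ is a length-preserving parametrization of the projected loop. The induced map $\mathrm{id}\times\phi\colon I\times_f C\to I\times_f X$ is again $1$-Lipschitz, because the length formula is monotone in fiber speed.

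Moreover, the triangle lifts isometrically to a triangle $\tilde\triangle$ in $I\times_f C$. Its sides are still geodesics there: by the first step their lengths agree with the corresponding distances, and these can only be larger in $I\times_f C$ by the $1$-Lipschitz property. Hence, if $I\times_f C$ is $\CAT(K)$, the comparison inequality for $\tilde\triangle$ pushes forward and gives the $\CAT(K)$ inequality for $\triangle$.

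It therefore remains to show that $I\times_f C$ is $\CAT(K)$ for a convex planar region $C$. I would do this in three moves:
\begin{enumerate}
\item First approximate $f$ uniformly by smooth $K$-convex functions. Uniform convergence of the metrics preserves $\CAT(K)$ in the limit.
\item Then regard $I\times_f\R^2$ as a smooth Riemannian warped product $dt^2+f(t)^2g_{\R^2}$ (coning off $f^{-1}(0)$ where needed). Its sectional curvatures are $-f''/f$ on mixed planes and $-(f')^2/f^2$ on fiber planes.
\item Finally, the region $I\times_f C$ is locally convex, since $C$ is convex and $I$ is an interval. A simply connected, locally convex Riemannian region with curvature $\le K$ is $\CAT(K)$ by Cartan--Hadamard/Alexandrov patchwork, with the usual diameter restriction when $K>0$.
\end{enumerate}

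The main obstacle is the curvature bound on fiber planes. The mixed-plane bound $-f''/f\le K$ is exactly $K$-convexity. The fiber-plane bound $-(f')^2/f^2\le K$, however, is automatic only when $K\ge 0$. For $K<0$ one must exploit that the fiber is flat only in a weak sense, or else use the hypothesis $K\inf f\ge0$ (when $f$ never vanishes) or the vanishing of $f$ together with $K$-convexity, which forces $|f'|$ to be large enough. I would first try to handle this by comparing with the model $I\times_{f}\R$ one dimension lower, where only mixed planes occur. The idea is to observe that the lifted triangle $\tilde\triangle$ can itself be majorized inside a surface $I\times_f\sigma$ over a geodesic segment $\sigma\subset C$, which would bypass fiber planes entirely. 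If this fails, I would fall back on the explicit case analysis of the zero set of $f$ as in \cite{alexander_curvature_2004}.
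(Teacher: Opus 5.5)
The paper does not prove this statement; it is quoted from Alexander--Bishop, so I am judging your outline on its own terms.

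\textbf{What works.} The first half is sound for triangles whose sides avoid $f^{-1}(0)$. The squeeze $d_{I\times_f X}(p_i,p_j)\le d_{I\times_f C}(\tilde p_i,\tilde p_j)\le L(\tilde\gamma_{ij})=d_{I\times_f X}(p_i,p_j)$ does make the lifted sides geodesics. Since $I\times_f C$ is convex in $I\times_f\R^2$, everything reduces to showing that $I\times_f\R^2$ is $\CAT(K)$. You also rightly read $K$-convexity as $f''+Kf\ge 0$, i.e.\ $-f''/f\le K$. The parenthetical $f''-Kf\ge 0$ in the statement has the opposite sign, and read literally it fails for $K=-1$, $f=\cos$ on $[-\pi/2,\pi/2]$.

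\textbf{The main gap.} That last reduction step is the whole content of the theorem, and you leave it open exactly where the hypothesis enters. For $K<0$ you never prove the fiber-plane bound $-(f')^2/f^2\le K$, and the assumption ``$f$ vanishes somewhere, or $K\inf f\ge 0$'' is never used.

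The proposed bypass through surfaces $I\times_f\sigma$ cannot work. A triangle whose vertices project to non-collinear points lies in no such surface. More decisively, take $K=-1$ and $f=\cosh$ on $\R$, which is $K$-convex but has $K\inf f<0$. Every $\R\times_f\sigma$ is a convex piece of $\mathbb{H}^2$, yet $\R\times_{\cosh}\R^2$ has fiber-plane curvature $-\tanh^2 t$. This vanishes at $t=0$, so the space is not $\CAT(-1)$.

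The missing argument is short. Set $\Phi:=(f')^2+Kf^2$; then $\Phi'=2f'(f''+Kf)$. So $\Phi$ is non-increasing where $f'\le 0$ and non-decreasing where $f'\ge 0$. For $K<0$, $f$ is convex, so $\Phi$ is bounded below either by its value at a minimum point of $f$, or by its limit at an unbounded end of $I$ along which $f$ decreases to $\inf f$.
\begin{itemize}
\item A positive minimum of $f$ is excluded by $K\inf f\ge 0$.
\item A zero minimum gives $\Phi=(f')^2\ge 0$ there.
\item If $\inf f=0$ is only approached at an unbounded end, convexity gives $f'\to 0$, hence $\Phi\to 0$.
\end{itemize}
Thus $\Phi\ge 0$, which is exactly $-(f')^2/f^2\le K$.

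\textbf{Further steps that fail as written.}

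First, mollifying $f$ destroys the hypothesis at zeros of $f$. For $K=-1$ and $f=\sinh|t|$, the space $\R\times_f X$ is two hyperbolic cones over the $\CAT(1)$ space $X$ glued at the apex, hence $\CAT(-1)$. But every smooth positive approximant has an interior minimum where it is positive. There $\R\times_{f_\varepsilon}\R^2$ has a flat fiber plane, so the limit argument yields only $\CAT(0)$. Zeros must be kept as cone points, for instance handled via Berestovskii's theorem on $K$-cones over $\CAT(1)$ spaces.

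Second, when $I$ has an endpoint $a$ with $f(a)>0$, the slice $\{a\}\times\R^2$ is boundary and is concave when $f'(a)>0$. For $K=0$ and $f=e^t$ on $[0,1]$ you get a slab between two horospheres in $\mathbb{H}^3$. So ``locally convex region plus Cartan--Hadamard'' does not apply; you need the Alexander--Berg--Bishop criterion for manifolds with boundary. The slices are flat, which is harmless for $K\ge 0$. For $K<0$ the hypotheses force $f'(a)\le 0\le f'(b)$ at endpoints where $f>0$, so the boundary is convex. For $K>0$ you need a globalization theorem requiring uniqueness and continuous dependence of geodesics shorter than $\pi/\sqrt{K}$, not Cartan--Hadamard.

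Third, a side through $f^{-1}(0)$ has constant fiber coordinate on each side of the zero set: it jumps, rather than forming two fiber geodesics. Its projection therefore does not close up into a loop in $X$, and the majorization step needs a separate argument for such triangles.
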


In the Lorentzian framework, the analogue construction involves the machinery of Lorentzian pre-length spaces introduced by \cite{kunzinger-saemann2018} (see \autoref{subs:LLS}). Now, one starts with a metric space $X$ (the fiber) and an interval (the base). As mentioned before, the base is assumed to be one-dimensional in order to achieve the Lorentzian signature. Moreover, one needs to check under which conditions the resulting generalized cone is a Lorentzian pre-length space. This was all done in \cite{alexander_generalized_2023}, whose main results and definitions we briefly present for an easier reading.

\begin{dfn}\label{def:generalized-cones}
    Let $(X,d)$ be a metric space, $I\subset \R$ be an interval and $f\colon I\to \R$ be continuous. Consider the product $Y=I\times X$ endowed with the product distance. We say that an absolutely continuous curve $\gamma=(\alpha,\beta)\colon J\to Y$ is \textit{timelike} (resp.\ \textit{null}, or \textit{causal}) if $\smash{-\dot{\alpha}^2+(f\circ \alpha)^2\, v_{\beta}^2<0}$ (resp.\ $=0$, or $\leq 0$) almost everywhere. Again $v_{\beta}$, which exists almost everywhere \cite[Theorem~2.7.6]{BBI}, is the metric speed of $\beta$. The curve is called \textit{future-directed} (resp.\ \textit{past-directed}) if $\alpha$ is strictly increasing (resp.\ strictly decreasing). One can define a \textit{timelike} and a \textit{causal relation} in the usual way. We denote $\prescript{-}{}{I}\times_f X$ the space $Y$ endowed with this \textit{causal structure} and call it \textit{generalized cone} with \textit{warping function} $f$. The superscript $^-$ before the product is meant to indicate the Lorentzian signature.
\end{dfn}

\begin{dfn}\label{def:lorentzian-length-cones}
    Let $Y$ be a generalized cone and let $\gamma=(\alpha,\beta)\colon J\to Y$ be a causal curve. Its \textit{Lorentzian length} is defined as 
    \[
    L(\gamma):=\int_J \sqrt{\dot{\alpha}^2-\left(f\circ\alpha\right)^2 v_{\beta}^2\,}.
    \]
    One can define a \textit{time separation function} $\tau\colon Y\times Y\to[0,\infty)$ in the usual sense, with the convention $\tau(y_1,y_2)=0$ whenever there is no future-directed causal curve from $y_1$ to $y_2$. A causal curve whose length agrees with the time separation between its endpoints is called \textit{maximal}.
\end{dfn}

We summarize now a few properties of generalized cones.

\begin{thm}{\cite[Proposition~3.26, Corollary~3.30, Theorem~4.10]{alexander_generalized_2023}}\label{thm:structure-generalizedcones}
    Let $Y=\prescript{-}{}{I}\times_f X$ be a generalized cone, where $(X,d)$ is a length space. Then $(Y,D,\ll,\leq,\tau)$, where $D$ is the product distance, is a Lorentzian pre-length space. In particular, $\tau$ is lower semicontinuous, satisfies the reverse triangle inequality, and $\tau(y_1,y_2)>0$ implies $y_1\ll y_2$. If $(X,d)$ is also proper and geodesic, then $Y$ is a globally hyperbolic and regular Lorentzian length space. 
\end{thm}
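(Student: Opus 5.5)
The strategy is to reduce everything to the elementary geometry of the product: the time coordinate is a strictly increasing function along future-directed curves, and causal curves have fiber speed controlled by their time speed. I will treat the pre-length space axioms first and then the assertions that need $X$ proper and geodesic.

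\emph{Pre-length space axioms.} Let $\gamma=(\alpha,\beta)$ range over future-directed causal curves.
\begin{enumerate}
\item \emph{Reverse triangle inequality.} Concatenating a causal curve from $y_1$ to $y_2$ with one from $y_2$ to $y_3$ gives a causal curve from $y_1$ to $y_3$. The length of Definition~\ref{def:lorentzian-length-cones} is additive under concatenation, so taking suprema gives $\tau(y_1,y_3)\geq \tau(y_1,y_2)+\tau(y_2,y_3)$. Transitivity of $\leq$ and $\ll$ follows the same way.
\item \emph{$\tau>0$ implies $\ll$.} If $\tau(y_1,y_2)>0$, there is a causal $\gamma$ with $L(\gamma)>0$. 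Then $\dot\alpha^2-(f\circ\alpha)^2v_\beta^2>0$ on a set of positive measure. I then reparametrize the fiber component $\beta$ (keeping its endpoints) so that it moves more slowly where the causal inequality is an equality. This redistributes the slack and makes the inequality strict almost everywhere. Continuity of $f$ on the compact set $\alpha(J)$ lets the strictness be controlled uniformly.
\item \emph{Lower semicontinuity of $\tau$.} Given $y_1\ll y_2$ and $\varepsilon>0$, fix a timelike curve $\gamma$ from $y_1$ to $y_2$ with $L(\gamma)>\tau(y_1,y_2)-\varepsilon$. If $y_i'$ is close to $y_i$ in the product metric $D$, I join $y_1'$ to a point of $\gamma$ slightly later, and a point slightly earlier to $y_2'$, by explicit short timelike curves. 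These can be taken to be fiber geodesic segments traversed while the time coordinate increases faster than $\sup f$ times the fiber speed. Removing the two short arcs of $\gamma$ costs length at most $\varepsilon$ by absolute continuity. Hence $\tau(y_1',y_2')\geq \tau(y_1,y_2)-2\varepsilon$ nearby.
\end{enumerate}

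\emph{Global hyperbolicity.} Assume now that $X$ is proper and geodesic. Causality is immediate because $\alpha$ is strictly increasing. For a causal diamond $J(p,q)$ with time coordinates in $[t_1,t_2]$, every causal curve satisfies $v_\beta\leq \dot\alpha/f$. Since $f\geq c>0$ on $[t_1,t_2]$, the fiber component stays in the closed ball of radius $(t_2-t_1)/c$ around the base point of $p$. So $J(p,q)\subset [t_1,t_2]\times \overline{B}$, which is compact by properness. Parametrizing causal curves by the time coordinate makes them uniformly Lipschitz. Theorem~\ref{thm:limit curve theorem}, together with lower semicontinuity of metric speed under uniform convergence, shows that limits of causal curves are causal. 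Hence $J(p,q)$ is closed, so compact, and local causal closedness also follows.

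\emph{Length space and regularity.} The length functional is upper semicontinuous under uniform convergence. I would show this by a Jensen-type concavity argument for $(a,b)\mapsto\sqrt{a^2-f^2b^2}$, combined with lower semicontinuity of the fiber length. Applying this to a maximizing sequence of curves between timelike related points yields maximizers, so $\tau$ is realized by curves and $Y$ is a Lorentzian length space.

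The main obstacle is regularity: no maximizer between timelike related points may contain a null piece. My plan is to argue by contradiction. Suppose a maximizer $\gamma$ has a null subarc adjacent to a timelike subarc; such a junction must occur because the total length is positive. On a small window containing the junction, replace $\gamma$ by the curve with the same endpoints whose fiber part is the $X$-geodesic traversed at a suitably redistributed constant-type speed. A first-order strict concavity computation for $\sqrt{a^2-f^2b^2}$ near $a=fb$ should show that spreading the fiber motion evenly strictly increases length, because the integrand has infinite derivative at the null boundary. This contradicts maximality, so every maximizer is timelike.
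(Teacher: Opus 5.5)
The paper gives no proof here; it imports the three assertions from \cite{alexander_generalized_2023}. Your treatment of the pre-length axioms and of compactness of diamonds is essentially sound. The genuine gap is the final step, which is a non sequitur: existence of maximizers of the integral length $L$ does not make $Y$ a Lorentzian length space.

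In the sense of \cite{kunzinger-saemann2018} you must verify causal path-connectedness, local causal closedness, localizability, and $\tau=\mathcal{T}$. Here $\mathcal{T}$ is the supremum of the variational length $L_\tau(\gamma)=\inf\sum_i\tau(\gamma(t_i),\gamma(t_{i+1}))$ over causal curves in the order-theoretic sense (locally Lipschitz, $\leq$-monotone). Likewise, ``geodesic'' in the definition of regularity means an $L_\tau$-maximizer among those curves. Your argument never connects these objects to cone-causal curves and $L$. The bridge is not deep, but it has to be built:

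(a) If $\gamma=(\alpha,\beta)$ is $D$-Lipschitz and $\leq$-monotone, then $d(\beta(s),\beta(s'))\leq\int_{\alpha(s)}^{\alpha(s')}du/f$. Hence $v_\beta\leq\dot\alpha/(f\circ\alpha)$ a.e., and the two notions of causal curve agree. Conversely, cone curves reparametrized by $D$-arclength are order-causal/timelike, which gives causal path-connectedness.

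(b) The inequality $\tau(\gamma(t_i),\gamma(t_{i+1}))\geq L(\gamma|_{[t_i,t_{i+1}]})$ yields $L_\tau\geq L$, hence $\tau=\mathcal{T}$.

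(c) Localizability requires three things. First, a bound on $D$-lengths of causal curves in small neighbourhoods, e.g.\ $L_D(\gamma)\leq(1+1/\inf f)\,\Delta t$. Second, continuity of the local time separation: you have only proved lower semicontinuity of $\tau$, and upper semicontinuity needs its own argument (your upper semicontinuity of $L$ plus compactness of near-maximizers would do). Third, local maximizers, e.g.\ global ones in small causally convex diamonds.

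(d) Suppose an $L_\tau$-maximizer $\gamma$ from $x\ll y$ has $\gamma(s)\not\ll\gamma(t)$. Maximality forces $\tau(x,y)=\tau(x,\gamma(s))+\tau(\gamma(t),y)$ and $L(\gamma|_{[s,t]})\leq L_\tau(\gamma|_{[s,t]})=0$. Concatenating $L$-maximizers $x\to\gamma(s)$ and $\gamma(t)\to y$ with $\gamma|_{[s,t]}$ then gives an $L$-maximizer containing a null piece. Only at this point does your regularity argument apply.

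That regularity argument also needs a repair. A ``null subarc adjacent to a timelike subarc'' need not exist: in the time parametrization, with $w$ the fiber speed, the set $\{fw=1\}$ may be a fat Cantor-type set. Instead, take a window containing a null subarc and a set of positive measure on which $fw\leq 1-\eta$. Shift fiber length $\epsilon$ from the former to the latter by reparametrizing $\beta$; no $X$-geodesic is needed, and the endpoints are preserved. Since $\partial_w\sqrt{1-f^2w^2}\to-\infty$ as $w\to 1/f$, the gain is of order $\sqrt{\epsilon}$ while the loss is $O(\epsilon)$. Pushed to the end, this strict concavity also yields the profile $v_\beta=A(f\circ\alpha)^{-2}$ of \autoref{thm:properties-geodesics-generalizedcones}.

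Smaller points:
\begin{itemize}
\item \autoref{thm:limit curve theorem} assumes local causal closedness, so using it here is circular. Use Arzel\`a--Ascoli directly and pass $d(\beta_n(t),\beta_n(t'))\leq\int_t^{t'}du/f$ to the limit. What is lower semicontinuous is fiber length on subintervals, not metric speed pointwise.
\item Global hyperbolicity in \cite{kunzinger-saemann2018} requires non-total imprisonment, not just causality; the $D$-length bound in (c) gives it.
\item In your lower-semicontinuity step, $X$ need not have geodesics. Moreover, the criterion ``time grows faster than $\sup f$ times fiber speed'' need not be satisfiable when $\gamma$ is nearly null. Use instead $(t,x)\ll(t',x')\iff d(x,x')<\int_t^{t'}du/f$. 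Your reparametrization in step (2), with fiber speed $\lambda/f$ and $\lambda=\ell/\int du/f<1$, proves exactly this.
\end{itemize}
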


\begin{thm}{\cite[Theorem~3.29]{alexander_generalized_2023}}\label{thm:properties-geodesics-generalizedcones}
    Let $(X,d)$ be a geodesic space and let $\gamma=(\alpha,\beta)\colon J\to Y=\prescript{-}{}{I}\times_f X$ be a future directed maximal geodesic. Then
    \begin{enumerate}
        \item $\beta$ is minimizing in $X$.
        \item If $\gamma$ is timelike, then it admits an absolutely continuous parametrization by arclength, and in such parametrization $v_\beta$ is proportional to $(f\circ \alpha)^{-2}$.
    \end{enumerate}
\end{thm}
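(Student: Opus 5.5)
The plan is to prove the two items in order. For (1) we compare $\gamma$ with a competitor that keeps the base component $\alpha$ and replaces the fiber component by a reparametrized minimizing geodesic. For (2) we use the fact that, once $\alpha$ and the trace of $\beta$ are fixed, the only remaining freedom is how fast $\beta$ runs. Maximizing the length over that freedom is a strictly concave optimization problem, and its first-order condition gives exactly the proportionality $v_\beta\propto (f\circ\alpha)^{-2}$.

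\emph{Step 1: $\beta$ is minimizing.} Let $\gamma=(\alpha,\beta)\colon[a,b]\to Y$ be a maximal future-directed causal curve. Put $\ell:=L_d(\beta)$ and $\delta:=d(\beta(a),\beta(b))$, and suppose $\ell>\delta$.
\begin{itemize}
\item Let $\sigma\colon[0,\delta]\to X$ be a minimizing geodesic from $\beta(a)$ to $\beta(b)$ parametrized by arclength; it exists because $X$ is geodesic.
\item Define $\tilde\beta(t):=\sigma\bigl(\tfrac{\delta}{\ell}\,L_d(\beta|_{[a,t]})\bigr)$. Then $\tilde\beta$ is absolutely continuous and $v_{\tilde\beta}=\tfrac{\delta}{\ell}v_\beta$ a.e.
\item Set $\tilde\gamma:=(\alpha,\tilde\beta)$. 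Since $v_{\tilde\beta}\le v_\beta$, the curve $\tilde\gamma$ is future-directed causal with the same endpoints as $\gamma$.
\item The integrand satisfies $\sqrt{\dot\alpha^2-(f\circ\alpha)^2v_{\tilde\beta}^2}\ge\sqrt{\dot\alpha^2-(f\circ\alpha)^2v_\beta^2}$ pointwise. The inequality is strict on the set $\{v_\beta>0\}$, because $f>0$ there.
\item That set has positive measure since $\ell>0$. Hence $L(\tilde\gamma)>L(\gamma)=\tau(\gamma(a),\gamma(b))$, which contradicts the definition of $\tau$.
\end{itemize}
The same argument covers null $\gamma$: the competitor then has positive length while $\tau$ would be $0$. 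Therefore $\ell=\delta$, that is, $\beta$ is minimizing.

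\emph{Step 2: arclength parametrization.} If $\gamma$ is timelike, the integrand of $L$ is positive a.e. Hence $\lambda(t):=L(\gamma|_{[a,t]})$ is absolutely continuous and strictly increasing. I expect the main technical obstacle to be showing that $\lambda^{-1}$ is again absolutely continuous, since the integrand could degenerate on a null set. My first attempt is to use that $\lambda$ maps null sets to null sets and that $\lambda'>0$ a.e. Together these give Lusin's (N) property for $\lambda^{-1}$, so the reparametrized curve $\gamma\circ\lambda^{-1}$ is absolutely continuous and satisfies $\dot\alpha^2-(f\circ\alpha)^2v_\beta^2=1$ a.e.

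\emph{Step 3: the optimality condition.} Work in the arclength parametrization on $[0,T]$. By Step 1 we may write $\beta=\sigma\circ s$ with $s(t)=L_d(\beta|_{[0,t]})$, so $\dot s=v_\beta=:w$ and $\int_0^T w=\delta$. For any measurable $u\ge0$ with $\int_0^T u=\delta$ and $(f\circ\alpha)u\le\dot\alpha$, the curve $(\alpha,\sigma\circ\int_0^{\cdot}u)$ is causal with the same endpoints as $\gamma$, so its length is at most $\tau$. Consequently $w$ maximizes the functional
\[
\Phi(u)=\int_0^T\sqrt{\dot\alpha^2-(f\circ\alpha)^2u^2}
\]
on this constraint set. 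The integrand is strictly concave in $u$. Computing the directional derivative of $\Phi$ at $w$ along $u-w$ with $\int(u-w)=0$ (admissible since $w$ lies strictly inside the timelike region a.e.) gives a Lagrange condition:
\[
\frac{(f\circ\alpha)^2\,w}{\sqrt{\dot\alpha^2-(f\circ\alpha)^2w^2}}=c\quad\text{a.e.}
\]
for some constant $c$. To justify this, note that if the left-hand side took two essentially different values on sets of positive measure, transferring a small amount of $u$-mass from the larger-value set to the smaller one would increase $\Phi$ to first order. In arclength parametrization the denominator equals $1$, so $v_\beta=w=c\,(f\circ\alpha)^{-2}$, which is item (2).
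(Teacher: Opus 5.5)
The paper gives no proof of this statement; it quotes it from \cite[Theorem~3.29]{alexander_generalized_2023}, so there is no in-paper argument to compare yours with. Judged on its own, your proof is correct. Step~1 proves (1): slowing the fibre motion by the factor $\delta/\ell$ along a minimizing geodesic strictly increases the integrand wherever $v_\beta>0$, which contradicts $L(\gamma)=\tau$. Step~3 proves (2): you take the first variation of $\int\sqrt{\dot\alpha^2-(f\circ\alpha)^2u^2}$ under redistributions of the fibre speed, keeping $\alpha$ and the fibre trace fixed. The following points should be made explicit in a write-up.

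\textbf{The choice of $\sigma$ in Step~3.} Here $\sigma$ must be the arclength parametrization of $\beta$ itself, not the minimizer chosen in Step~1. Since $X$ need not be uniquely geodesic, an arbitrary minimizer need not have the same trace as $\beta$. Step~1 is exactly what makes this reparametrization of $\beta$ a minimizing geodesic.

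\textbf{The constraint $u\ge 0$.} Your parenthetical justification of the Lagrange condition only addresses the constraint $(f\circ\alpha)u\le\dot\alpha$. But $u\ge0$ is active wherever $w=0$, and that set might a priori have positive measure. There the direction $u-w$ is only one-sidedly admissible, so a directional derivative alone gives an inequality, not an equality. Your mass-transfer formulation does handle this, because $\varphi:=(f\circ\alpha)^2w/\sqrt{\dot\alpha^2-(f\circ\alpha)^2w^2}\ge0$ vanishes exactly where $w=0$. Concretely: if $\varphi$ is not a.e.\ constant, pick $c_1<c_2$ such that $A=\{\varphi>c_2\}$ and $B=\{\varphi<c_1\}$ have positive measure. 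Then $c_2>0$, so $w>0$ on $A$. Shrink to subsets $A'\subset A$ and $B'\subset B$ of positive measure on which $w\ge\eta$, $\dot\alpha\le M$ (so that $\dot\alpha-(f\circ\alpha)w\ge 1/(2M)$), and $f\circ\alpha$ is bounded. Replacing $w$ by $w-\varepsilon\chi_{A'}/|A'|+\varepsilon\chi_{B'}/|B'|$ then increases $\Phi$ by at least $\varepsilon(c_2-c_1)+o(\varepsilon)$, with the $o(\varepsilon)$ controlled by dominated convergence. This gives the dichotomy: either $w=0$ a.e.\ (vertical $\gamma$), or $(f\circ\alpha)^2w\equiv c>0$ a.e.

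\textbf{The inverse $\lambda^{-1}$ in Step~2.} What gives Lusin's (N) property for $\lambda^{-1}$ is the identity $|\lambda(F)|=\int_F\lambda'$ for measurable $F$ (valid because $\lambda$ is absolutely continuous and increasing), combined with $\lambda'>0$ a.e. The (N) property of $\lambda$ itself is what you need afterwards. It makes the chain rule for metric speeds hold at a.e.\ value of the new parameter, giving $\dot\alpha^2-(f\circ\alpha)^2v_\beta^2=1$ a.e.

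\textbf{Minor points.} The Step~3 competitors are absolutely continuous because $u\le\dot\alpha/(f\circ\alpha)$ and $f\circ\alpha$ is bounded below on the compact interval. For non-compact $J$, all arguments are applied on compact subintervals.
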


\begin{thm}{\cite[Corollary~5.4]{alexander_generalized_2023}}\label{thm:curvature_cones}
    Let $Y=\prescript{-}{}{I}\times_f X$ be a generalized cone with $f$ smooth. Assume that $f$ is $K$-convex (i.e., that $f''-Kf\geq 0$), and that $(X,d)$ is a geodesic space with curvature bounded above by $\inf\{Kf^2-(f')^2\}$. Then $Y$ satisfies $\CBA(K)$.
\end{thm}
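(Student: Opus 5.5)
The proof will reduce the comparison condition in $Y=\prescript{-}{}{I}\times_f X$ to a smooth three-dimensional model cone $\prescript{-}{}{I}\times_f M^2(k)$, where $M^2(k)$ is the Riemannian model surface of constant curvature
\[
k:=\inf\{Kf^2-(f')^2\},
\]
and transport comparison back to $Y$ by a Reshetnyak-type majorization argument. It has three steps.

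\textbf{Step 1: the model cone.} For the smooth Lorentzian warped product $\prescript{-}{}{I}\times_f M^2(k)$ I use the standard O'Neill warped product formulas. Planes containing $\partial_t$ have sectional curvature determined by $f''/f$. Fiber planes have curvature determined by $(k-(f')^2)/f^2$. Mixed planes interpolate between these two.

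I then check that $K$-convexity, $f''\geq Kf$, together with $k\leq Kf^2-(f')^2$, gives timelike sectional curvature bounded above by $K$ in the sign convention of \cite{kunzinger-saemann2018}. By the smooth Lorentzian triangle comparison theorem (Harris; Alexander--Bishop), such a strongly causal spacetime satisfies $\CBA(K)$ locally. I expect fixing the sign conventions here to be the fiddliest part of the whole argument.

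\textbf{Step 2: triangles in $Y$ and their fiber projections.} Let $\Delta p_1p_2p_3$ be a timelike triangle in a small neighborhood of $Y$, with sides $\gamma_{ij}=(\alpha_{ij},\beta_{ij})$ maximal timelike geodesics. By \autoref{thm:properties-geodesics-generalizedcones}, each $\beta_{ij}$ is a minimizing geodesic in $X$, so the $\beta_{ij}$ form a geodesic triangle in $X$. This triangle may be degenerate or non-simple, so it needs care.

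Since $X$ is locally $\CAT(k)$, Reshetnyak's majorization theorem gives a convex region $C\subset M^2(k)$ and a $1$-Lipschitz map $\phi\colon C\to X$. The map $\phi$ sends $\partial C$ isometrically onto the closed curve $\beta_{12}\cup\beta_{23}\cup\beta_{13}$.

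Define
\[
\Phi=\mathrm{id}\times\phi\colon I\times C\to Y .
\]
Because $\phi$ does not increase metric speeds, $\Phi$ sends causal curves to causal curves of no smaller Lorentzian length. Hence
\[
\tau_Y(\Phi(a),\Phi(b))\geq \tau_{I\times_f C}(a,b).
\]

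\textbf{Step 3: lifting the triangle and comparing.} The lifts $\tilde\gamma_{ij}=(\alpha_{ij},\phi^{-1}\circ\beta_{ij})$ lie over $\partial C$. Their lengths equal those of $\gamma_{ij}$, so $\tau_{I\times_f C}$ between the lifted vertices is at least $\tau_Y(p_i,p_j)$. The reverse inequality follows from the displayed estimate above, so side lengths agree and the lifted curves are maximal sides of a triangle in the model cone.

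Now take $q_1,q_2$ on two sides of $\Delta p_1p_2p_3$ and their lifts $\tilde q_1,\tilde q_2$. Convexity of $C$ lets me compute $\tau_{I\times_f C}$ inside $I\times_f M^2(k)$ for points in a small neighborhood. Then
\[
\tau_Y(q_1,q_2)\geq\tau_{I\times_f C}(\tilde q_1,\tilde q_2)\geq\bar\tau(\bar q_1,\bar q_2),
\]
where the last inequality is Step 1 applied to the lifted triangle.

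The remaining items of \autoref{def:comparison_neighb} are finiteness and continuity of $\tau$ and strict timelike geodesic connectedness on small neighborhoods. I would take these from the local structure theory of \autoref{thm:structure-generalizedcones}, working in small cylinders $J\times B$ on which $X$ is $\CAT(k)$ and geodesic.

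I expect the main obstacle to be the majorization step. One part is verifying that it applies to degenerate or non-simple fiber triangles, for instance when the fiber projections overlap or a side is vertical with constant $\beta$. The other part is ensuring that the lifted curves stay inside the region where the smooth comparison of Step 1 holds.
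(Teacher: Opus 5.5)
The paper gives no proof of this statement; it is quoted from \cite{alexander_generalized_2023}, so there is no in-paper argument to compare yours against. Your plan is sound: reduce to the smooth model cone $\prescript{-}{}{I}\times_f M^2(k)$, check its curvature with O'Neill's formulas, apply the smooth comparison theorem there, and pull comparison back through the fiber. However, Step~1 contains a sign error, and with it the hypothesis would not come out.

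\textbf{The sign error in Step~1.} The base is timelike, so $\operatorname{grad} f=-f'\partial_t$ and $\langle \operatorname{grad} f,\operatorname{grad} f\rangle=-(f')^2$. Fiber planes therefore have curvature $(k+(f')^2)/f^2$; your expression $(k-(f')^2)/f^2$ is the Riemannian one. In O'Neill's convention you need the Alexander--Bishop bound $R\le K$: curvature $\ge K$ on timelike planes and $\le K$ on spacelike planes.
\begin{itemize}
\item Planes containing $\partial_t$ have curvature $f''/f$, so you need $f''/f\ge K$, which is $K$-convexity.
\item Fiber planes need $(k+(f')^2)/f^2\le K$, i.e.\ $k\le Kf^2-(f')^2$.
\item A general timelike plane is spanned by $\cosh s\,\partial_t+\sinh s\,e_1$ and $e_2$, with $e_1,e_2$ orthonormal fiber directions. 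Its curvature is $\cosh^2 s\,\frac{f''}{f}-\sinh^2 s\,\frac{k+(f')^2}{f^2}$, which is $\ge K$ once both inequalities above hold. The analogous computation shows spacelike planes have curvature $\le K$.
\end{itemize}

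\textbf{Majorization is more than you need.} Your majorization step is correct, but fibre independence does the job and removes the obstacles you anticipated. It says $\tau_Y((s,x),(t,y))=G(s,t,d_X(x,y))$, where $G(s,t,d)$ is the time separation from $(s,0)$ to $(t,d)$ in $\prescript{-}{}{I}\times_f\R$. Moreover $G$ is non-increasing in $d$: rescale the fiber speed of a causal curve by a factor $\le 1$ and run it along a geodesic.

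So you can simply take the ordinary comparison triangle $\triangle\bar x_1\bar x_2\bar x_3$ of the fiber triangle in $M^2(k)$ and set $\tilde p_i=(t_i,\bar x_i)$.
\begin{itemize}
\item The lifted sides keep the same $t$-component and fiber arclength. Hence they have the same $\tau$-lengths and are maximal.
\item The $\CAT(k)$ inequality $d_X(x(q_1),x(q_2))\le d_{M^2(k)}(\bar x(q_1),\bar x(q_2))$ then gives $\tau_Y(q_1,q_2)\ge \tau_{\mathrm{model}}(\tilde q_1,\tilde q_2)\ge\bar\tau(\bar q_1,\bar q_2)$.
\end{itemize}
This uses only the triangle condition in a small $\CAT(k)$ ball. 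Degenerate or non-simple fiber triangles, such as vertical sides, are harmless, and nothing beyond the local $\CAT(k)$ property of $X$ is used.

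For a geodesic triangle, your majorizing region is in fact forced to be the filled comparison triangle. Each boundary arc maps $1$-Lipschitz onto a geodesic of the same length, so it is itself a geodesic. Your map $\Phi$ is therefore this same comparison in heavier form.

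The formula for $\tau_Y$ also settles the remaining items:
\begin{itemize}
\item It gives continuity of $\tau$ and existence of maximizers on cylinders $J\times B$ directly from the smooth $2$-dimensional cone.
\item Your worry about leaving the comparison region is resolved by homogeneity. Isometries of $M^2(k)$ act isometrically on the model cone, so its comparison neighbourhoods $J\times B_r(\bar x)$ can be taken with $r$ independent of $\bar x$.
\end{itemize}
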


This result connects an upper curvature bound in the metric sense on the fibre $X$ and a convexity condition on the warping function $f$, with a local upper timelike curvature bound on the generalized cone $Y$. In our case, we will be mostly interested in obtaining an upper timelike curvature bound by $0$ in $Y$. Therefore, it will suffice to assume that $f$ is convex and that $X$ has curvature bounded above by $-\sup\{f'\}^2$ (see also \ref{pop:restricted-cone-2}).

\begin{rem}
    If in \autoref{thm:curvature_cones} one assumes moreover that $(X,d)$ is locally compact and complete, the generalized cone $Y$ is globally hyperbolic and regular Lorentzian length space by \autoref{thm:structure-generalizedcones}. In particular, $\tau$ is finite and continuous. If $X$ has curvature bounded above by $-\sup\{f'\}^2$, then it will be uniquely geodesic. As a consequence (see \autoref{rem:uniquely-geodesic}) the generalized cone is also uniquely geodesic. By \cite[Theorems~4.5, 4.8]{beran-napper-rott2025} we deduce that the generalized cone satisfies $\CBA(0)$ \textit{globally}.
\end{rem}

\begin{rem}\label{rem:cones-global-curvature}
    Let $Y:=\prescript{-}{}{\mathbb{R}}\times_f X$ be a generalized cone with smooth and convex warping function $f$ converging to $L>0$, and where $X$ is proper and geodesic. Then $Y$ is a globally hyperbolic and regular Lorentzian length space (\autoref{thm:structure-generalizedcones}). In particular, $\tau$ is finite and continuous \cite[Theorem~3.28]{kunzinger-saemann2018}. Assume that there exists $\varepsilon>0$ such that $X\in\CAT(-\varepsilon)$. Then there exists $a\in \R$ such that the generalized cone $Y_a:=\smash{\prescript{-}{}{(a,\infty)}\times_f X}$ satisfies $\CBA(0)$ locally (\autoref{thm:curvature_cones}). If $Y_a$ is future one-connected, then it satisfies a global curvature bound from above by $0$ (\autoref{cor:globalization}).
\end{rem}

\section{Timelike ideal boundary\label{sec:ideal-boundary}}
In this section we introduce the notion of timelike ideal boundary of a Lorentzian pre-length space. We first recall a couple of definitions.

\begin{dfn}[{\cite[Definition~4.1]{beran-ohanyan-rott-solis2023}}]\label{def:timelike geodesic rays}
Let $Y$ be Lorentzian pre-length space. A \textit{timelike geodesic ray} is a $\tau$-arclength parametrized (thus timelike) curve $\gamma\colon [0,\infty)\to Y$ which is a maximal geodesic between any of its points.
\end{dfn}

\begin{dfn}[{\cite[Definition~5.4]{barton-beran-che-gieger-roehrig-rott}}]\label{def:asymptotic}
Let $Y$ be a Lorentzian pre-length space and $\alpha,\beta\colon [0,\infty)\to Y$ be two future-directed timelike geodesic rays in $Y$. We say that \textit{$\alpha$ is in the past of $\beta$}, and write $\alpha\ll \beta$, if there exists $c>0$ such that for all $t\in\mathbb{R}$, 
\[
\alpha(t)\ll \beta(t+c).
\]
If $\alpha\ll\beta$ and $\beta\ll \alpha$ we say that they are \textit{asymptotic} or \textit{weakly parallel}, which we denote $\alpha\sim\beta$. 
\end{dfn}

It is immediate that being asymptotic defines an equivalence relation in the set of all future-directed timelike geodesic rays. This motivates the following Lorentzian version of \autoref{def:metric-ideal-boundary}.

\begin{dfn}
The \textit{future timelike ideal boundary} of a \LpLS $Y$ is the set $\bd^+Y$ of equivalence classes of future-directed timelike geodesic rays under the relation of being asymptotic. The \textit{future timelike ideal completion} of $Y$ is the set $Y^*=Y\cup \bd^+Y$. Given a future-directed timelike geodesic ray $\alpha$, we denote by $\alpha(\infty)$ the equivalence class of $\alpha$ in $\bd^+Y$, and we call elements of $\bd^+Y$ \textit{future timelike boundary points}. By reversing causal and timelike relations we obtain the notion of \textit{past} timelike ideal boundary, denoted by $\bd^-Y$, and so on.
\end{dfn}

For brevity, we will occasionally omit the word ``timelike'' when referring to the ideal boundary. Furthermore, in what follows we will formulate some of the results and definitions for the future case, as the past case is completely analogous. Accordingly, the past ideal boundary will be mentioned almost exclusively when both the future and past ideal boundaries are treated simultaneously. In those instances, we will call $\bd^+Y\cup \bd^-Y$ the \textit{ideal boundary}, and refer to either the future or the past ideal boundary as a \textit{partial ideal boundary}.

\begin{rem}
    Let $\alpha\ll \beta$. Then for any $\lambda>0$, one can choose the parameter shift $c>0$ in such a way that $\tau\bigl(\alpha(t),\beta(t+c)\bigr)\geq \lambda$ for all $t$.
\end{rem}

\begin{rem}
    If $\alpha\ll \beta$, then $I^-(\alpha)\subset I^-(\beta)$. In particular, the elements of a given equivalence class of asymptotic curves have all the same past. The converse is not true, as one can easily see in the Minkowski space.
\end{rem}

The next proposition asserts the uniqueness of asymptotic rays to a given future-directed timelike ray, starting from a given point in the past of that ray. This is the first part of \cite[Proposition~5.9]{barton-beran-che-gieger-roehrig-rott}. We provide here a more explicit proof.

\begin{pop}\label{pop:uniqueness of pointwise rep}
Let $Y$ be a strongly causal, regular Lorentzian pre-length space satisfying $\CBA(0)$ globally. Let $\beta\colon[0,\infty)\to Y$ be a future-directed timelike ray and $x\in Y$. Then the there is at most one future-directed timelike ray $\alpha\colon [0,\infty)\to Y$ such that $\alpha(0)=x$ and $\alpha(\infty) = \beta(\infty)$.
\end{pop}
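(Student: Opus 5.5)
Suppose $\alpha_1,\alpha_2$ are two future-directed timelike rays with $\alpha_1(0)=\alpha_2(0)=x$ and $\alpha_1\sim\beta\sim\alpha_2$, so $\alpha_1\sim\alpha_2$. The plan is to show $\ma_x(\alpha_1,\alpha_2)=0$ via monotonicity of comparison angles, and then use uniqueness of geodesics to conclude $\alpha_1=\alpha_2$. Transitivity gives a constant $c>0$ with $\alpha_1(t)\ll\alpha_2(t+c)$ and $\alpha_2(t)\ll\alpha_1(t+c)$ for all $t\geq 0$.

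First I would bound comparison angles along the rays. For large $t$ consider the timelike triangle $x\ll\alpha_1(t)\ll\alpha_2(t+c)$. Its sides from $x$ are $\tau(x,\alpha_1(t))=t$ and $\tau(x,\alpha_2(t+c))=t+c$, since both are rays parametrized by $\tau$-arclength. The third side is $\lambda_t:=\tau(\alpha_1(t),\alpha_2(t+c))>0$. The reverse triangle inequality gives $t+\lambda_t\le t+c$, so $0<\lambda_t\le c$. The comparison angle at the vertex $x$ is
\[
\widetilde{\ma}_x\bigl(\alpha_1(t),\alpha_2(t+c)\bigr)=\cosh^{-1}\biggl(\frac{t^2+(t+c)^2-\lambda_t^2}{2t(t+c)}\biggr),
\]
using $\sigma=1$ for $i=1$. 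The argument equals $1+\frac{c^2-\lambda_t^2}{2t(t+c)}\le 1+\frac{c^2}{2t(t+c)}\to1$ as $t\to\infty$. Hence the comparison angle tends to $0$.

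Next I would use monotonicity. By \autoref{thm:monotonicity of comparison angles}, in its global form since $\CBA(0)$ holds globally on $Y$, the function $(s,u)\mapsto\widetilde{\ma}_x(\alpha_1(s),\alpha_2(u))$ is non-decreasing in both variables for same-oriented geodesics, on the set $A$ of causally related parameter pairs. For fixed small $(s,u)\in A$ and $t$ large with $s\le t$, $u\le t+c$, I would get $\widetilde{\ma}_x(\alpha_1(s),\alpha_2(u))\le\widetilde{\ma}_x(\alpha_1(t),\alpha_2(t+c))\to 0$.

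The main obstacle is this monotonicity step. Monotonicity in each variable separately requires passing through intermediate pairs that remain causally related, so they stay in $A$. I would handle it by first increasing $u$ to $t+c$, which keeps $\alpha_1(s)\ll\alpha_2(t+c)$ because $\alpha_1(s)\le\alpha_1(t)\ll\alpha_2(t+c)$. I would then increase $s$ to $t$, which is fine since $\alpha_1(s')\le\alpha_1(t)\ll\alpha_2(t+c)$ for all $s'\le t$. I also need the global version of the monotonicity statement, i.e. that its neighborhood $U$ can be taken to be all of $Y$. This should follow from the proof in \cite{beran-saemann2023} applied to $U=Y$, given the global comparison together with strict timelike geodesic connectedness and regularity. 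With this, all comparison angles vanish, so $\ma_x(\alpha_1,\alpha_2)=0$.

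Finally, set $\alpha_1^-(r):=\alpha_1(-r)$ for $r\in[-s,0]$, the reversed initial segment. By \autoref{thm:triangle inequality for angles}, or directly by angle zero, the concatenation of the reverse of $\alpha_1|_{[0,s]}$ with $\alpha_2$ forms a zero angle at $x$. Since $K=0$ gives $D_0=\infty$, \autoref{thm:concatenation with angle zero} is not needed. The simplest route is instead to note that a zero angle together with the equality case of comparison forces $\tau(\alpha_1(s),\alpha_2(s))$ relations to degenerate. Concretely, the monotone limit shows $\widetilde{\ma}_x(\alpha_1(s),\alpha_2(u))=0$ for all $(s,u)\in A$. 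Taking $s<u$ then gives $\tau(\alpha_1(s),\alpha_2(u))=u-s$, so $\alpha_1(s)$ lies on a maximal geodesic from $x$ to $\alpha_2(u)$, obtained by concatenating $\alpha_1|_{[0,s]}$ with a geodesic from $\alpha_1(s)$ to $\alpha_2(u)$. By \autoref{thm:uniqueness of geodesics in cba}, this geodesic coincides with $\alpha_2|_{[0,u]}$, and regularity ensures the concatenation is a genuine timelike geodesic with the correct parametrization. Hence $\alpha_1(s)=\alpha_2(s)$ for all $s$, which proves uniqueness.
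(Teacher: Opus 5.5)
Your argument is correct and is essentially the paper's: monotonicity of comparison angles bounds $\widetilde{\ma}_x(\alpha_1(s),\alpha_2(u))$ by the comparison angle of $\triangle x\,\alpha_1(t)\,\alpha_2(t+c)$, which tends to $0$. Hence the triangles degenerate (giving $\tau(\alpha_1(s),\alpha_2(s+c))=c$), and \autoref{thm:uniqueness of geodesics in cba} forces $\alpha_1(s)=\alpha_2(s)$. The detour through $\ma_x(\alpha_1,\alpha_2)=0$ and the concatenation paragraph can be cut; your remark that monotonicity is needed in its global form ($U=Y$) makes explicit a point the paper uses implicitly.
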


\begin{proof}
Let $\alpha,\alpha'\colon [0,\infty)\to Y$ be future-directed timelike rays such that $\alpha(\infty)=\alpha'(\infty)=\beta(\infty)$ and $\alpha(0)=\alpha'(0)=x$. Then, for any fixed $\lambda>0$, there exists $c>0$ such that 
$\tau(\alpha(t),\alpha'(t+c))\geq \lambda$
for all $t\in [0,\infty)$.

Since $Y$ satisfies $\CBA(0)$ globally, by \autoref{thm:monotonicity of comparison angles}, it follows that
\[
\widetilde{\ma}_x(\alpha(t),\alpha'(t+c)) \leq \widetilde{\ma}_x(\alpha(t'),\alpha'(t'+c)),
\]
for any $0\leq t\leq t'$. However,
\begin{align*}
\widetilde{\ma}_x(\alpha(t'),\alpha'(t'+c)) &= \cosh^{-1}\left(\frac{t'^2+(t'+c)^2-\tau(\alpha(t'),\alpha'(t'+c))^2}{2t'(t'+c)}\right)\\
&\leq \cosh^{-1}\left(\frac{t'^2+(t'+c)^2-\lambda^2}{2t'(t'+c)}\right)\\
&= \cosh^{-1}\left(1+\frac{c^2-\lambda^2}{2t'(t'+c)}\right) \xrightarrow{t'\to \infty} 0.
\end{align*}
Therefore $\widetilde{\ma}_x(\alpha(t),\alpha'(t+c)) = 0$, which in turn implies that $x$, $\alpha(t)$ and $\alpha'(t+c)$ lie on a timelike geodesic. By \autoref{thm:uniqueness of geodesics in cba},  $\alpha(t) = \alpha'(t)$ for any $t\in[0,\infty)$.
\end{proof}

The following lemma will be instrumental to guarantee the existence of asymptotic rays in Lorentzian pre-length spaces with non-positive timelike curvature.

\begin{lem}\label{lem:minkowski-situation-bounded-angles}
Let $p,q\in \mathbb{R}^{1,1}$ be such that $p\ll q$, and $\alpha_o \geq 0$. Then there is $c_o\geq 1$ such that, if $r\in \mathbb{R}^{1,1}$ satisfies  $\ma_p(q,r)\leq \alpha_o$ and $\tau(p,r)>c_o\tau(p,q)$, then $q\ll r$.
\end{lem}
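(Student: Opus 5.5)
The plan is a direct computation in coordinates. The only property of $\mathbb{R}^{1,1}$ I will use is that the angle between two future-directed timelike vectors at a point is their rapidity difference.

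First, I will fix what the hypotheses force. Since $\tau(p,r)>c_o\tau(p,q)>0$, we have $p\ll r$. So both $q-p$ and $r-p$ are future-directed timelike vectors. For such vectors the Minkowski angle $\ma_p(q,r)$ is the hyperbolic angle between them, i.e.\ the $\theta\geq 0$ with $\langle q-p,r-p\rangle=-\tau(p,q)\tau(p,r)\cosh\theta$. This agrees with the $\cosh^{-1}$ formula of \autoref{def:comparison-triangle} by the reverse Cauchy--Schwarz identity in the plane.

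Next, I will normalize. Translations and Lorentz boosts preserve $\tau$, $\ll$ and angles, so I can assume $p=0$ and $q=(T,0)$ with $T=\tau(p,q)$. In these coordinates $r=s(\cosh\theta,\sinh\theta)$, where $s=\tau(p,r)$ and $|\theta|=\ma_p(q,r)\leq\alpha_o$.

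Then I will write out the condition $q\ll r$. It holds iff $r-q$ is future-directed timelike, i.e.
\[
s\cosh\theta-T>|s\sinh\theta|.
\]
This is equivalent to $s(\cosh|\theta|-\sinh|\theta|)>T$, that is, $se^{-|\theta|}>T$. I therefore choose $c_o:=e^{\alpha_o}\geq 1$. The hypothesis $s>c_oT$ then gives $se^{-|\theta|}\geq se^{-\alpha_o}>T$, so $q\ll r$.

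No step is a real obstacle. The only point needing care is the first one: checking that the angle convention of the paper (the unsigned $\cosh^{-1}$ formula) really is the rapidity difference $|\theta|$ in this normalized configuration, and noting that $r$ is automatically in the future of $p$. Once that is settled, the statement reduces to the elementary inequality above.
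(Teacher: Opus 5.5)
Your proposal is correct and follows the paper's proof essentially step for step. The paper also normalizes to $p=(0,0)$ and $q=(\tau(p,q),0)$, writes $r$ as $(t\cosh\alpha,t\sinh\alpha)$, reduces $q\ll r$ to $t(\cosh\alpha-\sinh\alpha)>\tau(p,q)$, and chooses $c_o=\cosh\alpha_o+\sinh\alpha_o=e^{\alpha_o}$. Your use of $|\theta|$ simply makes explicit the symmetry that the paper handles by tacitly taking $\alpha\geq 0$.
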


\begin{proof}
Without loss of generality we can assume that $p = (0,0)$ and $q = (\tau(p,q),0)$ where, as usual, the first coordinate is the time coordinate and the second one is the spacial one. Let $0\leq \alpha\leq \alpha_o$ and $\gamma_{\alpha}\colon [0,\infty)\to \mathbb{R}^{1,1}$ be given by $\gamma_{\alpha}(t) = (t\cosh\alpha, t\sinh\alpha)$. Such a curve forms an angle $\alpha$ at $p$ with the segment $\overline{pq}$. One has $q\ll \gamma_{\alpha}(t)$ if and only if 
\[
t(\cosh\alpha - \sinh\alpha) > \tau(p,q).
\]
Moreover,
\begin{equation}\label{eq:bound1}
t(\cosh\alpha - \sinh\alpha) = \frac{t}{\cosh\alpha + \sinh\alpha} \geq \frac{t}{\cosh\alpha_o + \sinh\alpha_o}
\end{equation}
for any $0\leq \alpha\leq \alpha_o$ and $t>0$. Therefore, by setting $c_o = \cosh\alpha_o + \sinh\alpha_o\geq 1$, the claim follows. Indeed, if $t=\tau(p,r)>c_o\tau(p,q)$, then the timelike condition above is satisfied.
\end{proof}

The following lemma can be proved along the same lines as \cite[Lemma~3.33]{beran-saemann2023}.
\begin{lem}\label{lem:bounded comparison angles imply timelike limit}
Let $Y$ be a strongly causal and regular Lorentzian pre-length space satisfying $\CBA(0)$ globally. Let $\gamma_n\colon [0,\varepsilon]\to Y$ be future-directed timelike geodesics (non-necessarily parametrized by $\tau$-length), starting at $\gamma_n(0)=x$ and converging uniformly to a causal curve $\gamma\colon [0,\varepsilon]\to Y$. If there exists $C>0$ such that
\[
\sup\bigl\{\widetilde\ma_x(\gamma_n(t),\gamma_1(t')):\gamma_n(t)\ll\gamma_1(t')\ \text{ or }\ \gamma_1(t')\ll\gamma_n(t)\bigr\}<C
\]
for all $n$, then $\gamma$ is timelike.
\end{lem}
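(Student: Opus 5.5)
We argue by contradiction and follow the strategy of \cite[Lemma~3.33]{beran-saemann2023}. Suppose $\gamma$ is not timelike. Since it is a uniform limit of future-directed causal curves from $x$, it is causal by \autoref{thm:limit curve theorem}, so it must fail to be timelike somewhere. We first reduce to the case where $\gamma$ is null near $x$. The curves $\gamma_n$ are maximal geodesics, so for $t$ fixed, $\tau(x,\gamma(t))\ge\limsup_n \tau(x,\gamma_n(t))$ by upper semicontinuity considerations, while lower semicontinuity of $\tau$ gives the reverse inequality; hence $L_\tau(\gamma|_{[0,t]})\ge \lim_n \tau(x,\gamma_n(t))$. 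Moreover $\tau$ is continuous and $\gamma$ is the limit of maximal curves, so $\gamma$ is itself maximal. Because $Y$ is regular, a maximal causal curve between timelike related points is timelike. Therefore, if $\gamma$ is not timelike, we get $x\not\ll\gamma(t)$ for all small $t>0$, which forces $\tau(x,\gamma(t))=0$. By continuity of $\tau$, this gives
\[
\tau_n(t):=\tau(x,\gamma_n(t))\to 0 \qquad \text{for every fixed } t\in(0,\varepsilon].
\]

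Next we fix a reference point on $\gamma_1$. Choose $t'$ with $\gamma_1(t')\gg x$, and set $T:=\tau(x,\gamma_1(t'))>0$. In the Minkowski comparison triangle for $x,\gamma_n(t),\gamma_1(t')$, the angle bound $C$ together with \autoref{lem:minkowski-situation-bounded-angles} says the following: the comparison point of $\gamma_n(t)$ lies inside a cone of hyperbolic angle $<C$ around the segment $\bar x\,\overline{\gamma_1(t')}$. We then distinguish two cases.

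\emph{Case 1.} Suppose $\gamma_n(t)\ll\gamma_1(t')$ with $\tau_n(t)\to0$ while $\tau(\gamma_n(t),\gamma_1(t'))\to\tau(\gamma(t),\gamma_1(t'))$. The comparison formula gives
\[
\cosh\widetilde\ma_x=\frac{\tau_n^2+T^2-\tau(\gamma_n(t),\gamma_1(t'))^2}{2\tau_nT}.
\]
The numerator tends to $T^2-\tau(\gamma(t),\gamma_1(t'))^2$, while the denominator tends to $0$. This numerator is strictly positive: otherwise the reverse triangle inequality would be an equality along $x\le\gamma(t)\ll\gamma_1(t')$. That would make the concatenation of $\gamma|_{[0,t]}$ with a geodesic from $\gamma(t)$ to $\gamma_1(t')$ maximal, hence timelike by regularity, and it would equal $\gamma_1$ by \autoref{thm:uniqueness of geodesics in cba}, contradicting the fact that $\gamma$ is null. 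Hence $\widetilde\ma_x\to\infty$, contradicting the bound $C$.

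\emph{Case 2.} If instead $\gamma_n(t)$ is not in the past of $\gamma_1(t')$, we take $t$ so small that $\gamma(t)\ll\gamma_1(t')$; this is possible since $\gamma(t)\to x\ll\gamma_1(t')$ and $I^-$ is open by strong causality and continuity of $\tau$. The relation $\gamma_n(t)\ll\gamma_1(t')$ then holds for large $n$, and we are back in Case 1.

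The main obstacle is the positivity step in Case 1, namely that the numerator cannot degenerate. If the uniqueness argument there turns out to be delicate, we would first try to use the monotonicity of comparison angles (\autoref{thm:monotonicity of comparison angles}) in $t$ to transfer the blow-up from a single $t$ to the whole family.
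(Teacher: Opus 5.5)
Your strategy is the right one, and it is what the paper's pointer to \cite[Lemma~3.33]{beran-saemann2023} amounts to (the paper gives no further proof). You show $\gamma$ is maximal, use regularity to reduce to $\tau(x,\gamma(t))=0$, and let the Lorentzian law of cosines force $\cosh\widetilde\ma_x(\gamma_n(t),\gamma_1(t'))\to\infty$.

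One step fails as written. When you take $t$ ``so small that $\gamma(t)\ll\gamma_1(t')$'', nothing prevents $\gamma(t)=x$. The parametrizations of the $\gamma_n$ are arbitrary, so $\gamma$ may be constant on an initial interval $[0,t_0]$ (timelikeness of $\gamma$ being understood up to such reparametrizations). For such $t$ the numerator tends to $T^2-\tau(x,\gamma_1(t'))^2=0$, and your positivity argument yields no contradiction: the concatenation you build is then just $\gamma_1|_{[0,t']}$, and $\gamma|_{[0,t]}$, being constant, contributes no null piece.

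The repair is short. Since $\gamma$ is continuous and non-constant and $I^-(\gamma_1(t'))$ is open, you can choose $t>t_0$ with $\gamma(t)\neq x$ and $\gamma(t)\ll\gamma_1(t')$. Suppose then that $\tau(\gamma(t),\gamma_1(t'))=T$. Concatenating $\gamma|_{[0,t]}$ with a maximal geodesic from $\gamma(t)$ to $\gamma_1(t')$ gives a maximal causal curve between the timelike related points $x$ and $\gamma_1(t')$. It passes through $\gamma(t)\neq x$ with $x\not\ll\gamma(t)$, which regularity alone excludes; the appeal to uniqueness of geodesics is not needed.

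Two smaller points:
\begin{itemize}
\item The inequality $L_\tau(\gamma|_{[0,t]})\geq\tau(x,\gamma(t))$ comes from upper semicontinuity of $L_\tau$ (available here, as in the proof of \autoref{lem:cones_open}) combined with continuity of $\tau$. It does not come from semicontinuity of $\tau$, as your wording suggests.
\item The split into Cases~1 and~2 is superfluous: fix $t$ first and then take $n$ large.
\end{itemize}
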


We now prove the existence of an asymptotic ray to a given future-directed timelike ray, starting from a point in the past of the given ray. This is the remaining part of \cite[Proposition~5.9]{barton-beran-che-gieger-roehrig-rott}. To that end we prove that under our assumptions the \textit{timelike co-ray condition} of Beem-Ehrlich-Markvorsen-Galloway \cite[Definition~3.1]{beem-ehrlich-markvorsen-galloway} is satisfied for every timelike ray.

\begin{pop}\label{pop:existence of pointwise rep}
Let $Y$ be a proper, strongly causal, locally
causally closed, regular Lorentzian pre-length space that satisfies $\CBA(0)$ globally. Let $\gamma\colon [0,\infty)\to Y$ be a future-directed timelike ray and $p\ll \gamma(0)$. Then there is exactly one future-directed timelike ray, $\gamma_p$, starting at $p$ and weakly parallel to $\gamma$. Furthermore, $\gamma_p$ is the pointwise limit of arclength-parametrized maximal geodesics of the form $[p,\gamma(t_n)]$ where $t_n\to\infty$. 
\end{pop}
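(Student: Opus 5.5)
Uniqueness is already settled by \autoref{pop:uniqueness of pointwise rep}, so only existence and the limit description need proof. The plan is to build the asymptotic ray as a limit of the segments $[p,\gamma(t_n)]$ and to control it by monotonicity of comparison angles.

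\emph{Setup and angle bound.} Since $p\ll\gamma(0)\ll\gamma(t)$, the reverse triangle inequality gives $\tau(p,\gamma(t))\geq \tau(p,\gamma(0))+t\to\infty$. For each $t_n\to\infty$ let $\sigma_n$ be the unique maximal geodesic from $p$ to $\gamma(t_n)$, which is timelike by regularity and unique by \autoref{thm:uniqueness of geodesics in cba}. Parametrize it by $\tau$-arclength on $[0,L_n]$, where $L_n=\tau(p,\gamma(t_n))$. Now consider the timelike triangle $p\ll\gamma(0)\ll\gamma(t_n)$. Its comparison angle at $p$ is
\[
\widetilde\ma_p(\gamma(0),\gamma(t_n))=\cosh^{-1}\!\Bigl(\tfrac{\tau_0^2+L_n^2-t_n^2}{2\tau_0L_n}\Bigr), \qquad \tau_0=\tau(p,\gamma(0)).
\]
We have $L_n\geq \tau_0+t_n$, and $L_n\leq \tau(p,\gamma(0))+t_n+C$ for some $C$; the latter I expect to follow by applying $\CBA(0)$ to the same triangle in the comparison plane. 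Together these show the comparison angle stays bounded by some $\alpha_o$ uniformly in $n$. By \autoref{thm:monotonicity of comparison angles}, the angles $\widetilde\ma_p(\sigma_n(s),\gamma(0))$ and $\widetilde\ma_p(\sigma_n(s),\sigma_1(s'))$ are then also uniformly bounded for small parameters. This bounded-angle control is the key estimate.

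\emph{Limit ray.} Since $Y$ is proper and the $\sigma_n$ start at $p$, I will use Lipschitz bounds coming from strong causality and local causal closedness to apply \autoref{thm:limit curve theorem} on each compact $[0,T]$. A diagonal subsequence then converges to a causal curve $\gamma_p$. By \autoref{lem:bounded comparison angles imply timelike limit} and the angle bound, $\gamma_p$ is timelike. Maximality passes to the limit by upper semicontinuity of length and continuity of $\tau$, so $\gamma_p$ is a timelike ray. Lengths and the parametrization also pass to the limit because $\tau$ is continuous and the curves are maximal.

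\emph{Asymptoticity.} Showing $\gamma_p\sim\gamma$ is the main obstacle.
\begin{itemize}
\item For $\gamma_p\ll\gamma$: in the comparison triangle for $p,\sigma_n(s),\gamma(t_n)$, the point $\sigma_n(s)$ lies on a side. I would use $\CBA(0)$ comparing $\sigma_n(s)$ with $\gamma(s+c)$, in the triangle $p,\gamma(0),\gamma(t_n)$ or via a suitably chosen triangle. \autoref{lem:minkowski-situation-bounded-angles} with the bound $\alpha_o$ gives a constant $c_o$ such that points at $\tau$-distance beyond $c_o$ times the scale are timelike related. This should give $\sigma_n(s)\ll\gamma(s+c)$ with $c$ independent of $n$, and a margin $\tau\geq\lambda>0$ lets the relation survive the limit.
\item For $\gamma\ll\gamma_p$: compare $\gamma(s)$ with $\sigma_n(s+c')$, again using the angle bound and $\tau(\gamma(s),\gamma(t_n))=t_n-s$ against $L_n-s-c'$, then pass to the limit.
\end{itemize}
If a uniform-in-$n$ margin is hard to obtain, I would fall back on closedness of $\leq$ plus a push along the ray to recover strict relations.

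Once asymptoticity holds, the uniqueness from \autoref{pop:uniqueness of pointwise rep} shows every subsequence has the same limit. Hence the full sequence $[p,\gamma(t_n)]$ converges pointwise to $\gamma_p$, which is the final claim of the statement.
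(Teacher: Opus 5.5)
Your overall plan matches the paper's: bound the angle at $p$, take limits of the segments $[p,\gamma(t_n)]$, get timelikeness from \autoref{lem:bounded comparison angles imply timelike limit}, compare in $\triangle p\,\gamma(0)\,\gamma(t_n)$, and conclude with uniqueness. Your bound $L_n\le t_n+C$ is also correct. Monotonicity of $t\mapsto\widetilde{\ma}_{\gamma(0)}(p,\gamma(t))$ gives $L_n^2\le\tau_0^2+t_n^2+2\tau_0t_n\cosh\alpha_o$, so $\delta_n:=L_n-t_n\le\tau_0\cosh\alpha_o$. However, two steps are not actually established.

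\textbf{The limit step.} You parametrize $\sigma_n$ by $\tau$-arclength and invoke Lipschitz bounds ``from strong causality and local causal closedness'' to apply \autoref{thm:limit curve theorem}. Nothing in those properties controls the $d$-Lipschitz constants of $\tau$-arclength parametrizations uniformly in $n$, so the theorem cannot be applied in that parametrization. The workable route is the paper's: parametrize by $d$-arclength (uniformly $1$-Lipschitz), extract a locally uniform limit by a diagonal argument \cite[Proposition~4.3]{beran-ohanyan-rott-solis2023}, and get timelikeness near $p$ from your angle bound, then everywhere by regularity. Two facts still have to be proved: that this limit has \emph{infinite} $\tau$-length, and that the $\tau$-reparametrized $\sigma_n$ converge pointwise to its $\tau$-reparametrization. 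Neither follows from ``$\tau$ is continuous and the curves are maximal''. A priori the $d$-length of $\sigma_n$ between $\tau$-parameters $0$ and $T$ can blow up, and then the $d$-parametrized limit has $\tau$-length at most $T$. The paper imports exactly this from \cite[Lemmata~5.7, 5.8]{barton-beran-che-gieger-roehrig-rott}, and your closing subsequence argument depends on it too.

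\textbf{Asymptoticity.} This is only asserted (``this should give''), and for $\gamma_p\ll\gamma$, \autoref{lem:minkowski-situation-bounded-angles} is the wrong tool. The missing ingredient is a law-of-cosines identity in the comparison triangle $\triangle\bar p\,\overline{\gamma(0)}\,\overline{\gamma(t_n)}$. Set $\theta_n=\widetilde{\ma}_{\gamma(t_n)}(\gamma(0),p)$, so that $2t_nL_n(\cosh\theta_n-1)=\delta_n^2-\tau_0^2$. Take $\bar y\in[\overline{\gamma(0)},\overline{\gamma(t_n)}]$ and $\bar x\in[\bar p,\overline{\gamma(t_n)}]$ at $\tau$-distances $a$ and $b$ from the top vertex. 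Then
\[
a^2+b^2-2ab\cosh\theta_n=(a-b)^2-\frac{ab}{t_nL_n}\bigl(\delta_n^2-\tau_0^2\bigr).
\]
For $\bar x=\overline{\sigma_n(s)}$ and $\bar y=\overline{\gamma(s)}$, this equals $\tau_0^2+2s(t_n+L_n-s)(\cosh\theta_n-1)\ge\tau_0^2$. Then $\CBA(0)$ gives $\tau(\sigma_n(s),\gamma(s))\ge\tau_0$ uniformly in $n$ and $s$, which is the paper's proof of $\gamma_p\ll\gamma$.

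For $\bar x=\overline{\sigma_n(s+c)}$ and $\bar y=\overline{\gamma(s)}$, use $0\le ab/(t_nL_n)\le 1$. The expression is then at least $c^2-2c\delta_n+\tau_0^2\ge\tau_0^2$ once $c\ge 2\sup_n\delta_n$. Moreover $b<a$, which forces $\overline{\gamma(s)}\ll\overline{\sigma_n(s+c)}$. So your direct comparison for $\gamma\ll\gamma_p$ does work with a uniform margin. It is slightly more economical than the paper's route, which uses \autoref{lem:minkowski-situation-bounded-angles} at $p$ to get $\tau(\gamma(0),\sigma_n(c_o))\ge\sqrt{\lambda_o}$ and then propagates this margin via the same identity in $\triangle\gamma(0)\,\sigma_n(c_o)\,\gamma(t_n)$.

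These uniform margins, together with continuity of $\tau$, are what carry the relations to the limit. Your fallback via closedness of $\le$ is not available, since only local causal closedness is assumed.
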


\begin{proof}
First observe that, by \autoref{thm:monotonicity of comparison angles}, the function
\[
[1,\infty) \ni t\mapsto \widetilde{\ma}_{\gamma(0)}(p,\gamma(t)) 
\]
is non-increasing and, in particular it is bounded. Let $\alpha_o > 0$ be such that 
\begin{equation}\label{eq:alpha_o}
\widetilde{\ma}_{\gamma(0)}(p,\gamma(t)) \leq \alpha_o \quad \text{ for all }\quad t\in [1,\infty).
\end{equation}
\autoref{lem:sum of angles of triangle} combined with \eqref{eq:alpha_o} yield 
\begin{equation}\label{eq:comparison angle at p bounded}
\widetilde{\ma}_p(\gamma(0),\gamma(t)) \leq \alpha_o
\end{equation}
for all $t\in [1,\infty)$.

Now, for any sequence $t_n\to \infty$ the reverse triangle inequality for $\tau$ implies $\tau(p,\gamma(t_n))\to \infty$. By \cite[Proposition 4.3]{beran-ohanyan-rott-solis2023}, if $\sigma_n$ is a timelike maximal geodesic from $p$ to $\gamma(t_n)$, parametrized by $d$-arclength, then there is a causal ray $\gamma_p$ starting at $p$ such that $\sigma_n \to \gamma_p$ locally uniformly (see \autoref{subfig:sigma_n-existencerep}). Moreover, since $I^-(\gamma(0))$ is open and $\gamma_p$ is continuous, it follows that $\sigma_n([0,\epsilon]),\gamma_p([0,\epsilon])\subset I^-(\gamma(0))$ for some $\epsilon>0$ and sufficiently large $n$. In particular, the sequence of timelike maximizers $\sigma_n|_{[0,\epsilon]}$ converges uniformly to $\gamma_p|_{[0,\epsilon]}$ and by \autoref{thm:monotonicity of comparison angles} applied to $\triangle p\sigma_n(\epsilon)\gamma(0)$,
\[
\widetilde{\ma}_p(\gamma(0),\sigma_n(\epsilon)) \leq \widetilde{\ma}_p(\gamma(0),\gamma(t_n)) \leq \alpha_o.
\] 
\autoref{lem:bounded comparison angles imply timelike limit} implies that $\gamma_p|_{[0,\epsilon]}$ is timelike, and since maximal geodesics in regular Lorentzian pre-length spaces have causal character, it follows that $\gamma_p$ is timelike. One can also prove that $\gamma_p$ is a timelike ray and that $\sigma_n$ converges pointwise to $\gamma_p$ in $\tau$-arclength parametrizations \cite[Lemmata~5.7, 5.8]{barton-beran-che-gieger-roehrig-rott}. Hence, we can abuse the notation and denote by $\gamma_p$ and $\sigma_n$ their corresponding $\tau$-arclength parametrizations.

We now proceed to show that $\gamma_p$ is parallel to $\gamma$. Indeed, observe that for any $s\in [0,\infty)$, since $t_n$ is unbounded, the global $\CBA(0)$ condition on the triangle $\triangle p\gamma(0)\gamma(t_n)$, for sufficiently large $n$, implies
\begin{align*}
\tau(\sigma_n(s),\gamma(s))&\geq \tau\bigl(\overline{\sigma_n(s)},\overline{\gamma(s)}\bigr)\\
&= \sqrt{\tau(p,\gamma(0))^2+2s\bigl(t_n+\tau(p,\gamma(t_n))-s\bigr)\bigl(\cosh\widetilde{\ma}_{\gamma(t_n)}(\gamma(0),p)-1\bigr)},
\end{align*}
where to pass from the first line to the second one we used that, working in the comparison space $\R^{1,1}$, the $\cosh$ on the previous equation can be expressed in the two following ways (see \autoref{subfig:triangle-existencerep}: the angle is the one on the top vertex):
\[
\frac{t_n^2+\tau(p,\gamma(t_n))^2-\tau(p,\gamma(0))^2}{2t_n\tau(p,\gamma(t_n))}=
\frac{(t_n-s)^2+\tau\bigl(\overline{\sigma_n(s)},\overline{\gamma(t_n)}\bigr)^2-\tau\bigl(\overline{\sigma_n(s)},\overline{\gamma(s)}\bigr)^2}{2(t_n-s)\tau\bigl(\overline{\sigma_n(s)},\overline{\gamma(s)}\bigr)},
\]
and that $\tau\bigl(\overline{\sigma_n(s)},\overline{\gamma(s)}\bigr)=\tau(p,\gamma(t_n))-s$.

This implies
\begin{equation}\label{eq:gamma_p in the past of gamma}
\tau(\sigma_n(s),\gamma(s))\geq \tau(p,\gamma(0))
\end{equation}
and, letting $n \to \infty$, we get
\begin{equation}\label{eq:distance between gamma and gamma_p bounded below}
     \tau(\gamma_p(s),\gamma(s)) \geq \tau(p,\gamma(0)) > 0.
\end{equation}
In particular, $\gamma_p(s)\ll \gamma(s)$ for any $s\in [0,\infty)$.

We now prove that there is some $c_o$ such that $\gamma(s)\ll \gamma_p(s+c_o)$ for all $s \in [0,\infty)$.  Indeed, 
we can apply \autoref{lem:minkowski-situation-bounded-angles} to the comparison triangles $\triangle \bar{p}\overline{\gamma(0)}\,\overline{\gamma(t_n)}$ in $\mathbb{R}^{1,1}$, which satisfy the following: 
\begin{itemize}
\item they have a fixed side-length $\tau(p,\gamma(0))$; 
\item the angle $\ma_{\bar{p}}(\overline{\gamma(0)},\overline{\gamma(t_n)})=\widetilde{\ma}_{p}(\gamma(0),\gamma(t_n))$ is bounded above by $\alpha_o$, by inequality \eqref{eq:comparison angle at p bounded};
\item they have an unbounded side-length $\tau(\overline{\gamma(0)},\overline{\gamma(t_n)})=t_n$. 
\end{itemize}

\begin{figure}
\centering
\begin{subfigure}{0.45\textwidth}
    \centering
    \tikzset{every picture/.style={line width=0.75pt}}    
    \begin{tikzpicture}[x=0.75pt,y=0.75pt,yscale=-0.9,xscale=0.9]
    \draw [color={rgb, 255:red, 74; green, 144; blue, 226 }  ,draw opacity=1 ][line width=1.5]    (204.2,38.6) .. controls (246.6,85.8) and (334.2,210.2) .. (351.8,260.6) ;
    \draw  [fill={rgb, 255:red, 0; green, 0; blue, 0 }  ,fill opacity=1 ] (368.1,165) .. controls (368.1,163.84) and (369.04,162.9) .. (370.2,162.9) .. controls (371.36,162.9) and (372.3,163.84) .. (372.3,165) .. controls (372.3,166.16) and (371.36,167.1) .. (370.2,167.1) .. controls (369.04,167.1) and (368.1,166.16) .. (368.1,165) -- cycle ; 
    \draw [color={rgb, 255:red, 128; green, 128; blue, 128 }  ,draw opacity=1 ]   (351.8,260.6) .. controls (356.75,210.5) and (357.82,172.85) .. (353.02,123.25) ;
    \draw [color={rgb, 255:red, 128; green, 128; blue, 128 }  ,draw opacity=1 ]   (351.8,260.6) .. controls (345.5,195.5) and (326.65,97.1) .. (299.4,34.6) ;
    \draw [color={rgb, 255:red, 128; green, 128; blue, 128 }  ,draw opacity=1 ]   (351.8,260.6) .. controls (337.5,205.25) and (288,82.25) .. (239.25,14) ;
    \draw [line width=1.5]    (277.8,10.6) .. controls (314.6,44.6) and (352.6,114.6) .. (370.2,165) ;
    \draw  [fill={rgb, 255:red, 0; green, 0; blue, 0 }  ,fill opacity=1 ] (349.7,260.6) .. controls (349.7,259.44) and (350.64,258.5) .. (351.8,258.5) .. controls (352.96,258.5) and (353.9,259.44) .. (353.9,260.6) .. controls (353.9,261.76) and (352.96,262.7) .. (351.8,262.7) .. controls (350.64,262.7) and (349.7,261.76) .. (349.7,260.6) -- cycle ;
    \draw  [fill={rgb, 255:red, 0; green, 0; blue, 0 }  ,fill opacity=1 ] (350.92,123.25) .. controls (350.92,122.09) and (351.86,121.15) .. (353.02,121.15) .. controls (354.18,121.15) and (355.12,122.09) .. (355.12,123.25) .. controls (355.12,124.41) and (354.18,125.35) .. (353.02,125.35) .. controls (351.86,125.35) and (350.92,124.41) .. (350.92,123.25) -- cycle ;
    \draw  [fill={rgb, 255:red, 0; green, 0; blue, 0 }  ,fill opacity=1 ] (297.3,34.6) .. controls (297.3,33.44) and (298.24,32.5) .. (299.4,32.5) .. controls (300.56,32.5) and (301.5,33.44) .. (301.5,34.6) .. controls (301.5,35.76) and (300.56,36.7) .. (299.4,36.7) .. controls (298.24,36.7) and (297.3,35.76) .. (297.3,34.6) -- cycle ;
    
    \draw (300,107.7) node [anchor=north west][inner sep=0.75pt]  [color={rgb, 255:red, 128; green, 128; blue, 128 }  ,opacity=1 ]  {$\cdots $};
    \draw (377,156.25) node [anchor=north west][inner sep=0.75pt]  [font=\small]  {$\gamma ( 0)$};
    \draw (358.05,110.45) node [anchor=north west][inner sep=0.75pt]  [font=\small]  {$\gamma ( t_{1})$};
    \draw (303.35,15.95) node [anchor=north west][inner sep=0.75pt]  [font=\small]  {$\gamma ( t_{2})$};
    \draw (360.55,199.95) node [anchor=north west][inner sep=0.75pt]  [color={rgb, 255:red, 128; green, 128; blue, 128 }  ,opacity=1 ]  {$\sigma _{1}$};
    \draw (294.45,74.15) node [anchor=north west][inner sep=0.75pt]  [color={rgb, 255:red, 128; green, 128; blue, 128 }  ,opacity=1 ]  {$\sigma _{2}$};
    \draw (236,37.9) node [anchor=north west][inner sep=0.75pt]  [color={rgb, 255:red, 128; green, 128; blue, 128 }  ,opacity=1 ]  {$\sigma _{n}$};
    \draw (258.45,133.65) node [anchor=north west][inner sep=0.75pt]  [font=\small,color={rgb, 255:red, 74; green, 144; blue, 226 }  ,opacity=1 ]  {$\gamma _{p}$};
    \draw (337,65.05) node [anchor=north west][inner sep=0.75pt]  [font=\normalsize]  {$\gamma $};
    \draw (359.4,255.65) node [anchor=north west][inner sep=0.75pt]  [font=\small]  {$p$};    
    \end{tikzpicture}
    \caption{}
    \label{subfig:sigma_n-existencerep}
\end{subfigure}\hspace{0.5cm}
\begin{subfigure}{0.45\textwidth}
    \centering
    \tikzset{every picture/.style={line width=0.75pt}}       
    \begin{tikzpicture}[x=0.75pt,y=0.75pt,yscale=-0.9,xscale=0.9]
    \draw    (279.5,20) -- (217,228.5) ;
    \draw    (279.5,20) -- (273.5,158) ;
    \draw    (273.5,158) -- (217,228.5) ;
    \draw  [fill={rgb, 255:red, 0; green, 0; blue, 0 }  ,fill opacity=1 ] (214.25,228.5) .. controls (214.25,226.98) and (215.48,225.75) .. (217,225.75) .. controls (218.52,225.75) and (219.75,226.98) .. (219.75,228.5) .. controls (219.75,230.02) and (218.52,231.25) .. (217,231.25) .. controls (215.48,231.25) and (214.25,230.02) .. (214.25,228.5) -- cycle ; 
    \draw  [fill={rgb, 255:red, 0; green, 0; blue, 0 }  ,fill opacity=1 ] (276.75,20) .. controls (276.75,18.48) and (277.98,17.25) .. (279.5,17.25) .. controls (281.02,17.25) and (282.25,18.48) .. (282.25,20) .. controls (282.25,21.52) and (281.02,22.75) .. (279.5,22.75) .. controls (277.98,22.75) and (276.75,21.52) .. (276.75,20) -- cycle ;
    \draw  [fill={rgb, 255:red, 0; green, 0; blue, 0 }  ,fill opacity=1 ] (270.75,158) .. controls (270.75,156.48) and (271.98,155.25) .. (273.5,155.25) .. controls (275.02,155.25) and (276.25,156.48) .. (276.25,158) .. controls (276.25,159.52) and (275.02,160.75) .. (273.5,160.75) .. controls (271.98,160.75) and (270.75,159.52) .. (270.75,158) -- cycle ;
    \draw  [fill={rgb, 255:red, 0; green, 0; blue, 0 }  ,fill opacity=1 ] (245.5,124.25) .. controls (245.5,122.73) and (246.73,121.5) .. (248.25,121.5) .. controls (249.77,121.5) and (251,122.73) .. (251,124.25) .. controls (251,125.77) and (249.77,127) .. (248.25,127) .. controls (246.73,127) and (245.5,125.77) .. (245.5,124.25) -- cycle ;
    \draw  [fill={rgb, 255:red, 0; green, 0; blue, 0 }  ,fill opacity=1 ] (274,84.25) .. controls (274,82.73) and (275.23,81.5) .. (276.75,81.5) .. controls (278.27,81.5) and (279.5,82.73) .. (279.5,84.25) .. controls (279.5,85.77) and (278.27,87) .. (276.75,87) .. controls (275.23,87) and (274,85.77) .. (274,84.25) -- cycle ;
    \draw    (276.75,84.25) -- (248.25,124.25) ;
    
    \draw (208,109.9) node [anchor=north west][inner sep=0.75pt]  [font=\small]  {$\overline{\sigma _{n}( s)}$};
    \draw (202,230.4) node [anchor=north west][inner sep=0.75pt]  [font=\small]  {$\overline{p}$};
    \draw (284.5,9.9) node [anchor=north west][inner sep=0.75pt]  [font=\small]  {$\overline{\gamma ( t_{n})}$};
    \draw (279.5,149.4) node [anchor=north west][inner sep=0.75pt]  [font=\small]  {$\overline{\gamma ( 0)}$};
    \draw (284,72.9) node [anchor=north west][inner sep=0.75pt]  [font=\small]  {$\overline{\gamma ( s)}$};
    \end{tikzpicture}
    \caption{}
    \label{subfig:triangle-existencerep}
\end{subfigure}
    \caption{Some situations described in the proof of \autoref{pop:existence of pointwise rep}.}
    \label{fig:existence-representative}
\end{figure}

This way, we obtain $\tilde{c}_o=\cosh\alpha_o+\sinh\alpha_o$ such that whenever $r\in\R^{1,1}$ satisfies $\tau(p,r)>\tilde{c}_o\tau(p,\gamma(0))$ and $\ma_p(\gamma(0),r)\leq \alpha_o$, then $\overline{\gamma(0)}\ll r$. Let us call $c_o:=2\tilde{c}_o\tau(p,\gamma(0))$, for future convenience.

Consider, for $n$ sufficiently large, a point $x_n\in [p,\gamma(t_n)]$ such that $\tau(p,x_n) \geq c_o/2$. Then $\overline{\gamma(0)}\ll\bar{x}_n$ by the previous reasonings, where $\bar{x}_n$ is the comparison point for $x_n$ in $[\bar{p},\overline{\gamma(t_n)}]$ (see \autoref{fig:existence-representative-bis}). By the global $\CBA(0)$ condition, we get $\tau(\gamma(0),x_n)\geq \tau(\overline{\gamma(0)},\bar{x}_n)>0$, i.e., $\gamma(0)\ll x_n$. In particular, we have
\[
\tau(p,{\sigma}_n(c_o))=c_o>c_o/2,
\]
and therefore $\gamma(0)\ll {\sigma}_n(c_o)$. Analogously to \eqref{eq:gamma_p in the past of gamma}, we obtain (see again \autoref{subfig:triangle-existencerep} with the points changed accordingly):
\[
\tau(\gamma(s),\sigma_n(s+c_o)) \geq \tau(\gamma(0),\sigma_n(c_o)),
\]
which implies 
\[
\tau(\gamma(s),\gamma_p(s+c_o)) \geq \tau(\gamma(0),\gamma_p(c_o))
\]
by letting $n\to \infty$.
Finally, observe that
\begin{align*}
\tau(\gamma(0),{\sigma}_n(c_o))^2 &= c_o^2 + \tau(p,\gamma(0))^2 - 2\tau(p,\gamma(0))c_o\cosh\widetilde{\ma}_p(\gamma(0),\sigma_n(c_o))\\
&\geq c_o^2 + \tau(p,\gamma(0))^2 - 2\tau(p,\gamma(0))c_o\cosh\widetilde{\ma}_p(\gamma(0),\gamma(t_n))\\
&\geq  c_o^2 + \tau(p,\gamma(0))^2 - 2\tau(p,\gamma(0))c_o\cosh \alpha_o =: \lambda_o,
\end{align*}
where $\lambda_o>0$ by substituting $c_o=2(\cosh\alpha_o+\sinh\alpha_o)\tau(p,\gamma(0))$.

In turn, this implies 
\[
\tau(\gamma(s),\gamma_p(s+c_o))\geq \tau(\gamma(0),{\gamma}_p(c_o))  \geq \sqrt{\lambda_o}>0,
\]
and therefore, $\gamma(s)\ll \gamma_p(s+c_o)$, for all $s\in[0,\infty)$.\qedhere

\begin{figure}
\def \globalscale {1.000000}
\begin{tikzpicture}[y=1.1cm, x=1.1cm, yscale=0.9,xscale=0.9, every node/.append style={scale=\globalscale}, inner sep=0pt, outer sep=0pt]
  \path[draw=black,line cap=butt,line join=miter,line width=0.02cm] (5.0, 26.7) -- (10.0, 21.7);
  \path[draw=black,line cap=butt,line join=miter,line width=0.02cm] (10.0, 21.7) -- (15.0, 26.7);
  \path[draw=black,line cap=butt,line join=miter,line width=0.02cm] (10.0, 21.7) -- (10.0, 23.7);
  \path[draw=black,line cap=butt,line join=miter,line width=0.02cm,miter limit=4.0,dash pattern=on 0.1cm off 0.1cm] (10.0, 21.7) -- (7.1, 26.7);
  \path[draw=black,line cap=butt,line join=miter,line width=0.02cm] (10.0, 23.7) -- (8.4, 26.3) -- (10.0, 21.7);
  \path[draw=black,line cap=butt,line join=miter,line width=0.02cm,miter limit=4.0,dash pattern=on 0.1cm off 0.1cm] (10.0, 21.7) -- (12.9, 26.7);
  \path[draw=black,line cap=butt,line join=miter,line width=0.02cm] (10.0, 23.7) -- (8.9, 24.8);
  \path[draw=black,line width=0.02cm,xscale=-1.0,yscale=1.0] (-10.6, 22.7)arc(240.4:270.0:1.2 and -1.2);
  \path[fill=black,line width=0.02cm] (10.0, 21.7) circle (2pt);
  \path[fill=black,line width=0.02cm] (10.0, 23.7) circle (2pt);
  \path[fill=black,line width=0.02cm] (8.4, 26.3) circle (2pt);
  \path[fill=black,line width=0.02cm] (8.925, 24.8) circle (2pt);
  \node[line width=0.0cm,anchor=south west] (text10651) at (9.8, 21.2){$\bar{p}$};
  \node[line width=0.0cm,anchor=south west] (text13019) at (10.1, 23.9  ){$\overline{\gamma(0)}$};
  \node[line width=0.0cm,anchor=south west] (text16653) at (8.5, 26.4){$\overline{\gamma(t_n)}$};
  \node[line width=0.0cm,anchor=south west] (text28651) at (8.45, 24.8){$\bar{x}$};
  \node[line width=0.0cm,anchor=south west] (text43450) at (10.2, 22.95){$\alpha_0$};
\end{tikzpicture}
    \caption{Situation described in the proof of \autoref{pop:existence of pointwise rep}.}
    \label{fig:existence-representative-bis}
\end{figure}
\end{proof}

\begin{lem}\label{lem:angles between asymptotic rays}
Let $Y$ be a Lorentzian pre-length space satisfying the hypotheses of Propositions~\ref{pop:uniqueness of pointwise rep} and \ref{pop:existence of pointwise rep}. Let $\gamma,\gamma'\colon [0,\infty)\to Y$ be future-directed timelike rays such that $\gamma(0)\ll \gamma'(0)$ and $\gamma\sim\gamma'$. Then (see \autoref{subfig:comparison-angles-asymptotic}):
\[
\ma_{\gamma(0)}(\gamma,[\gamma(0),\gamma'(0)]) \leq \ma_{\gamma'(0)}(\gamma',[\gamma'(0),\gamma(0)]).
\]
\end{lem}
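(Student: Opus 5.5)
The plan is to compare both angles through the timelike triangles $\triangle x\,y\,\gamma'(t)$ for large $t$, where $x:=\gamma(0)$ and $y:=\gamma'(0)$, and then let $t\to\infty$. Since $y$ lies on the ray $\gamma'$, we have $x\ll y\ll\gamma'(t)$ for every $t>0$, so these triangles are timelike. By \autoref{thm:uniqueness of geodesics in cba} the sides are unique geodesics. Let $\sigma_t:=[x,\gamma'(t)]$, parametrized by $\tau$-arclength.

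First, the angle at $y$. Apply \autoref{lem:sum of angles of triangle} to a comparison triangle of $\triangle x\,y\,\gamma'(t)$. It gives
\[
\widetilde{\ma}_x(y,\gamma'(t))+\widetilde{\ma}_{\gamma'(t)}(x,y)=\widetilde{\ma}_y(x,\gamma'(t)),
\]
and hence $\widetilde{\ma}_x(y,\gamma'(t))\leq \widetilde{\ma}_y(x,\gamma'(t))$. At $y$ the two sides are the past-directed segment $[y,x]$ and the future-directed ray $\gamma'$, so they have opposite time orientations. By \autoref{thm:monotonicity of comparison angles} (global $\CBA(0)$), the comparison angle $\widetilde{\ma}_y(\cdot,\cdot)$ is non-increasing as the points move away from $y$. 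Its value at $(x,\gamma'(t))$ is therefore bounded by the limsup as the points approach $y$:
\[
\widetilde{\ma}_y(x,\gamma'(t))\leq \ma_{y}(\gamma',[y,x]).
\]

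Second, the angle at $x$. Here both $\sigma_t$ and $[x,y]$ are future-directed, and the comparison angle is non-decreasing. Consequently
\[
\ma_x(\sigma_t,[x,y])\leq \widetilde{\ma}_x(y,\gamma'(t)).
\]
Combining the two steps yields $\ma_x(\sigma_t,[x,y])\leq \ma_y(\gamma',[y,x])$ for every $t$.

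It remains to show $\ma_x(\sigma_{t_n},[x,y])\to\ma_x(\gamma,[x,y])$ along some sequence $t_n\to\infty$. Since $x\ll\gamma'(0)$, \autoref{pop:existence of pointwise rep} applies to the ray $\gamma'$ and the point $x$. It says that, after passing to a subsequence, the maximal geodesics $[x,\gamma'(t_n)]$ converge pointwise in $\tau$-arclength to the unique ray from $x$ weakly parallel to $\gamma'$. Because $\gamma\sim\gamma'$ and $\gamma(0)=x$, \autoref{pop:uniqueness of pointwise rep} identifies this ray with $\gamma$. Now fix $b>0$ small and restrict all curves to $[0,b]$; this is possible since $\tau(x,\gamma'(t_n))\to\infty$. \autoref{thm:continuity of angles}, with the constant sequence $[x,y]$ as the second curve, then gives the desired convergence. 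Passing to the limit in the inequality of the previous paragraph proves the lemma.

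The main obstacle is this last limiting step. One has to check that the convergence furnished by \autoref{pop:existence of pointwise rep} is genuinely pointwise convergence of $\tau$-arclength parametrizations on a common interval $[0,b]$, which is what \autoref{thm:continuity of angles} requires, and that existence of angles (\autoref{thm:monotonicity implies existence of angles}) justifies replacing limsups by limits. If the pointwise convergence turns out to hold only after reparametrization, I would first try to compensate for that using the locally uniform convergence in $d$-arclength from \cite[Proposition 4.3]{beran-ohanyan-rott-solis2023}, together with \cite[Lemmata~5.7, 5.8]{barton-beran-che-gieger-roehrig-rott}.
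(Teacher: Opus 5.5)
Your proposal is correct and follows essentially the same route as the paper. Both use the timelike triangles $\triangle\,\gamma(0)\,\gamma'(0)\,\gamma'(t_n)$ with monotonicity of comparison angles at both vertices (non-decreasing at $\gamma(0)$, non-increasing at $\gamma'(0)$ because of the opposite orientations), \autoref{lem:sum of angles of triangle} to pass between the two vertices, and \autoref{pop:existence of pointwise rep}, \autoref{pop:uniqueness of pointwise rep} and \autoref{thm:continuity of angles} for the limit; the only difference is that you prove the inequality for each fixed $t$ before letting $t\to\infty$, whereas the paper writes one chain of limits.
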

\begin{proof}
Let $t_n\geq 0$ be such that $t_n\to\infty$ as $n\to\infty$. By Propositions~\ref{pop:uniqueness of pointwise rep} and \ref{pop:existence of pointwise rep}, $[\gamma(0),\gamma'(t_n)]\to \gamma$ pointwise as $n\to \infty$. Therefore,
\begin{align}
\ma_{\gamma(0)}(\gamma,[\gamma(0),\gamma'(0)]) &= \lim_{n\to\infty} \ma_{\gamma(0)}(\gamma'(t_n),\gamma'(0)), \tag*{by \autoref{thm:continuity of angles},}\\
&\leq \lim_{n\to\infty} \widetilde\ma_{\gamma(0)}(\gamma'(t_n),\gamma'(0)), \tag*{by \autoref{thm:monotonicity of comparison angles},}\\
&\leq \lim_{n\to\infty} \widetilde\ma_{\gamma'(0)}(\gamma'(t_n),\gamma(0)), \tag*{by \autoref{lem:sum of angles of triangle},}\\
&\leq \ma_{\gamma'(0)}(\gamma',[\gamma'(0),\gamma(0)]). \tag*{by \autoref{thm:monotonicity of comparison angles}.}
\end{align}
\end{proof}

\begin{lem}\label{lem:points_seen_from_far_seem_close}
    Let $Y$ be a \LpLS satisfying the hypotheses of Propositions~\ref{pop:uniqueness of pointwise rep} and \ref{pop:existence of pointwise rep}. Let $\xi$ be a future ideal point and let $p,q\in I^-(\xi)$ be such that $p\ll q$. Consider the representative $\gamma$ of $\xi$ at $p$. Then (see \autoref{subfig:angle-at-infinity-is-0}):
    \[
    \ma_{\gamma(t)}(p,q)\xrightarrow{t\to\infty} 0.
    \] 
\end{lem}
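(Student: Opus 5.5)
The plan is to bound the angle $\ma_{\gamma(t)}(p,q)$ by a comparison angle, using non-positive timelike monotonicity comparison, and then to show directly that this comparison angle tends to $0$. Here $\ma_{\gamma(t)}(p,q)$ means the angle between the past-directed segment of $\gamma$ from $\gamma(t)$ to $p$ and the past-directed maximal geodesic $[\gamma(t),q]$. That geodesic exists because $Y$ is a global non-positive timelike comparison neighbourhood, hence strictly timelike geodesically connected, as soon as $q\ll\gamma(t)$. By \autoref{thm:monotonicity implies existence of angles} this angle exists. By \autoref{thm:monotonicity of comparison angles}, the comparison angle at $\gamma(t)$ is non-decreasing along two geodesics with the same time orientation. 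Letting the parameters tend to $0$, we obtain
\[
\ma_{\gamma(t)}(p,q)\leq \widetilde{\ma}_{\gamma(t)}(p,q),
\]
the comparison angle at $\gamma(t)$ of the timelike triangle $\triangle p\,q\,\gamma(t)$.

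\textbf{Step 1: $q\ll\gamma(t)$ for large $t$.} Since $q\in I^-(\xi)$, there is a representative $\eta$ of $\xi$ and some $s_1$ with $q\ll\eta(s_1)$. Since $\eta\sim\gamma$, we have $\eta\ll\gamma$, so $\eta(s_1)\ll\gamma(s_0)$ with $s_0=s_1+c$. Hence $q\ll\gamma(s_0)$, and then $q\ll\gamma(t)$ for all $t\geq s_0$. In particular, $\triangle p\,q\,\gamma(t)$ is a timelike triangle for $t\geq s_0$.

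\textbf{Step 2: estimate the side lengths.} Put $a:=\tau(p,q)>0$ and $T_t:=\tau(q,\gamma(t))$; note that $\tau(p,\gamma(t))=t$. The reverse triangle inequality through $q$ gives $t\geq a+T_t$, so $T_t\leq t-a$. The reverse triangle inequality through $\gamma(s_0)$ gives
\[
T_t\geq \tau(q,\gamma(s_0))+(t-s_0)\geq t-s_0.
\]
Thus $t-T_t$ is bounded, lying in $[a,s_0]$, while $T_t\to\infty$.

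\textbf{Step 3: compute the comparison angle.} With $\sigma=1$ at the top vertex, \autoref{def:comparison-triangle} gives
\[
\cosh\widetilde{\ma}_{\gamma(t)}(p,q)=\frac{t^2+T_t^2-a^2}{2tT_t}=1+\frac{(t-T_t)^2-a^2}{2tT_t}.
\]
The numerator of the last fraction is bounded by $s_0^2$, while the denominator tends to infinity. Hence the comparison angle tends to $0$, and so does $\ma_{\gamma(t)}(p,q)$.

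The only delicate point is the monotonicity step. One must check that the orientation convention of \autoref{def:monotonicity} gives the inequality in the right direction: both curves at $\gamma(t)$ are past-directed, so the comparison angle is non-decreasing and the upper angle is its infimum. One must also check that the comparison angle used there, taken at points $\gamma(t-s)$ and on $[\gamma(t),q]$, extends to the endpoints $p$ and $q$. This holds by monotonicity itself, since the endpoints are admissible parameters in $A$ for this pair of maximal geodesics.
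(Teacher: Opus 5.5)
Your proof is correct and follows essentially the same route as the paper. You bound $\ma_{\gamma(t)}(p,q)$ by the comparison angle $\widetilde{\ma}_{\gamma(t)}(p,q)$ via monotonicity, then use the reverse triangle inequality to show this comparison angle tends to $0$; your estimate $\tau(q,\gamma(t))\geq t-s_0$ is the paper's $\tau(q,\gamma(t+s))\geq\tau(q,\gamma(t))+s$. Your Step 1 also spells out, via asymptoticity, why $q\ll\gamma(t)$ for large $t$, which the paper simply assumes.
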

\begin{proof}
    Consider $t>0$ such that $q\ll \gamma(t)$, and the timelike triangle $\triangle pq\gamma(t)$. The curvature bound from above ensures (see ~\autoref{thm:monotonicity of comparison angles}) that $\ma_{\gamma(t)}(p,q)\leq  \smash{\widetilde{\ma}_{\gamma(t)}(p,q)}$. Now, for $s>0$ we have
    \[
\begin{aligned}
    \widetilde{\ma}_{\gamma_{t+s}} (p,q)&=
    \cosh^{-1} \left(
    \frac{\tau(p,\gamma_{t+s})^2 + \tau(q,\gamma_{t+s})^2 -\tau(p,q)^2}{2\tau(p,\gamma_{t+s})\tau(q,\gamma_{t+s})}
    \right) \\
    &\leq\cosh^{-1} \left(
    \frac{2(t+s)^2}{2(t+s)(\tau(q,\gamma_t)+s)}
    \right) =\cosh^{-1} \left(
    \frac{t+s}{\tau(q,\gamma_t)+s}
    \right) 
    \xrightarrow{s\to\infty} 0,
\end{aligned}
    \]
    and the previous inequality gives the result.
\end{proof}

\begin{figure}
\centering
\begin{subfigure}{0.45\textwidth}
    \centering
    \tikzset{every picture/.style={line width=0.75pt}}      
    \begin{tikzpicture}[x=0.75pt,y=0.75pt,yscale=-0.8,xscale=0.8]
    \draw [color={rgb, 255:red, 0; green, 0; blue, 0 }  ,draw opacity=1 ][line width=1]  (351.8,260.6) .. controls (354.5,219.75) and (360.5,193.25) .. (370.2,165) ;
    \draw [color={rgb, 255:red, 0; green, 0; blue, 0 }  ,draw opacity=1 ][line width=1]    (204.2,38.6) .. controls (246.6,85.8) and (334.2,210.2) .. (351.8,260.6) ;
    \draw  [fill={rgb, 255:red, 0; green, 0; blue, 0 }  ,fill opacity=1 ] (368.1,165) .. controls (368.1,163.84) and (369.04,162.9) .. (370.2,162.9) .. controls (371.36,162.9) and (372.3,163.84) .. (372.3,165) .. controls (372.3,166.16) and (371.36,167.1) .. (370.2,167.1) .. controls (369.04,167.1) and (368.1,166.16) .. (368.1,165) -- cycle ;
    \draw [line width=1]    (277.8,10.6) .. controls (314.6,44.6) and (352.6,114.6) .. (370.2,165) ;
    \draw  [fill={rgb, 255:red, 0; green, 0; blue, 0 }  ,fill opacity=1 ] (349.7,260.6) .. controls (349.7,259.44) and (350.64,258.5) .. (351.8,258.5) .. controls (352.96,258.5) and (353.9,259.44) .. (353.9,260.6) .. controls (353.9,261.76) and (352.96,262.7) .. (351.8,262.7) .. controls (350.64,262.7) and (349.7,261.76) .. (349.7,260.6) -- cycle ;
    \draw  [draw opacity=0] (334.99,225.2) .. controls (340.1,222.89) and (345.8,221.6) .. (351.8,221.6) .. controls (353.03,221.6) and (354.25,221.65) .. (355.45,221.76) -- (351.8,260.6) -- cycle ; \draw   (334.99,225.2) .. controls (340.1,222.89) and (345.8,221.6) .. (351.8,221.6) .. controls (353.03,221.6) and (354.25,221.65) .. (355.45,221.76) ;  
    \draw  [draw opacity=0] (363.65,187.01) .. controls (356.15,183.78) and (350.79,175.14) .. (350.79,165) .. controls (350.79,155.57) and (355.43,147.44) .. (362.11,143.75) -- (370.2,165) -- cycle ; \draw   (363.65,187.01) .. controls (356.15,183.78) and (350.79,175.14) .. (350.79,165) .. controls (350.79,155.57) and (355.43,147.44) .. (362.11,143.75) ;  
    \draw (359.5,254.25) node [anchor=north west][inner sep=0.75pt]  [font=\small]  {$\gamma ( 0)$};
    \draw (337,65.05) node [anchor=north west][inner sep=0.75pt]  [font=\normalsize]  {$\gamma '$};
    \draw (375.25,165.7) node [anchor=north west][inner sep=0.75pt]  [font=\small]  {$\gamma '( 0)$};
    \draw (254.5,127.15) node [anchor=north west][inner sep=0.75pt]  [font=\normalsize]  {$\gamma $};
    \end{tikzpicture}
    \caption{}
    \label{subfig:comparison-angles-asymptotic}
\end{subfigure}\hspace{0.5cm}
\begin{subfigure}{0.45\textwidth}
    \centering
    \tikzset{every picture/.style={line width=0.75pt}}       
    \begin{tikzpicture}[x=0.75pt,y=0.75pt,yscale=-1,xscale=1]
    
    \draw [color={rgb, 255:red, 0; green, 0; blue, 0 }  ,draw opacity=1 ]   (351.8,260.6) .. controls (354.5,219.75) and (357.97,205.92) .. (367.67,177.67) ;
    \draw  [fill={rgb, 255:red, 0; green, 0; blue, 0 }  ,fill opacity=1 ] (349.7,260.6) .. controls (349.7,259.44) and (350.64,258.5) .. (351.8,258.5) .. controls (352.96,258.5) and (353.9,259.44) .. (353.9,260.6) .. controls (353.9,261.76) and (352.96,262.7) .. (351.8,262.7) .. controls (350.64,262.7) and (349.7,261.76) .. (349.7,260.6) -- cycle ;
    \draw [color={rgb, 255:red, 128; green, 128; blue, 128 }  ,draw opacity=1 ]   (274.33,130) .. controls (321.67,164.33) and (334.67,168) .. (367.67,177.67) ;
    \draw [color={rgb, 255:red, 128; green, 128; blue, 128 }  ,draw opacity=1 ]   (243.5,87.75) .. controls (287.25,128.92) and (323,155.25) .. (367.67,177.67) ;
    \draw [color={rgb, 255:red, 128; green, 128; blue, 128 }  ,draw opacity=1 ]   (210,44.75) .. controls (254.25,90.25) and (337,160.33) .. (367.67,177.67) ;
    \draw  [fill={rgb, 255:red, 0; green, 0; blue, 0 }  ,fill opacity=1 ] (365.57,177.67) .. controls (365.57,176.51) and (366.51,175.57) .. (367.67,175.57) .. controls (368.83,175.57) and (369.77,176.51) .. (369.77,177.67) .. controls (369.77,178.83) and (368.83,179.77) .. (367.67,179.77) .. controls (366.51,179.77) and (365.57,178.83) .. (365.57,177.67) -- cycle ;
    \draw [color={rgb, 255:red, 0; green, 0; blue, 0 }  ,draw opacity=1 ][line width=1]    (204.2,38.6) .. controls (246.6,85.8) and (334.2,210.2) .. (351.8,260.6) ;
    \draw  [draw opacity=0] (236.63,70.61) .. controls (235.36,71.8) and (234,72.91) .. (232.56,73.93) -- (210,44.75) -- cycle ; \draw  [color={rgb, 255:red, 128; green, 128; blue, 128 }  ,draw opacity=1 ] (236.63,70.61) .. controls (235.36,71.8) and (234,72.91) .. (232.56,73.93) ;  
    \draw  [draw opacity=0] (271.09,112.68) .. controls (269.55,114.23) and (267.86,115.65) .. (266.06,116.93) -- (243.5,87.75) -- cycle ; \draw  [color={rgb, 255:red, 128; green, 128; blue, 128 }  ,draw opacity=1 ] (271.09,112.68) .. controls (269.55,114.23) and (267.86,115.65) .. (266.06,116.93) ;  
    \draw  [draw opacity=0] (304.88,151.56) .. controls (302.28,154.91) and (299.09,157.82) .. (295.45,160.14) -- (274.33,130) -- cycle ; \draw  [color={rgb, 255:red, 128; green, 128; blue, 128 }  ,draw opacity=1 ] (304.88,151.56) .. controls (302.28,154.91) and (299.09,157.82) .. (295.45,160.14) ; 
    \draw  [fill={rgb, 255:red, 0; green, 0; blue, 0 }  ,fill opacity=1 ] (272.23,130) .. controls (272.23,128.84) and (273.17,127.9) .. (274.33,127.9) .. controls (275.49,127.9) and (276.43,128.84) .. (276.43,130) .. controls (276.43,131.16) and (275.49,132.1) .. (274.33,132.1) .. controls (273.17,132.1) and (272.23,131.16) .. (272.23,130) -- cycle ; 
    \draw  [fill={rgb, 255:red, 0; green, 0; blue, 0 }  ,fill opacity=1 ] (241.4,87.75) .. controls (241.4,86.59) and (242.34,85.65) .. (243.5,85.65) .. controls (244.66,85.65) and (245.6,86.59) .. (245.6,87.75) .. controls (245.6,88.91) and (244.66,89.85) .. (243.5,89.85) .. controls (242.34,89.85) and (241.4,88.91) .. (241.4,87.75) -- cycle ; 
    \draw  [fill={rgb, 255:red, 0; green, 0; blue, 0 }  ,fill opacity=1 ] (207.9,44.75) .. controls (207.9,43.59) and (208.84,42.65) .. (210,42.65) .. controls (211.16,42.65) and (212.1,43.59) .. (212.1,44.75) .. controls (212.1,45.91) and (211.16,46.85) .. (210,46.85) .. controls (208.84,46.85) and (207.9,45.91) .. (207.9,44.75) -- cycle ;
    
    \draw (357.75,251.75) node [anchor=north west][inner sep=0.75pt]  [font=\small]  {$p$};
    \draw (296.5,179.4) node [anchor=north west][inner sep=0.75pt]  [font=\normalsize]  {$\gamma $};
    \draw (372.17,174.53) node [anchor=north west][inner sep=0.75pt]  [font=\small]  {$q$};
    \end{tikzpicture}
    \caption{}
    \label{subfig:angle-at-infinity-is-0}
\end{subfigure}
    \caption{Situations described in Lemmata~\ref{lem:angles between asymptotic rays} and \ref{lem:points_seen_from_far_seem_close}.}
    \label{fig:auxiliary-angles}
\end{figure}

\section{Cone topology on the future timelike ideal completion}
In this section we introduce a natural topology on $Y^*:=Y\cup \bd^+Y$, namely the \textit{cone topology}, in the same spirit as the one defined in the metric case \cite{eberlein_visibility_1973}. Of course, an analogous topology can be defined for $Y_*:=Y\cup \bd^-Y$.

Let $Y$ be a \LpLS satisfying $\CBA(0)$ globally. Let $x\in Y$ and $\xi,\xi'\in \bd^+Y$. Then the angle
\begin{equation}
\ma_x(\xi,\xi') := \ma_x(\alpha,\alpha')  = \limsup_{s,t\to 0^+} \cosh^{-1}\left(\frac{s^2+t^2-\tau( \alpha(s),\alpha'(t))^2}{2st}\right)
\end{equation}
is well-defined, by Theorems~\ref{thm:monotonicity implies existence of angles} and \ref{pop:uniqueness of pointwise rep}, whenever there exist $\alpha,\alpha'$ future-directed timelike geodesic rays such that $\alpha(0)=\alpha'(0)=x$ and $\alpha(\infty) = \xi$, $\alpha'(\infty)=\xi'$. Otherwise, we declare that $\ma_x(\xi,\xi')$ does not exist. Note that if $Y$ is proper, strongly causal, locally causally closed and regular, the representatives $\alpha=\xi_x$ and $\alpha'=\xi'_x$ exist whenever $x\in I^-(\xi)\cap I^-(\xi')$, by \autoref{pop:existence of pointwise rep}.

\begin{dfn}\label{def:basic nbhd cone topology}
    Let $Y$ be a proper, strongly causal, locally causally closed, regular Lorentzian pre-length space satisfying $\CBA(0)$ globally. Let $\xi\in \bd^+Y$, $p\in I^-(\xi)$ and $R,\varepsilon>0$. Define the sets
    \[
    \begin{aligned}
    V_{p,R,\varepsilon}(\xi)&=\left\{
    x\in Y^* \relmiddle| \ma_p\bigl(\xi_p,[p,x]\bigr)<\varepsilon \text{ and } \tau(p,x)>R
    \right\},
    \end{aligned}
    \]
    where $[p,x]$ denotes the unique geodesic from $p$ to $x$ (see \autoref{thm:uniqueness of geodesics in cba}). We call such a set a \textit{truncated cone at $p$ with axis $\xi$}.
\end{dfn}

\begin{lem}\label{lem:cones_open}
    Let $Y$ be a proper, strongly causal, locally causally closed, regular Lorentzian pre-length space satisfying $\CBA(0)$ globally. The intersection of a truncated cone with $Y$ is open in the metric topology.
\end{lem}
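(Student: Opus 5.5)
We have to show that $V:=V_{p,R,\varepsilon}(\xi)\cap Y$ is open in the metric topology. Fix $x\in V$. Then $\tau(p,x)>R>0$, so $p\ll x$, and $\ma_p(\xi_p,[p,x])<\varepsilon$. The plan is to show that every $y$ close enough to $x$ in $d$ also lies in $V$, by checking the two defining conditions separately.

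The condition $\tau(p,y)>R$ is handled by continuity of $\tau$. Since $Y$ satisfies $\CBA(0)$ globally, $\tau$ is finite and continuous on $Y\times Y$. Hence the set $\{y:\tau(p,y)>R\}$ is open and contains $x$. It also forces $p\ll y$, so the geodesic $[p,y]$ exists and is unique by \autoref{thm:uniqueness of geodesics in cba}, and it is timelike by regularity.

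The angle condition is the heart of the proof. I will argue by contradiction. Suppose there are $y_n\to x$ with $\tau(p,y_n)>R$ and $\ma_p(\xi_p,[p,y_n])\ge\varepsilon$.

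\begin{itemize}
\item[\textit{Step 1: the geodesics converge.}] Parametrize $\gamma_n:=[p,y_n]$ and $\gamma:=[p,x]$ proportionally to $\tau$-arclength on $[0,1]$, so that $\tau(p,\gamma_n(s))=s\,\tau(p,y_n)$. All $\gamma_n$ start at $p$, and their endpoints stay in a compact neighbourhood of $x$ (properness). I claim $\gamma_n\to\gamma$ pointwise.
  \begin{itemize}
  \item Global hyperbolicity is not assumed here, so I would first reparametrize by $d$-arclength. Strong causality together with causal closedness and properness should give uniformly bounded $d$-lengths near the compact set $J(p,x')$ for some $x'\gg x$. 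Then \autoref{thm:limit curve theorem} gives a subsequence converging uniformly to a causal curve from $p$ to $x$.
  \item Upper semicontinuity of the length (from locally causally closed) and continuity of $\tau$ show the limit is maximal, hence it equals $\gamma$ by \autoref{thm:uniqueness of geodesics in cba}.
  \item Since every subsequence has a further subsequence with the same limit, the whole sequence converges. Continuity of $\tau$ then transfers this to convergence in the $\tau$-parametrization.
  \end{itemize}

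\item[\textit{Step 2: the angles converge.}] Next, rescale to a common domain $[0,b]$ with $b\le R$, so all curves are $\tau$-arclength parametrized and start at $p$. Apply \autoref{thm:continuity of angles} with $\alpha_n=\alpha$ equal to $\xi_p|_{[0,b]}$ and $\beta_n=\gamma_n$, which converge pointwise to $\gamma$. This gives $\ma_p(\xi_p,\gamma_n)\to\ma_p(\xi_p,\gamma)<\varepsilon$, contradicting $\ma_p(\xi_p,[p,y_n])\ge\varepsilon$ for all $n$.
\end{itemize}

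If the compactness in Step 1 is hard to obtain directly, a fallback is to use the monotonicity of comparison angles (\autoref{thm:monotonicity of comparison angles}) together with continuity of $\tau$. This controls $\widetilde\ma_p(\xi_p(s),\gamma_n(t))$ uniformly near $\widetilde\ma_p(\xi_p(s),\gamma(t))$ for fixed small $s,t$. Combined with the fact that the angle is the limit of these comparison angles (existence of angles, \autoref{thm:monotonicity implies existence of angles}), this gives upper semicontinuity of $y\mapsto\ma_p(\xi_p,[p,y])$. Upper semicontinuity is all that openness requires.

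I expect the main obstacle to be the convergence $[p,y_n]\to[p,x]$ in Step 1, namely the uniform Lipschitz bound needed for the limit curve theorem.
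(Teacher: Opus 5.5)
Your strategy is the paper's: take $y_n\to x$, get $[p,y_n]\to[p,x]$ from \autoref{thm:limit curve theorem} plus uniqueness of geodesics, and conclude with \autoref{thm:continuity of angles}. Your subsequence-of-subsequence remark even supplies a step the paper skips.

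However, the step you flag is a genuine gap as written. Bounding the $d$-lengths of $[p,y_n]$ ``near the compact set $J(p,x')$'' presupposes compact causal diamonds, which is essentially global hyperbolicity. This lemma does not assume that, and a point $x'\gg x$ need not exist either. For instance, the flat strip $\{(t,r)\in\R^{1,1}:|r|<1\}$, with $d$ induced by a complete Riemannian metric, satisfies every hypothesis of the lemma and still has non-compact diamonds. Strong causality and properness alone give no uniform bound on $L_d([p,y_n])$. The paper does not claim such a bound: it separates the bounded case from the divergent one, and for the latter only sketches local uniform convergence via \cite[Theorem~2.23]{beran-ohanyan-rott-solis2023}. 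Your fallback does not escape the issue either. Continuity of $\tau$ controls $\widetilde\ma_p(\xi_p(s),\gamma_n(t))$ only once you know $\gamma_n(t)\to\gamma(t)$, which is Step 1 again.

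You can bypass the convergence of geodesics entirely. Let $T:=\tau(p,x)$, fix $t_0\in(0,T)$, and put $x_-:=[p,x](t_0)$. Then $\tau(x_-,x)=T-t_0>0$, and since $I^+(x_-)$ is open, $x_-\ll y$ for all $y$ near $x$. By \autoref{thm:triangle inequality for angles}(\ref{item:same orientation triangle inequality}) and monotonicity of comparison angles (\autoref{thm:monotonicity of comparison angles}),
\[
\ma_p\bigl(\xi_p,[p,y]\bigr)\leq\ma_p\bigl(\xi_p,[p,x]\bigr)+\ma_p\bigl([p,x],[p,y]\bigr)\leq\ma_p\bigl(\xi_p,[p,x]\bigr)+\widetilde{\ma}_p(x_-,y),
\]
while continuity of $\tau$ gives
\[
\cosh\widetilde{\ma}_p(x_-,y)=\frac{t_0^2+\tau(p,y)^2-\tau(x_-,y)^2}{2t_0\,\tau(p,y)}\xrightarrow{y\to x}\frac{t_0^2+T^2-(T-t_0)^2}{2t_0T}=1.
\]
Hence $\limsup_{y\to x}\ma_p(\xi_p,[p,y])\leq\ma_p(\xi_p,[p,x])<\varepsilon$. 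Together with $\tau(p,y)>R$ for $y$ near $x$, this proves openness using only the stated hypotheses, with neither properness nor any limit curve argument.

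This is what your fallback was reaching for. The key is to compare $[p,y]$ with $[p,x]$ through the comparison angle formed by a point strictly inside $[p,x]$ and the endpoint $y$. Both of these vary continuously with $y$, so no convergence of the geodesics $[p,y_n]$ is needed.
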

\begin{proof}
    Consider 
    \[
    V:=V_{p,\varepsilon,R}(\xi)\cap Y=
    \left\{
    x\in Y \relmiddle| \ma_p\bigl(\xi_p,[p,x]\bigr)<\varepsilon \text{ and } \tau(p,x)>R
    \right\},
    \]
    and take a point $x\in V$ and a sequence $x_n$ in $Y$ converging to $x$ with the metric topology. Because the timelike cones are open and $x\in I^+(p)$, there is some $m\in\N$ such that $x_n\in I^+(p)$ for every $n\geq m$. What is more, the continuity of $\tau$ (given by the global $\CBA(0)$ condition) yields $\lim_{n} \tau(p,x_n) = \tau(p,x)>R$, therefore there is some $m'\in\N$ such that $\tau(p,x_n)>R$ for every $n\geq m'$.

    For each $n\geq m'$, take $\alpha_n\colon [0,1]\to Y$ to be the unique future-directed maximal geodesic from $p$ to $x_n$ (see \autoref{thm:uniqueness of geodesics in cba}) of constant $d$-speed. This will in fact be timelike by regularity, and the unit $d$-speed parametrization exists because each $\alpha_n$ is $d$-rectifiable \cite[Proposition~2.19]{kunzinger-saemann2018}. 

    Assume that $L_d(\alpha_n)$ is bounded. Then the curves $\alpha_n$, which are Lipschitz, have uniformly bounded Lipschitz constants (in fact, the Lipschitz constant of all of them is $1$). Thus, as $Y$ is locally causally closed with proper metric $d$, we can use \autoref{thm:limit curve theorem} and obtain a subsequence $\alpha_{n_k}$ which converges uniformly to a Lipschitz curve $\alpha\colon [0,1]\to Y$ which is future-directed causal from $p$ to $x$. Moreover, $\alpha$ is maximizing. Indeed,
    \[
    L_\tau(\alpha)\leq \tau(p,x) = \lim_n \tau(p,x_n) = \lim_n L_\tau(\alpha_n) \leq L_\tau(\alpha),
    \] 
    where we used the continuity of $\tau$, the upper semicontinuity of $L_\tau$ (ensured by strong causality and timelike geodesic connectedness; see \cite[Proposition~3.17]{kunzinger-saemann2018} and \cite[Remark~2.5]{eros-gieger-2025}), and the fact that $\alpha_n$ are maximal geodesics. If, on the other hand, $L_d(\alpha_n)$ diverges, we can similarly deduce local uniform convergence \cite[Theorem~2.23]{beran-ohanyan-rott-solis2023}.
    
    By \autoref{thm:continuity of angles}, the upper curvature bound ensures that the map $\ma_p(\xi_p,\cdot)$ is continuous in the sense that for $\alpha_n,\alpha$ future-directed geodesics (non necessarily parametrized by $\tau$-length) starting at $p$ and such that $\alpha_n\to \alpha$ pointwise, one has
    \[
    \lim_n \ma_p(\xi_p,\alpha_n)=\ma_p(\xi_p,\alpha).
    \]
    Therefore, after reparametrization, we have
    \[
    \lim_n \ma_p\bigl(\xi_p,[p,x_n]\bigr)=\ma_p\bigl(\xi_p,[p,x]\bigr)<\varepsilon.
    \]
    Thus, for $n$ large enough, $\ma_p\bigl(\xi_p,[p,x_n]\bigr)<\varepsilon$. In other words, there is some $\tilde{m}\in \N$ such that $x_n\in V$, for all $n\geq \tilde{m}$. From the generality of the sequence $x_n$ and the point $x\in V$ we deduce that $V$ is open in the metric topology.
\end{proof}

Now, we will show that the truncated cones induce some topology in $Y^*$ such that the set of cones with axis $\xi \in \bd^+Y$ is a neighborhood basis of $\xi$. To that end, consider first the following topological result \cite[Lemma~1.3]{eberlein_visibility_1973}:
\begin{lem}\label{lem:topological}
    Let $(X,\newtau)$ be a topological space, $S$ a set and $X^*:=X\sqcup S$. For each $y\in S$, let $\mathcal{N}(y)$ be a family of subsets of $X^*$ such that: 
    \begin{enumerate}
        \item $\mathcal{N}(y)\neq \varnothing$, for each $y\in S$
        \item $V\in \mathcal{N}(y)\implies y\in V$ and $V\cap X\in \newtau$,
        \item if $V_1\in \mathcal{N}(y_1)$, $V_2\in \mathcal{N}(y_2)$ and $y_3\in V_1\cap V_2 \cap S$, then there exists $V_3\in\mathcal{N}(y_3)$ such that $V_3\subset V_1\cap V_2$.
    \end{enumerate} 
    Then there is a unique topology $\newtau^*$ on $X^*$ such that its restriction to $X$ is $\newtau$ and such that for each $y\in S$, $\mathcal{N}(y)$ is a neighborhood basis of $y$ for $\newtau^*$.
\end{lem}

With this result, we can prove that there is a unique topology on $Y^*$ satisfying that its restriction to $Y$ is the metric topology and such that the truncated cones with a fixed axis $\xi$ form a neighborhood basis of $\xi$.

\begin{pop}\label{pop:cone_top}
    Let $Y$ be a proper, strongly causal, locally causally closed, regular Lorentzian pre-length space satisfying $\CBA(0)$ globally. For each $\xi\in\bd^+Y$, set 
    \[
    \mathcal{N}(\xi)=\bigl\{V_{p,\varepsilon,R}(\xi)\mid p\in I^-(\xi); \: \varepsilon,R>0\bigr\}.
    \]
    Then there is a unique topology on $Y^*$ whose restriction to $Y$ is the metric topology and such that for every $\xi\in\bd^+Y$, $\mathcal{N}(\xi)$ is a neighborhood basis of $\xi$.
\end{pop}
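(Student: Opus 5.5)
The plan is to apply \autoref{lem:topological} with $X=Y$ carrying the metric topology, $S=\bd^+Y$, and $\mathcal{N}(\xi)$ the family of truncated cones. The lemma then gives both existence and uniqueness of the topology, so it suffices to verify its three hypotheses.

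Hypothesis (1) holds because $\xi$ is the class of some future-directed timelike ray $\gamma$. For instance, $\gamma(1)\in I^-(\xi)$, so $I^-(\xi)\neq\varnothing$ and $\mathcal{N}(\xi)\neq\varnothing$.

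For (2), openness of $V\cap Y$ is exactly \autoref{lem:cones_open}. To see that $\xi\in V_{p,\varepsilon,R}(\xi)$, we must first make the definition meaningful at ideal points. For $\eta\in\bd^+Y$ we read $[p,\eta]$ as the ray $\eta_p$, which exists for $p\in I^-(\eta)$ by \autoref{pop:existence of pointwise rep}, and we set $\tau(p,\eta)=\infty$. We also require $p\in I^-(\eta)$ for such points. Then $\ma_p(\xi_p,\xi_p)=0<\varepsilon$ and $\tau(p,\xi)=\infty>R$, so $\xi\in V_{p,\varepsilon,R}(\xi)$.

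Hypothesis (3) is the main step. Let $\eta\in V_1\cap V_2\cap\bd^+Y$ with $V_i=V_{p_i,\varepsilon_i,R_i}(\xi_i)$, so $p_i\in I^-(\eta)$ and $\delta_i:=\varepsilon_i-\ma_{p_i}(\xi_{i,p_i},\eta_{p_i})>0$. It is enough, for each $i$ separately, to find $V\in\mathcal{N}(\eta)$ with $V\subset V_i$. Given such $V^{(1)},V^{(2)}$, we then need a single cone inside both. Cones with the same apex and axis are nested: smaller $\varepsilon$ and larger $R$ give a smaller set. So it suffices to produce the $V^{(i)}$ with a common apex $q\in I^-(\eta)$.

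Fix $i$ and drop the index. We look for $V_{p,\varepsilon',R'}(\eta)\subset V_{p,\varepsilon,R}(\xi)$, using the same apex $p$, with $\varepsilon'=\delta$ and $R'=R$. For $x\in V_{p,\delta,R}(\eta)$, the triangle inequality for angles at $p$ (\autoref{thm:triangle inequality for angles}\eqref{item:same orientation triangle inequality}, all three curves future-directed) gives
\[
\ma_p(\xi_p,[p,x])\le \ma_p(\xi_p,\eta_p)+\ma_p(\eta_p,[p,x])<\varepsilon,
\]
and $\tau(p,x)>R$ holds as well. For ideal $x$ we additionally need $p\in I^-(x)$, and this is built into the definition. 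So with apex $p_i$, the cone $V_{p_i,\delta_i,R_i}(\eta)$ lies in $V_i$.

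The obstacle is the common apex, since $p_1\neq p_2$ in general. I would try to choose $q\in I^-(\eta)$ in the future of both $p_i$, for example $q=\eta_{p_1}(t)$ with $t$ large so that $p_2\ll q$. This is possible because $p_2\ll\eta_{p_2}(s)\ll\eta_{p_1}(s+c)$ by asymptoticity. We then show that a small cone at $q$ with axis $\eta$ lies in the cone at $p_i$ with axis $\eta$. The idea is that for $x$ far from $q$ in a narrow cone around $\eta_q$, the angle at $p_i$ between $\eta_{p_i}$ and $[p_i,x]$ is small. This should follow from $\CBA(0)$ comparison on triangles $\triangle p_i q x$, combining \autoref{lem:angles between asymptotic rays} (which compares angles at $p_i$ and $q$ between asymptotic rays and $[p_i,q]$), \autoref{lem:sum of angles of triangle}, and the fact that the angle at $x$ subtended by $p_i,q$ tends to $0$ as $\tau(q,x)\to\infty$, in the spirit of \autoref{lem:points_seen_from_far_seem_close}. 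Ideal points $x$ would be handled by passing to the limit along $x_p$, using \autoref{thm:continuity of angles}.

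If this apex-changing estimate turns out too delicate, a fallback is to enlarge the family $\mathcal{N}(\xi)$ to finite intersections of truncated cones with axis $\xi$. The previous paragraph then gives (3) directly, and since each such intersection contains a cone at any fixed apex, the resulting topology is the same.
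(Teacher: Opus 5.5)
Your treatment of hypotheses (1) and (2) is fine and matches the paper, as are the reduction to a common apex and the same-apex inclusion $V_{p_i,\delta_i,R_i}(\eta)\subset V_i$ (the paper's thinner cones $\widetilde V_j$). Your candidate apex $q=\eta_{p_1}(t)$ is also the paper's $p_3=\xi_{3,p_1}(S)$. The gap is the apex-change step, which is the real content of (3). You only ask $t$ to be large enough that $p_2\ll q$, and then try to get smallness at $p_2$ by sending $x$ far from $q$. That cannot work. The tools you list give only upper bounds:
\begin{align*}
\ma_{p_2}\bigl(\eta_{p_2},[p_2,x]\bigr)
&\le \ma_{p_2}\bigl(\eta_{p_2},[p_2,q]\bigr)+\ma_{p_2}\bigl([p_2,q],[p_2,x]\bigr)\\
&\le \ma_{q}\bigl(\eta_{q},[q,p_2]\bigr)+\ma_{q}\bigl([q,p_2],[q,x]\bigr)
\le 2\,\ma_{q}\bigl(\eta_{q},[q,p_2]\bigr)+\ma_{q}\bigl(\eta_q,[q,x]\bigr).
\end{align*}
These use \autoref{lem:angles between asymptotic rays}, comparison in $\triangle p_2qx$ with \autoref{lem:sum of angles of triangle}, and \autoref{thm:triangle inequality for angles}. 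The error term $2\ma_q(\eta_q,[q,p_2])$ does not depend on $x$. The angle at $x$ subtended by $p_2,q$ enters \autoref{lem:sum of angles of triangle} with a minus sign, so its vanishing gives nothing. The cancellation you are picturing is a flat phenomenon and fails once geodesics branch.

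Here is a concrete counterexample. Let $T$ be a locally finite simplicial tree containing a ray $\tilde\beta$ and an infinite branch attached at $\tilde\beta(3)$. Then $Y=\prescript{-}{}{\mathbb{R}}\times_1 T$ satisfies all the hypotheses by \autoref{thm:minkowski product}. Let $\eta$ be the class of $t\mapsto(t\cosh\theta,\tilde\beta(t\sinh\theta))$ with $\sinh\theta=1$. Take $p_1=(0,\tilde\beta(0))$, $p_2=(-100,\tilde\beta(5))$ and $q=\eta_{p_1}(1)=(\cosh\theta,\tilde\beta(1))$, so $p_2\ll q$. Let $z_\ell$ be the point on the branch at distance $\ell$ from $\tilde\beta(3)$, and set $x_\ell=(\cosh\theta+(2+\ell)\coth\theta,z_\ell)$. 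The angle formula in the proof of \autoref{thm:minkowski product} gives $\ma_q(\eta_q,[q,x_\ell])=0$, and $\tau(q,x_\ell)=2+\ell$. So $x_\ell$ lies in every truncated cone at $q$ with axis $\eta$ once $\ell$ is large. However, $[p_2,x_\ell]$ projects to a path leaving $\tilde\beta(5)$ away from the end (tree angle $\pi$). Hence $\ma_{p_2}(\eta_{p_2},[p_2,x_\ell])=\theta+\theta_\ell\ge\theta$, where $\theta_\ell$ is the rapidity of $[p_2,x_\ell]$. So for $\varepsilon_2\le\theta$, no cone with apex $q$ lies in $V_{p_2,\varepsilon_2,R_2}(\eta)$.

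The same example refutes your fallback's claim that a finite intersection of cones contains a cone at any fixed apex. The fallback also needs some such inclusion anyway, to show that single cones form a basis.

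The missing idea is to push the apex itself far out along the ray:
\begin{itemize}
\item Choose $S$ so large that $\ma_q([q,p_1],[q,p_2])<\varepsilon_3$, by \autoref{lem:points_seen_from_far_seem_close}. Here the far point is the apex, not $x$.
\item Note $\ma_q([q,p_2],\eta_q)\le\ma_q([q,p_2],[q,p_1])$, by the time-reversed \autoref{thm:triangle inequality for angles}(\ref{item:special case of triangle inequality}), since $[p_1,q]$ followed by $\eta_q$ is a geodesic.
\item The chain above then gives a bound $<3\varepsilon_3$ for every $x\in V_{q,\varepsilon_3,R_3}(\eta)$, with $R_3$ arbitrary.
\end{itemize}
This is exactly the paper's argument, with $\varepsilon_3=\min\{\varepsilon_1',\varepsilon_2'/3\}$.
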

\begin{proof}
    Property (1) and the first part of property (2) in \autoref{lem:topological} are clearly satisfied. The second part of property (2) is precisely \autoref{lem:cones_open}.

    Let us check that property \textit{(3)} also holds. To that end consider boundary points $\xi_1,\xi_2\in\bd^+Y$ and truncated cones $V_j:=V_{p_j,\varepsilon_j,R_j}(\xi_j)$, for $j\in\{1,2\}$. Assume that the intersection of these is non-empty and, moreover, that there is a boundary point $\xi_3\in V_1\cap V_2\cap \bd^+Y$. We will show that there is a truncated cone $V_3:=V_{p_3,\varepsilon_3,R_3}(\xi_3)$ contained in the intersection of $V_1$ and $V_2$.
    
    For $i\in\{1,2,3\}$, $j\in\{1,2\}$, call $\xi_{i,p_j}$ the future-directed timelike geodesic ray starting at $p_j$ such that $\xi_{i,p_j}(\infty)=\xi_i$. As $\xi_3\in V_{p_j,\varepsilon_j,R_j}(\xi_j)$, we have $\theta_j:=\ma_{p_j}\bigl(\xi_{j,p_j},\xi_{3,p_j}\bigr)<\varepsilon_j$. The symmetry of this condition gives, conversely, $\xi_j\in V_{p_j,\varepsilon_j,R_j}(\xi_3)$. For clarity, we depict the situation in Figures~\ref{subfig:original_cones} and \ref{subfig:final_cones}. We will work with these two asymptotic rays $\xi_{3,p_j}$ starting at different points. Define $\smash{\varepsilon_j':=\varepsilon_j-\theta_j>0}$ and consider the thinner cones $\smash{\widetilde{V}_j:=V_{p_j,\varepsilon_j',R_j}(\xi_3)}$.

    \begin{figure}[ht]
    \begin{subfigure}{0.45\textwidth}
        \centering
        \tikzset{every picture/.style={line width=0.75pt}}       
        
        \begin{tikzpicture}[x=0.75pt,y=0.75pt,yscale=-0.9,xscale=0.9]

        \draw [color={rgb, 255:red, 208; green, 2; blue, 27 }  ,draw opacity=0.38 ]   (318.33,46.33) .. controls (328,92.33) and (321.67,153.33) .. (306.33,198) ;
        \draw [color={rgb, 255:red, 208; green, 2; blue, 27 }  ,draw opacity=0.38 ]   (427,103) .. controls (382.33,117) and (336.67,150.67) .. (306.33,198) ;
        \draw [color={rgb, 255:red, 74; green, 144; blue, 226 }  ,draw opacity=0.53 ]   (230,42.67) .. controls (253.67,83) and (257,140) .. (246.67,184.67) ;
        \draw [color={rgb, 255:red, 74; green, 144; blue, 226 }  ,draw opacity=0.53 ]   (349,62) .. controls (309,82.67) and (277,137.33) .. (246.67,184.67) ;
        \draw [color={rgb, 255:red, 208; green, 2; blue, 27 }  ,draw opacity=1 ]   (401.67,37.67) -- (306.33,198) ;
        \draw [color={rgb, 255:red, 74; green, 144; blue, 226 }  ,draw opacity=1 ]   (302,18.33) -- (246.67,184.67) ;
        \draw    (381,31) -- (306.33,198) ;
        \draw  [fill={rgb, 255:red, 0; green, 0; blue, 0 }  ,fill opacity=1 ] (245.67,184.67) .. controls (245.67,184.11) and (246.11,183.67) .. (246.67,183.67) .. controls (247.22,183.67) and (247.67,184.11) .. (247.67,184.67) .. controls (247.67,185.22) and (247.22,185.67) .. (246.67,185.67) .. controls (246.11,185.67) and (245.67,185.22) .. (245.67,184.67) -- cycle ;
        \draw  [fill={rgb, 255:red, 0; green, 0; blue, 0 }  ,fill opacity=1 ] (305.33,198) .. controls (305.33,197.45) and (305.78,197) .. (306.33,197) .. controls (306.89,197) and (307.33,197.45) .. (307.33,198) .. controls (307.33,198.55) and (306.89,199) .. (306.33,199) .. controls (305.78,199) and (305.33,198.55) .. (305.33,198) -- cycle ;
        \draw    (321.33,17.67) -- (246.67,184.67) ;
        
        \draw (235.08,185.23) node [anchor=north west][inner sep=0.75pt]  [font=\footnotesize]  {$p_{1}$};
        \draw (296.33,199.48) node [anchor=north west][inner sep=0.75pt]  [font=\footnotesize]  {$p_{2}$};
        \draw (269,13.4) node [anchor=north west][inner sep=0.75pt]  [font=\footnotesize]  {$\xi _{1,p_{1}}$};
        \draw (403,37.4) node [anchor=north west][inner sep=0.75pt]  [font=\footnotesize]  {$\xi _{2,p_{2}}$};
        \draw (318,22.73) node [anchor=north west][inner sep=0.75pt]  [font=\footnotesize]  {$\xi _{3,p_{1}}$};
        \draw (351,21.4) node [anchor=north west][inner sep=0.75pt]  [font=\footnotesize]  {$\xi _{3,p_{2}}$};
        \end{tikzpicture}
        \caption{Original cones $V_{p_j,\varepsilon_j}(\xi_j)$, containing both the ideal point $\xi_{3}$.}
        \label{subfig:original_cones}
    \end{subfigure}\hspace{0.5cm}
    \begin{subfigure}{0.45\textwidth}
        \centering
        \tikzset{every picture/.style={line width=0.75pt}}          
        \begin{tikzpicture}[x=0.75pt,y=0.75pt,yscale=-0.9,xscale=0.9]
 
        \draw [color={rgb, 255:red, 208; green, 2; blue, 27 }  ,draw opacity=1 ]   (399.33,31.17) -- (304,191.5) ;
        \draw [color={rgb, 255:red, 74; green, 144; blue, 226 }  ,draw opacity=1 ]   (299.67,11.83) -- (244.33,178.17) ;
        \draw [color={rgb, 255:red, 128; green, 128; blue, 128 }  ,draw opacity=1 ]   (248.33,37.5) .. controls (261.33,60.17) and (259.67,133.5) .. (244.33,178.17) ;
        \draw [color={rgb, 255:red, 128; green, 128; blue, 128 }  ,draw opacity=1 ]   (288.33,28.83) .. controls (310.33,64.17) and (319.33,146.83) .. (304,191.5) ;
        \draw [color={rgb, 255:red, 128; green, 128; blue, 128 }  ,draw opacity=1 ]   (374.33,66.17) .. controls (332,82.5) and (274.67,130.83) .. (244.33,178.17) ;
        \draw [color={rgb, 255:red, 128; green, 128; blue, 128 }  ,draw opacity=1 ]   (407.33,86.17) .. controls (376.33,97.17) and (334.33,144.17) .. (304,191.5) ;
        \draw    (378.67,24.5) -- (304,191.5) ;
        \draw  [fill={rgb, 255:red, 0; green, 0; blue, 0 }  ,fill opacity=1 ] (243.33,178.17) .. controls (243.33,177.61) and (243.78,177.17) .. (244.33,177.17) .. controls (244.89,177.17) and (245.33,177.61) .. (245.33,178.17) .. controls (245.33,178.72) and (244.89,179.17) .. (244.33,179.17) .. controls (243.78,179.17) and (243.33,178.72) .. (243.33,178.17) -- cycle ;
        \draw  [fill={rgb, 255:red, 0; green, 0; blue, 0 }  ,fill opacity=1 ] (303,191.5) .. controls (303,190.95) and (303.45,190.5) .. (304,190.5) .. controls (304.55,190.5) and (305,190.95) .. (305,191.5) .. controls (305,192.05) and (304.55,192.5) .. (304,192.5) .. controls (303.45,192.5) and (303,192.05) .. (303,191.5) -- cycle ;
        \draw    (319,11.17) -- (244.33,178.17) ;
        
        \draw (232.75,178.73) node [anchor=north west][inner sep=0.75pt]  [font=\footnotesize]  {$p_{1}$};
        \draw (294,192.98) node [anchor=north west][inner sep=0.75pt]  [font=\footnotesize]  {$p_{2}$};
        \draw (266.67,6.9) node [anchor=north west][inner sep=0.75pt]  [font=\footnotesize]  {$\xi _{1,p_{1}}$};
        \draw (400.67,30.9) node [anchor=north west][inner sep=0.75pt]  [font=\footnotesize]  {$\xi _{2,p_{2}}$};
        \draw (315.67,16.23) node [anchor=north west][inner sep=0.75pt]  [font=\footnotesize]  {$\xi _{3,p_{1}}$};
        \draw (348.67,14.9) node [anchor=north west][inner sep=0.75pt]  [font=\footnotesize]  {$\xi _{3,p_{2}}$};
        \end{tikzpicture}
        \caption{New cones $V_{p_j,\varepsilon_j}(\xi_3)$, containing resp.\ the ideal point $\xi_{j}$.}
        \label{subfig:final_cones}
    \end{subfigure}
        \caption{Situations in the proof of \autoref{pop:cone_top}. The cones are depicted without truncation for clarity.}
    \end{figure}

    \begin{claim}
    There exists a value $T>0$ such that $\xi_{3,p_j}(t)\in V_{p_k,\varepsilon_k',R_k}(\xi_3)$, for all $t\geq T$ and $j,k\in\{1,2\}$. In other words, at some point each of these asymptotic rays  enters the (thinner) truncated cone of the other one and remains within thereafter.
    \end{claim}
    \begin{claimproof}
    Indeed, following a similar argument as in the proof of \autoref{pop:uniqueness of pointwise rep} one has, for $j,k\in\{1,2\}$,
    \[
    0\leq
    \ma_{p_k}\bigl(
    [p_k,\xi_{3,p_j}(t)],\xi_{3,p_k}
    \bigr)
    \leq \widetilde{\ma}_{p_k}\bigl(
    \xi_{3,p_j}(t),\xi_{3,p_k}(t+c)
    \bigr)\xrightarrow{t\to\infty} 0,
    \]
    where $c$ is the value given by \autoref{def:asymptotic}. As a consequence, there is some value $T'>0$ for which for all $t>T'$ and $k,j\in\{1,2\}$,
    \[
    \ma_{p_k}\bigl([p_k,\xi_{3,p_j}(t)],\xi_{3,p_k}\bigr)<\varepsilon_k'.
    \]
    To achieve the inequality $\tau(p_k,\xi_{3,p_j}(t))>R_k$, it suffices to take $t>R_k+c$. In conclusion, taking $T>\max(T',R_1+c,R_2+c)$ one obtains $\xi_{3,p_j}(t)\in V_{p_k,\varepsilon_k,R_k}(\xi_3)$, for all $t\geq T$.
    \end{claimproof}

    This claim, together with \autoref{lem:points_seen_from_far_seem_close}, implies that one can take a parameter value $S$ sufficiently large such that $S\geq T$ and $\ma_{\gamma(t)}\bigl([\gamma(t), p],[\gamma(t),q]\bigr)<\varepsilon_3:=\min\{\varepsilon_1',\frac{\varepsilon_2'}{3}\}$, for every $t\geq S$. Define $p_3=\xi_{3,p_1}(S)$ and take $R_3>0$ to be any positive number.
    
    \begin{claim}
        $V_3:=V_{p_3,\varepsilon_3,R_3}(\xi_3)\subset V_1\cap V_2$.
    \end{claim}
    \begin{claimproof}
        That $V_3\subset \widetilde{V}_1$ is clear from the curvature bound. Indeed, take $q\in V_3$. We have a situation as in \autoref{subfig:cone_inclusion1}, where the angles represent the angle between geodesics. The curvature bound gives the inequalities $a\geq b\geq c$, hence $c=\ma_{p_1}\bigl([p_1,q],\xi_{3,p_1}\bigr)\leq a=
        \ma_{p_3}\bigl([p_3,q],\xi_{3,p_3}\bigr)<\varepsilon_3\leq \varepsilon_1'=\varepsilon_1-\theta_1$. Moreover, $d=\ma_{p_1}(\xi_{3,p_1}, \xi_{1,p_1})= \theta_1$. \autoref{thm:triangle inequality for angles} gives therefore
        \[
        \ma_{p_1}\bigl([p_1,q],\xi_{1,p_1}\bigr)\leq \ma_{p_1}\bigl([p_1,q],\xi_{3,p_1}\bigr) + \ma_{p_1}(\xi_{3,p_1},\xi_{1,p_1})<\varepsilon_1.
        \]

        \begin{figure}
        \centering
        \begin{subfigure}{0.45\textwidth}
        \centering
        \tikzset{every picture/.style={line width=0.75pt}}     
        \begin{tikzpicture}[x=0.75pt,y=0.75pt,yscale=-1,xscale=1]
        \draw  [fill={rgb, 255:red, 0; green, 0; blue, 0 }  ,fill opacity=1 ] (234.17,212.23) .. controls (234.17,211.68) and (234.61,211.23) .. (235.17,211.23) .. controls (235.72,211.23) and (236.17,211.68) .. (236.17,212.23) .. controls (236.17,212.78) and (235.72,213.23) .. (235.17,213.23) .. controls (234.61,213.23) and (234.17,212.78) .. (234.17,212.23) -- cycle ;
        \draw  [fill={rgb, 255:red, 0; green, 0; blue, 0 }  ,fill opacity=1 ] (271.25,110.31) .. controls (271.25,109.76) and (271.7,109.31) .. (272.25,109.31) .. controls (272.8,109.31) and (273.25,109.76) .. (273.25,110.31) .. controls (273.25,110.86) and (272.8,111.31) .. (272.25,111.31) .. controls (271.7,111.31) and (271.25,110.86) .. (271.25,110.31) -- cycle ;
        \draw    (296,45.33) -- (235.17,212.23) ;
        \draw  [fill={rgb, 255:red, 0; green, 0; blue, 0 }  ,fill opacity=1 ] (334.25,51.75) .. controls (334.25,51.2) and (334.7,50.75) .. (335.25,50.75) .. controls (335.8,50.75) and (336.25,51.2) .. (336.25,51.75) .. controls (336.25,52.3) and (335.8,52.75) .. (335.25,52.75) .. controls (334.7,52.75) and (334.25,52.3) .. (334.25,51.75) -- cycle ;
        \draw    (272.25,110.31) .. controls (281.5,97.56) and (316.5,75.69) .. (335.25,50.75) ;
        \draw  [draw opacity=0] (287.88,67.72) .. controls (294.76,70.47) and (300.81,74.97) .. (305.52,80.7) -- (272.25,110.31) -- cycle ; \draw   (287.88,67.72) .. controls (294.76,70.47) and (300.81,74.97) .. (305.52,80.7) ;  
        \draw    (258.67,46.33) -- (235.17,212.23) ;
        \draw    (335.25,51.75) -- (235.17,212.23) ;
        \draw  [draw opacity=0] (282.57,100.05) .. controls (282.85,101.36) and (283,102.72) .. (283,104.13) .. controls (283,113.34) and (276.58,120.94) .. (268.26,122.1) -- (266.06,104.13) -- cycle ; \draw   (282.57,100.05) .. controls (282.85,101.36) and (283,102.72) .. (283,104.13) .. controls (283,113.34) and (276.58,120.94) .. (268.26,122.1) ;  
        \draw  [draw opacity=0] (250.8,169.64) .. controls (253.65,170.78) and (256.36,172.22) .. (258.89,173.92) -- (235.17,212.23) -- cycle ; \draw   (250.8,169.64) .. controls (253.65,170.78) and (256.36,172.22) .. (258.89,173.92) ;  
        \draw  [draw opacity=0] (243.49,156.66) .. controls (247.21,157.17) and (250.82,158.01) .. (254.28,159.15) -- (235.17,212.23) -- cycle ; \draw   (243.49,156.66) .. controls (247.21,157.17) and (250.82,158.01) .. (254.28,159.15) ;  
        
        \draw (223.58,212.8) node [anchor=north west][inner sep=0.75pt]  [font=\footnotesize]  {$p_{1}$};
        \draw (279.58,28.13) node [anchor=north west][inner sep=0.75pt]  [font=\footnotesize]  {$\xi _{3,p_{1}}$};
        \draw (256.42,98.3) node [anchor=north west][inner sep=0.75pt]  [font=\footnotesize]  {$p_{3}$};
        \draw (338.25,39.71) node [anchor=north west][inner sep=0.75pt]  [font=\footnotesize]  {$q$};
        \draw (239.83,28.65) node [anchor=north west][inner sep=0.75pt]  [font=\footnotesize]  {$\xi _{1,p_{1}}$};
        \draw (296.75,60.9) node [anchor=north west][inner sep=0.75pt]  [font=\scriptsize]  {$a$};
        \draw (284.57,103.45) node [anchor=north west][inner sep=0.75pt]  [font=\scriptsize]  {$b$};
        \draw (256.5,155.15) node [anchor=north west][inner sep=0.75pt]  [font=\scriptsize]  {$c$};
        \draw (246.5,143.9) node [anchor=north west][inner sep=0.75pt]  [font=\scriptsize]  {$d$};
        \end{tikzpicture}
        \caption{}
        \label{subfig:cone_inclusion1}
        \end{subfigure}
        \begin{subfigure}{0.45\textwidth}
        \centering
        \tikzset{every picture/.style={line width=0.75pt}}        
        \begin{tikzpicture}[x=0.75pt,y=0.75pt,yscale=-1,xscale=1]
        \draw    (370,90) -- (295.33,257) ;
        \draw  [fill={rgb, 255:red, 0; green, 0; blue, 0 }  ,fill opacity=1 ] (214.17,238.67) .. controls (214.17,238.11) and (214.61,237.67) .. (215.17,237.67) .. controls (215.72,237.67) and (216.17,238.11) .. (216.17,238.67) .. controls (216.17,239.22) and (215.72,239.67) .. (215.17,239.67) .. controls (214.61,239.67) and (214.17,239.22) .. (214.17,238.67) -- cycle ;
        \draw  [fill={rgb, 255:red, 0; green, 0; blue, 0 }  ,fill opacity=1 ] (294.33,257) .. controls (294.33,256.45) and (294.78,256) .. (295.33,256) .. controls (295.89,256) and (296.33,256.45) .. (296.33,257) .. controls (296.33,257.55) and (295.89,258) .. (295.33,258) .. controls (294.78,258) and (294.33,257.55) .. (294.33,257) -- cycle ;
        \draw [color={rgb, 255:red, 0; green, 0; blue, 0 }  ,draw opacity=1 ]   (259.75,138.75) .. controls (278,157.75) and (302,218) .. (295.33,257) ;
        \draw  [fill={rgb, 255:red, 0; green, 0; blue, 0 }  ,fill opacity=1 ] (258.75,138.75) .. controls (258.75,138.2) and (259.2,137.75) .. (259.75,137.75) .. controls (260.3,137.75) and (260.75,138.2) .. (260.75,138.75) .. controls (260.75,139.3) and (260.3,139.75) .. (259.75,139.75) .. controls (259.2,139.75) and (258.75,139.3) .. (258.75,138.75) -- cycle ;
        \draw    (289.83,71.67) -- (215.17,238.67) ;
        \draw  [fill={rgb, 255:red, 0; green, 0; blue, 0 }  ,fill opacity=1 ] (305.5,72.5) .. controls (305.5,71.95) and (305.95,71.5) .. (306.5,71.5) .. controls (307.05,71.5) and (307.5,71.95) .. (307.5,72.5) .. controls (307.5,73.05) and (307.05,73.5) .. (306.5,73.5) .. controls (305.95,73.5) and (305.5,73.05) .. (305.5,72.5) -- cycle ;
        \draw    (295.33,257) .. controls (315.75,188) and (319.5,136) .. (306.5,72.5) ; 
        \draw    (259.75,138.75) .. controls (269,126) and (291.75,98.75) .. (306.5,72.5) ;
        \draw  [draw opacity=0] (295.52,227) .. controls (298.09,227.02) and (300.58,227.35) .. (302.95,227.98) -- (295.33,257) -- cycle ; \draw   (295.52,227) .. controls (298.09,227.02) and (300.58,227.35) .. (302.95,227.98) ;  
        \draw  [draw opacity=0] (304.62,220.27) .. controls (306.82,220.77) and (308.95,221.44) .. (311,222.26) -- (295.33,257) -- cycle ; \draw   (304.62,220.27) .. controls (306.82,220.77) and (308.95,221.44) .. (311,222.26) ;  
        \draw  [draw opacity=0] (289.54,197.82) .. controls (291.45,197.65) and (293.38,197.56) .. (295.33,197.56) .. controls (303.91,197.56) and (312.08,199.26) .. (319.49,202.32) -- (295.33,257) -- cycle ; \draw   (289.54,197.82) .. controls (291.45,197.65) and (293.38,197.56) .. (295.33,197.56) .. controls (303.91,197.56) and (312.08,199.26) .. (319.49,202.32) ;  
        \draw  [draw opacity=0] (278.75,97.68) .. controls (281.48,99.05) and (284.06,100.71) .. (286.43,102.62) -- (259.75,138.75) -- cycle ; \draw   (278.75,97.68) .. controls (281.48,99.05) and (284.06,100.71) .. (286.43,102.62) ;  
        \draw  [draw opacity=0] (272.91,121.56) .. controls (275.88,126.07) and (277.7,132.11) .. (277.7,138.75) .. controls (277.7,146.02) and (275.52,152.57) .. (272.03,157.17) -- (259.75,138.75) -- cycle ; \draw   (272.91,121.56) .. controls (275.88,126.07) and (277.7,132.11) .. (277.7,138.75) .. controls (277.7,146.02) and (275.52,152.57) .. (272.03,157.17) ;  
        \draw  [draw opacity=0] (273.78,108.16) .. controls (282.15,114.27) and (287.78,125.68) .. (287.78,138.75) .. controls (287.78,150.01) and (283.61,160.03) .. (277.1,166.5) -- (259.75,138.75) -- cycle ; \draw   (273.78,108.16) .. controls (282.15,114.27) and (287.78,125.68) .. (287.78,138.75) .. controls (287.78,150.01) and (283.61,160.03) .. (277.1,166.5) ;   
        \draw  [draw opacity=0] (267.73,149.47) .. controls (265.39,150.56) and (262.66,151.19) .. (259.75,151.19) .. controls (258.04,151.19) and (256.4,150.98) .. (254.86,150.58) -- (259.75,138.75) -- cycle ; \draw   (267.73,149.47) .. controls (265.39,150.56) and (262.66,151.19) .. (259.75,151.19) .. controls (258.04,151.19) and (256.4,150.98) .. (254.86,150.58) ;  
        
        \draw (203.58,239.23) node [anchor=north west][inner sep=0.75pt]  [font=\footnotesize]  {$p_{1}$};
        \draw (281.83,256.73) node [anchor=north west][inner sep=0.75pt]  [font=\footnotesize]  {$p_{2}$};
        \draw (209.25,175.73) node [anchor=north west][inner sep=0.75pt]  [font=\footnotesize]  {$\xi _{3,p_{1}}$};
        \draw (345.25,151.65) node [anchor=north west][inner sep=0.75pt]  [font=\footnotesize]  {$\xi _{3,p_{2}}$};
        \draw (243.75,126.4) node [anchor=north west][inner sep=0.75pt]  [font=\footnotesize]  {$p_{3}$};
        \draw (302,58) node [anchor=north west][inner sep=0.75pt]  [font=\footnotesize]  {$q$};
        \draw (295.75,214.15) node [anchor=north west][inner sep=0.75pt]  [font=\scriptsize]  {$a$};
        \draw (307,207.15) node [anchor=north west][inner sep=0.75pt]  [font=\scriptsize]  {$b$};
        \draw (295.75,188) node [anchor=north west][inner sep=0.75pt]  [font=\scriptsize]  {$c$};
        \draw (287.5,148.4) node [anchor=north west][inner sep=0.75pt]  [font=\scriptsize]  {$d$};
        \draw (283,85.65) node [anchor=north west][inner sep=0.75pt]  [font=\scriptsize]  {$f$};
        \draw (257.25,155) node [anchor=north west][inner sep=0.75pt]  [font=\scriptsize]  {$g$};
        \draw (267,132.98) node [anchor=north west][inner sep=0.75pt]  [font=\scriptsize]  {$e$};
        \end{tikzpicture}
        \caption{}
        \label{subfig:cone_inclusion2}
        \end{subfigure}
        \caption{Situations of the last claim of \autoref{pop:cone_top}. Same letters in different images have different meanings.}
        \end{figure}

        For the other inclusion, consider again some $q\in V_3$. We have the situation of \autoref{subfig:cone_inclusion2}, where again angles represent angles between corresponding geodesics. From \autoref{thm:triangle inequality for angles}, we have $b\leq a+c$ and $e\leq d+f$. Also, from the curvature bounded above we deduce that $c\leq d$, $a\leq e$ and $d\leq g$. By definition of $q$, we have $f:=\ma_{p_3}\bigl([p_3,q],\xi_{3,p_3}\bigr)<\varepsilon_3$. And by the choice of $S$, we also have $g:=\ma_{p_3}\bigl([p_3,p_1],[p_3,p_2]\bigr)<\varepsilon_3$. So
        \[
        b:=\ma_{p_2}\bigl([p_2,q],\xi_{3,p_2}\bigr)\leq a+c \leq e+d \leq 2d+f < \varepsilon'_2=\varepsilon_2-\theta_2.
        \]
        In other words, $V_3\subset \widetilde{V}_2$. Finally, analogously to the previous situation, \autoref{thm:triangle inequality for angles} gives
        \[
        \ma_{p_2}\bigl([p_2,q],\xi_{2,p_2}\bigr) \leq
        \ma_{p_2}\bigl([p_2,q],\xi_{3,p_2}\bigr) + 
        \ma_{p_2}(\xi_{3,p_2},\xi_{2,p_2})<\varepsilon_2.
        \]
        Thus $V_3\subset V_1\cap V_2$.
    \end{claimproof}
    
    Using now \autoref{lem:topological} we deduce the result.    
\end{proof}

\begin{cor}\label{cor:nice basis of nbhds}
    Let $\xi\in\bd^+Y$ and $p\in I^-(\xi)$. The set 
    \[
    \mathcal{N}_p(\xi):=\bigl\{
    V_{\xi_p(t),\varepsilon,R}(\xi) \mid t\in [0,\infty),\, \varepsilon,R>0
    \bigr\}
    \]
    is a neighborhood basis of $\xi$ for the cone topology.
\end{cor}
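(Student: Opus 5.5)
We need to show that every truncated cone $V_{q,\varepsilon,R}(\xi)$ with $q\in I^-(\xi)$ contains a truncated cone of the form $V_{\xi_p(t),\varepsilon',R'}(\xi)$. Each element of $\mathcal{N}_p(\xi)$ already belongs to $\mathcal{N}(\xi)$: since $p\ll\xi_p(t)\ll\xi_p(t+1)$ and $\xi_p(t+1)\in I^-(\xi)$, we have $\xi_p(t)\in I^-(\xi)$, and its representative is $\xi_{\xi_p(t)}=\xi_p(t+\cdot)$ by \autoref{pop:uniqueness of pointwise rep}. So it suffices to prove this containment.

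The plan is to repeat the argument of the second claim in the proof of \autoref{pop:cone_top}, with $q$ playing the role of $p_2$ and a point far out on $\xi_p$ playing the role of $p_3$, with $\xi_3=\xi_2=\xi$ (so $\theta_2=0$ and $\varepsilon_2'=\varepsilon$). Concretely, first use the first claim of that proof (asymptotic rays enter each other's truncated cones) to find $T$ such that $\xi_p(t)\in V_{q,\varepsilon/3,R}(\xi)$ for all $t\geq T$. This uses $\ma_q([q,\xi_p(t)],\xi_q)\le\widetilde\ma_q(\xi_p(t),\xi_q(t+c))\to0$. Next apply \autoref{lem:points_seen_from_far_seem_close} to the representative at $\min$ of the two base points. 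If $p\ll q$, apply it to $\xi_p$ with the pair $p\ll q$. Otherwise compare via a common point $r\ll p,q$ in $I^-(\xi)$ (for instance a point of $\xi_q$ or $\xi_p$ in the past of both, or use the reverse-time version). The goal is to pick $S\ge T$ with $\ma_{\xi_p(t)}([\xi_p(t),p],[\xi_p(t),q])<\varepsilon/3$ for $t\ge S$. Set $p_3=\xi_p(S)$, $\varepsilon'=\varepsilon/3$, and choose $R'$ large.

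Then for $x\in V_{p_3,\varepsilon',R'}(\xi)$, run the angle chase of \autoref{subfig:cone_inclusion2}: the triangle inequality for angles (\autoref{thm:triangle inequality for angles}) combined with the curvature monotonicity comparisons ($c\le d$, $a\le e$, $d\le g$) gives $\ma_q([q,x],\xi_q)\le 2d+f<\varepsilon$. Boundary points $x=\eta$ are handled by the same chase with $[q,x]$ replaced by $\eta_q$, using the continuity of angles (\autoref{thm:continuity of angles}) and the approximation $[q,\eta_q(s)]$. For the truncation, $\tau(q,x)\ge\tau(q,p_3)+\tau(p_3,x)>R$ once $p_3\gg q$ (true for $S$ large, since $\xi_p(S)$ eventually lies in $I^+(q)$ by asymptoticity). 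In that case any $R'>0$ works; otherwise take $R'>R$ and use the reverse triangle inequality through $p_3$.

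The main obstacle is the case where $q$ and $p$ are not causally comparable. There the configuration of \autoref{subfig:cone_inclusion2} requires $q\ll p_3$ and the angle at $p_3$ between $[p_3,p]$ and $[p_3,q]$ to be small, and \autoref{lem:points_seen_from_far_seem_close} was stated only for $p\ll q$. I would first try to avoid it by noting that the relevant angle only involves the past-directed segments from $p_3$ to $q$ and to $\xi_p$. Replacing $p$ by a point $\xi_p(t_0)$ with $q\ll\xi_p(t_0)$ changes nothing, since $\xi_{\xi_p(t_0)}$ is a tail of $\xi_p$. So I would reduce to the case $q\ll p$ and apply the lemma with the pair $q\ll\xi_p(t_0)$ along the ray $\xi_q$, transferred to $\xi_p$ through asymptoticity.
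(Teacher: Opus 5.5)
Your proposal is correct and follows the paper's route. The paper's proof observes that the construction in the second claim of the proof of \autoref{pop:cone_top} applies with $\xi_1=\xi_2=\xi_3=\xi$, $p_1=p$, $p_2=q$, and it already produces an apex $p_3=\xi_p(S)$ with $V_{p_3,\varepsilon_3,R_3}(\xi)\subset V_{q,\varepsilon,R}(\xi)$, which is exactly what you do.

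Your extra care about $p$ and $q$ not being causally related goes beyond what the paper writes; there \autoref{lem:points_seen_from_far_seem_close} does not literally apply. Your reduction works. The ``transfer'' step is just the estimate of that lemma redone along $\xi_{\xi_p(t_0)}$, using the upper bound $\tau(q,\xi_p(s))\le s+c$ that comes from asymptoticity.
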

\begin{proof}
    Along the proof of \autoref{pop:cone_top} we proved that for any truncated cone $V:=V_{q,\varepsilon,R}(\xi)$ there exists some $t$ large enough and values $\smash{\tilde{\varepsilon},\tilde{R}>0}$ such that $V_{\tilde{p},\tilde{\varepsilon},\tilde{R}}(\xi)\subset V$, where $\smash{\tilde{p}=\xi_p(t)}$.
\end{proof}

\section{Angular metric in the timelike ideal boundary\label{sec:angular-metric}}

In this section we endow the future timelike ideal boundary with an extended distance, in a similar spirit as in the metric case \cite[Chapter~II.9]{BH}.

\begin{dfn}\label{def:angular-distance}
Let $Y$ be a proper, strongly causal, locally causally closed, regular \LpLS satisfying $\CBA(0)$ globally and let $\xi,\xi'\in \bd^+Y$ be such that $I^-(\xi)\cap I^-(\xi')\neq \varnothing$. Then the \textit{angular distance} $\ma(\xi,\xi')$ is defined by 
\begin{equation}\label{eq:def-angular distance}
\ma(\xi,\xi') = 
\sup\{\ma_x(\xi,\xi'): x\in I^-(\xi)\cap I^-(\xi')\}.
\end{equation}
If $I^-(\xi)\cap I^-(\xi')=\varnothing$, we set $\ma(\xi,\xi')=\infty$. We define $\ma$ analogously for any $\xi,\xi'\in \bd^+Y\cup \bd^-Y$, by considering the appropriate intersection $I^\pm(\xi)\cap I^\pm(\xi')$ in \eqref{eq:def-angular distance} if that intersection is non-empty, and setting $\ma(\xi,\xi')=\infty$ otherwise. 
\end{dfn}

\begin{pop}\label{pop:different-pasts-infinite-angle}
    Let $Y$ be a proper, globally hyperbolic, strongly causal, locally causally closed, regular \LpLS satisfying $\CBA(0)$ globally. Let $\xi,\xi'\in \bd^+Y$ (resp. $\xi,\xi'\in \bd^-Y$) be such that $I^-(\xi)\neq I^-(\xi')$ (resp. $I^+(\xi) \neq I^+(\xi')$). Then $\ma(\xi,\xi')=\infty$. 
    
    Additionally, if $Y$ is regularly localizable, the following implication holds: if $\xi\in \bd^+Y$, $\xi'\in \bd^-Y$ are such that $I^-(\xi)\neq I^+(\xi')$, then $\ma(\xi,\xi') = \infty$.
\end{pop}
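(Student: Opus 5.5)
We prove the contrapositive. Assume $\ma(\xi,\xi')<\infty$; we show $I^-(\xi)=I^-(\xi')$. If $I^-(\xi)\cap I^-(\xi')=\varnothing$ then $\ma(\xi,\xi')=\infty$ by definition, so we may fix $x\in I^-(\xi)\cap I^-(\xi')$ and a bound $M$ with $\ma_z(\xi,\xi')\le M$ for all $z\in I^-(\xi)\cap I^-(\xi')$. Suppose, for a contradiction, that some $q\in I^-(\xi)\setminus I^-(\xi')$ exists; the symmetric case is identical.

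\emph{Step 1: find a boundary point of $I^-(\xi')$ inside $I^-(\xi)$.} Since $x,q\in I^-(\xi)$ and $I^-(\xi)$ is open, take $t$ large with $x,q\ll\xi_x(t)$. Then join $x$ and $q$ inside $I^-(\xi)$ by the concatenation of $[x,\xi_x(t)]$ and $[q,\xi_x(t)]$. This path starts in the open set $I^-(\xi')$ and ends outside it, so it meets $\partial I^-(\xi')$ at a first point $z$. The point $z$ lies in $I^-(\xi)\setminus I^-(\xi')$, and there are points $z_n\in I^-(\xi)\cap I^-(\xi')$ with $z_n\to z$. By \autoref{pop:existence of pointwise rep} the rays $\xi_{z_n}$, $\xi'_{z_n}$ exist, and $\xi_z$ exists as well. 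Moreover $\xi_{z_n}\to\xi_z$ locally uniformly. To see this, use the co-ray construction with a fixed far point $\xi_x(T)$ and the continuity of geodesics in their endpoints under global $\CBA(0)$ (\autoref{thm:uniqueness of geodesics in cba}).

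\emph{Step 2: the rays $\xi'_{z_n}$ must degenerate.} By properness and global hyperbolicity, the limit curve theorem (\autoref{thm:limit curve theorem}, together with its ray version \cite[Theorem~2.23]{beran-ohanyan-rott-solis2023}) gives a subsequence of $\xi'_{z_n}$, in $d$-arclength, converging locally uniformly to a causal ray $\eta$ from $z$. Now use $\ma_{z_n}(\xi_{z_n},\xi'_{z_n})\le M$. Combined with monotonicity (\autoref{thm:monotonicity of comparison angles}) and the convergence $\xi_{z_n}\to\xi_z$, this yields a uniform bound on the comparison angles at $z_n$ between points of $\xi'_{z_n}$ and points of a fixed timelike reference geodesic near $z$. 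We then run the argument of \autoref{lem:bounded comparison angles imply timelike limit} with $\xi_z$ in place of $\gamma_1$ and conclude that $\eta$ is timelike. A lower-semicontinuity argument for $\tau$-lengths then shows that $\eta$ is a timelike ray.

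\emph{Step 3: the limit ray is asymptotic to $\xi'$.} Since $\xi'_{z_n}\sim\xi'_x$, each $\xi'_{z_n}$ is obtained as in the proof of \autoref{pop:existence of pointwise rep}. We want constants $c$ and $\lambda>0$, independent of $n$, such that
\[
\tau\bigl(\xi'_{z_n}(s),\xi'_x(s+c)\bigr)\ge\lambda
\qquad\text{and}\qquad
\tau\bigl(\xi'_x(s),\xi'_{z_n}(s+c)\bigr)\ge\lambda .
\]
The constants in that proof are governed by $\tau(z_n,\cdot)$ to a fixed point of $\xi'$ and by the angle bound $\alpha_o$. Step 2 keeps these quantities bounded, and $\tau(z,\xi'_x(1))$ stays positive up to choosing a later point, provided $\eta$ is timelike. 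Passing to the limit using continuity of $\tau$ gives $\eta\sim\xi'_x$, i.e.\ $\eta(\infty)=\xi'$. Then $z\ll\eta(1)$ puts $z\in I^-(\xi')$, contradicting $z\in\partial I^-(\xi')$ with $I^-(\xi')$ open.

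\emph{Main obstacle and mixed case.} The delicate point is the uniformity in Step 3. The naive route would bound $\tau(z_n,\xi'(t_0))$ from below, but this tends to $0$ exactly because $z$ lies on the boundary. I would therefore argue through the timelike character of $\eta$ obtained in Step 2, passing to points $\eta(s_0)$ and $\xi'_{z_n}(s_0)$ with $s_0>0$ rather than working at $z$ itself. For the second assertion, with $\xi\in\bd^+Y$ and $\xi'\in\bd^-Y$, the same scheme applies with past-directed rays and opposite time orientations; \autoref{thm:triangle inequality for angles}(3) replaces the same-orientation estimates. Regular localizability is used to get the analogue of \autoref{lem:bounded comparison angles imply timelike limit} for mixed orientations, so that a finite angle bound again forces the limit ray to be timelike.
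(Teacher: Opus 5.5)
\textbf{There is a genuine gap: the key estimate in Step~2 runs in the wrong direction, and Step~3 rests on a false claim.}

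Your architecture is the right one: find a point $z$ of the topological boundary of $I^-(\xi')$ inside $I^-(\xi)$, take the rays $\xi'_{z_n}$ from nearby points, and show that a finite angle forbids their limit from degenerating. The problem is the estimate that makes the finite angle bite. In Step~2 you combine $\ma_{z_n}(\xi_{z_n},\xi'_{z_n})\le M$ with \autoref{thm:monotonicity of comparison angles} to bound comparison angles at $z_n$ from above. For two geodesics with the same time orientation, $\CBA(0)$ makes comparison angles non-decreasing away from the vertex. So the angle at $z_n$ is a \emph{lower} bound for $\widetilde{\ma}_{z_n}(\xi_{z_n}(t),\xi'_{z_n}(s))$, not an upper one.

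The missing idea is to control the comparison angle at the base vertex through the other two vertices. By \autoref{lem:sum of angles of triangle} it equals the difference of the comparison angles at the middle and top vertices, and monotonicity at \emph{those} vertices goes the right way. Then \autoref{thm:triangle inequality for angles}(\ref{item:special case of triangle inequality}), (\ref{item:same orientation triangle inequality}) together with \autoref{lem:angles between asymptotic rays} bound the result by $\ma(\xi,\xi')$. This is exactly the chain the paper runs for $\alpha_n=\widetilde{\ma}_{p_n}(r_n,q_n)$.

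There is a second issue in Step~2. \autoref{lem:bounded comparison angles imply timelike limit} has a fixed base point, so ``running it with $\xi_z$ in place of $\gamma_1$'' needs an actual argument. Suppose the limit $\eta$ were null. Take a point $w$ of $\xi_z$ with $z\ll w$ and small $\sigma>0$. Then $\tau(z_n,\xi'_{z_n}(\sigma))\to 0$, while the law-of-cosines numerator tends to $\tau(z,w)^2-\tau(\eta(\sigma),w)^2$. This limit is positive: equality would give a maximal curve between $z\ll w$ with a null initial piece, contradicting regularity. Hence the comparison angles blow up. The paper does precisely this, using $\tau(p_n,q_n)\to 0$ and $\tau(p_n,r_n)=\lambda$ to force $\tau(q_\infty,r_\infty)=\lambda$.

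Step~3, which you identify as the crux, is both flawed and unnecessary. Since $z\notin I^-(\xi')$, you have $\tau(z,\xi'_x(t))=0$ for \emph{every} $t$. So the claim that it ``stays positive up to choosing a later point'' is false, and the uniform constants $c,\lambda$ are never obtained. You do not need $\eta(\infty)=\xi'$ at all. Once $\eta$ is timelike, $z\ll\eta(s)$ for $s>0$, the set $I^+(z)$ is open, and $\xi'_{z_n}(s)\to\eta(s)$. Hence $\xi'_{z_n}(s)\in I^+(z)\cap I^-(\xi')$ for large $n$, which gives $z\in I^-(\xi')$, a contradiction.

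Likewise, the convergence $\xi_{z_n}\to\xi_z$ in Step~1 does not follow from \autoref{thm:uniqueness of geodesics in cba}, which gives only uniqueness. You can avoid it by choosing the $z_n$ on $\xi_x$, so that the $\xi_{z_n}$ are tails of $\xi_x$. With these repairs your proof becomes essentially the paper's, with your limit ray $\eta$ replacing its points $q_n\in\xi'_{p_n}$ on the boundary of $I^-(r_1)$.

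In the mixed case, the paper uses regular localizability only to choose the initial point outside $\overline{I^+(\xi')}$. It is not used for a mixed-orientation version of \autoref{lem:bounded comparison angles imply timelike limit}. In that case monotonicity actually works in your favour, since for opposite orientations the angle dominates the comparison angles.
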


\begin{rem}
Observe that, under strong causality, being regularly localizable (see \cite[Definition~3.16]{kunzinger-saemann2018}) is equivalent to being regular and localizable, by \cite[Lemma~3.6]{beran-kunzinger-rott2024}.
\end{rem}

\begin{proof}
    Let $\xi,\xi'\in \bd^+Y$ be such that $I^-(\xi)\neq I^-(\xi')$. If $S:=I^-(\xi)\cap I^-(\xi')=\varnothing$ there is nothing to prove. Thus, let us assume that $S\neq\varnothing$ and let $x\in S$. Consider the representatives $\xi_x,\xi'_x$ of $\xi,\xi'$, respectively, starting at $x$ (see~\autoref{pop:existence of pointwise rep}). As the pasts of these geodesic rays are not equal, there exists a parameter $t\in[0,\infty)$ such that either $\xi_x(t)\notin I^-(\xi')$ or $\xi'_x(t)\notin I^-(\xi)$. Without loss of generality, let us assume the former (see \autoref{subfig:infinite-angle-same-direction}).

    By a standard topological argument, being $\xi_x$ continuous, $I^-(\xi')$ open, $\xi_x(0)=x\in I^-(\xi')$ and $\xi_x(t)\notin I^-(\xi')$, there exists some parameter $s\in (0,t]$ such that $p_\infty:=\xi_x(s)$ is in the topological boundary of $I^-(\xi')$.

    \begin{figure}
    \centering
    \begin{subfigure}{0.45\textwidth}
    \centering
    \tikzset{every picture/.style={line width=0.75pt}}
    \begin{tikzpicture}[x=0.75pt,y=0.75pt,yscale=-1,xscale=1]
    \draw [color={rgb, 255:red, 128; green, 128; blue, 128 }  ,draw opacity=1 ]   (320.62,172.37) .. controls (339.92,143.85) and (355.81,116.52) .. (373.03,55.73) ;
    \draw [color={rgb, 255:red, 128; green, 128; blue, 128 }  ,draw opacity=1 ]   (332.73,201.28) .. controls (353.92,162.47) and (359.98,158.51) .. (373.03,55.73) ;
    \draw [color={rgb, 255:red, 20; green, 0; blue, 255 }  ,draw opacity=1 ]   (350.9,238.11) .. controls (363.95,192.96) and (371.9,156.72) .. (373.03,55.73) ;
    \draw [shift={(369.41,141.13)}, rotate = 95.33] [color={rgb, 255:red, 20; green, 0; blue, 255 }  ,draw opacity=1 ][line width=0.75]    (10.93,-4.9) .. controls (6.95,-2.3) and (3.31,-0.67) .. (0,0) .. controls (3.31,0.67) and (6.95,2.3) .. (10.93,4.9)   ;
    \draw  [dash pattern={on 4.5pt off 4.5pt}]  (287.32,27.21) -- (403.69,148.41) ;
    \draw  [dash pattern={on 4.5pt off 4.5pt}]  (295.83,84.24) -- (412.2,205.44) ;
    \draw  [dash pattern={on 4.5pt off 4.5pt}]  (373.03,55.73) -- (425.4,111.8) ;
    \draw  [dash pattern={on 4.5pt off 4.5pt}]  (287.32,27.21) -- (229,87.8) ;
    \draw  [dash pattern={on 4.5pt off 4.5pt}]  (373.03,55.73) -- (260.6,171) ;
    \draw  [dash pattern={on 4.5pt off 4.5pt}]  (295.83,84.24) -- (246.2,135) ;
    \draw [color={rgb, 255:red, 208; green, 2; blue, 27 }  ,draw opacity=1 ]   (350.9,238.11) .. controls (319.67,179.89) and (298.1,121.08) .. (287.32,27.21) ;
    \draw [shift={(306.53,129.49)}, rotate = 74.07] [color={rgb, 255:red, 208; green, 2; blue, 27 }  ,draw opacity=1 ][line width=0.75]    (10.93,-4.9) .. controls (6.95,-2.3) and (3.31,-0.67) .. (0,0) .. controls (3.31,0.67) and (6.95,2.3) .. (10.93,4.9)   ;
    \draw  [fill={rgb, 255:red, 0; green, 0; blue, 0 }  ,fill opacity=1 ] (293,84.24) .. controls (293,82.6) and (294.27,81.27) .. (295.83,81.27) .. controls (297.4,81.27) and (298.67,82.6) .. (298.67,84.24) .. controls (298.67,85.88) and (297.4,87.21) .. (295.83,87.21) .. controls (294.27,87.21) and (293,85.88) .. (293,84.24) -- cycle ;
    \draw  [fill={rgb, 255:red, 0; green, 0; blue, 0 }  ,fill opacity=1 ] (348.06,238.11) .. controls (348.06,236.47) and (349.33,235.14) .. (350.9,235.14) .. controls (352.46,235.14) and (353.73,236.47) .. (353.73,238.11) .. controls (353.73,239.75) and (352.46,241.08) .. (350.9,241.08) .. controls (349.33,241.08) and (348.06,239.75) .. (348.06,238.11) -- cycle ;
    \draw  [fill={rgb, 255:red, 0; green, 0; blue, 0 }  ,fill opacity=1 ] (302.65,127.02) .. controls (302.65,125.38) and (303.92,124.05) .. (305.48,124.05) .. controls (307.05,124.05) and (308.32,125.38) .. (308.32,127.02) .. controls (308.32,128.66) and (307.05,129.99) .. (305.48,129.99) .. controls (303.92,129.99) and (302.65,128.66) .. (302.65,127.02) -- cycle ;
    \draw  [fill={rgb, 255:red, 0; green, 0; blue, 0 }  ,fill opacity=1 ] (317.97,110.38) .. controls (317.97,108.74) and (319.24,107.41) .. (320.81,107.41) .. controls (322.38,107.41) and (323.65,108.74) .. (323.65,110.38) .. controls (323.65,112.02) and (322.38,113.35) .. (320.81,113.35) .. controls (319.24,113.35) and (317.97,112.02) .. (317.97,110.38) -- cycle ;
    \draw  [fill={rgb, 255:red, 0; green, 0; blue, 0 }  ,fill opacity=1 ] (319.11,172.37) .. controls (319.11,171.49) and (319.78,170.78) .. (320.62,170.78) .. controls (321.46,170.78) and (322.13,171.49) .. (322.13,172.37) .. controls (322.13,173.24) and (321.46,173.95) .. (320.62,173.95) .. controls (319.78,173.95) and (319.11,173.24) .. (319.11,172.37) -- cycle ; 
    \draw  [fill={rgb, 255:red, 0; green, 0; blue, 0 }  ,fill opacity=1 ] (331.22,201.28) .. controls (331.22,200.4) and (331.89,199.69) .. (332.73,199.69) .. controls (333.57,199.69) and (334.24,200.4) .. (334.24,201.28) .. controls (334.24,202.15) and (333.57,202.86) .. (332.73,202.86) .. controls (331.89,202.86) and (331.22,202.15) .. (331.22,201.28) -- cycle ;
    \draw  [fill={rgb, 255:red, 0; green, 0; blue, 0 }  ,fill opacity=1 ] (355.72,148.01) .. controls (355.72,147.13) and (356.4,146.43) .. (357.23,146.43) .. controls (358.07,146.43) and (358.75,147.13) .. (358.75,148.01) .. controls (358.75,148.88) and (358.07,149.59) .. (357.23,149.59) .. controls (356.4,149.59) and (355.72,148.88) .. (355.72,148.01) -- cycle ;
    \draw  [fill={rgb, 255:red, 0; green, 0; blue, 0 }  ,fill opacity=1 ] (342,134.15) .. controls (342,133.27) and (342.68,132.56) .. (343.52,132.56) .. controls (344.35,132.56) and (345.03,133.27) .. (345.03,134.15) .. controls (345.03,135.02) and (344.35,135.73) .. (343.52,135.73) .. controls (342.68,135.73) and (342,135.02) .. (342,134.15) -- cycle ;
    \draw  [fill={rgb, 255:red, 0; green, 0; blue, 0 }  ,fill opacity=1 ] (287.6,53.35) .. controls (287.6,51.71) and (288.87,50.38) .. (290.44,50.38) .. controls (292.01,50.38) and (293.28,51.71) .. (293.28,53.35) .. controls (293.28,54.99) and (292.01,56.32) .. (290.44,56.32) .. controls (288.87,56.32) and (287.6,54.99) .. (287.6,53.35) -- cycle ;
    
    \draw (358.52,232.11) node [anchor=north west][inner sep=0.75pt]  [font=\small]  {$x$};
    \draw (272.24,5.95) node [anchor=north west][inner sep=0.75pt]    {$\xi $};
    \draw (357.09,36.84) node [anchor=north west][inner sep=0.75pt]    {$\xi '$};
    \draw (269,48.53) node [anchor=north west][inner sep=0.75pt]  [font=\footnotesize]  {$r_{\infty }$};
    \draw (281.22,116.85) node [anchor=north west][inner sep=0.75pt]  [font=\footnotesize]  {$p_{\infty }$};
    \draw (313,89) node [anchor=north west][inner sep=0.75pt]  [font=\footnotesize]  {$q_{\infty }$};
    \draw (304.73,168.55) node [anchor=north west][inner sep=0.75pt]  [font=\scriptsize]  {$p_{n}$};
    \draw (326.51,126.07) node [anchor=north west][inner sep=0.75pt]  [font=\scriptsize]  {$q_{n}$};
    \draw (277.98,72.89) node [anchor=north west][inner sep=0.75pt]  [font=\footnotesize]  {$r_{1}$};
    \draw (321.01,217.44) node [anchor=north west][inner sep=0.75pt]  [font=\small,color={rgb, 255:red, 208; green, 2; blue, 27 }  ,opacity=1 ]  {$\xi _{x}$};
    \draw (364.53,204.77) node [anchor=north west][inner sep=0.75pt]  [font=\small,color={rgb, 255:red, 20; green, 0; blue, 255 }  ,opacity=1 ]  {$\xi '_{x}$};
    \end{tikzpicture}
    \caption{Case $\xi,\xi'\in\bd^+Y$.\label{subfig:infinite-angle-same-direction}}
    \end{subfigure}\quad
    \begin{subfigure}{0.45\textwidth}
    \centering
    \tikzset{every picture/.style={line width=0.75pt}}
    \begin{tikzpicture}[x=0.75pt,y=0.75pt,yscale=-1,xscale=1]
    \draw [color={rgb, 255:red, 20; green, 0; blue, 255 }  ,draw opacity=1 ]   (355.4,256) .. controls (353.32,222.45) and (355.38,191.95) .. (325.53,144.9) ;
    \draw [shift={(350.66,204.45)}, rotate = 257.37] [color={rgb, 255:red, 20; green, 0; blue, 255 }  ,draw opacity=1 ][line width=0.75]    (10.93,-3.29) .. controls (6.95,-1.4) and (3.31,-0.3) .. (0,0) .. controls (3.31,0.3) and (6.95,1.4) .. (10.93,3.29)   ;
    \draw [color={rgb, 255:red, 20; green, 0; blue, 255 }  ,draw opacity=1 ]   (355.4,256) .. controls (348.92,222.05) and (339.32,203.66) .. (314.33,170.92) ;
    \draw [shift={(343.64,216.89)}, rotate = 244.43] [color={rgb, 255:red, 20; green, 0; blue, 255 }  ,draw opacity=1 ][line width=0.75]    (10.93,-3.29) .. controls (6.95,-1.4) and (3.31,-0.3) .. (0,0) .. controls (3.31,0.3) and (6.95,1.4) .. (10.93,3.29)   ;
    \draw  [dash pattern={on 4.5pt off 4.5pt}]  (241.51,144.49) -- (355.4,256) ;
    \draw  [dash pattern={on 4.5pt off 4.5pt}]  (365.72,12.17) -- (432.9,79.03) ;
    \draw  [dash pattern={on 4.5pt off 4.5pt}]  (365.72,12.17) -- (262.07,120.13) ;
    \draw [color={rgb, 255:red, 208; green, 2; blue, 27 }  ,draw opacity=1 ]   (279.32,240.19) .. controls (317.73,173.39) and (356.52,78.04) .. (365.72,12.17) ;
    \draw [shift={(334.41,123.41)}, rotate = 111.21] [color={rgb, 255:red, 208; green, 2; blue, 27 }  ,draw opacity=1 ][line width=0.75]    (10.93,-4.9) .. controls (6.95,-2.3) and (3.31,-0.67) .. (0,0) .. controls (3.31,0.67) and (6.95,2.3) .. (10.93,4.9)   ;
    \draw  [fill={rgb, 255:red, 0; green, 0; blue, 0 }  ,fill opacity=1 ] (276.32,240.19) .. controls (276.32,238.56) and (277.66,237.23) .. (279.32,237.23) .. controls (280.97,237.23) and (282.32,238.56) .. (282.32,240.19) .. controls (282.32,241.83) and (280.97,243.16) .. (279.32,243.16) .. controls (277.66,243.16) and (276.32,241.83) .. (276.32,240.19) -- cycle ;
    \draw  [fill={rgb, 255:red, 0; green, 0; blue, 0 }  ,fill opacity=1 ] (297.19,201.6) .. controls (297.19,199.96) and (298.54,198.63) .. (300.19,198.63) .. controls (301.85,198.63) and (303.19,199.96) .. (303.19,201.6) .. controls (303.19,203.24) and (301.85,204.57) .. (300.19,204.57) .. controls (298.54,204.57) and (297.19,203.24) .. (297.19,201.6) -- cycle ;
    \draw  [fill={rgb, 255:red, 0; green, 0; blue, 0 }  ,fill opacity=1 ] (323.93,144.9) .. controls (323.93,144.03) and (324.64,143.32) .. (325.53,143.32) .. controls (326.41,143.32) and (327.13,144.03) .. (327.13,144.9) .. controls (327.13,145.78) and (326.41,146.49) .. (325.53,146.49) .. controls (324.64,146.49) and (323.93,145.78) .. (323.93,144.9) -- cycle ;
    \draw  [fill={rgb, 255:red, 0; green, 0; blue, 0 }  ,fill opacity=1 ] (351.02,71.02) .. controls (351.02,70.14) and (351.74,69.44) .. (352.62,69.44) .. controls (353.5,69.44) and (354.22,70.14) .. (354.22,71.02) .. controls (354.22,71.89) and (353.5,72.6) .. (352.62,72.6) .. controls (351.74,72.6) and (351.02,71.89) .. (351.02,71.02) -- cycle ;
    \draw  [fill={rgb, 255:red, 0; green, 0; blue, 0 }  ,fill opacity=1 ] (341.42,94.66) .. controls (341.42,93.02) and (342.77,91.69) .. (344.42,91.69) .. controls (346.08,91.69) and (347.42,93.02) .. (347.42,94.66) .. controls (347.42,96.3) and (346.08,97.63) .. (344.42,97.63) .. controls (342.77,97.63) and (341.42,96.3) .. (341.42,94.66) -- cycle ;
    \draw  [dash pattern={on 4.5pt off 4.5pt}]  (438,174.87) -- (355.4,256) ; 
    \draw  [fill={rgb, 255:red, 0; green, 0; blue, 0 }  ,fill opacity=1 ] (312.73,170.92) .. controls (312.73,170.04) and (313.44,169.33) .. (314.33,169.33) .. controls (315.21,169.33) and (315.93,170.04) .. (315.93,170.92) .. controls (315.93,171.79) and (315.21,172.5) .. (314.33,172.5) .. controls (313.44,172.5) and (312.73,171.79) .. (312.73,170.92) -- cycle ;
    \draw  [fill={rgb, 255:red, 0; green, 0; blue, 0 }  ,fill opacity=1 ] (357.32,47.38) .. controls (357.32,46.5) and (358.03,45.8) .. (358.92,45.8) .. controls (359.8,45.8) and (360.52,46.5) .. (360.52,47.38) .. controls (360.52,48.25) and (359.8,48.96) .. (358.92,48.96) .. controls (358.03,48.96) and (357.32,48.25) .. (357.32,47.38) -- cycle ;
    \draw  [dash pattern={on 4.5pt off 4.5pt}]  (412.97,102.23) -- (279.32,240.19) ;
    \draw  [fill={rgb, 255:red, 0; green, 0; blue, 0 }  ,fill opacity=1 ] (340.32,176.26) .. controls (340.32,175.38) and (341.04,174.68) .. (341.92,174.68) .. controls (342.81,174.68) and (343.52,175.38) .. (343.52,176.26) .. controls (343.52,177.13) and (342.81,177.84) .. (341.92,177.84) .. controls (341.04,177.84) and (340.32,177.13) .. (340.32,176.26) -- cycle ;
    \draw  [fill={rgb, 255:red, 0; green, 0; blue, 0 }  ,fill opacity=1 ] (326.53,189.91) .. controls (326.53,189.03) and (327.24,188.33) .. (328.13,188.33) .. controls (329.01,188.33) and (329.72,189.03) .. (329.72,189.91) .. controls (329.72,190.78) and (329.01,191.49) .. (328.13,191.49) .. controls (327.24,191.49) and (326.53,190.78) .. (326.53,189.91) -- cycle ;
    \draw  [dash pattern={on 4.5pt off 4.5pt}]  (220.25,186.74) -- (279.32,240.19) ;
    \draw  [fill={rgb, 255:red, 0; green, 0; blue, 0 }  ,fill opacity=1 ] (305.89,210.2) .. controls (305.89,208.57) and (307.23,207.24) .. (308.89,207.24) .. controls (310.55,207.24) and (311.89,208.57) .. (311.89,210.2) .. controls (311.89,211.84) and (310.55,213.17) .. (308.89,213.17) .. controls (307.23,213.17) and (305.89,211.84) .. (305.89,210.2) -- cycle ;

    \draw (263.26,231.62) node [anchor=north west][inner sep=0.75pt]  [font=\small]  {$x$};
    \draw (321.22,82.72) node [anchor=north west][inner sep=0.75pt]  [font=\footnotesize]  {$r_{\infty }$};
    \draw (274.58,194.88) node [anchor=north west][inner sep=0.75pt]  [font=\footnotesize]  {$p_{\infty }$};
    \draw (302.93,147.08) node [anchor=north west][inner sep=0.75pt]  [font=\scriptsize]  {$p_{n}$};
    \draw (342.12,115.07) node [anchor=north west][inner sep=0.75pt]  [font=\small,color={rgb, 255:red, 208; green, 2; blue, 27 }  ,opacity=1 ]  {$\xi _{x}$};
    \draw (356.04,209.68) node [anchor=north west][inner sep=0.75pt]  [font=\small,color={rgb, 255:red, 20; green, 0; blue, 255 }  ,opacity=1 ]  {$\xi '_{p_{n}}$};
    \draw (339.32,51.04) node [anchor=north west][inner sep=0.75pt]  [font=\scriptsize]  {$r_{n}$};
    \draw (349.32,172.6) node [anchor=north west][inner sep=0.75pt]  [font=\scriptsize]  {$q_{n}$};
    \draw (304.17,217.43) node [anchor=north west][inner sep=0.75pt]  [font=\footnotesize]  {$q_{\infty }$};
    \end{tikzpicture}
        \caption{Case $\xi\in\bd^+Y$ and $\xi'\in\bd^-Y$.\label{subfig:infinite-angle-opposite-direction}}
    \end{subfigure}
    \caption{Situations in the proof of \autoref{pop:different-pasts-infinite-angle}.}
    \label{fig:infinite_angle}
    \end{figure}

    Consider an increasing sequence $s_n\to s$ of positive real numbers and define $p_n:=\xi_x(s_n)\to p_\infty$. Define $r_n:=\xi_x(s_n+\lambda)\to r_\infty:=\xi_x(s+\lambda)$, where $\lambda$ is any positive real number. Notice that, as the topological boundary of the timelike past of any set is achronal \cite{minguzzi_lorentzian_2019}, all the points $r_n,r_\infty$ are in $Y\backslash \overline{I^-(\xi')}$. For each $n\in\N$, take $\smash{\xi'_{p_n}}$ the unique future directed timelike geodesic ray asymptotic to $\xi'$ and starting at $p_n$. By a similar argument as before, for each $n$ there is a parameter value $s'_n$ such that $q_n:=\xi'_{p_n}(s'_n)$ is in the topological boundary of $I^-(r_1)$. 

    As all the $q_n$'s are contained in the causal diamond $J^+(x)\cap J^-(r_1)$, which is compact by global hyperbolicity, there is a converging subsequence $q_{\sigma(n)}\to q_\infty$, which we denote again by $q_n$ for simplicity. Moreover, $q_\infty$ is also in the topological boundary of $I^-(r_1)$ and, in particular, $q_\infty\neq p_\infty\in I^-(r_1)$. In addition, by local causal closedness, the relations $p_n\ll q_n$ imply that $p_\infty\leq q_\infty$. 
    
    Let us notice that this implies that $q_\infty$ is also in the topological boundary of $I^-(\xi')$. Indeed, it has to be in the closure (being limit of a sequence in $I^-(\xi')$) and it cannot be in the interior because otherwise one would obtain that also $p_\infty\in I^-(\xi')$ via push-up. The reasonings in this paragraph are not really needed for the proof, but just to justify the drawing in \autoref{subfig:infinite-angle-same-direction}.

    Notice also that $p_\infty\not\ll q_\infty$ (nor vice-versa), as both are in the topological boundary of $I^-(\xi')$, which is achronal.

    Let us now prove that there is a causal (in fact null) segment joining $p_\infty$ with $q_\infty$, and a causal segment joining $q_\infty$ with $r_\infty$. By global hyperbolicity, we have non-total imprisonment, i.e., for every compact set $K\subset Y$ the $d$-length of the causal curves contained in $K$ is bounded. The restrictions of $\xi_{p_n}$ to $[0,s_n']$, i.e., the parts connecting $p_n$ to $q_n$, are all contained in the compact diamond $J^+(x)\cap J^-(r_1)$. As a consequence, they all have uniformly bounded $d$-length. In particular, their parametrization to some common domain $[a,b]$ is uniformly Lipschitz. \autoref{thm:limit curve theorem} gives now existence of a causal curve joining $p_\infty$ to $q_\infty$. For the other case, notice that $q_\infty\ll r_\infty$ and the global curvature bound gives that the whole space is (strictly) timelike geodesically connected (\autoref{def:comparison_neighb}).

    So, for a contradiction, assume that the angle $\ma(\xi,\xi')$ is finite. Consider the timelike triangles $\triangle p_n q_n r_n$ where the indices are understood to be taken up to the subsequence previously mentioned, and the comparison angles $\alpha_n:=\smash{\widetilde{\ma}_{p_n}(r_n,q_n)}$. Then,
    \begin{align}
        \alpha_n &= \widetilde{\ma}_{q_n}(p_n,r_n) - \widetilde{\ma}_{r_n}(p_n,q_n), \tag*{by \autoref{lem:sum of angles of triangle}}\\
        &\leq \ma_{q_n}(p_n,r_n) - \ma_{r_n}(p_n,q_n), \tag*{by \autoref{thm:monotonicity of comparison angles}}\\
        &\leq \ma_{q_n}(\xi'_{q_n},[q_n,r_n]) - \ma_{r_n}(\xi_{r_n},[r_n,q_n]), \tag*{by \autoref{thm:triangle inequality for angles}(\ref{item:special case of triangle inequality})}\\
        &\leq \ma_{q_n}(\xi'_{q_n},\xi_{q_n})+\ma_{q_n}(\xi_{q_n},[q_n,r_n]) - \ma_{r_n}(\xi_{r_n},[r_n,q_n]), \tag*{by \autoref{thm:triangle inequality for angles}(\ref{item:same orientation triangle inequality})}\\
        &\leq \ma (\xi,\xi')+\ma_{q_n}(\xi_{q_n},[q_n,r_n])-\ma_{r_n}(\xi_{r_n},[r_n,q_n]), \tag*{by \autoref{def:angular-distance}.}
    \end{align}
    By \autoref{lem:angles between asymptotic rays}, 
    \[
    \ma_{q_n}(\xi_{q_n},[q_n,r_n])\leq \ma_{r_n}(\xi_{r_n},[r_n,q_n]),
    \]
    thus $\alpha_n \leq \ma(\xi,\xi')$.
    
    As a consequence,  
    \[
    \alpha_n=\cosh^{-1}\left(\frac{\tau(p_n,r_n)^2+\tau(p_n,q_n)^2-\tau(q_n,r_n)^2}{2\tau(p_n,r_n)\tau(p_n,q_n)}\right)
    \]
    is bounded.
    
    Now, as $\tau(p_n,q_n)\to \tau(p_\infty,q_\infty)=0$ whereas $\tau(p_n,r_n)=\lambda$ is constant, one deduces that the numerator must also converge to $0$. In other words, $\tau(q_n,r_n)\to \tau(q_\infty,r_\infty)=\lambda$. We used here that the whole space is a comparison neighborhood where $\tau$ is continuous.

    To sum up, we have a triangle $\triangle p_\infty q_\infty r_\infty$ such that $\tau(p_\infty,r_\infty)=\tau(q_\infty,r_\infty)$ and $\tau(p_\infty,q_\infty)=0$. Concatenating the segments $[p_\infty,q_\infty]$ with $[q_\infty,r_\infty]$ gives a causal curve whose $\tau$-length is $\lambda=\tau(p_\infty,r_\infty)$. In particular, this is a maximal geodesic, but it does not have a causal character as the first segment is timelike whereas the second one is null, thus contradicting regularity. The case $\xi,\xi'\in\bd^-Y$ and $I^{+}(\xi)\neq I^{+}(\xi')$ is completely analogous. 
    
    Assuming further that $Y$ is regularly localizable, the case $\xi\in \bd^+Y$, $\xi'\in\bd^-Y$ and $I^-(\xi)\neq I^+(\xi')$ is also very similar (see now \autoref{subfig:infinite-angle-opposite-direction}). Indeed, if  $\ma(\xi,\xi')<\infty$ then $I^-(\xi)\cap I^+(\xi')\neq \varnothing$ by definition. Since $I^-(\xi)\cup I^+(\xi')\setminus I^-(\xi)\cap I^+(\xi')\neq \varnothing$, without loss of generality we can assume there exists $x\in I^-(\xi)\setminus I^+(\xi')$. Moreover, since $Y$ is regularly localizable, we can further assume that $x\not\in \overline{I^+(\xi')}$. This is because, if $x\in\overline{I^+(\xi')}$, by regular localizability we can find $x'\ll x$, and such $x'$ cannot be in $\overline{I^+(\xi')}$; otherwise $I^-(x)$, which is a neighborhood of $x'$, would intersect $I^+(\xi')$, and this implies $x\in I^+(\xi')$, i.e.,a contradiction.
    
    Therefore, we can argue as above to obtain a minimal $t_\infty>0$ such that $\xi_x(t)\in I^-(\xi)\cap I^+(\xi')$ for all $t>t_\infty$. Then $\xi_x(t_\infty)$ is in the topological boundary of $I^+(\xi')$. Let $t_n>t_\infty$ be such that $t_n\to t_\infty$ and, for $n\in\mathbb{N}\cup\{\infty\}$, define $p_n=\xi_p(t_n)$, $r_n= \xi_x(t_n+\lambda)$ for some fixed constant $\lambda>0$. For $n\in\mathbb{N}$, let $q_n$ be the unique point in the intersection of $\xi'_{p_n}$ with the topological boundary of the causal diamond $J(x,r_1)$. The latter set is compact by global hyperbolicity, thus, up to a subsequence, $q_n \to q_\infty$ for some $q_\infty\in J(x,r_1)$. Arguing as in the case $\xi,\xi'\in \bd^+Y$, we obtain a triangle $\triangle p_\infty q_\infty r_\infty$ where $\tau(p_\infty,r_\infty) = \tau(q_\infty,r_\infty)$ and $\tau(q_\infty,p_\infty) = 0$, which gives the same contradiction as before.
\end{proof} 

\begin{pop}\label{pop:main1}
Let $Y$ be a proper, globally hyperbolic, strongly causal, locally causally closed and regular \LpLS satisfying $\CBA(0)$ globally. Then the function $(\xi,\xi')\mapsto \ma(\xi,\xi')$ defines an extended metric (i.e.\ $\ma$ might take infinite values) on $\bd^+Y$. Analogously with $\ma$ restricted to $\bd^-Y$.

Furthermore, for any $\xi\in\bd^+Y$, $\xi'\in \bd^-Y$, $\ma(\xi,\xi') = 0$ if and only if for every $x\in I^-(\xi)=I^+(\xi')$ there exists a geodesic line $\gamma\colon \mathbb{R}\to Y$ (which is unique) such that $\gamma(\infty) = \xi$, $\bar{\gamma}(\infty) = \xi'$ and $\gamma(0) = x$, where $\bar{\gamma}\colon \mathbb{R}\to Y$ is given by $\bar{\gamma}(t) = \gamma(-t)$.
\end{pop}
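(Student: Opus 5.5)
The proof splits into the metric axioms on $\bd^+Y$ and the characterization of zero angle between a future and a past ideal point.

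\textbf{Symmetry and non-negativity.} These are immediate from the definition, since each $\ma_x(\xi,\xi')$ is symmetric and non-negative. When $I^-(\xi)\cap I^-(\xi')$ is empty, both sides equal $\infty$.

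\textbf{Triangle inequality.} Take $\xi_1,\xi_2,\xi_3\in\bd^+Y$. If $\ma(\xi_1,\xi_2)$ or $\ma(\xi_2,\xi_3)$ is infinite, there is nothing to prove. Otherwise \autoref{pop:different-pasts-infinite-angle} forces $I^-(\xi_1)=I^-(\xi_2)=I^-(\xi_3)$. Then every $x\in I^-(\xi_1)\cap I^-(\xi_3)$ lies in all three pasts, so the representatives $\xi_{i,x}$ exist by \autoref{pop:existence of pointwise rep}. \autoref{thm:triangle inequality for angles}(\ref{item:same orientation triangle inequality}) gives
\[\ma_x(\xi_1,\xi_3)\le\ma_x(\xi_1,\xi_2)+\ma_x(\xi_2,\xi_3)\le \ma(\xi_1,\xi_2)+\ma(\xi_2,\xi_3),\]
and taking the supremum over $x$ yields the inequality.

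\textbf{Definiteness.} Suppose $\ma(\xi,\xi')=0$. The pasts are then equal and nonempty. Pick $x$ in the common past; then $\ma_x(\xi_x,\xi'_x)=0$. For each $t$, I want to show $\xi_x(t)\ll\xi'_x(t+c)$ for some uniform $c$, and symmetrically. My first attempt uses $\CBA(0)$ directly. Zero angle combined with monotonicity (\autoref{thm:monotonicity of comparison angles}) does not immediately force the comparison angle to vanish, because monotonicity bounds the angle by the comparison angle in the wrong direction. I therefore use angles at other base points. For $y=\xi_x(s)$, the supremum condition gives $\ma_y(\xi_y,\xi'_y)=0$. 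Since $\xi_y$ is the tail of $\xi_x$, we get $\ma_y(\xi_x|_{[s,\infty)},\xi'_y)=0$. Also, $\xi_x|_{[0,s]}$ concatenated with $\xi'_y$ has angle $0$ at $y$ between the past-directed segment $[y,x]$ and $\xi'_y$: this follows from \autoref{thm:triangle inequality for angles}(\ref{item:special case of triangle inequality}), since the reverse of $[x,y]$ continued by $\xi_y$ is a geodesic. By \autoref{thm:concatenation with angle zero}, the concatenation of $[x,y]$ with $\xi'_y$ is then a geodesic ray from $x$ asymptotic to $\xi'$. By \autoref{pop:uniqueness of pointwise rep} it equals $\xi'_x$, so $\xi'_x$ passes through $y=\xi_x(s)$ for every $s$. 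Hence $\xi_x=\xi'_x$ and $\xi=\xi'$. This concatenation step is the main obstacle: one must check that the angle at $y$ between the backward segment and $\xi'_y$ is genuinely $0$, and that \autoref{thm:concatenation with angle zero} applies with the infinite diameter $D_0=\infty$.

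\textbf{Zero angle between $\xi\in\bd^+Y$ and $\xi'\in\bd^-Y$.} Assume $\ma(\xi,\xi')=0$; \autoref{pop:different-pasts-infinite-angle} gives $I^-(\xi)=I^+(\xi')$. For $x$ in this set, take $\xi_x$ and $\xi'_x$. Their angle at $x$ is $0$, and by \autoref{thm:concatenation with angle zero} applied on every compact piece, the curve $\gamma(t)=\xi_x(t)$ for $t\ge0$, $\gamma(t)=\xi'_x(-t)$ for $t\le0$, is a geodesic line. Uniqueness follows from \autoref{pop:uniqueness of pointwise rep} applied to each half.

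Conversely, suppose such lines exist through every $x$. Then at each $x$ the past half of $\gamma$ is $\xi'_x$ and the future half is $\xi_x$. Applying \autoref{thm:triangle inequality for angles}(\ref{item:special case of triangle inequality}) with $\alpha=\beta=\xi_x$ gives $\ma_x(\xi_x,\xi'_x)\le\ma_x(\xi_x,\xi_x)=0$. Taking the supremum over $x$ yields $\ma(\xi,\xi')=0$.
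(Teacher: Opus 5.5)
Your proposal is correct and follows essentially the same route as the paper. The triangle inequality is the pointwise angle triangle inequality followed by a supremum. Definiteness comes from concatenating an initial segment of one representative at $x$ with the other representative at an intermediate point $y$, via \autoref{thm:concatenation with angle zero} and \autoref{pop:uniqueness of pointwise rep>}; you merely swap the roles of $\xi$ and $\xi'$. The mixed case is the same concatenation performed at $x$.

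You make explicit two points the paper leaves implicit: the use of \autoref{thm:triangle inequality for angles}(\ref{item:special case of triangle inequality}) to see that the backward segment $[y,x]$ makes angle $0$ with the other ray at $y$, and the converse direction of the final equivalence. As in the paper, the equality $I^-(\xi)=I^+(\xi')$ in the mixed case rests on the second part of \autoref{pop:different-pasts-infinite-angle}, which is stated under the extra assumption of regular localizability.
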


\begin{proof}
By \autoref{pop:uniqueness of pointwise rep}, $\ma$ is well-defined when restricted to either $\bd^+Y$ or $\bd^-Y$. Moreover, it is clear that $\ma$ is non-negative and symmetric, and that $\ma(\xi,\xi)=0$, for any $\xi\in\bd^+Y\cup\bd^-Y$.

Now consider $\xi,\xi'\in\bd^+Y$ and assume that $\ma(\xi,\xi')=0$. \autoref{pop:different-pasts-infinite-angle} ensures that $S:=I^-(\xi)=I^-(\xi')$, and by \autoref{pop:existence of pointwise rep}, for any $x\in S$ there are unique future-directed timelike rays $\xi_x$ and $\xi'_x$ with $\xi_x(0)=\xi'_x(0)=x$ and $\xi_x(\infty) = \xi$, $\xi'_x(\infty) = \xi'$. Now, by the definition of the angular distance, $\ma(\xi,\xi')=0$ implies that for any $x\in S$, one must have $\ma_x(\xi_x,\xi'_x) = 0$. 

In particular, if we fix one such $x\in S$ and define $y=\xi'_x(t)$ for some $t\in [0,\infty)$, we must have $\ma_y(\xi_y,\xi'_y)=0$. Therefore, by \autoref{thm:concatenation with angle zero}, the concatenation of $\xi'_x|_{[0,t]}$ with $\xi_y$ yields a future-directed timelike geodesic ray $\smash{\tilde{\xi}_x}$ such that $\smash{\tilde{\xi}_x(0)=x}$ and $\smash{\tilde{\xi}_x(\infty)=\xi}$. By \autoref{pop:uniqueness of pointwise rep}, it follows that $\smash{\tilde{\xi}_x = \xi_x}$ and in particular $\xi_x(t) = \tilde{\xi}_x(t) = \xi'_x(t)$. Since $t\in[0,\infty)$ was arbitrary, we obtain that $\xi_x=\xi'_x$ which implies $\xi=\xi'$. Similarly when $\xi,\xi'\in \bd^-Y$.

On the other hand, if $\xi\in \bd^+Y$ and $\xi'\in\bd^-Y$ and $\ma(\xi,\xi')=0$ one obtains similarly that $\ma_x(\xi_x,\xi'_x)=0$ for every $x\in I^-(\xi)=I^+(\xi')$. By \autoref{thm:concatenation with angle zero}, for any such $x$, $\xi_x$ and $\xi'_x$ fit together to a line. By definition, this line has $\xi$ and $\xi'$ for ideal points, and uniqueness follows from \autoref{pop:uniqueness of pointwise rep}.
 
Finally, we prove the triangle inequality as follows. Let $\xi,\xi',\xi''\in\bd^+Y$ be such that $\ma(\xi,\xi')<\infty$ and $\ma(\xi',\xi'')<\infty$. Then there is some $x\in Y$ and representatives $\xi_x,\xi'_x,\xi''_x$ of the corresponding ideal points at $x$. We know that the triangle inequality holds for the angle between timelike geodesic directions at $x\in Y$, by Theorems~\ref{thm:triangle inequality for angles} and \ref{thm:monotonicity implies existence of angles}. In particular,
\[
\ma_x(\xi_x,\xi'_x) + \ma_x(\xi'_x,\xi_x'') \geq \ma_x(\xi_x,\xi''_x).
\]
Taking the supremum over $x\in Y$ yields the claim. Analogously with $\xi,\xi',\xi''\in\bd^-Y$.
\end{proof} 

The following result provides a relation between the restriction of the cone topology to the future ideal boundary and the topology induced in such a boundary by the angular metric. This is a Lorentzian analogue to the metric result \cite[Proposition~II.9.7.(1)]{BH}.

\begin{pop}\label{pop:cone-top-coarser}
Let $Y$ be a proper, globally hyperbolic, strongly causal, locally causally closed and regular \LpLS satisfying $\CBA(0)$ globally. Then the cone topology on $\bd^+Y$ is coarser than the topology induced by the angular metric. Analogously with $\bd^-Y$. 
\end{pop}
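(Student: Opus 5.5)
We need to show: every set open in the cone topology, intersected with $\bd^+Y$, is open for $\ma$. Since the truncated cones $V_{p,\varepsilon,R}(\xi)$ form a neighborhood basis of each $\xi\in\bd^+Y$ (\autoref{pop:cone_top}), it suffices to prove the following. Given $\xi\in\bd^+Y$ and a truncated cone $V=V_{p,\varepsilon,R}(\xi)$, there is $\delta>0$ such that every $\eta\in\bd^+Y$ with $\ma(\xi,\eta)<\delta$ lies in $V$. We will simply take $\delta=\varepsilon$. The argument mirrors \cite[Proposition~II.9.7]{BH}.

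Fix $\eta$ with $\ma(\xi,\eta)<\varepsilon$. First, since $\ma(\xi,\eta)<\infty$, \autoref{pop:different-pasts-infinite-angle} gives $I^-(\eta)=I^-(\xi)$; this is where global hyperbolicity enters. Hence $p\in I^-(\eta)$, and \autoref{pop:existence of pointwise rep} provides the representative $\eta_p$ starting at $p$. The angle $\ma_p(\xi,\eta)=\ma_p(\xi_p,\eta_p)$ is therefore defined, and by \autoref{def:angular-distance} it satisfies $\ma_p(\xi_p,\eta_p)\le\ma(\xi,\eta)<\varepsilon$.

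It remains to interpret membership of the ideal point $\eta$ in $V_{p,\varepsilon,R}(\xi)$. For an ideal point $\eta\in I^+(p)$-side, the natural reading of $[p,\eta]$ is the ray $\eta_p$ and of $\tau(p,\eta)$ is $+\infty$. This is consistent with the proof of \autoref{pop:cone_top}, where $\xi_j\in V_{p_j,\varepsilon_j,R_j}(\xi_3)$ is deduced exactly from $\ma_{p_j}(\xi_{j,p_j},\xi_{3,p_j})<\varepsilon_j$. With this reading, the condition $\tau(p,\eta)>R$ holds automatically. The angular condition is exactly $\ma_p(\xi_p,\eta_p)<\varepsilon$, which we have just established. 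Thus $\eta\in V$, and the ball of radius $\varepsilon$ for $\ma$ around $\xi$ is contained in $V\cap\bd^+Y$.

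The main (though mild) obstacle is making sure that membership of an ideal point in a truncated cone is governed precisely by the angle at $p$ between the representatives. This requires that $p\in I^-(\eta)$, and that is exactly the content of \autoref{pop:different-pasts-infinite-angle}. Without it, $\eta_p$ might fail to exist and the angle would be undefined.

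If one instead wants to use only points of $Y$, one can argue through the rays directly. The points $\eta_p(t)$ satisfy $\ma_p(\xi_p,[p,\eta_p(t)])=\ma_p(\xi_p,\eta_p)<\varepsilon$ and $\tau(p,\eta_p(t))=t>R$ for large $t$, so the tail of $\eta_p$ lies in $V\cap Y$; this is consistent with the previous reading. The past case $\bd^-Y$ is obtained by reversing time orientation.
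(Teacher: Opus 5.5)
Your proof is correct and is essentially the paper's argument: you show $B^{\ma}_\varepsilon(\xi)\subset V_{p,\varepsilon,R}(\xi)$ using $\ma_p(\xi_p,\eta_p)\le\ma(\xi,\eta)<\varepsilon$. You also make explicit two points the paper leaves implicit: finiteness of $\ma(\xi,\eta)$ forces $I^-(\eta)=I^-(\xi)$, so $p\in I^-(\eta)$ and $\eta_p$ exists (by \autoref{pop:different-pasts-infinite-angle} and \autoref{pop:existence of pointwise rep}), and for an ideal point the condition $\tau(p,\eta)>R$ is automatically satisfied.
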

\begin{proof}
If $\xi\in\bd^+Y$ and $V_{p,\varepsilon,R}(\xi)$ is a basic neighborhood of $\xi$ in the cone topology (\autoref{def:basic nbhd cone topology}), and $\xi''\in \bd^+Y$ is such that $\ma(\xi,\xi') < \varepsilon$ then, by definition of the angular metric, $\ma_p(\xi_p,\xi_p') < \varepsilon$, which implies that $\xi' \in V_{p,\varepsilon,R}(\xi)$. Therefore $\smash{B_\varepsilon^{\ma}}(\xi) \subset V_{p,\varepsilon,R}(\xi)$ and the claim follows.
\end{proof}

In general, the restriction of the cone topology to the ideal boundary need not coincide with the topology induced by the angular metric (see \autoref{rem:cone-topology-angular-topology}).

Now we relate this construction with the classical notion of causal boundary \cite{geroch_ideal_1972,flores_final_2011}. Recall that a subset $P\subset Y$ is called a \textit{past set} if $I^-(P)=P$. A a (non-empty) past set $P$ that cannot be written as the proper union of two past sets $P_i\subsetneq P$ is called \textit{indecomposable past set} (IP). If an IP is the past of a point in $Y$ then it is called a \textit{proper indecomposable past set} (PIP). Otherwise, it is a \textit{terminal indecomposable past set} (TIP). The future counterparts are defined analogously. As a consequence of \cite[Theorem~3]{harris1998}, for any future-directed timelike geodesic ray $\xi$, the set $I^-(\xi)$ is a TIP. This, combined with \autoref{pop:different-pasts-infinite-angle} and \autoref{pop:main1} yields the following.
\begin{cor}\label{cor:components of bd are inside TIFs}
Let $Y$ be a proper, globally hyperbolic, strongly causal, locally causally closed and regular \LpLS satisfying $\CBA(0)$ globally. Then each finiteness component of $(\bd^+Y,\ma)$ is contained in some TIP. Analogously, each finiteness component of $(\bd^-Y,\ma)$ is contained in some TIF.   
\end{cor}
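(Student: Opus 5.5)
The plan is to define the finiteness components first and then map each one to a past set. Two ideal points $\xi,\xi'\in\bd^+Y$ are declared equivalent when $\ma(\xi,\xi')<\infty$. This is an equivalence relation because \autoref{pop:main1} shows that $\ma$ is an extended metric: symmetry is clear, and transitivity follows from the triangle inequality, since a sum of two finite values is finite. The finiteness components are the equivalence classes of this relation. To each component $C$ I will attach the set $P_C:=I^-(\xi)$ for an arbitrary $\xi\in C$, and then show that $P_C$ is well defined and is a TIP.

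First, well-definedness. Take $\xi,\xi'\in C$, so $\ma(\xi,\xi')<\infty$. By the contrapositive of the first part of \autoref{pop:different-pasts-infinite-angle}, we get $I^-(\xi)=I^-(\xi')$. The standing hypotheses (proper, globally hyperbolic, strongly causal, locally causally closed, regular, $\CBA(0)$ globally) are exactly those of that proposition, so it applies directly. Here $I^-(\xi)$ means the chronological past of any representative ray. This is unambiguous by the remark after \autoref{def:asymptotic}: asymptotic rays have the same past. Hence every element of $C$ has the same timelike past $P_C$.

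Second, $P_C$ is a TIP. Fix a representative future-directed timelike geodesic ray $\gamma$ of some $\xi\in C$. Then $\gamma$ is an inextendible future-directed timelike curve, since it is defined on $[0,\infty)$ and parametrized by $\tau$-arclength. By \cite[Theorem~3]{harris1998}, as quoted just before the statement, $I^-(\gamma)$ is a TIP. Therefore $C$ is contained in the set of ideal points whose past equals this TIP, which is what ``contained in some TIP'' means here. The past statement for $\bd^-Y$ is obtained in the same way by time reversal: use the $\bd^-Y$ case of \autoref{pop:different-pasts-infinite-angle} and the dual of Harris's result, which gives TIFs.

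The only step I expect to need any care is applying Harris's theorem. One must check that a timelike ray in our synthetic setting satisfies the hypotheses used there, namely that it is future-inextendible and that its past is indecomposable. If this is not directly available, I would prove indecomposability by hand. Suppose $I^-(\gamma)=P_1\cup P_2$ with $P_1,P_2$ past sets. Each point $\gamma(n)$ lies in $P_1$ or in $P_2$, so one of them, say $P_1$, contains infinitely many of the $\gamma(n)$. Any $q\in I^-(\gamma)$ satisfies $q\ll\gamma(t)$ for some $t$, and then $q\ll\gamma(n)$ for every $n\ge t$. Choosing such an $n$ with $\gamma(n)\in P_1$ gives $q\in I^-(P_1)=P_1$. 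Hence $P_1=I^-(\gamma)$, so the decomposition is not proper. Finally, $I^-(\gamma)$ is not the past of a point. If it were, say $I^-(\gamma)=I^-(z)$, then $\gamma\subset J^-(z)$. The segment of $\gamma$ beyond $\gamma(0)$ would then lie in the compact diamond $J(\gamma(0),z)$, given by global hyperbolicity, while having infinite $\tau$-length. This contradicts the finiteness of $\tau$ on that diamond.
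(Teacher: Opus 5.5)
Your proposal is correct and follows the paper's route: \autoref{pop:main1} makes the finiteness components well defined, finite angular distance forces equal timelike pasts by \autoref{pop:different-pasts-infinite-angle}, and that common past is a TIP by \cite[Theorem~3]{harris1998}. Your hand-made check of the TIP property is a valid self-contained substitute for the citation: indecomposability comes from the pigeonhole argument along $\gamma(n)$, and the past is not that of a point because $I^-(\gamma)=I^-(z)$ would give $\tau(\gamma(0),z)\geq t$ for all $t$, contradicting finiteness of $\tau$.
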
 

Intuitively, the (future or past) ideal boundary distinguishes points which are regarded as the same by the causal boundary. Notice, however, that not all TIPs can be obtained as the timelike past of a future-directed timelike geodesic ray and, therefore, they do not appear in the ideal boundary even though they appear in the causal one.
For instance, in $2$-dimensional Minkowski space all future-directed timelike rays have the whole space as their past (see \autoref{lem:cones-past-whole-space}), whereas a half-plane below any diagonal line (i.e., at angle $\pm45^\circ$ with the horizontal) is also a TIP.

The following proposition provides reformulations of the angular distance, analogous to the two first items of \cite[Proposition~II.9.8]{BH}. These reformulations will be useful to prove \autoref{pop:completeness} and \autoref{cor:geodesicity}, which establish that $(\bd^+Y,\ma)$ is complete and geodesic under some technical assumptions on $Y$.

\begin{pop}\label{pop:properties of angle}
Let $Y$ be a proper, globally hyperbolic, strongly causal, locally causally closed and regular \LpLS satisfying $\CBA(0)$ globally. Let $x \in Y$ and $\gamma, \gamma' \colon [0,\infty) \to Y$ be future-directed timelike geodesic rays with $\gamma(0) = \gamma'(0) = x$,  $\gamma(\infty) = \xi$, and $\gamma'(\infty) = \xi'$. Then: 
\begin{enumerate}
    \item{\label{item:increasing-angle}}  The function (see \autoref{subfig:increasing-angle-ray})
    \[
        t \;\mapsto\; \ma_{\gamma(t)}(\xi,\xi')
    \]
    is non-decreasing.
    
    \item{\label{item:alternative-definition-angular}}  Assuming that $\ma(\xi,\xi') < \infty$ and calling $A = \{(t,t') \in \mathbb{R}^2: \gamma(t)\ll \gamma'(t')\ \text{ or }\ \gamma'(t')\ll \gamma(t)\}$, one has
    \[\ma(\xi, \xi') = \overline{\ma}_x(\xi,\xi') := \sup_{(t,t')\in A}
    \widetilde{\ma}_{x}(\gamma(t),\gamma'(t'))
    = \lim_{\substack{(t,t')\in A \\ t,t'\to \infty}} \widetilde{\ma}_{x}(\gamma(t),\gamma'(t')),
    \]
    and in particular, $\overline{\ma}_x(\xi,\xi')$ does not depend on $x\in S=I^-(\xi)=I^-(\xi')$. Moreover,
    \[
        \lim_{t \to \infty} \ma_{\gamma(t)}(\xi,\xi') = \ma(\xi,\xi').
    \]
\end{enumerate}
\end{pop}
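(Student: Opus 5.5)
For item (1) I will compare angles at $y=\gamma(t)$ and $z=\gamma(t')$ with $t<t'$ through triangle inequalities for angles and the monotonicity of comparison angles. Since $\gamma$ is a ray, $\xi_y=\gamma|_{[t,\infty)}$ and $\xi_z=\gamma|_{[t',\infty)}$ up to reparametrisation (\autoref{pop:uniqueness of pointwise rep}). Applying \autoref{lem:angles between asymptotic rays} to the asymptotic rays $\xi'_y$ and $\xi'_z$ (we have $y\ll z$) gives
\[
\ma_y(\xi'_y,[y,z])\le \ma_z(\xi'_z,[z,y]).
\]
By definition $[y,z]$ is the initial segment of $\xi_y$, so the left-hand side is $\ma_y(\xi,\xi')$. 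The backward segment $[z,y]$ concatenated with $\xi_z$ is the geodesic $\gamma$, so \autoref{thm:triangle inequality for angles}(\ref{item:special case of triangle inequality}) applies. Together with the same-orientation triangle inequality (\ref{item:same orientation triangle inequality}), I expect it to give $\ma_z(\xi'_z,[z,y])\le \ma_z(\xi_z,\xi'_z)$. This is the same bookkeeping as in the proof of \autoref{pop:different-pasts-infinite-angle}. Combining the two inequalities yields monotonicity.

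For item (2), write $\theta(t,t')=\widetilde\ma_x(\gamma(t),\gamma'(t'))$. By \autoref{thm:monotonicity of comparison angles} (global $\CBA(0)$), $\theta$ is non-decreasing in each variable on $A$. Hence the supremum over $A$ equals the limit as $t,t'\to\infty$ along $A$. Note that $A$ is cofinal: asymptoticity is not needed, and for large $t'$ the point $\gamma'(t')$ enters $I^+(\gamma(t))$ because $x\ll\gamma(t)\in I^-(\xi')$. The value $\ma_x(\xi,\xi')=\lim_{s,t\to0}\theta$ is the infimum, so $\ma_x\le\overline\ma_x$ and hence $\ma(\xi,\xi')\ge\ma_x(\xi,\xi')$. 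For the key inequality $\overline\ma_x\le\ma$, I fix $(t,t')\in A$ and use the comparison triangle $\triangle x\gamma(t)\gamma'(t')$ with \autoref{lem:sum of angles of triangle}. This should express the comparison angle at $x$ as a difference of comparison angles at the far vertices. I then push those vertices to infinity, with $\ma_{\gamma(t)}(\xi,\xi')\le\ma(\xi,\xi')$ serving as the bound at the top. Concretely, I mimic the metric proof \cite[Proposition~II.9.8]{BH}: for $t$ large, consider the geodesic from $\gamma(t)$ to $\gamma'(t'')$ with $t''\to\infty$. By \autoref{pop:existence of pointwise rep} it converges to $\xi'_{\gamma(t)}$. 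Continuity of angles (\autoref{thm:continuity of angles}) and \autoref{lem:points_seen_from_far_seem_close} then show that the angle at $\gamma(t)$ between $[\gamma(t),x]$ reversed and $[\gamma(t),\gamma'(t'')]$ approximates $\ma_{\gamma(t)}(\xi,\xi')$ in the limit.

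The main obstacle is making this limit estimate rigorous with Lorentzian sign conventions. The angle identity in the Minkowski plane mixes past- and future-directed directions at the top vertex, so I must check which of the inequalities in \autoref{thm:triangle inequality for angles} are available. The plan is to prove $\theta(t,t'')\le \ma_{\gamma(t)}(\xi,\xi')+o(1)$ as $t''\to\infty$, then let $t\to\infty$. Once $\overline\ma_x=\ma(\xi,\xi')$ is established for every $x\in S$, independence of $x$ follows. The final statement $\lim_t\ma_{\gamma(t)}(\xi,\xi')=\ma(\xi,\xi')$ then follows by combining the upper bound $\ma_{\gamma(t)}\le\ma$, the monotonicity from item (1), and the lower estimate just obtained, which bounds $\overline\ma_x$ by $\lim_t\ma_{\gamma(t)}$.
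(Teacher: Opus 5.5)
Item (1) is the paper's argument. Note that \autoref{thm:triangle inequality for angles}(\ref{item:special case of triangle inequality}) alone gives $\ma_z(\xi'_z,[z,y])\le\ma_z(\xi_z,\xi'_z)$, so the same-orientation inequality is not needed there.

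For (2), your route to $\overline{\ma}_x\le\ma(\xi,\xi')$ also works. You drop the nonnegative comparison angle at $\gamma'(t'')$ and apply (\ref{item:special case of triangle inequality}) and (\ref{item:same orientation triangle inequality}) at $\gamma(t)$. Then you let $t''\to\infty$, so that $\ma_{\gamma(t)}(\xi'_{\gamma(t)},[\gamma(t),\gamma'(t'')])\to0$ by convergence of the segments to the ray and \autoref{thm:continuity of angles}. The paper avoids this limit. It bounds the same error angle by the angle $\ma_{\gamma'(t')}(x,\gamma(t))$, using \autoref{lem:angles between asymptotic rays} and the time-dual of (\ref{item:special case of triangle inequality}). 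That term then cancels exactly against the one coming from \autoref{lem:sum of angles of triangle}. The result is $\widetilde{\ma}_x(\gamma(t),\gamma'(t'))\le\ma_{\gamma(t)}(\xi,\xi')$ for every such pair.

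The genuine gap is the reverse inequality $\ma(\xi,\xi')\le\overline{\ma}_x(\xi,\xi')$. Your tools only give $\ma_x\le\overline{\ma}_x\le\ma$ at one and the same base point. Since $\ma(\xi,\xi')=\sup_y\ma_y(\xi,\xi')$, you need $\ma_y\le\overline{\ma}_x$ for every $y\in S$, which amounts to $\overline{\ma}_y\le\overline{\ma}_x$. Your sentence ``once $\overline{\ma}_x=\ma(\xi,\xi')$ is established for every $x$, independence follows'' is circular: the equality cannot be reached without the independence. The final claim $\lim_t\ma_{\gamma(t)}(\xi,\xi')=\ma(\xi,\xi')$ inherits the same gap, since you only get $\overline{\ma}_x\le\lim_t\ma_{\gamma(t)}\le\ma$.

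This is exactly where asymptoticity is needed, contrary to your remark that it is not. For $x,y\in S$, weak parallelism gives $c,\lambda>0$ with $\tau(\xi_y(t-c),\xi_x(t))\ge\lambda$ and $\tau(\xi'_x(t'),\xi'_y(t'+c))\ge\lambda$. The reverse triangle inequality then yields $\tau(\xi_x(t),\xi'_x(t'))\le\tau(\xi_y(t-c),\xi'_y(t'+c))-2\lambda$, and the Lorentzian law of cosines turns this into
\[
\cosh\widetilde{\ma}_y\bigl(\xi_y(t-c),\xi'_y(t'+c)\bigr)\le\frac{tt'\cosh\widetilde{\ma}_x\bigl(\xi_x(t),\xi'_x(t')\bigr)+(t'-t)c+c^2}{(t-c)(t'+c)}.
\]
Letting $t,t'\to\infty$ gives $\overline{\ma}_y\le\overline{\ma}_x$, and by symmetry the two are equal. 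Then $\ma_y\le\overline{\ma}_y=\overline{\ma}_x$ for all $y\in S$ gives $\ma(\xi,\xi')\le\overline{\ma}_x$, and the ``moreover'' part follows.
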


\begin{figure}
    \centering
    \begin{subfigure}{0.45\textwidth}
    \centering
    \tikzset{every picture/.style={line width=0.75pt}}
    \begin{tikzpicture}[x=0.75pt,y=0.75pt,yscale=-0.8,xscale=0.8] 
    \draw    (228,244.33) .. controls (236,194) and (329.33,84.33) .. (378.33,54.33) ;
    \draw    (228,244.33) .. controls (231.25,193.75) and (185.67,91) .. (147,48.33) ;
    \draw    (218.5,184.83) .. controls (234.5,135.83) and (300.08,56) .. (340.75,29) ;
    \draw    (201.58,138.92) .. controls (223.92,92.92) and (249.33,56.75) .. (291,19.75) ;
    \draw  [draw opacity=0] (191.1,117.95) .. controls (194.25,116.33) and (197.81,115.42) .. (201.58,115.42) .. controls (205.43,115.42) and (209.06,116.37) .. (212.25,118.05) -- (201.58,138.92) -- cycle ; \draw   (191.1,117.95) .. controls (194.25,116.33) and (197.81,115.42) .. (201.58,115.42) .. controls (205.43,115.42) and (209.06,116.37) .. (212.25,118.05) ;  
    \draw  [draw opacity=0] (211.03,162.58) .. controls (213.37,161.77) and (215.88,161.33) .. (218.5,161.33) .. controls (221.62,161.33) and (224.6,161.96) .. (227.31,163.09) -- (218.5,184.83) -- cycle ; \draw   (211.03,162.58) .. controls (213.37,161.77) and (215.88,161.33) .. (218.5,161.33) .. controls (221.62,161.33) and (224.6,161.96) .. (227.31,163.09) ;  
    \draw  [draw opacity=0] (227,220.85) .. controls (227.33,220.84) and (227.67,220.83) .. (228,220.83) .. controls (230.33,220.83) and (232.58,221.18) .. (234.7,221.83) -- (228,244.33) -- cycle ; \draw   (227,220.85) .. controls (227.33,220.84) and (227.67,220.83) .. (228,220.83) .. controls (230.33,220.83) and (232.58,221.18) .. (234.7,221.83) ;   
    \draw  [fill={rgb, 255:red, 0; green, 0; blue, 0 }  ,fill opacity=1 ] (226.25,244.08) .. controls (226.25,243.25) and (226.92,242.58) .. (227.75,242.58) .. controls (228.58,242.58) and (229.25,243.25) .. (229.25,244.08) .. controls (229.25,244.91) and (228.58,245.58) .. (227.75,245.58) .. controls (226.92,245.58) and (226.25,244.91) .. (226.25,244.08) -- cycle ;
    \draw  [fill={rgb, 255:red, 0; green, 0; blue, 0 }  ,fill opacity=1 ] (217,184.83) .. controls (217,184) and (217.67,183.33) .. (218.5,183.33) .. controls (219.33,183.33) and (220,184) .. (220,184.83) .. controls (220,185.66) and (219.33,186.33) .. (218.5,186.33) .. controls (217.67,186.33) and (217,185.66) .. (217,184.83) -- cycle ;
    \draw  [fill={rgb, 255:red, 0; green, 0; blue, 0 }  ,fill opacity=1 ] (200.08,138.92) .. controls (200.08,138.09) and (200.75,137.42) .. (201.58,137.42) .. controls (202.41,137.42) and (203.08,138.09) .. (203.08,138.92) .. controls (203.08,139.75) and (202.41,140.42) .. (201.58,140.42) .. controls (200.75,140.42) and (200.08,139.75) .. (200.08,138.92) -- cycle ;
    
    \draw (210.75,243.15) node [anchor=north west][inner sep=0.75pt]  [font=\small]  {$x$};
    \draw (184.25,181.4) node [anchor=north west][inner sep=0.75pt]  [font=\small]  {$\gamma ( t)$};
    \draw (167.5,138.65) node [anchor=north west][inner sep=0.75pt]  [font=\small]  {$\gamma ( s)$};
    \draw (283.5,152.4) node [anchor=north west][inner sep=0.75pt]  [font=\small]  {$\gamma '$};
    \draw (223.75,34.9) node [anchor=north west][inner sep=0.75pt]  [font=\small]  {$\xi '_{\gamma (s)}$};
    \draw (243.25,72.65) node [anchor=north west][inner sep=0.75pt]  [font=\small]  {$\xi '_{\gamma (t)}$};
    \draw (148.75,72.65) node [anchor=north west][inner sep=0.75pt]  [font=\small]  {$\gamma $};
    \end{tikzpicture}
    \caption{Situation described in \autoref{pop:properties of angle}(\ref{item:increasing-angle}).\label{subfig:increasing-angle-ray}}
    \end{subfigure}\quad
    \begin{subfigure}{0.45\textwidth}
    \centering
    \tikzset{every picture/.style={line width=0.75pt}}
    \begin{tikzpicture}[x=0.75pt,y=0.75pt,yscale=-0.85,xscale=0.85]
    \draw [line width=0.9]    (311,195.33) .. controls (319,145) and (384,71.5) .. (423.25,38) ;
    \draw [line width=0.9]    (311,195.33) .. controls (314.25,144.75) and (273.2,61.6) .. (253.2,31.2) ;
    \draw [line width=0.9]    (301.5,135.83) .. controls (318,87.25) and (351,44.25) .. (376.75,20.25) ;
    \draw  [draw opacity=0] (294.03,113.58) .. controls (296.37,112.77) and (298.88,112.33) .. (301.5,112.33) .. controls (304.62,112.33) and (307.6,112.96) .. (310.31,114.09) -- (301.5,135.83) -- cycle ; \draw   (294.03,113.58) .. controls (296.37,112.77) and (298.88,112.33) .. (301.5,112.33) .. controls (304.62,112.33) and (307.6,112.96) .. (310.31,114.09) ;   
    \draw  [fill={rgb, 255:red, 0; green, 0; blue, 0 }  ,fill opacity=1 ] (309.25,195.08) .. controls (309.25,194.25) and (309.92,193.58) .. (310.75,193.58) .. controls (311.58,193.58) and (312.25,194.25) .. (312.25,195.08) .. controls (312.25,195.91) and (311.58,196.58) .. (310.75,196.58) .. controls (309.92,196.58) and (309.25,195.91) .. (309.25,195.08) -- cycle ;
    \draw  [fill={rgb, 255:red, 0; green, 0; blue, 0 }  ,fill opacity=1 ] (300,135.83) .. controls (300,135) and (300.67,134.33) .. (301.5,134.33) .. controls (302.33,134.33) and (303,135) .. (303,135.83) .. controls (303,136.66) and (302.33,137.33) .. (301.5,137.33) .. controls (300.67,137.33) and (300,136.66) .. (300,135.83) -- cycle ;
    \draw  [fill={rgb, 255:red, 0; green, 0; blue, 0 }  ,fill opacity=1 ] (388.28,70.32) .. controls (388.28,69.49) and (388.95,68.82) .. (389.78,68.82) .. controls (390.61,68.82) and (391.28,69.49) .. (391.28,70.32) .. controls (391.28,71.15) and (390.61,71.82) .. (389.78,71.82) .. controls (388.95,71.82) and (388.28,71.15) .. (388.28,70.32) -- cycle ;
    \draw    (301.5,135.83) .. controls (342.8,109.6) and (343.2,107.7) .. (389.78,70.32) ;
    \draw  [draw opacity=0] (308,120.16) .. controls (311.16,121.57) and (313.8,123.99) .. (315.56,127.02) -- (301.5,135.83) -- cycle ; \draw   (308,120.16) .. controls (311.16,121.57) and (313.8,123.99) .. (315.56,127.02) ;   
    \draw  [draw opacity=0] (287.6,96.55) .. controls (291.96,95.12) and (296.64,94.35) .. (301.5,94.35) .. controls (316.19,94.35) and (329.16,101.41) .. (336.94,112.2) -- (301.5,135.83) -- cycle ; \draw   (287.6,96.55) .. controls (291.96,95.12) and (296.64,94.35) .. (301.5,94.35) .. controls (316.19,94.35) and (329.16,101.41) .. (336.94,112.2) ;   
    \draw  [draw opacity=0] (309.15,131.3) .. controls (309.8,132.65) and (310.17,134.2) .. (310.17,135.83) .. controls (310.17,139.99) and (307.79,143.53) .. (304.47,144.87) -- (301.5,135.83) -- cycle ; \draw   (309.15,131.3) .. controls (309.8,132.65) and (310.17,134.2) .. (310.17,135.83) .. controls (310.17,139.99) and (307.79,143.53) .. (304.47,144.87) ;  
    \draw  [draw opacity=0] (349.92,116.81) .. controls (346.6,114.47) and (343.54,111.85) .. (340.78,108.98) -- (389.78,70.32) -- cycle ; \draw   (349.92,116.81) .. controls (346.6,114.47) and (343.54,111.85) .. (340.78,108.98) ;   
    \draw  [draw opacity=0] (380.57,77.2) .. controls (378.97,75.31) and (378.02,72.92) .. (378.02,70.32) .. controls (378.02,64.2) and (383.28,59.25) .. (389.78,59.25) .. controls (392.75,59.25) and (395.47,60.29) .. (397.54,61.99) -- (389.78,70.32) -- cycle ; \draw   (380.57,77.2) .. controls (378.97,75.31) and (378.02,72.92) .. (378.02,70.32) .. controls (378.02,64.2) and (383.28,59.25) .. (389.78,59.25) .. controls (392.75,59.25) and (395.47,60.29) .. (397.54,61.99) ;  
    
    \draw (298,196) node [anchor=north west][inner sep=0.75pt]  [font=\small]  {$x$};
    \draw (272.25,132.4) node [anchor=north west][inner sep=0.75pt]  [font=\small]  {$\gamma ( t)$};
    \draw (424.9,30.8) node [anchor=north west][inner sep=0.75pt]  [font=\small]  {$\gamma '$};
    \draw (339,10.45) node [anchor=north west][inner sep=0.75pt]  [font=\small]  {$\xi '_{\gamma ( t)}$};
    \draw (241.75,39.25) node [anchor=north west][inner sep=0.75pt]  [font=\small]  {$\gamma $};
    \draw (393,66.85) node [anchor=north west][inner sep=0.75pt]  [font=\small]  {$\gamma '( t')$};
    \draw (313.5,108.4) node [anchor=north west][inner sep=0.75pt]  [font=\small]  {$e$};
    \draw (311.15,134.7) node [anchor=north west][inner sep=0.75pt]  [font=\small]  {$a$};
    \draw (297.5,99) node [anchor=north west][inner sep=0.75pt]  [font=\small]  {$d$};
    \draw (303.25,76.9) node [anchor=north west][inner sep=0.75pt]  [font=\small]  {$b$};
    \draw (365.5,58.9) node [anchor=north west][inner sep=0.75pt]  [font=\small]  {$f$};
    \draw (335,113) node [anchor=north west][inner sep=0.75pt]  [font=\small]  {$g$};
    \end{tikzpicture}
    \caption{Situation in the proof of \autoref{pop:properties of angle}(\ref{item:alternative-definition-angular}).\label{subfig:alternative-def-angular-metric}}
    \end{subfigure}
    \caption{}
    \label{fig:properties-angular-metric}
\end{figure}

\begin{proof}
\hfill
\begin{enumerate}
    \item Let $0\leq t<s$. By \autoref{lem:angles between asymptotic rays} and \autoref{thm:triangle inequality for angles}(\ref{item:special case of triangle inequality}), respectively
    \[
    \ma_{\gamma(t)}(\xi,\xi') \leq 
    \ma_{\gamma(s)}\bigl([\gamma(s),\gamma(t)],\xi'_{\gamma(s)}\bigr) \leq 
    \ma_{\gamma(s)}(\xi,\xi').
    \]
    
    \item Let $t,t'$ be such that $\gamma(t) \ll \gamma'(t')$. By \autoref{lem:sum of angles of triangle} and  monotonicity of comparison angles, we have
    \[
    \widetilde{\ma}_{x}(\gamma(t),\gamma'(t')) = \widetilde{\ma}_{\gamma(t)}(x,\gamma'(t'))-\widetilde{\ma}_{\gamma'(t)}(x,\gamma(t)) 
    \leq 
    \underbrace{\ma_{\gamma(t)}(x,\gamma'(t'))}_{a}-\underbrace{\ma_{\gamma'(t)}(x,\gamma(t))}_{g}.
    \]
    Now, by \autoref{thm:triangle inequality for angles}(\ref{item:special case of triangle inequality}), for every $s>t$ one has (see \autoref{subfig:alternative-def-angular-metric}),
    \[
    \underbrace{\ma_{\gamma(t)}(x,\gamma'(t'))}_{a} \leq 
    \underbrace{\ma_{\gamma(t)}(\gamma(s),\gamma'(t'))}_{b} \leq
    \underbrace{\ma_{\gamma(t)}(\xi,\xi')}_{d}
    +\underbrace{\ma_{\gamma(t)}\bigl(\xi'_{\gamma(t)},[\gamma(t),\gamma'(t')]\bigr)}_{e}.
    \]
    By \autoref{lem:angles between asymptotic rays} and one more application of \autoref{thm:triangle inequality for angles}(\ref{item:special case of triangle inequality}), we have
    \[
    e=\ma_{\gamma(t)}\bigl(\xi'_{\gamma(t)},[\gamma(t),\gamma'(t')]\bigr)
    \leq \underbrace{\ma_{\gamma'(t')}\bigl([\gamma'(t'),\gamma(t)],\xi'_{\gamma'(t')}\bigr)}_{f}
    \leq \underbrace{\ma_{\gamma'(t')}(x,\gamma(t))}_{g}.
    \]
    Combining the inequalities above,
    \begin{equation}\label{eq:properties of angle 1}
    \widetilde{\ma}_{x}(\gamma(t),\gamma'(t')) \leq a-g \leq d+e-g \leq d= \ma_{\gamma(t)}(\xi,\xi') \leq \ma(\xi,\xi').
    \end{equation}
    An analogous argument shows the same inequality when $\gamma'(t')\ll \gamma(t)$. This immediately implies
    \[
    \overline{\ma}_x(\xi,\xi') \leq \ma(\xi,\xi').
    \]
    On the other hand, $S:=I^-(\xi) = I^-(\xi')$ by \autoref{pop:different-pasts-infinite-angle}. If $x,y\in S$ then there exist $c,\lambda>0$  such that 
    \[
    \tau(\xi_y(t-c),\xi_x(t))\geq \lambda \quad\text{and}\quad \tau(\xi'_x(t'),\xi'_y(t'+c)) \geq \lambda 
    \]
    for any $t,t'>0$ sufficiently large. In particular, if $\xi_x(t)\ll\xi'_x(t')$ then, by the previous inequalities, the transitivity of $\ll$, and the reverse triangle inequality for $\tau$,
    \[
    \tau(\xi_x(t),\xi'_x(t'))\leq  \tau(\xi_y(t-c),\xi'_y(t'+c)) - 2\lambda. 
    \]
    A direct computation using the Lorentzian law of cosines yields 
    \[
    \cosh\widetilde{\ma}_y(\xi_y(t-c),\xi'_y(t'+c))\leq \frac{tt'}{(t-c)(t'+c)}\cosh\widetilde{\ma}_x(\xi_x(t),\xi'_x(t')) + \frac{(t'-t)c+c^2}{(t-c)(t'+c)}.
    \] 
    An analogous inequality holds when $\xi'_x(t')\ll \xi_x(t)$. By monotonicity of the hyperbolic cosine and monotonicity of comparison angles, these inequalities imply
    \[
    \overline{\ma}_y(\xi,\xi') \leq \overline{\ma}_x(\xi,\xi')
    \]
    when we let $t,t'\to \infty$. Notice that to be able to let $t,t'$ diverge we are again using that $\xi$ and $\xi'$ generate the same past. The reverse inequality is obtained analogously, yielding that $\overline{\ma}(\xi,\xi'):=\overline{\ma}_x(\xi,\xi')$ is independent of $x$. 
    
    Finally, by monotonicity of comparison angles,
    \[
    \ma_x(\xi,\xi')\leq \overline{\ma}_x(\xi,\xi') = \overline{\ma}(\xi,\xi')
    \]
    for any $x$ such that $\xi_x$ and $\xi'_x$ exist, and the result follows.
    
    The last part of the claim follows from \eqref{eq:properties of angle 1} and this result.\qedhere
\end{enumerate}
\end{proof}

The following proposition can be interpreted as follows: if the angular distance between two future-directed timelike rays $\xi$, $\xi'$ is finite, then for any point $x$ in the common past of $\xi$ and $\xi'$, one can choose $t,t'$ large enough so that the timelike triangle $\triangle x\xi_x(t)\xi'_x(t')$ becomes ``similar'' to a timelike triangle in Minkowski space with past angle $\ma(\xi,\xi')$. It is analogous to \cite[Proposition~II.9.8(4)]{BH} and it will be instrumental in proving \autoref{thm:main2}.

\begin{pop}\label{pop:asymptotic time separation}
Let $Y$ be a proper, globally hyperbolic, strongly causal, locally causally closed and regular \LpLS satisfying $\CBA(0)$ globally. Let $\xi,\xi'\in\bd^+Y$ be such that $\ma(\xi,\xi') < \infty$. Then there exists $c_o\geq 1$ such that for any $x\in I^-(\xi)=I^-(\xi')$, any $c> c_o$, and any $s>0$, we have $\xi_x(s)\ll \xi'_x(cs)$ and 
\[
\lim_{s\to \infty} \frac{1}{s^2}\tau(\xi_x(s),\xi'_x(cs))^2 = 1 + c^2 - 2c\cosh\ma(\xi,\xi').
\]
\end{pop}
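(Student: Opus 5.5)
Throughout, write $\theta:=\ma(\xi,\xi')<\infty$ and, for a fixed $x\in S:=I^-(\xi)=I^-(\xi')$ (the equality holds by \autoref{pop:different-pasts-infinite-angle}), let $\gamma:=\xi_x$ and $\gamma':=\xi'_x$ be the representatives given by \autoref{pop:existence of pointwise rep}. I will take $c_o:=\cosh\theta+\sinh\theta=e^{\theta}\geq 1$, exactly as in \autoref{lem:minkowski-situation-bounded-angles}. This choice depends only on $\theta$, so it is independent of $x$.

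The plan has two parts: first prove the timelike relation $\gamma(s)\ll\gamma'(cs)$ for all $c>c_o$ and $s>0$; then read off the limit from the Lorentzian law of cosines together with \autoref{pop:properties of angle}(\ref{item:alternative-definition-angular}).

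\emph{Timelike relation.} Fix $s>0$, put $q:=\gamma(s)$, and let
\[
T:=\{t'>c_os : q\ll\gamma'(t')\}.
\]

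Step 1: $T\neq\varnothing$. Since $q\in I^-(\xi)=I^-(\xi')$, we have $q\ll\gamma''(r)$ for some representative $\gamma''$ of $\xi'$ and some $r$. This ray is asymptotic to $\gamma'$, so $\gamma''(r)\ll\gamma'(r+c')$ for a suitable $c'>0$. Transitivity then gives $q\ll\gamma'(t')$, and pushing up along $\gamma'$ this holds for all large $t'$.

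Step 2: $T$ is open in $(c_os,\infty)$. This follows from continuity of $\tau$, which holds because $Y$ is a global comparison neighborhood, together with the fact that $\tau>0$ implies $\ll$.

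Step 3: $T$ is closed in $(c_os,\infty)$. This is the step I expect to be the main obstacle. For $t'\in T$ we have $(s,t')\in A$, so \eqref{eq:properties of angle 1}, equivalently $\overline{\ma}_x(\xi,\xi')\le\theta$, gives $\widetilde{\ma}_x(q,\gamma'(t'))\leq\theta$. The law of cosines then yields
\[
\tau(q,\gamma'(t'))^2=s^2+t'^2-2st'\cosh\widetilde{\ma}_x(q,\gamma'(t'))\geq s^2+t'^2-2st'\cosh\theta=(t'-se^{\theta})(t'-se^{-\theta}).
\]
The right-hand side is strictly positive for $t'>c_os$. If $t'_n\in T$ with $t'_n\to t'_\infty>c_os$, then continuity of $\tau$ gives $\tau(q,\gamma'(t'_\infty))\geq(t'_\infty-se^{\theta})(t'_\infty-se^{-\theta})>0$, hence $t'_\infty\in T$. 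The delicate point is that the comparison angle only makes sense once the relation is known. This is exactly why I argue by a connectedness argument along the ray instead of applying \autoref{lem:minkowski-situation-bounded-angles} directly.

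Since $(c_os,\infty)$ is connected and $T$ is nonempty, open and closed in it, $T=(c_os,\infty)$. In particular $\gamma(s)\ll\gamma'(cs)$ for every $c>c_o$.

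\emph{Limit.} Now $(s,cs)\in A$ for all $s>0$, and the law of cosines gives exactly
\[
\frac{1}{s^2}\tau(\gamma(s),\gamma'(cs))^2=1+c^2-2c\cosh\widetilde{\ma}_x(\gamma(s),\gamma'(cs)).
\]
As $s\to\infty$, both parameters $s$ and $cs$ tend to infinity inside $A$. By \autoref{pop:properties of angle}(\ref{item:alternative-definition-angular}), the comparison angle therefore converges to $\ma(\xi,\xi')$. Continuity of $\cosh$ then yields the claimed limit $1+c^2-2c\cosh\ma(\xi,\xi')$, which is positive for $c>e^{\theta}$.
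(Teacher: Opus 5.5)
Your proof is correct. The limit part is exactly the paper's argument: the law of cosines for the triangle $x\ll\xi_x(s)\ll\xi'_x(cs)$ combined with \autoref{pop:properties of angle}(\ref{item:alternative-definition-angular}). Your constant $c_o=e^{\ma(\xi,\xi')}$ is the same one the paper obtains from \autoref{lem:minkowski-situation-bounded-angles}.

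You differ from the paper in how you establish the relation $\xi_x(s)\ll\xi'_x(cs)$. The paper takes the bound $\widetilde{\ma}_x(\xi_x(s),\xi'_x(s'))\le\overline{\ma}_x(\xi,\xi')$, which holds whenever the two points are related. It then appeals in one line to \autoref{lem:minkowski-situation-bounded-angles} (``we can assume $s'=cs$''). Read rigorously, that appeal is the comparison argument from the proof of \autoref{pop:existence of pointwise rep}:
\begin{enumerate}
\item Choose $t_n>cs$ with $\xi_x(s)\ll\xi'_x(t_n)$.
\item In the comparison triangle of $x\ll\xi_x(s)\ll\xi'_x(t_n)$, apply the Minkowski lemma to the comparison point of $\xi'_x(cs)$ on the side $[\bar x,\overline{\xi'_x(t_n)}]$.
\item Transfer the resulting relation back to $Y$ via the global $\CBA(0)$ inequality $\tau(\xi_x(s),\xi'_x(cs))\ge\bar\tau>0$.
\end{enumerate}

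Your open--closed argument along $\xi'_x$ replaces that triangle-comparison transfer with continuity of $\tau$ and the a priori angle bound. As a bonus, it gives the explicit estimate $\tau(\xi_x(s),\xi'_x(t'))^2\ge(t'-se^{\theta})(t'-se^{-\theta})$ for all $t'>e^{\theta}s$. The comparison route instead gets the relation for each fixed $c$ in one step, with no connectedness argument. Your explicit remark that the comparison angle is only defined once the relation is known is a point the paper passes over quickly.

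One minor typo: in Step 3 the limiting inequality should read $\tau(q,\gamma'(t'_\infty))^2\ge(t'_\infty-se^{\theta})(t'_\infty-se^{-\theta})$, with the square on $\tau$.
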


\begin{proof}
The hypothesis on $\xi$ and $\xi'$ is equivalent to $I^-(\xi)=I^-(\xi')$ and $\alpha_o:=\overline{\ma}_x(\xi,\xi') < \infty$ for any $x\in I^-(\xi)$, where $\overline{\ma}$ is defined as in \autoref{pop:properties of angle}. In particular, if we fix such an $x$ then $\smash{\widetilde{\ma}_x(\xi_x(s),\xi'_x(s'))}\leq \alpha_o$ for any $s,s'>0$ such that $\xi_x(s)\ll \xi'_x(s')$. By \autoref{lem:minkowski-situation-bounded-angles} there is a constant $c_o\geq 1$ (only depending on $\alpha_o$) such that we can assume $s' = cs$ for some $c>c_o$, and the first part of the claim follows. 

For the second part, simply observe that 
\[
\frac{1}{s^2}\tau(\xi_x(s),\xi'(cs))^2 = 1 + c^2 - 2c\cosh \widetilde{\ma}_x(\xi_x(s),\xi'_x(cs))
\]
by the definition of comparison angles. This, combined with \autoref{pop:properties of angle}, yields the result.
\end{proof}

\begin{pop}\label{pop:completeness}
Let $Y$ be a proper, globally hyperbolic, strongly causal, locally causally closed and regular \LpLS satisfying $\CBA(0)$ globally. Then the \textit{extended} metric space $(\bd^+Y,\ma)$ is complete. 
\end{pop}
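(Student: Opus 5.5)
The plan is to imitate the metric argument in \cite[Proposition~II.9.7]{BH}: realise a Cauchy sequence by rays from one base point, extract a limit ray by compactness, and then check that the limit ray represents the limit point.

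Let $(\xi_n)$ be a Cauchy sequence in $(\bd^+Y,\ma)$. After discarding finitely many terms we may assume $\ma(\xi_n,\xi_m)<\varepsilon_0$ for all $n,m$, with some fixed $\varepsilon_0>0$. By \autoref{pop:different-pasts-infinite-angle} all the $\xi_n$ then have the same past $S=I^-(\xi_n)$. Fix $x\in S$ and set $\gamma_n:=(\xi_n)_x$, which exists and is unique by Propositions~\ref{pop:uniqueness of pointwise rep} and \ref{pop:existence of pointwise rep}.

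By \autoref{pop:properties of angle}(\ref{item:alternative-definition-angular}), every comparison angle $\widetilde\ma_x(\gamma_n(s),\gamma_m(s'))$ with related points is bounded by $\ma(\xi_n,\xi_m)<\varepsilon_0$. Applying \autoref{lem:minkowski-situation-bounded-angles} with $\alpha_o=\varepsilon_0$, as in \autoref{pop:asymptotic time separation}, gives a constant $c>1$, independent of $n,m$, such that for all $s>0$
\[
\gamma_n(s)\ll\gamma_m(cs)\quad\text{and}\quad \tau\bigl(\gamma_n(s),\gamma_m(cs)\bigr)^2\geq s^2\bigl(1+c^2-2c\cosh\varepsilon_0\bigr)=:s^2\kappa>0 .
\]
In particular, for each $t$ all the points $\gamma_n(t)$ lie in the causal diamond $J(x,\gamma_1(ct))$, which is compact by global hyperbolicity. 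Non-total imprisonment gives uniform $d$-length bounds on the segments $\gamma_n|_{[0,t]}$. So the limit curve theorem (\autoref{thm:limit curve theorem}), together with a diagonal argument over $t\in\N$, yields a subsequence $\gamma_{n_k}$ converging locally uniformly to a causal curve $\gamma$ from $x$. Continuity of $\tau$ (global $\CBA(0)$) passes $\tau(\gamma_{n_k}(s),\gamma_{n_k}(t))=t-s$ to the limit, so $\gamma$ is a $\tau$-arclength parametrised, hence timelike, ray starting at $x$. Set $\xi:=\gamma(\infty)$.

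Next I would show $\ma(\xi_n,\xi)<\infty$, i.e.\ $\gamma\sim\gamma_m$. Letting $n=n_k\to\infty$ in the displayed estimate gives $\tau(\gamma(s),\gamma_m(cs))\geq s\sqrt\kappa$. The symmetric estimate with the roles exchanged gives $\tau(\gamma_m(s),\gamma(cs))\geq s\sqrt\kappa$. What is needed, however, is a constant shift, $\gamma(s)\ll\gamma_m(s+C)$ for all $s$, rather than the multiplicative one $s\mapsto cs$. I expect converting the multiplicative shift into an additive one to be the main obstacle. I would first try the following. The comparison triangle at $x$ has past angle at most $\varepsilon_0$. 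Using the explicit Minkowski formula for $\tau$ in that triangle, along the lines of \autoref{lem:points_seen_from_far_seem_close}, one can hope to show $\tau(\gamma(s),\gamma_m(s+C))\geq\lambda$ for some fixed large $C$ and all large $s$, with small $s$ handled by compactness. If this fails, a fallback is to compare instead with the weakly parallel rays $(\xi_m)_{p}$ issuing from a point $p\ll x$, as in \autoref{pop:existence of pointwise rep}.

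Once $\xi$ lies in the same finiteness component as the $\xi_n$, \autoref{pop:properties of angle}(\ref{item:alternative-definition-angular}) expresses $\ma(\xi_n,\xi)$ as a supremum of comparison angles $\widetilde\ma_x(\gamma_n(t),\gamma(t'))$. Each such angle is the limit as $k\to\infty$ of $\widetilde\ma_x(\gamma_n(t),\gamma_{n_k}(t'))$, by continuity of $\tau$. Pairs that are only causally related in the limit are treated by approximating with strictly related pairs, using monotonicity. Each of these angles is at most $\sup_{k}\ma(\xi_n,\xi_{n_k})$, which tends to $0$ as $n\to\infty$. Hence $\ma(\xi_n,\xi)\to0$ along the full sequence, since a Cauchy sequence with a convergent subsequence converges. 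This proves completeness.
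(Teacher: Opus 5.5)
Your skeleton is the paper's: take the representatives $\gamma_n=(\xi_n)_x$ at one base point, extract a limit ray with the limit curve theorem, then pass comparison angles to the limit and apply \autoref{pop:properties of angle}(\ref{item:alternative-definition-angular}). The middle step, however, contains a genuine error. You identify ``$\ma(\xi_n,\xi)<\infty$'' with ``$\gamma\sim\gamma_m$'' and try to upgrade your multiplicative shift to an additive one, $\gamma(s)\ll\gamma_m(s+C)$. But being asymptotic means $\xi=\xi_m$, so this would force the Cauchy sequence to be constant. Finiteness of $\ma$ is much weaker, and even the one-sided relation fails in general. In $\mathbb{R}^{1,1}$, let $\xi_m$ be the class of the ray $t\mapsto(t\cosh\frac1m,t\sinh\frac1m)$ from the origin. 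This is a Cauchy sequence, with $\ma(\xi_m,\xi_{m'})=|\frac1m-\frac1{m'}|$ by \autoref{thm:minkowski product}. Your limit ray is $\gamma(s)=(s,0)$, and $\gamma(s)\ll\gamma_m(s+C)$ holds iff $(s+C)e^{-1/m}>s$, which fails for large $s$ whatever $C$ is. So the step you flag as the main obstacle cannot be carried out by either suggested route. As written, the proof never establishes that $\xi$ lies in the finiteness component of the $\xi_n$, which your last paragraph needs.

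The missing observation is that you only need equal pasts, and your estimates already give them. The bounds $\tau(\gamma(s),\gamma_m(cs))\ge s\sqrt\kappa$ and $\tau(\gamma_m(s),\gamma(cs))\ge s\sqrt\kappa$ yield $I^-(\gamma)\subseteq I^-(\gamma_m)$ and the reverse inclusion. The proof of \autoref{pop:properties of angle}(\ref{item:alternative-definition-angular}) uses the finiteness hypothesis only to get $I^-(\xi)=I^-(\xi')$. With equal pasts, its base-point independence argument gives $\ma_y(\xi_n,\xi)\le\overline{\ma}_y(\xi_n,\xi)=\overline{\ma}_x(\xi_n,\xi)$ for every $y$ in the common past. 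Combined with your limiting bound $\overline{\ma}_x(\xi_n,\xi)\le\limsup_k\ma(\xi_n,\xi_{n_k})$, this gives $\ma(\xi_n,\xi)\to 0$ directly. The paper's own proof passes straight to $\ma(\xi,\xi_n)=\overline{\ma}_x(\xi,\xi_n)$ without verifying this hypothesis, so your two estimates are exactly what fills that in.

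A smaller point: the limit curve theorem needs uniformly Lipschitz curves, and the $\tau$-parametrized $\gamma_n$ need not be. You must extract the limit from constant-$d$-speed reparametrizations and then recover pointwise convergence in the $\tau$-parametrization. This can be done from maximality of the limit and uniqueness of geodesics, or by citing \cite[Lemmata~5.7, 5.8]{barton-beran-che-gieger-roehrig-rott} as the paper does.
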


\begin{proof}
Let $(\xi_n)_{n\in\mathbb{N}}$ be a Cauchy sequence in $\bd^+Y$. By \autoref{pop:different-pasts-infinite-angle}, there exists $N\in\N$ such that all ideal points $\xi_n$ with $n\geq N$ have the same timelike past, which we call $A$. Let us fix $x\in A$. By \autoref{pop:properties of angle}(\ref{item:alternative-definition-angular}), if $\varepsilon>0$ then for sufficiently large $m,n$,  
\begin{equation}\label{eq:angular metric is complete}
\overline{\ma}_x(\xi_{m},\xi_{n})<\varepsilon. 
\end{equation}

Let us denote $\gamma_n$ the representatives of $\xi_n$ starting at $x$ and their reparametrizations by $d$-arclength by $\tilde{\gamma}_n$. \autoref{thm:limit curve theorem} ensures that there is a subsequence $(\tilde{\gamma}_{n_k})$ and a future-directed causal curve $\tilde{\gamma}$ starting at $x$ such that $\tilde{\gamma}_{n_k}\to \tilde{\gamma}$ locally uniformly. Now, \autoref{lem:bounded comparison angles imply timelike limit} ensures that $\tilde{\gamma}$ is timelike. Moreover, $\tilde{\gamma}$ is maximizing by the continuity of $\tau$ (although only the lower semicontinuity is needed) and the upper semicontinuity of $L_\tau$ (see, for instance, \cite[Theorem~2.23]{beran-ohanyan-rott-solis2023}). 

Being $\tilde{\gamma}$ timelike, we can reparametrize it by $\tau$-length and call the new curve $\gamma$. Then \cite[Lemmata~5.7-8]{barton-beran-che-gieger-roehrig-rott} ensure that also $\gamma_{n_k}\to \gamma$, i.e., there is also pointwise convergence in the parametrization by $\tau$-arclength, and that $\gamma$ is a timelike geodesic ray. Let us call $\xi:=\gamma(\infty)$.

Let $m=n_k$ and fix $t'> t > 0$. Since $\gamma$ is a future-directed timelike curve, then 
\[I^-(\gamma(t'-\delta),\gamma(t'+\delta))\] 
is a neighborhood of $\gamma(t')$, and since $\gamma_m\to \gamma$ locally uniformly, it follows that 
\[\gamma_m(t') \in I^-(\gamma(t'-\delta),\gamma(t'+\delta))\] for sufficiently large $m$. Analogously, 
\[\gamma_n(t)\in I(\gamma(t-\delta),\gamma(t+\delta))\]
for sufficiently large $n$. Choosing $\delta>0$ small enough, it follows that $\gamma_n(t)\ll \gamma_m(t')$ and  $\gamma_n(t)\ll \gamma(t')$ for sufficiently large $m,n$. Thus, fixing such $n$ and letting $m\to\infty$, by continuity of $\tau$ and the Lorentzian law of cosines, we get
\[
\widetilde{\ma}_{x}(\gamma_n(t),\gamma(t')) = \lim_{m\to\infty} \widetilde{\ma}_{x}(\gamma_n(t),\gamma(t')).
\]
By \eqref{eq:angular metric is complete}, we can assume $\widetilde{\ma}_{x}(\gamma_n(t),\gamma(t))<\varepsilon$ for $m,n$ sufficiently large, yielding
\[
\widetilde{\ma}_{x}(\gamma_n(t),\gamma(t')) \leq \varepsilon
\]
for such $n$. Since $t'>t>0$ are arbitrary, it follows that
\[
\ma(\xi,\xi_n)=\overline{\ma}_x(\xi,\xi_n) \leq \varepsilon.
\]
Therefore $\xi_n\to \xi$ with respect to the angular metric.
\end{proof}

The following result is analogous to \cite[Lemma~II.9.16]{BH}.

\begin{pop}\label{pop:lower semicontinuity of angle}
Let $Y$ be a proper, globally hyperbolic, strongly causal, locally causally closed and regular \LpLS satisfying $\CBA(0)$ globally. Consider $\xi,\eta\in\bd^+Y$ such that $\ma (\xi, \eta) < \infty$ and a point $p$ in their (common) timelike past. Let $x_n,y_n\in Y$ be two sequences such that $p\ll x_n \ll y_n$ for all $n$, and $x_n\to \xi$ and $y_n\to \eta$ in the cone topology. Then
\[
\liminf_{n\to \infty} \widetilde{\ma}_{p}(x_n,y_n) \geq \ma(\xi,\eta).
\]
\end{pop}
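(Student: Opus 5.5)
The plan is to imitate the proof of \cite[Lemma~II.9.16]{BH}, but with the Lorentzian inequalities in place of the metric ones. Fix $\varepsilon>0$; the goal is to show that $\widetilde{\ma}_p(x_n,y_n)\geq \ma(\xi,\eta)-C\varepsilon$ for all large $n$. By \autoref{pop:properties of angle}(\ref{item:alternative-definition-angular}), $\ma(\xi,\eta)=\overline{\ma}_p(\xi,\eta)$ is a limit of comparison angles. So choose parameters $t,t'$ with $\xi_p(t)\ll\eta_p(t')$ (or the reverse relation) such that $\widetilde{\ma}_p(\xi_p(t),\eta_p(t'))>\ma(\xi,\eta)-\varepsilon$. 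Since the $\CBA(0)$ comparison angles at $p$ are monotone, any choice of $t,t'$ beyond these values still gives this lower bound.

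The next step is to transfer the statement from the rays to the sequences. Convergence $x_n\to\xi$ in the cone topology gives, with the cone based at $p$ itself (allowed since $p\in I^-(\xi)$), that $\ma_p(\xi_p,[p,x_n])\to0$ and $\tau(p,x_n)\to\infty$. The same holds for $y_n$ and $\eta$. Set $\sigma_n=[p,x_n]$ and $\rho_n=[p,y_n]$, parametrized by $\tau$-arclength. I would argue that $\sigma_n(t)\to\xi_p(t)$ for each fixed $t$. The argument runs as follows. By the limit curve theorem (\autoref{thm:limit curve theorem}) together with \autoref{lem:bounded comparison angles imply timelike limit}, subsequences of $\sigma_n$ converge to a timelike geodesic from $p$. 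By \autoref{thm:continuity of angles}, that limit makes angle $0$ with $\xi_p$ at $p$. A timelike geodesic from $p$ with angle $0$ to $\xi_p$ must coincide with $\xi_p$: by \autoref{thm:concatenation with angle zero} it extends $\xi_p$ backwards through a geodesic, and geodesics are unique by \autoref{thm:uniqueness of geodesics in cba}. The same argument gives $\rho_n(t')\to\eta_p(t')$.

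Now use monotonicity of comparison angles, \autoref{thm:monotonicity of comparison angles}. For $n$ large, $\tau(p,x_n)>t$ and $\tau(p,y_n)>t'$. Since $\tau$ is continuous and $\ll$ is open, the causal relation between $\sigma_n(t)$ and $\rho_n(t')$ persists, and
\[
\widetilde{\ma}_p(\sigma_n(t),\rho_n(t'))\to\widetilde{\ma}_p(\xi_p(t),\eta_p(t')).
\]
Monotonicity along the geodesics $\sigma_n,\rho_n$ (same time orientation, so non-decreasing) then gives
\[
\widetilde{\ma}_p(x_n,y_n)\geq\widetilde{\ma}_p(\sigma_n(t),\rho_n(t'))>\ma(\xi,\eta)-2\varepsilon
\]
for large $n$. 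Here $x_n\ll y_n$ guarantees that the pair $(x_n,y_n)$ lies in the admissible set $A$. Taking $\liminf_{n\to\infty}$ and then letting $\varepsilon\to0$ finishes the proof.

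The main obstacle is the monotonicity step. Monotonicity on $A$ compares points only within the set where they are causally related, and we need the pair $(\sigma_n(t),\rho_n(t'))$ and the endpoint pair $(x_n,y_n)$ to be joined through $A$. The way around this is to choose $t,t'$ in the same ratio as the endpoints. By \autoref{lem:minkowski-situation-bounded-angles} and \autoref{pop:asymptotic time separation}, a ratio $c>c_o$ keeps $\xi_p(s)\ll\eta_p(cs)$ for all $s$, so the relevant region of $A$ is connected. If the ratio $\tau(p,y_n)/\tau(p,x_n)$ is not controlled, I would instead apply monotonicity one variable at a time: first fix $x_n$ and move along $\rho_n$, then move along $\sigma_n$. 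Each single-variable move can be checked to stay in $A$ using the reverse triangle inequality. A secondary, more routine concern is establishing the pointwise convergence $\sigma_n\to\xi_p$ in the $\tau$-parametrization, for which I would cite \cite[Lemmata~5.7, 5.8]{barton-beran-che-gieger-roehrig-rott} as in \autoref{pop:completeness}.
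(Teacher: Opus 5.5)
The step you call routine, $\sigma_n(t)\to\xi_p(t)$, is where the proof breaks, and the reason you give for it is false. \autoref{thm:concatenation with angle zero} concerns a geodesic arriving at a point and another one leaving it. The zero angle there is therefore between curves of opposite time orientation, and the theorem says nothing about two future-directed geodesics issuing from $p$. For those, a zero angle does not force coincidence, because $\CBA(0)$ spaces can branch.

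Concretely, let $Y=\prescript{-}{}{\mathbb{R}}\times_1 T$ with $T$ a proper metric tree (covered by \autoref{thm:minkowski product}). Let $\beta,\beta'$ be tree rays from $x_0$ that agree on $[0,1]$ and then separate, fix $\theta>0$, and put $p=(0,x_0)$, $\xi_p(s)=(s\cosh\theta,\beta(s\sinh\theta))$ and $x_n=(n\cosh\theta,\beta'(n\sinh\theta))$. Then $[p,x_n]$ coincides with $\xi_p$ on $[0,1/\sinh\theta]$, so $\ma_p(\xi_p,[p,x_n])=0$ and $\tau(p,x_n)=n\to\infty$. Thus $x_n$ lies eventually in every $V_{p,\varepsilon,R}(\xi)$, yet $\sigma_n(t)\not\to\xi_p(t)$ for $t>1/\sinh\theta$. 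So the only information you extract from the cone topology, namely the neighbourhoods based at $p$, cannot give the convergence by any argument.

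There is a second problem in the same step. \autoref{lem:bounded comparison angles imply timelike limit} needs a uniform bound on \emph{comparison} angles. The fact $\ma_p(\xi_p,\sigma_n)\to0$ does not supply one, since angles are dominated by comparison angles.

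The missing idea is to use the neighbourhoods of $\xi$ based at points far out on the ray (\autoref{cor:nice basis of nbhds}), which is what the paper does. Let $q=\xi_p(s)$ and $x_n\in V_{q,\delta,R}(\xi)$. Then \autoref{thm:triangle inequality for angles}(\ref{item:special case of triangle inequality}) and monotonicity for opposite orientations (\autoref{thm:monotonicity of comparison angles}) give $\widetilde\ma_q(p,x_n)\le\ma_q([q,p],[q,x_n])\le\ma_q(\xi_q,[q,x_n])<\delta$. Hence $\widetilde\ma_p(q,x_n)<\delta$ by \autoref{lem:sum of angles of triangle}. For $t+\epsilon<s$ and $\delta$ small, the $\CBA(0)$ comparison in the almost degenerate triangle $\triangle pqx_n$ then traps $\sigma_n(t)$ in a thin diamond $J(\xi_p(t-\epsilon),\xi_p(t+\epsilon))$. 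Similarly, $\rho_n(t')$ is trapped near $\eta_p(t')$.

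The reverse triangle inequality then squeezes $\tau(\sigma_n(t),\rho_n(t'))$ between $\tau(\xi_p(t+\epsilon),\eta_p(t'-\epsilon))$ and $\tau(\xi_p(t-\epsilon),\eta_p(t'+\epsilon))$. This gives your convergence of comparison angles directly, with no limit curves. The paper runs this diamond argument at the moving scale $x_n'=\xi_p(t_n)$, with diamonds of $\tau$-size $<\delta t_n$. Your fixed-$(t,t')$ version, repaired this way, is somewhat simpler.

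The admissible-set issue you single out is the easy part, but the order of the moves matters. Take $\xi_p(t)\ll\eta_p(t')$, which is available by \autoref{pop:asymptotic time separation}. First push $\rho_n(t')$ up to $y_n$ keeping $\sigma_n(t)$ fixed, then push $\sigma_n(t)$ up to $x_n$ keeping $y_n$ fixed. Push-up and $x_n\ll y_n$ keep both moves in $A$, whereas an intermediate pair such as $(x_n,\rho_n(t'))$ need not be causally related.
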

\begin{proof}
Let $\varepsilon>0$, $R>0$ and $p\in I^-(\xi)=I^-(\eta)$. By \autoref{pop:asymptotic time separation}, we know that there is some $c>1$ such that $\xi_p(t)\ll\eta_p(ct)$ for all $t$. So consider a sequence $t_n\to \infty$ and define two sequences $x_n':=\xi_p(t_n)\ll \eta_p(2ct_n)=:y_n'$. Notice that the quotient between $\tau(p,y_n')$ and $\tau(p,x_n')$ is precisely $2c$.

By hypothesis, for each $n\in \N$ there exists some $m=m_n\in \N$ (depending on $x_n'$, $y_n'$, $\varepsilon$ and $R$) such that $x_m\in V_{x_n',\varepsilon,R}(\xi)$ and $y_m\in V_{y_n',\varepsilon,R}(\eta)$. By \autoref{thm:triangle inequality for angles} and \autoref{thm:monotonicity of comparison angles},
\[
\widetilde{\ma}_{x_n'}(p,x_m) \leq \ma_{x_n'}(p,x_m) \leq \ma_{x_n'}(x_m,\xi) < \varepsilon,
\]
and by \autoref{lem:sum of angles of triangle}, we obtain
\[
\widetilde{\ma}_{p}(x_m,x_n') < \varepsilon.
\]
Analogously, $\widetilde{\ma}_{p} (y_m,y_n') < \varepsilon$.

Straightforward computations involving the Lorentzian law of cosines in the comparison triangles {$\overline{\triangle} (p,x_m,x_n')$ and $\overline{\triangle} (p,y_m,y_n')$} show that, if $x_m''\in [p,x_m]$ and $y_m''\in [p,y_m]$ are such that 
\begin{equation}\label{eq:xm'' and xn'}
\tau(p,x_m'') = \tau(p,x_n')=t_n\quad \text{ and } \quad \tau(p,y_m'') = \tau(p, y_n')=2ct_n, 
\end{equation}
then there exist $p_m,q_m\in \xi_{p}$ and $r_m,s_m\in \eta_{p}$ such that (see \autoref{fig:lsc})
\[
x_m''\in J(p_m,q_m),\quad y_m''\in J(r_m,s_m), \quad p_m\ll x_n'\ll q_m, \quad r_m\ll y_n'\ll s_m,
\]
and moreover 
\[
\tau(p_m,q_m) < \delta\tau(p,x_n') \quad \text{and} \quad \tau(r_m,s_m) < \delta\tau(p,y_n'),
\]
for some $\delta=\delta(\varepsilon)>0$ such that $\lim_{\varepsilon\to 0} \delta(\varepsilon) = 0$. Furthermore, by taking $\varepsilon>0$ sufficiently small, we can guarantee that $q_m\leq r_m$.

\begin{figure}
\def \globalscale {0.80000}
\begin{tikzpicture}[y=0.9cm, x=0.9cm, yscale=\globalscale,xscale=\globalscale, every node/.append style={scale=\globalscale}, inner sep=0pt, outer sep=0pt]
  \path[draw=black,line width=0.02cm] (3.7, 25.5).. controls (6.0, 23.1) and (7.2, 15.2) .. (7.2, 15.2);
  \path[draw=black,line width=0.02cm] (7.2, 15.2) .. controls (9.0, 22.8) and (12.0, 25.3) .. (12.0, 25.3);
  \path[draw=black,line width=0.02cm] (7.2, 15.2).. controls (6.2, 18.6) and (4.9, 21.5) .. (4.9, 21.5);
  \path[draw=black,line width=0.02cm] (7.2, 15.2).. controls (9.0, 19.8) and (11.5, 23.7) .. (11.5, 23.7);
  \path[draw=black,line width=0.02cm,dashed] (9.2, 21.1).. controls (8.0, 20.5) and (6.2, 19.9) .. (6.2, 19.9);
  
  \path[draw=black,line width=0.02cm,rotate around={58.5:(0.0, 29.7)}] (-6.45, 19.3) rectangle (-5.1, 17.95);
  \path[draw=black,line width=0.02cm,rotate around={17.8:(0.0, 29.7)}] (6.1, 20.05) rectangle (7.5, 18.7);
  
  \path[fill=black,line width=0.02cm] (4.9, 21.5) circle (2pt);
  \path[fill=black,line width=0.02cm] (6.0, 18.8) circle (2pt);
  \path[fill=black,line width=0.02cm] (6.2, 19.9) circle (2pt);
  \path[fill=black,line width=0.02cm] (6.43, 19.0) circle (2pt);
  \path[fill=black,line width=0.02cm] (6.65, 18.1) circle (2pt);
  \path[fill=black,line width=0.02cm] (7.2, 15.2) circle (2pt);
  \path[fill=black,line width=0.02cm] (9.2, 21.1) circle (2pt);
  \path[fill=black,line width=0.02cm] (9.65, 22.0) circle (2pt);
  \path[fill=black,line width=0.02cm] (10.1, 22.8) circle (2pt);
  \path[fill=black,line width=0.02cm] (10.26, 21.6) circle (2pt);
  \path[fill=black,line width=0.02cm] (11.5, 23.7) circle (2pt);

  \node[text=black,anchor=south west,line width=0.02cm] (text16) at (4.6, 21.8){$x_m$};
  \node[text=black,anchor=south west,line width=0.02cm] (text17) at (11.7, 24.0){$y_m$};
  \node[text=black,anchor=south west,line width=0.02cm] (text18) at (9.6, 23.0){$s_m$};
  \node[text=black,anchor=south west,line width=0.02cm] (text19) at (8.6, 21.1){$r_m$};
  \node[text=black,anchor=south west,line width=0.02cm] (text20) at (10.6, 21.7){$y''_m$};
  \node[text=black,anchor=south west,line width=0.02cm] (text21) at (9.1, 22.1){$y'_n$};
  \node[text=black,anchor=south west,line width=0.02cm] (text22) at (6.6, 19.0){$x'_n$};
  \node[text=black,anchor=south west,line width=0.02cm] (text23) at (5.3, 18.2){$x''_m$};
  \node[text=black,anchor=south west,line width=0.02cm] (text24) at (6.9, 17.9){$p_m$};
  \node[text=black,anchor=south,line width=0.02cm] (text25) at (6.4, 20.1){$q_m$};
  \node[text=black,anchor=south west,line width=0.02cm] (text27) at (3.8, 25.8){$\xi_{p}$};
  \node[text=black,anchor=south west,line width=0.02cm] (text28) at (11.9, 25.7){$\eta_{p}$};
  \node[text=black,anchor=south west,line width=0.02cm] (text29) at (7.2, 14.6){$p$};
\end{tikzpicture}
\caption{}
\label{fig:lsc}
\end{figure}

Thus, by the reverse triangle inequality, 
\begin{equation}\label{eq:lsc-small diamonds}
\tau(x_n',y_n'), \tau(x_m'',y_m'') \in [\tau(q_m,r_m), \tau(p_m,s_m)].
\end{equation}
By the Lorentzian law of cosines and \eqref{eq:xm'' and xn'},
\begin{equation}\label{eq:lsc-comparing comparison angles1}
|\cosh\widetilde{\ma}_{p}(x_m'',y_m'') - \cosh\widetilde{\ma}_{p}(x_n',y_n')| = \frac{|\tau(x_n',y_n')^2-\tau(x_m'',y_m'')^2|}{2\tau(p,x_n')\tau(p,y_n')},
\end{equation}
and from \eqref{eq:xm'' and xn'}, \eqref{eq:lsc-small diamonds}, \eqref{eq:lsc-comparing comparison angles1}, it follows that
\begin{equation}\label{eq:lsc-comparing comparison angles2}
|\cosh\widetilde{\ma}_{p}(x_m'',y_m'') - \cosh\widetilde{\ma}_{p}(x_n',y_n')| \leq \frac{\tau(p_m,s_m)^2-\tau(q_m,r_m)^2}{2\tau(p,x_n')\tau(p,y_n')}.
\end{equation}
As $n\to \infty$, the right hand side of \eqref{eq:lsc-comparing comparison angles2} is asymptotically bounded by 
\[
2\delta\left(1-\frac{1}{c}\right)(1+\cosh\ma(\xi,\eta)).
\]
By letting $\varepsilon\to 0$, we obtain that
\[
\lim_{m\to\infty}\widetilde{\ma}_{p}(x_m'',y_m'') = \lim_{n\to\infty}\widetilde{\ma}_{p}(x_n',y_n') = \ma(\xi,\eta),
\]
where the last equation follows from \autoref{pop:properties of angle}.

Finally, by monotonicity of comparison angles, we have
\[
\widetilde{\ma}_{p}(x_m'',y_m'') \leq \widetilde{\ma}_{p}(x_m,y_m)
\]
which yields 
\[
\lim_{n\to\infty}\widetilde{\ma}_{p}(x_m'',y_m'') \leq \liminf_{m\to\infty}\widetilde{\ma}_{p}(x_m,y_m) 
\]
and the result follows.
\end{proof}

\begin{lem}\label{lem:angle bisectors converge to midpoint ray}
Let $Y$ be a proper, globally hyperbolic, strongly causal, locally causally closed and regular \LpLS satisfying $\CBA(0)$ globally. Let $\xi,\eta \in \bd^+Y$ such that $\ma(\xi,\eta) < \infty$. Let $p \in Y$ and representatives $\xi_p, \eta_p$ of $\xi, \eta$ with $\xi_p (0) = p = \eta_p (0)$. Let $\{x_n\} \subset \xi_p$ and $\{y_n\} \subset \eta_p$ with $x_n \ll x_{n+1}$, $y_n \ll y_{n+1}$, such that $x_n \to \xi, y_n \to \eta$ in the cone topology and $x_n \ll y_n$ for every $n$. Let $m_n \in [x_n, y_n]$ be such that $\widetilde{\ma}_{p} (x_n, m_n) = \widetilde{\ma}_{p} (m_n,y_n) \leq C$ for some constant $C>0$. Then, up to a subsequence, $m_n$ converges in the cone topology to a timelike ideal point $\mu$.
\end{lem}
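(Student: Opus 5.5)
The plan mirrors the proof of \autoref{pop:completeness}: I will produce a limit ray out of the geodesics $[p,m_n]$ and then verify that $m_n$ enters every truncated cone around its ideal point.

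\emph{Step 1: a limit ray.} Let $\sigma_n$ be the geodesic $[p,m_n]$, which is unique by \autoref{thm:uniqueness of geodesics in cba}. First I check that $\tau(p,m_n)\to\infty$. Since $m_n\in[x_n,y_n]$, the reverse triangle inequality gives $\tau(p,m_n)\ge\tau(p,x_n)\to\infty$, because $x_n\to\xi$ along $\xi_p$.

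Next I bound the angles. By \autoref{thm:monotonicity of comparison angles}, applied in the triangle $\triangle p\,x_n\,m_n$, the angle $\ma_p(\xi_p,\sigma_n)$ is at most $\widetilde\ma_p(x_n,m_n)$. The latter is at most $C$ by hypothesis, once we note that the ray direction can be compared with $[p,x_n]$, since $x_n$ lies on $\xi_p$.

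Now parametrize the $\sigma_n$ by $d$-arclength. By \cite[Proposition~4.3]{beran-ohanyan-rott-solis2023}, or \autoref{thm:limit curve theorem} together with properness, a subsequence converges locally uniformly to a causal ray $\mu_p$ from $p$. The uniform comparison-angle bound $C$, relative to the fixed geodesic $\xi_p$, feeds into \autoref{lem:bounded comparison angles imply timelike limit} and shows that $\mu_p$ is timelike. Then \cite[Lemmata~5.7, 5.8]{barton-beran-che-gieger-roehrig-rott} show that $\mu_p$ is a timelike geodesic ray and that the convergence also holds in $\tau$-arclength. Set $\mu:=\mu_p(\infty)$.

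\emph{Step 2: convergence in the cone topology.} By \autoref{cor:nice basis of nbhds}, it suffices to show that for every $t$, $\varepsilon$ and $R$, the points $m_n$ eventually lie in $V_{\mu_p(t),\varepsilon,R}(\mu)$. The condition $\tau(\mu_p(t),m_n)>R$ is easy. Because of the pointwise convergence $\sigma_n(t)\to\mu_p(t)$, the continuity of $\tau$, and $\tau(p,m_n)\to\infty$, we get $\tau(\mu_p(t),m_n)\ge\tau(\sigma_n(t'),m_n)-o(1)$ for some $t'$ slightly larger than $t$, and this grows without bound.

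The angle condition is the main obstacle. We need $\ma_{\mu_p(t)}\bigl(\mu_{\mu_p(t)},[\mu_p(t),m_n]\bigr)<\varepsilon$. Here $\mu_{\mu_p(t)}$ is the sub-ray $\mu_p|_{[t,\infty)}$ by uniqueness (\autoref{pop:uniqueness of pointwise rep}). The geodesic $[\mu_p(t),m_n]$ is close to the tail of $\sigma_n$. I will argue as follows.
\begin{enumerate}
\item Fix $T\gg t$. By monotonicity of comparison angles at the vertex $\mu_p(t)$, the angle between $[\mu_p(t),m_n]$ and $\mu_p$ is at most the comparison angle $\widetilde\ma_{\mu_p(t)}(\mu_p(T'),m_n)$, for any $T'$ with $\mu_p(T')$ causally related to $m_n$.
\item Choose points $\sigma_n(T)$ with $\mu_p(t)\ll\sigma_n(T)\ll m_n$, and compare with the degenerate triangle $\mu_p(t),\sigma_n(T),m_n$. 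Since $\sigma_n$ is a geodesic through $\sigma_n(T)$ and $\sigma_n(t)\to\mu_p(t)$, the comparison angle at $\mu_p(t)$ in $\triangle\mu_p(t)\sigma_n(T)m_n$ tends to $0$. The reason is that the triangle $\sigma_n(t),\sigma_n(T),m_n$ is degenerate, and $\tau$ is continuous.
\item Since $\sigma_n(T)\to\mu_p(T)$, the comparison angle $\widetilde\ma_{\mu_p(t)}(\mu_p(T),\sigma_n(T))$ also tends to $0$.
\item Combine steps 2 and 3 with the angle triangle inequality, \autoref{thm:triangle inequality for angles}(\ref{item:same orientation triangle inequality}), together with monotonicity. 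This gives $\ma_{\mu_p(t)}(\mu_p,[\mu_p(t),m_n])<\varepsilon$ for all large $n$.
\end{enumerate}
Alternatively, one can use continuity of angles (\autoref{thm:continuity of angles}) applied to the geodesics $[\mu_p(t),\sigma_n(T)]\to[\mu_p(t),\mu_p(T)]$.

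The delicate point is the uniformity in $n$ of step 2, because $\tau(\sigma_n(T),m_n)$ diverges while the defect at $\mu_p(t)$ is only $o(1)$. I expect the law of cosines to give $\cosh\widetilde\ma\to1$, since the defect is measured against the divergent side lengths, in the same way as the estimate in \autoref{lem:points_seen_from_far_seem_close}. This establishes $m_n\to\mu$ along the subsequence.
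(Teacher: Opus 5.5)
\paragraph{Main gap: the limit need not be a ray.} In Step~1 you conclude that $\mu_p$ is a timelike geodesic \emph{ray} by citing \cite[Lemmata~5.7, 5.8]{barton-beran-che-gieger-roehrig-rott}, and that step is not justified. What you have actually established is a timelike, inextendible maximizer from $p$ of infinite $d$-length. The divergence of the $d$-lengths is fine: $\tau(p,\cdot)$ is continuous, hence bounded on compact balls, so $\tau(p,m_n)\to\infty$ forces $d(p,m_n)\to\infty$. This is even quicker than the paper's argument. Lemma~5.7, however, only gives convergence in $\tau$-arclength on the domain of the limit.

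Nothing you have shown excludes that the limit has \emph{finite} $\tau$-length $a$, with the divergence of $\tau(p,m_n)$ carried entirely by the tails of the $\sigma_n$. In that case $\mu=\mu_p(\infty)$ is undefined, and Step~2, which uses $\mu_p(t)$ for all $t$, has no footing. Lemma~5.8 is what the paper invokes for segments $[p,\gamma(t_n)]$ ending on a \emph{fixed} ray (\autoref{pop:existence of pointwise rep}), and that ray is what does the bounding. Here the endpoints $m_n$ move along the varying segments $[x_n,y_n]$. The paper explicitly only borrows the idea of that proof, and this becomes the core of its own argument.

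You need to supply this step. Suppose $L_\tau=a<\infty$, and let $r_n\in[p,m_n]$ with $\tau(p,r_n)=a$.
\begin{itemize}
\item Monotonicity gives $\widetilde\ma_p(r_n,y_n)\le\widetilde\ma_p(m_n,y_n)\le C$.
\item Then \autoref{lem:minkowski-situation-bounded-angles} yields $\bar r_n\ll\overline{\eta_p(a')}$ in the comparison configuration for $a'>c_0a$.
\item Global $\CBA(0)$ transfers this to $r_n\ll\eta_p(a')$.
\item Hence every $[p,r_n]$, and so the limit curve, lies in the compact diamond $J\bigl(p,\eta_p(a')\bigr)$. This contradicts its infinite $d$-length by non-total imprisonment.
\end{itemize}

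\paragraph{Step~2.} Once the ray is in hand, Step~2 is essentially the paper's argument: the triangle $\triangle\bigl(\gamma(t),\alpha_n(t+s),m_n\bigr)$, the comparison angle at $\gamma(t)$ tending to $0$, then \autoref{thm:continuity of angles} and the triangle inequality for angles. Two items need repair.
\begin{itemize}
\item In item~3, $\mu_p(T)$ and $\sigma_n(T)$ need not be causally related, so $\widetilde\ma_{\mu_p(t)}(\mu_p(T),\sigma_n(T))$ may be undefined. Use your alternative instead; the paper gets $[\mu_p(t),\sigma_n(T)]\to\mu_p|_{[t,T]}$ from \cite[Corollary~4.3]{eros-gieger-2025}. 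Or compare with $\mu_p(T+h)$ for some $h>0$, which lies in $I^+(\sigma_n(T))$ for large $n$.
\item In item~2, the mechanism is not that the defect is small relative to the diverging sides. Set $a_n=\tau(\mu_p(t),\sigma_n(T))\to T-t$ and $\delta_n=\tau(\mu_p(t),m_n)-a_n-\tau(\sigma_n(T),m_n)\ge 0$. The law of cosines gives $\cosh\widetilde\ma_{\mu_p(t)}(\sigma_n(T),m_n)=1+\delta_n/a_n+o(1)$, so you need $\delta_n\to 0$ in absolute terms.
\item This does hold. For $t''<t$ and large $n$ one has $\sigma_n(t'')\ll\mu_p(t)$. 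The reverse triangle inequality then gives $\tau(\mu_p(t),m_n)\le\tau(p,m_n)-t''$, hence $\limsup_n\delta_n\le t-t''$, and $t''$ can be taken arbitrarily close to $t$.
\end{itemize}
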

\begin{proof}
Since $p\ll x_n \ll m_n$, we can take $\tilde{\alpha}_n$ to be the unique future-directed timelike geodesic segment from $p$ to $m_n$. Assume that the $\tilde{\alpha}_n$ are parametrized by $d$-arclength. Since $Y$ is a locally causally closed and proper \LpLS then \autoref{thm:limit curve theorem} lets us assume, up to a subsequence, that $\tilde{\alpha}_{n}$ converges locally uniformly to a future-directed causal curve $\tilde{\gamma}$. Moreover, by \autoref{lem:bounded comparison angles imply timelike limit} it follows that the limit curve is timelike and is also a maximal geodesic. As before, \cite[Lemma~5.7]{barton-beran-che-gieger-roehrig-rott} ensures that, calling $\gamma$ the reparametrization of $\tilde{\gamma}$ by $\tau$-length, there is also pointwise convergence in the $\tau$-parametrizations $\alpha_n$ of the curves $\tilde{\alpha}_n$ in the domain of $\gamma$.

Moreover, up to a subsequence, we can assume that $m_n\ll m_{n+1}$. Indeed, as $\ma(\xi,\eta)<\infty$ we know that $\xi$ and $\eta$ have the same timelike past (\autoref{pop:different-pasts-infinite-angle}). Therefore for each $m_n$ there exists some $x\in\xi_p$ such that $m_n\ll x$. But as $\tau(p,x_l)\to\infty$, all but finitely many terms of the sequence $\{x_l\}$ satisfy $m_n\ll x_l\ll m_l$. Consequently, we can pass to a subsequence (which we do not relabel for simplicity) to obtain that $m_n\ll m_{n+1}$ for every $n\in \N$.

In the following step we will prove that the sequence $L_n=d(p,m_n)$ is unbounded. Assume, by contradiction, that there exists $M>0$ such that $L_{n}\leq M$ for all $n\in \N$. This implies that $m_n\in \bar{B}_M(p)$ which, by properness of $d$, is compact. Extract a convergent subsequence $m_{n_k}\to z\in \bar{B}_M(p)$. As $\tau$ is finite and continuous because $Y$ is $\CBA(0)$ globally, and therefore the whole space is a comparison neighborhood, $\tau(p,z)$ is finite. Therefore, for every $k\in \N$ we have $x_{n_k}\ll m_{n_k}\ll z$, where we used that the sequence of the $m_n$'s is monotone increasing with respect to $\ll$. As a consequence, we have that all terms satisfy $x_{n_k}\in J(p,z)$ and, therefore, $\tau(p,x_{n_k})\leq \tau(p,z)$. However, this is a contradiction, as $\tau(p,x_{n_k})$ is unbounded. In particular, we deduce that $\tilde{\gamma}$ is inextendible and its $d$-length is infinite \cite[Lemma~3.12, Theorem~3.14]{kunzinger-saemann2018}.

We will now prove, following ideas from the proof of \cite[Lemma~5.8]{barton-beran-che-gieger-roehrig-rott}, that if the $\tau$-length of $\gamma$ is finite, then also its $d$-length must be finite, yielding a contradiction. To that end, assume that $L_\tau(\gamma)=a<\infty$. Under this assumption we will obtain a point $\eta_p(a')$ such that $\gamma(t)\leq\eta_p(a')$ for every $t\in[0,a)$. Therefore, the image of $\gamma$ will be contained in the compact diamond $J\bigl(p,\eta_p(a')\bigr)$ and will thus have finite $d$-length.

As $\tau(p,m_n)\to \infty$, there exists $N\in\N$ such that $L_\tau(\alpha_n)>a$, for all $n\geq N$. For $n$ large enough, 
take the point $r_n:=\alpha_n(a)$, i.e., $r_n\in [p,m_n]$ such that $\tau(p,r_n)=a$.  Now, for each $n$ large enough, consider the comparison triangles $\bar{\triangle}(\bar{p},\bar{r}_n,\overline{y_n})$
in $\R^{1,1}$. By monotonicity of comparison angles, $\widetilde{\ma}_p(r_n,y_n)\leq \widetilde{\ma}_p(m_n,y_n)$, and the right-hand side is uniformly bounded. Thus, by \autoref{lem:minkowski-situation-bounded-angles}, there exists some value $c_0\geq 1$ such that whenever $a'>c_0\,a$, one has $\bar{r}_n\ll\overline{\eta_p(a')}$ for all $n$ large enough, where $\overline{\eta_p(a')}$ is the comparison point for $\eta_p(a')$ in the corresponding side of the comparison triangle. Now define $b_n=\max\{\tau(p,y_n),a'\}$ and the new comparison triangles $\bar{\triangle}(\bar{p},\bar{r}_n,\overline{\eta_p(b_n)})$. The curvature bound ensures that $\tau(r_n,\eta_p(a'))\geq \tau_{\R^{1,1}}(\bar{r}_n,\overline{\eta_p(a')})>0$, from where we deduce that $r_n\ll\eta_p(a')$ for all $n$ large enough.

As a consequence, defining $a':=c_0\,a+1$, we have that all the segments $[p,r_n]$ are contained in the compact diamond $J\bigl(p,\eta_p(a')\bigr)$. By continuity of $\tau$ we deduce that also $\gamma$, which is the pointwise limit of the (half open) segments $[p,r_n)$, is contained in the diamond, arriving at the desired contradiction. To sum up, the curve $\gamma$ is defined on $[0,\infty)$ and therefore it is a timelike geodesic ray. Let us call $\mu:=\gamma(\infty)$ the corresponding ideal point.

Finally, to prove that $m_{n} \to \mu$ in the cone topology, take $t\geq 0$, $\varepsilon >0$, $R> 0$ and let $V_{\gamma(t),\varepsilon, R} (\mu)$ be the corresponding basic neighborhood of $\gamma$ in the cone topology (see \autoref{cor:nice basis of nbhds}). On the one hand, it is clear that $\tau(\gamma(t), m_n)\to \infty$ as $n\to \infty$. On the other hand, for each $s>0$ and $n$ large enough, consider the timelike triangle $\triangle\bigl(\gamma(t),\alpha_n(t+s),m_n\bigr)$. By the law of cosines and the fact that $\alpha_n(t+s)\to\gamma(t+s)$, a direct computation shows that $\widetilde{\ma}_{\gamma(t)}(\alpha_n(t+s),m_n)\to 0$ as $n\to\infty$. By the monotonicity of comparison angles, also $\ma_{\gamma(t)}\bigl(\alpha_n(t+s),m_n\bigr)\to 0$. By \cite[Corollary~4.3]{eros-gieger-2025}, $[\gamma(t),\alpha_n(t+s)]\to \gamma|_{[t,t+s]}$ uniformly as $n\to \infty$. As a consequence, \autoref{thm:continuity of angles} ensures that $\ma_{\gamma(t)}\bigl(\gamma,\alpha_n(t+s)\bigr)\to0$. Finally, by the triangle inequality for the angle between curves (\autoref{thm:triangle inequality for angles}) one deduces that $\ma_{\gamma(t)}\bigl(\gamma,m_n\bigr)\to0$. Consequently, a tail of the sequence $m_n$ is contained in the basic neighborhood $V_{\gamma(t),\varepsilon,R}(\mu)$, from where the result follows.
\end{proof}

\begin{pop}\label{pop:midpoints}
Let $Y$ be a proper, globally hyperbolic, strongly causal, locally causally closed and regular \LpLS satisfying $\CBA(0)$ globally. Then for any $\xi,\eta \in \bd^+Y$ such that $\ma(\xi,\eta) < \infty$ there exists $\mu \in \bd^+Y$ such that 
\[
\ma(\xi,\mu)=\ma(\eta,\mu)=\frac{1}{2}\ma(\xi,\eta).
\]
In other words, $\mu$ is a \emph{midpoint} between $\xi$ and $\eta$.
\end{pop}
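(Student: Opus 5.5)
We follow the metric strategy of \cite[Proposition~II.9.21]{BH}, with \autoref{lem:angle bisectors converge to midpoint ray} producing the candidate midpoint. Since $\ma(\xi,\eta)<\infty$, \autoref{pop:different-pasts-infinite-angle} gives $I^-(\xi)=I^-(\eta)$. Fix $p$ in this common past and let $\theta:=\ma(\xi,\eta)$. If $\theta=0$ we take $\mu=\xi=\eta$, so assume $\theta>0$.

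\textbf{Step 1: constructing $\mu$.} By \autoref{pop:asymptotic time separation} there is $c>c_o$ such that $x_n:=\xi_p(t_n)\ll y_n:=\eta_p(ct_n)$ for an increasing sequence $t_n\to\infty$. These points lie on the rays, so $x_n\to\xi$ and $y_n\to\eta$ in the cone topology, and they are $\ll$-increasing. The map $m\mapsto \widetilde{\ma}_p(x_n,m)-\widetilde{\ma}_p(m,y_n)$ is continuous along $[x_n,y_n]$ because $\tau$ is continuous. At $m=x_n$ it equals $-\widetilde{\ma}_p(x_n,y_n)$ and at $m=y_n$ it equals $+\widetilde{\ma}_p(x_n,y_n)$, so the intermediate value theorem yields $m_n\in[x_n,y_n]$ with
\[
\widetilde{\ma}_p(x_n,m_n)=\widetilde{\ma}_p(m_n,y_n).
\]
At the endpoints the comparison angle degenerates, so we use the interior of the segment. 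We also need the uniform bound $\widetilde{\ma}_p(x_n,m_n)\le C$. For this we invoke monotonicity of comparison angles (\autoref{thm:monotonicity of comparison angles}) in the triangle $\triangle p x_n y_n$ together with the Minkowski estimate. The comparison point $\bar m_n$ lies on the side $[\bar x_n,\bar y_n]$, so the Minkowski angles at $\bar p$ are at most $\widetilde{\ma}_p(x_n,y_n)\le\theta$ by \autoref{pop:properties of angle}. Hence $C=\theta$ should work, possibly after adjusting by the curvature comparison. \autoref{lem:angle bisectors converge to midpoint ray} then gives, along a subsequence, $m_n\to\mu\in\bd^+Y$ in the cone topology.

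\textbf{Step 2: upper bounds.} We first show $\ma(\xi,\mu)\le\theta/2$, and likewise for $\eta$. By \autoref{pop:properties of angle}(\ref{item:alternative-definition-angular}) it suffices to bound $\widetilde{\ma}_p(\xi_p(s),\mu_p(s'))$ for large related parameters. The ray $\mu_p$ is the pointwise limit of the segments $[p,m_n]$ (see the proof of \autoref{lem:angle bisectors converge to midpoint ray}). Monotonicity gives $\widetilde{\ma}_p(\xi_p(s),\alpha_n(s'))\le \widetilde{\ma}_p(x_n,m_n)$ for $n$ large, where $\alpha_n$ is $[p,m_n]$ in $\tau$-arclength. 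Passing to the limit using continuity of $\tau$ gives $\widetilde{\ma}_p(\xi_p(s),\mu_p(s'))\le\limsup_n\widetilde{\ma}_p(x_n,m_n)$.

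It remains to show $\limsup_n\widetilde{\ma}_p(x_n,m_n)\le\theta/2$. In the Minkowski comparison triangle $\bar\triangle(\bar p,\bar x_n,\bar y_n)$ the angle at $\bar p$ is additive along the opposite side. Since $\bar m_n$ bisects this angle, each half equals $\frac12\widetilde{\ma}_p(x_n,y_n)\le\theta/2$. The $\CBA(0)$ condition gives $\tau(x_n,m_n)\ge\tau(\bar x_n,\bar m_n)$ and $\tau(p,m_n)=\tau(\bar p,\bar m_n)$. By the law of cosines, a larger $\tau(x_n,m_n)$ means a smaller comparison angle, so $\widetilde{\ma}_p(x_n,m_n)\le\ma_{\bar p}(\bar x_n,\bar m_n)$, and similarly on the other side. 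Combining these with the bisecting choice of $m_n$ yields $\widetilde{\ma}_p(x_n,m_n)\le\frac12\widetilde{\ma}_p(x_n,y_n)\le\theta/2$.

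\textbf{Step 3: conclusion.} The triangle inequality from \autoref{pop:main1} gives
\[
\theta\le\ma(\xi,\mu)+\ma(\mu,\eta)\le\theta,
\]
so equality holds in both upper bounds, and $\mu$ is a midpoint. Alternatively, \autoref{pop:lower semicontinuity of angle} applied to $x_n\to\xi$, $m_n\to\mu$ gives the lower bound directly.

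The main obstacle is the bisector bookkeeping in Step 2. There are two delicate points. First, the Minkowski angle must be additive along the side in the correct causal configuration, since $\bar m_n$ may lie in the past or the future of intermediate points. Second, the comparison-angle monotonicity must be applied in the direction that actually yields an upper bound. If the additivity fails for some configurations, the first fallback is to choose $m_n$ instead as the point whose comparison point bisects the Minkowski angle at $\bar p$ in $\bar\triangle(\bar p,\bar x_n,\bar y_n)$, then recheck the uniform bound $C$ needed for \autoref{lem:angle bisectors converge to midpoint ray}.
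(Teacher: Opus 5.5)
You share the paper's overall architecture: points $x_n=\xi_p(t_n)\ll y_n=\eta_p(ct_n)$, a bisecting $m_n\in[x_n,y_n]$ from the intermediate value theorem, \autoref{lem:angle bisectors converge to midpoint ray} for the limit $\mu$, and the triangle inequality to finish. Bounding $\overline{\ma}_p(\xi,\mu)$ directly by monotonicity and pointwise convergence of $[p,m_n]$ is a reasonable substitute for \autoref{pop:lower semicontinuity of angle}.

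The gap is the key estimate. The factor $\frac12$ rests entirely on $\widetilde{\ma}_p(x_n,m_n)=\widetilde{\ma}_p(m_n,y_n)\le\frac12\widetilde{\ma}_p(x_n,y_n)$, and your Step 2 argument for it is wrong.
\begin{itemize}
\item Your $m_n$ equalizes the comparison angles of the sub-triangles $\triangle p x_n m_n$ and $\triangle p m_n y_n$. Its comparison point $\bar m_n$ in $\bar\triangle(\bar p,\bar x_n,\bar y_n)$ need not bisect $\ma_{\bar p}(\bar x_n,\bar y_n)$.
\item $\tau(x_n,m_n)=\bar\tau(\bar x_n,\bar m_n)$ holds by definition; what $\CBA(0)$ actually gives is $\tau(p,m_n)\ge\bar\tau(\bar p,\bar m_n)$. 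Fix $a=\tau(p,x_n)$ and $c=\tau(x_n,m_n)$, and let $b=\tau(p,m_n)$. Then $\partial_b\frac{a^2+b^2-c^2}{2ab}=\frac{b^2-a^2+c^2}{2ab^2}>0$ because $b\ge a+c$. So the angle at $p$ increases with $b$, and you get $\widetilde{\ma}_p(x_n,m_n)\ge\ma_{\bar p}(\bar x_n,\bar m_n)$, the opposite of your claim.
\item Only on the other side does one get $\widetilde{\ma}_p(m_n,y_n)\le\ma_{\bar p}(\bar m_n,\bar y_n)$.
\end{itemize}
Hence for the common value $\phi$ of the two sub-angles you only obtain $\ma_{\bar p}(\bar x_n,\bar m_n)\le\phi\le\ma_{\bar p}(\bar m_n,\bar y_n)$, and the right-hand side is at least $\frac12\widetilde{\ma}_p(x_n,y_n)$. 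Your fallback fails for the same reason. If $\bar m_n$ bisects the Minkowski angle, the two inequalities give $\widetilde{\ma}_p(m_n,y_n)\le\frac12\widetilde{\ma}_p(x_n,y_n)\le\widetilde{\ma}_p(x_n,m_n)$, which controls only the $\eta$-side.

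The missing idea, which is the paper's, is to glue rather than compare with the big comparison triangle. Place $\bar\triangle(\bar p,\bar x_n,\bar m_n)$ and $\bar\triangle(\bar p,\bar m_n,\bar y_n)$ along $[\bar p,\bar m_n]$ with $\bar x_n,\bar y_n$ on opposite sides. In $\R^{1,1}$ the angle at $\bar p$ between $\bar x_n$ and $\bar y_n$ is then $\widetilde{\ma}_p(x_n,m_n)+\widetilde{\ma}_p(m_n,y_n)$. Since $\bar x_n\ll\bar m_n\ll\bar y_n$, the reverse triangle inequality gives $\bar\tau(\bar x_n,\bar y_n)\ge\tau(x_n,m_n)+\tau(m_n,y_n)=\tau(x_n,y_n)$. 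The apex angle decreases as the opposite side grows, so $\widetilde{\ma}_p(x_n,m_n)+\widetilde{\ma}_p(m_n,y_n)\le\widetilde{\ma}_p(x_n,y_n)$, with no curvature needed. Combined with your bisecting choice, this gives the factor $\frac12$ and the bound $C=\theta$ at once.

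Two smaller points:
\begin{itemize}
\item \autoref{pop:lower semicontinuity of angle} yields the \emph{upper} bound $\ma(\xi,\mu)\le\liminf_n\widetilde{\ma}_p(x_n,m_n)$, not a lower bound.
\item Both that proposition and \autoref{pop:properties of angle}(\ref{item:alternative-definition-angular}) presuppose $\ma(\xi,\mu)<\infty$, i.e.\ $I^-(\mu)=I^-(\xi)$. You should check this, e.g.\ with \autoref{lem:minkowski-situation-bounded-angles} as in the proof of \autoref{lem:angle bisectors converge to midpoint ray}. The paper leaves this implicit as well.
\end{itemize}
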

\begin{proof}
By \autoref{pop:different-pasts-infinite-angle}, the ideal points $\xi,\eta$ generate both the same timelike past. Let us take a point $x_o$ in such a past and consider the representatives $\xi_o, \eta_o$ of $\xi, \eta$ with $\xi_o (0) = x_o = \eta_o (0)$. Take sequences $\{x_n\} \subset \xi_o$ and $\{y_n\} \subset \eta_o$ such that for every $n\in\N$, one has $x_n \ll x_{n+1}$, $y_n \ll y_{n+1}$ and $x_n \ll y_n$, and such that $x_n \to \xi$, $y_n \to \eta$ in the cone topology. This is possible by \autoref{pop:asymptotic time separation}, since  $\ma (\xi, \eta) < \infty$ implies there exists $c>0$ such that $\xi_o(t)\ll\eta_o(ct)$ for all $t$. Thus, one may take an increasing sequence $t_n\to\infty$ and define $x_n := \xi_o (t_n) \ll \eta_o (ct_n) =: y_n$.

For each $n$, \autoref{pop:properties of angle}(\ref{item:alternative-definition-angular}) ensures that $\widetilde{\ma}_{x_o} (x_n, y_n) \leq \ma(\xi,\eta)$. Additionally, following ideas from the proof of \cite[Theorem~3.8]{barton_space_2026}, we can show that there exists $m_n \in [x_n, y_n]$ such that
\begin{equation}\label{midpoints 1}
\widetilde{\ma}_{x_o} (x_n, m_n) = \widetilde{\ma}_{x_o} (m_n, y_n) \leq \frac{1}{2} \widetilde{\ma}_{x_o} (x_n, y_n).
\end{equation}
The argument for this last part is classical in triangle comparison; we reproduce it here for the sake of completeness. By continuity of $\tau$ (implied by the global $\CBA(0)$ condition) and the $\tau$-arclength parametrization of $[x_n,y_n]$ we can guarantee the existence of $m_n\in[x_n,y_n]$ realizing the first equation in \eqref{midpoints 1}. On the other hand, letting $\triangle \bar{x}_o\bar{x}_n\bar{m}_n$, $\triangle \bar{x}_o\bar{m}_n\bar{y}_n$ and $\triangle \tilde{x}_o\tilde{x}_n\tilde{y}_n$ be comparison triangles in $\mathbb{R}^{1,1}$ for $\triangle x_o x_n m_n$, $\triangle x_o m_n y_n$ and $\triangle x_o x_n y_n$, respectively,  such that $\bar{x}_n$ and $\bar{y}_n$ are in opposite sides of $\mathbb{R}^{1,1}$ with respect to $[\bar{x}_o,\bar{m}_n]$, we see that
\[
\overline{\tau}(\tilde{x}_n,\tilde{y}_n) = \overline{\tau}(\bar{x}_n,\bar{m}_n)+\overline{\tau}(\bar{m}_n,\bar{y}_n) \leq \overline{\tau}(\bar{x}_n,\bar{y}_n) 
\]
which yields
\[
\widetilde{\ma}_{x_o}(x_n,y_n) \geq \ma_{\bar{x}_o}(\bar{x}_n,\bar{y}_n)  = \ma_{\bar{x}_o}(\bar{x}_n,\bar{m}_n)  + \ma_{\bar{x}_o}(\bar{m}_n,\bar{y}_n) = 2\widetilde{\ma}_{x_o} (m_n, y_n) 
\]
and the inequality in \eqref{midpoints 1} follows.

By \autoref{lem:angle bisectors converge to midpoint ray}, a subsequence (which we do not relabel) of $m_n$ converges in the cone topology to a timelike ideal point $\mu$. By \eqref{midpoints 1} together with Propositions~\ref{pop:lower semicontinuity of angle} and \ref{pop:properties of angle},  
\[
\ma (\xi, \mu ) \leq \liminf \widetilde{\ma}_{x_o} (x_n, m_n) \leq \frac{1}{2} \lim_{n\to \infty} \widetilde{\ma}_{x_o} (x_n, y_n) = \frac{1}{2} \ma (\xi, \eta).
\]

Analogously,
\[
\ma (\eta, \mu) \leq \frac{1}{2} \ma(\xi, \eta),
\]
and the triangle inequality gives us
\[
\ma (\xi, \eta) \leq \ma (\xi, \mu) + \ma(\eta, \mu) \leq \ma (\xi, \eta),
\]
which implies that $\ma( \xi, \mu) = \frac{1}{2} \ma(\xi, \eta) = \ma (\eta, \mu)$.
\end{proof}

\begin{cor}\label{cor:geodesicity}
Let $Y$ be a proper, globally hyperbolic, strongly causal, locally causally closed and regular \LpLS satisfying $\CBA(0)$ globally. Then each component of finiteness of $\bd^+Y$ is geodesic.
\end{cor}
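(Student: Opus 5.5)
This is the standard fact that a complete metric space with midpoints is geodesic, applied to each finiteness component. Fix a component $C$ of $(\bd^+Y,\ma)$, i.e.\ an equivalence class under $\ma(\xi,\eta)<\infty$; by \autoref{pop:main1} the restriction of $\ma$ to $C$ is a genuine (finite) metric. We want, for $\xi,\eta\in C$ with $D:=\ma(\xi,\eta)$, a map $c\colon[0,D]\to C$ with $c(0)=\xi$, $c(D)=\eta$ and $\ma(c(s),c(s'))=|s-s'|$.

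\textbf{Step 1 (completeness and midpoints on $C$).} First check that $C$ is complete: a Cauchy sequence in $C$ converges in $\bd^+Y$ by \autoref{pop:completeness}. Its limit lies at finite distance from the sequence, so it belongs to $C$. Hence $C$ is a complete metric space. Next, \autoref{pop:midpoints} gives every pair $\xi,\eta\in C$ a midpoint $\mu$. This midpoint lies in $C$, since $\ma(\xi,\mu)=\tfrac12 D<\infty$.

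\textbf{Step 2 (dyadic construction).} Define $c$ on the dyadic rationals in $[0,D]$ by iterated midpoints. Set $c(0)=\xi$ and $c(D)=\eta$. Once $c(kD/2^j)$ is defined for all $k$, let $c((2k+1)D/2^{j+1})$ be a midpoint of $c(kD/2^j)$ and $c((k+1)D/2^j)$. By induction, consecutive dyadic points of level $j$ are at distance exactly $D/2^j$. The triangle inequality then gives $\ma(c(s),c(s'))\le |s-s'|$ for dyadic $s,s'$. Conversely, for $s<s'$ we have
\[
D\le \ma(\xi,c(s))+\ma(c(s),c(s'))+\ma(c(s'),\eta)\le s+\ma(c(s),c(s'))+(D-s'),
\]
which forces equality, so $c$ is an isometry on the dyadics.

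\textbf{Step 3 (extension).} Extend $c$ to all of $[0,D]$. Because $c$ is $1$-Lipschitz on a dense set and $C$ is complete, it extends uniquely by uniform continuity, and the extension remains an isometric embedding. This gives the required geodesic.

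\textbf{Main obstacle.} The only subtle point is Step 1, namely that completeness and midpoints hold inside a single finiteness component and not merely for the extended metric. Once that is checked, the remaining steps are the standard argument (cf.\ \cite[Proposition~I.1.4]{BH}).
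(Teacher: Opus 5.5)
Your proposal is correct and is essentially the paper's argument: the paper deduces the corollary directly from \autoref{pop:completeness} and \autoref{pop:midpoints}, together with the standard fact that a complete metric space with midpoints is geodesic (\cite[Theorem~2.4.16]{BBI}). You also make explicit two points the paper leaves implicit: that completeness and midpoints pass to each finiteness component, and the dyadic construction underlying that criterion.
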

\begin{proof}
This follows from Propositions~\ref{pop:completeness} and \ref{pop:midpoints}, and the fact that complete metric spaces where midpoints exist between any pair of points are geodesic spaces (see, for example, \cite[Theorem~2.4.16]{BBI}).
\end{proof}

\begin{pop}\label{pop:almost intermediate points are close}
Let $Y$ be a proper, globally hyperbolic, strongly causal, locally causally closed and regular \LpLS satisfying $\CBA(0)$ globally. Let $x\ll m\ll y$ be such that $m$ is in a timelike geodesic $[x,y]$ and let $\lambda = \frac{\tau(x,m)}{\tau(x,y)}\in (0,1)$. Then, for all $0<\varepsilon<\min\{\lambda,1-\lambda\}$, there exists $\delta(\varepsilon,\lambda)>0$ such that $\lim_{\varepsilon\to 0}\delta(\varepsilon,\lambda) = 0$ and  the following implication holds: if $m'\in X$ is such that $x\ll m'\ll y$ and 
\begin{equation}\label{eq:almost intermediate points0}
\max\{|\tau(x,m')-\lambda\tau(x,y)|, |\tau(m',y)-(1-\lambda)\tau(x,y)| \}< \varepsilon \tau(x,y),
\end{equation}
then there exist $p,q\in [x,y]$ such that $p\ll m\ll q$, $\tau(p,q) < \delta(\varepsilon,\lambda)\tau(x,y)$, and
\[
m' \in J(p,q).
\]
\end{pop}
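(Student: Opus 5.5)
The plan is to reduce everything to an explicit, scale-invariant statement in the Minkowski plane and then transfer it to $Y$ with the global $\CBA(0)$ condition, applied to the timelike triangle $\triangle x m' y$.

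Write $T=\tau(x,y)$ and let $c\colon[0,T]\to Y$ be the $\tau$-arclength parametrization of $[x,y]$, so that $m=c(\lambda T)$. For a parameter $\eta>0$ to be chosen, set $p=c((\lambda-\eta)T)$ and $q=c((\lambda+\eta)T)$. We take $\eta\le\min\{\lambda,1-\lambda\}$, capping it if necessary so that these points are defined. Then $p\ll m\ll q$ and $\tau(p,q)=2\eta T$, so it suffices to show $p\ll m'\ll q$ (hence $m'\in J(p,q)$) whenever $\eta>\eta_0(\varepsilon,\lambda)$ for some function with $\eta_0\to0$ as $\varepsilon\to0$. We then take $\delta=2\eta$ with $\eta$ slightly larger than $\eta_0$.

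\textbf{Step 1 (comparison).} Consider the timelike triangle $\triangle x m' y$, with sides $a=\tau(x,m')$, $b=\tau(m',y)$ and $T$. Reverse triangle inequality gives $a+b\le T$, and \eqref{eq:almost intermediate points0} gives $|a-\lambda T|,|b-(1-\lambda)T|<\varepsilon T$. Take a comparison triangle $\triangle\bar x\bar m'\bar y$ in $\R^{1,1}$. The points $p$ and $q$ lie on the side $[x,y]$, and $m'$ is an endpoint of the side $[x,m']$. Hence the global $\CBA(0)$ condition (\autoref{def:comparison_neighb}) gives
\[
\tau(p,m')\ge\bar\tau(\bar p,\bar m'),\qquad \tau(m',q)\ge\bar\tau(\bar m',\bar q).
\]
It therefore suffices to show $\bar p\ll\bar m'\ll\bar q$ in the Minkowski plane.

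\textbf{Step 2 (Minkowski estimate).} By scaling we may assume $T=1$, $\bar x=(0,0)$ and $\bar y=(1,0)$. Let $K_\varepsilon$ be the set of points $z$ with $\bar x\le z\le\bar y$, $|\tau(\bar x,z)-\lambda|\le\varepsilon$ and $|\tau(z,\bar y)-(1-\lambda)|\le\varepsilon$. These sets are compact and decreasing in $\varepsilon$. Their intersection consists of the points $z$ with $\tau(\bar x,z)+\tau(z,\bar y)=1=\tau(\bar x,\bar y)$. By the equality case of the reverse triangle inequality in $\R^{1,1}$, this forces $z=\bar m=(\lambda,0)$.

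For fixed $\eta$ the diamond $I(\bar p_\eta,\bar q_\eta)$ is an open neighbourhood of $\bar m$, so $K_\varepsilon\subset I(\bar p_\eta,\bar q_\eta)$ for all small $\varepsilon$. This yields the function $\eta_0(\varepsilon,\lambda)\to0$. One can even make it explicit: write $\bar m'=(t,s)$ and solve $t^2-s^2=a^2$, $(1-t)^2-s^2=b^2$. This gives $t=\tfrac12(1+a^2-b^2)$ and $s^2=t^2-a^2$. Both $t-\lambda$ and $|s|$ are then $O(\sqrt{\varepsilon})$ uniformly, and one may take $\eta_0=C(\lambda)\sqrt{\varepsilon}$. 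The comparison triangle depends on $m'$, but $\bar m'\in K_\varepsilon$ regardless, so the bound is uniform in $m'$.

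The main obstacle is Step 2: obtaining a bound uniform in $m'$ and checking that $\delta(\varepsilon,\lambda)\to0$. The explicit coordinate computation above is the route I would try first. The hypothesis $\varepsilon<\min\{\lambda,1-\lambda\}$ guarantees $a,b>0$, so the comparison triangle is non-degenerate and the relevant points are actually timelike related.
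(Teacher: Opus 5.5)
Your argument is correct up to one small repair, described in the second paragraph. Its skeleton is the paper's. Both proofs apply the global $\CBA(0)$ condition to $\triangle x m' y$, with $m'$ a vertex and $p,q$ on the side $[x,y]$. Both rest on the same Minkowski estimate: made explicit, your bound is $s^2=t^2-a^2<(\lambda+\varepsilon)^2-(\lambda-\varepsilon)^2=4\lambda\varepsilon$, which is exactly the paper's $\tau(x,m')\sinh\widetilde{\ma}_x(m',y)<2(\lambda\varepsilon)^{1/2}\tau(x,y)$.

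Where you differ is the choice of $p,q$. The paper adapts them to $m'$: it takes $\bar p_0,\bar q_0$ to be the null projections $\tau(x,m')(\cosh\theta\mp\sinh\theta)$ of $\bar m'$ onto $[\bar x,\bar y]$, and transfers causal relations via the causal form of the comparison (Remark~1.21 of Beran--S\"amann). You fix $p,q$ symmetrically about $m$, independently of $m'$, and get strict relations $p\ll m'\ll q$ from the $\tau$-inequality of \autoref{def:comparison_neighb} alone. Your choice buys two things:
\begin{itemize}
\item It makes $p\ll m\ll q$ automatic. The paper's window is centred at $\tau(x,m')\cosh\theta$, which can be nearly $\varepsilon\tau(x,y)$ away from $\lambda\tau(x,y)$. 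It then misses $\bar m$ when $\theta$ is small (e.g.\ $m'\in[x,y]$), so it must be widened by $O(\varepsilon)$.
\item It makes $p,q$ divide $[x,y]$ in fixed ratios, which is exactly what the proof of \autoref{thm:main2} later uses.
\end{itemize}
The paper's window is narrower, but that plays no role in the statement.

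The repair concerns your symmetric cap $\eta\le\min\{\lambda,1-\lambda\}$. The threshold $\eta_0(\varepsilon,\lambda)$ is of order $\sqrt{\lambda\varepsilon}$, and it can exceed $\min\{\lambda,1-\lambda\}$ when $\varepsilon$ is close to its upper limit. A concrete case in $\R^{1,1}$ itself, where comparison is exact: take $\tau(x,y)=1$, $\lambda=0.1$, $\varepsilon=0.09$, $\tau(x,m')=0.011$ and $\tau(m',y)=0.811$. Then $\bar m'\approx(0.171,0.171)$, and $m'\ll q$ forces $\eta>0.24$, so your construction yields no $p,q$ for this $\varepsilon$.

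The fix is to cap the two ends separately: $p=c(\max\{0,\lambda-\eta\}T)$ and $q=c(\min\{1,\lambda+\eta\}T)$, using the hypotheses $x\ll m'$ and $m'\ll y$ whenever $p=x$ or $q=y$. Alternatively, for such $\varepsilon$ simply take $p=x$, $q=y$ and any $\delta>1$. Also choose $\delta$ strictly larger than $2\eta$, since the statement asks for $\tau(p,q)<\delta\tau(x,y)$. The statement only requires $\lim_{\varepsilon\to0}\delta(\varepsilon,\lambda)=0$, so nothing else changes.
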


\begin{proof}
Fix $0 < \varepsilon < \min\{\lambda,1-\lambda\}$ and let $x\ll m'\ll y$ be such that \eqref{eq:almost intermediate points0} holds. Consider a comparison triangle $\triangle(\bar{x},\bar{m}',\bar{y})$ for $\triangle(x,m',y)$ in the Minkowski plane, and let $\bar{m}\in[\bar{x},\bar{y}]$ be the comparison point corresponding to $m$. Using the Lorentzian law of cosines, one can see that if $\bar{p},\bar{q}\in [\bar{x},\bar{y}]$ are such that $\bar{p}\ll\bar{m}\ll\bar{q}$, then $\bar{m}'\in J(\bar{p},\bar{q})$ if and only if
\begin{equation}\label{eq:almost intermediate points1}
\tau(\bar{x},\bar{p}) \leq \tau(x,m')(\cosh\widetilde{\ma}_x(m',y) - \sinh\widetilde{\ma}_x(m',y))
\end{equation}
and
\begin{equation}\label{eq:almost intermediate points2}
\tau(\bar{x},\bar{q}) \geq \tau(x,m')(\cosh\widetilde{\ma}_x(m',y) + \sinh\widetilde{\ma}_x(m',y)).
\end{equation}
On the other hand, another application of the Lorentzian law of cosines combined with \eqref{eq:almost intermediate points0} yields
\begin{equation}\label{eq:almost intermediate points3}
|\tau(x,m')\cosh\widetilde{\ma}_x(m',y) - \lambda\tau(x,y)| < \varepsilon\tau(x,y).
\end{equation}
Indeed, from \eqref{eq:almost intermediate points0} and the fact that $0<\varepsilon<\lambda$, it follows that
\[
(\lambda-\varepsilon)^2\tau(x,y)^2<\tau(x,m')^2<(\lambda+\varepsilon)^2\tau(x,y)^2,
\]
and analogously
\[
((1-\lambda)-\varepsilon)^2\tau(x,y)^2<\tau(m',y)^2< ((1-\lambda)+\varepsilon)^2\tau(x,y)^2.
\]
These inequalities imply
\[
2(\lambda-\varepsilon)\tau(x,y)^2 < \tau(x,m')^2+\tau(x,y)^2-\tau(m',y)^2 < 2(\lambda+\varepsilon)\tau(x,y)^2
\]
from which \eqref{eq:almost intermediate points3} easily follows by the Lorentzian law of cosines. From this, and analogous computations, we obtain 
\[
\tau(x,m')\sinh\widetilde{\ma}_x(m',y) < 2\left(\lambda\varepsilon\right)^{1/2}\tau(x,y). 
\]
In particular, if $\bar{p}_0,\bar{q}_0\in [\bar{x},\bar{y}]$ are such that equalities are attained in \eqref{eq:almost intermediate points1} and \eqref{eq:almost intermediate points2}, then 
\begin{equation}\label{eq:bar p and bar q}
\tau(\bar{p}_0,\bar{q}_0) < 4\left(\varepsilon \lambda\right)^{1/2}\tau(x,y)=:\delta(\varepsilon,\lambda)\tau(x,y).
\end{equation}
Finally, choose $p,q\in [x,y]$ whose comparison points in the previous configuration are slight perturbations of $\bar{p}_0,\bar{q}_0$, and such that \eqref{eq:almost intermediate points1}, \eqref{eq:almost intermediate points2}, and \eqref{eq:bar p and bar q} (with $\bar{p}$ and $\bar{q}$ replacing $\bar{p}_0$ and $\bar{q}_0$, respectively, which is possible by continuity of $\tau$ and the fact that the inequality is open) hold simultaneously. By the curvature assumption on $Y$, the claim follows (see \cite[Remark~1.21]{beran-saemann2023}).
\end{proof}

\begin{thm}\label{thm:main2}
Let $Y$ be a proper, globally hyperbolic, strongly causal, locally causally closed and regular \LpLS satisfying $\CBA(0)$ globally. Then $(\bd^+Y,\ma)$ every component of finiteness of $\ma$ is a global $\mathrm{CAT}(-1)$ space.
\end{thm}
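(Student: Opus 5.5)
We will follow the strategy of \cite[Theorem~II.9.13]{BH}, replacing the Euclidean cone by the Lorentzian ``cone'' $\mathbb{R}^{1,1}$ over hyperbolic space. By \autoref{cor:geodesicity} every component of finiteness is geodesic, so it suffices to check the $\CAT(-1)$ four-point (or triangle) condition. Concretely, we will verify that for any geodesic triangle $\triangle(\xi_1,\xi_2,\xi_3)$ in a component of finiteness and any point $\mu$ on the side $[\xi_2,\xi_3]$, the distance $\ma(\xi_1,\mu)$ is at most the corresponding distance in a comparison triangle in $\mathbb{H}^2$. The bridge between the two geometries is the observation that, for future timelike unit vectors $u,v$ in Minkowski space $\mathbb{R}^{1,n}$, one has $\langle u,v\rangle=-\cosh d_{\mathbb{H}}(u,v)$. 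Equivalently, a Minkowski triangle with past vertex $\bar p$ and sides $s,cs$ has $\tau^2=s^2+c^2s^2-2cs^2\cosh\theta$, where $\theta$ is the hyperbolic distance of the directions. This is exactly the formula in \autoref{pop:asymptotic time separation}.

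\textbf{Step 1.} Fix $x\in I^-(\xi_i)$; this is a common past by \autoref{pop:different-pasts-infinite-angle}. Choose $\mu$ on $[\xi_2,\xi_3]$ with $\ma(\xi_2,\mu)=\lambda\,\ma(\xi_2,\xi_3)$. By \autoref{pop:asymptotic time separation} we can pick constants $c_i$ such that the points $x_i(s):=(\xi_i)_x(c_is)$ are pairwise timelike related in a fixed order. Moreover, $\tau(x_i(s),x_j(s))^2/s^2$ converges to the Minkowski value determined by $\ma(\xi_i,\xi_j)$. Thus, after rescaling by $1/s$, the quadruple $x,x_1(s),x_2(s),x_3(s)$ asymptotically realizes the $\tau$-values of a configuration $\bar x,\bar v_1,\bar v_2,\bar v_3$ in $\mathbb{R}^{1,2}$. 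The directions of this configuration form a comparison triangle in $\mathbb{H}^2$ for $\triangle(\xi_1,\xi_2,\xi_3)$.

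\textbf{Step 2.} Let $m(s)$ be the point on the timelike geodesic $[x_2(s),x_3(s)]$ that subdivides it in the ratio prescribed by the comparison point $\bar\mu$. Here we use that the ray of $\bar\mu$ meets the segment $[\bar v_2,\bar v_3]$, which fixes a parameter $\lambda'$. By the argument of \autoref{lem:angle bisectors converge to midpoint ray}, $m(s)$ converges in the cone topology to some ideal point $\mu'$. Using \autoref{pop:lower semicontinuity of angle} together with the asymptotic identities, we will show that $\ma(\xi_2,\mu')\le\lambda\ma(\xi_2,\xi_3)$ and $\ma(\mu',\xi_3)\le(1-\lambda)\ma(\xi_2,\xi_3)$. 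The triangle inequality then forces equality. Combined with uniqueness of geodesics, which is to be checked, or by running the argument for an arbitrary given geodesic and using \autoref{pop:almost intermediate points are close} to trap $m(s)$ in small diamonds around points of the representative of $\mu$, this identifies $\mu'$ with $\mu$.

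\textbf{Step 3.} Apply the global $\CBA(0)$ condition to the timelike triangle $\triangle(x_1(s),x_2(s),x_3(s))$, or to a chronologically ordered choice such as $\triangle(x,\,\cdot\,,\,\cdot\,)$. This gives the lower bound $\tau(x_1(s),m(s))\ge\bar\tau(\bar x_1,\bar m)$, or its reversed-order analogue, and a lower bound on $\tau$ translates into an upper bound on the comparison angle $\widetilde\ma_x(x_1(s),m(s))$ via the law of cosines. Dividing by $s^2$ and letting $s\to\infty$ yields
\[
\limsup_{s\to\infty}\widetilde{\ma}_x\bigl(x_1(s),m(s)\bigr)\le d_{\mathbb{H}^2}(\bar\xi_1,\bar\mu).
\]
\autoref{pop:lower semicontinuity of angle} then gives $\ma(\xi_1,\mu)\le\liminf_s\widetilde{\ma}_x(x_1(s),m(s))$, which is the $\CAT(-1)$ inequality.

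\textbf{Main obstacle.} The main difficulty is that the four points are not totally ordered by $\ll$, whereas $\CBA(0)$ only applies to timelike triangles. We must therefore arrange the scalings $c_i$ so that the relevant triangles are genuinely timelike and so that $m(s)$ stays timelike related to $x$ and $x_1(s)$. We must also control the error terms, which are $o(s^2)$ and not $O(1)$, uniformly enough to pass to the limit. We will handle the ordering first by choosing $c_1\ll c_2\ll c_3$ large, as in \autoref{lem:minkowski-situation-bounded-angles}. We will handle the error terms with \autoref{pop:almost intermediate points are close}.
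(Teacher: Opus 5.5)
Your framework is the same as the paper's: rescale to a configuration in $\mathbb{R}^{1,2}$ with $\mathbb{H}^2$ as the unit hyperboloid, use \autoref{pop:asymptotic time separation}, apply $\CBA(0)$ to a triangle with a point on a side, and trap with \autoref{pop:almost intermediate points are close}. However, the main route through the limit point $\mu'$ in Steps~2--3 has a genuine gap.

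Identifying $\mu'$ with the given $\mu$ via uniqueness of geodesics in $(\bd^+Y,\ma)$ is circular. Nothing in the paper provides that uniqueness; it is a consequence of the $\CAT(-1)$ property you are proving. So constructing \emph{some} point $\mu'$ at the right position would not verify the inequality for an arbitrary geodesic.

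Worse, \autoref{pop:lower semicontinuity of angle} does not even give $\ma(\xi_2,\mu')\le\lambda\,\ma(\xi_2,\xi_3)$. Write $a=\tau(x,x_2(s))$, $b=\tau(x,m(s))$, $c=\tau(x_2(s),m(s))$. Then $\cosh\widetilde\ma_x(x_2(s),m(s))=\frac{a^2+b^2-c^2}{2ab}$, and since $b\ge a+c$ this is \emph{increasing} in $b$. The quantity $b$ is not fixed by your asymptotic identities, and $\CBA(0)$ only gives $b\ge\bar\tau(\bar x,\bar m)$. That is a \emph{lower} bound on $\widetilde\ma_x(x_2(s),m(s))$, the wrong direction. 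Only $\ma(\mu',\xi_3)\le(1-\lambda)\ma(\xi_2,\xi_3)$ comes out, and that does not locate $\mu'$.

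Step~3 has the same oversight. A lower bound on $\tau(x_1(s),m(s))$ gives an upper bound on $\widetilde\ma_x(x_1(s),m(s))$ only if $\tau(x,m(s))$ is also controlled from the right side. This works when $m(s)\ll x_1(s)$. It fails when $x_1(s)\ll m(s)$, which is the ordering your choice $c_1\ll c_2\ll c_3$ suggests.

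The fix is your parenthetical alternative, and it makes $\mu'$, \autoref{lem:angle bisectors converge to midpoint ray} and \autoref{pop:lower semicontinuity of angle} unnecessary. This is essentially the paper's proof, which treats only midpoints and puts the opposite vertex in the past.

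Work with the given $\mu$ and its ray $\mu_x$, and set $\bar m=c_\mu\bar\mu$.
\begin{enumerate}
\item Apply \autoref{pop:asymptotic time separation} to the pairs $(\xi_2,\mu)$ and $(\mu,\xi_3)$, using $\ma(\xi_2,\mu)=\lambda\,\ma(\xi_2,\xi_3)$ and $\ma(\mu,\xi_3)=(1-\lambda)\ma(\xi_2,\xi_3)$. This shows that $\tfrac1s\tau(x_2(s),\mu_x(c_\mu s))$ and $\tfrac1s\tau(\mu_x(c_\mu s),x_3(s))$ have the same limits as the corresponding quantities for $m(s)$.
\item \autoref{pop:almost intermediate points are close} then puts $\mu_x(c_\mu s)$ in $J(p_\varepsilon(s),q_\varepsilon(s))$, with $p_\varepsilon(s),q_\varepsilon(s)\in[x_2(s),x_3(s)]$ close to $m(s)$.
\item If $x_1(s)$ is the past vertex, the reverse triangle inequality and $\CBA(0)$ give $\tau(x_1(s),\mu_x(c_\mu s))\ge\tau(x_1(s),p_\varepsilon(s))\ge\bar\tau(\bar x_1(s),\bar p_\varepsilon(s))=s\,\bar\tau(\bar v_1,\bar p_\varepsilon)+o(s)$. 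If $x_1(s)$ is the future vertex, use $q_\varepsilon(s)$ symmetrically.
\item Since $\tau(x,\mu_x(c_\mu s))=c_\mu s$ exactly, \autoref{pop:asymptotic time separation} for $(\xi_1,\mu)$ converts this into $\ma(\xi_1,\mu)\le d_{\mathbb{H}^2}(\bar\xi_1,\bar\mu)$ after letting $s\to\infty$ and then $\varepsilon\to0$.
\end{enumerate}
Only this one-sided estimate is needed.
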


\begin{proof}
Let $\xi_0,\xi_1,\xi_2,\mu\in \bd^+Y$ be such that $\ma(\xi_i,\xi_j) < \infty$ for all $i,j$, and such that 
\[\ma(\xi_0,\xi_1) = 2\ma(\xi_0,\mu) = 2\ma(\xi_1,\mu).\] 
Let $\triangle \overline{\xi}_0 \overline{\xi}_1 \overline{\xi}_2$ be a comparison triangle for $\triangle \xi_0\xi_1\xi_2$ in the hyperbolic plane, $\mathbb{H}^2$, and $\overline{\mu}$ be the comparison point corresponding to $\mu$. We want to show
\[
\ma(\xi_2,\mu) \leq d_{\mathbb{H}^2}(\overline{\xi}_2,\overline{\mu}).
\]
In order to achieve this, fix some $p\in Y$ and, abusing the notation, denote by $\xi_0,\xi_1,\xi_2,\mu$ the rays starting at $p$ and representing the corresponding points in $\bd^+Y$. By the triangle inequality for the angular metric in $\bd^+Y$, $\ma(\xi_2,\mu)<\infty$. Then, by \autoref{pop:asymptotic time separation}, for sufficiently large $c>1$ we have $\xi_2(s)\ll \mu(cs)$ for all $s\geq 0$ and  
\begin{equation}\label{eq:asymptotic time separation curvature thm}
\lim_{s\to\infty} \frac{1}{s^2} \tau(\xi_2(s),\mu(cs))^2= 1 + c^2 - 2c\cosh\ma(\xi_2,\mu).
\end{equation}
On the other hand, by identifying $\mathbb{H}^2$ with the subset of future-directed timelike unit vectors in the three-dimensional Minkowski space, $\mathbb{R}^{2,1}$, we obtain 
\begin{equation}\label{eq:distance in minkowski curvature thm}
\tau_{\mathbb{R}^{2,1}}(\overline{\xi}_2,c\overline{\mu})^2 = 1 + c^2 - 2c\cosh d_{\mathbb{H}^2}(\overline{\xi}_2,\overline{\mu}).
\end{equation}
Therefore, we only need to compare the left-hand sides of equations \eqref{eq:asymptotic time separation curvature thm} and \eqref{eq:distance in minkowski curvature thm}.

By the law of cosines in $\mathbb{R}^{2,1}$ and the fact that $\overline{\mu}$ is in the hyperbolic geodesic joining $\overline{\xi}_0$ with $\overline{\xi}_1$, one can show that, if $t\overline{\xi}_0$, $c\overline{\mu}$, and $t'\overline{\xi}_1$ are in a straight line in $\mathbb{R}^{2,1}$ for some $t,t'>0$ such that $t\overline{\xi}_0\ll t'\overline{\xi}_1$ (see \autoref{fig:comparison situation-cat(-1)}), then
\[
\frac{\tau_{\mathbb{R}^{2,1}}(t\overline{\xi}_0,c\overline{\mu})}{\tau_{\mathbb{R}^{2,1}}(c\overline{\mu},t'\overline{\xi}_1)} = \frac{t}{t'}.
\]

\begin{figure}
\def \globalscale {0.800000}
\begin{tikzpicture}[y=1cm, x=1cm, yscale=1.3*\globalscale,xscale=1.3*\globalscale, every node/.append style={scale=\globalscale}, inner sep=0pt, outer sep=0pt]
  \path[draw=black,line width=0.02cm,fill=blue!20,fill opacity=0.5] (7.8, 23.7) -- (6.8, 25.2) -- (10.7, 29.7) -- cycle;
  \path[draw=blue,line width=0.04cm] (7.8, 23.7) -- (8.0, 26.6);
  \path[draw=black,line width=0.02cm] (2.4, 27.3) -- (8.0, 21.7) -- (13.6, 27.3);

  \path[draw=black,line width=0.02cm,fill=red!30,fill opacity=0.5] (7.8, 23.7).. controls (7.4, 23.9) and (7.2, 24.1) .. (7.044, 24.5).. controls (7.7, 24.2) and (8.3, 24.3) .. (8.95, 24.5).. controls (8.65, 24.1) and (8.4, 23.8) .. (7.8, 23.7) -- cycle;
  \path[draw=red,line width=0.04cm] (8.0, 24.3).. controls (7.94, 24.0) and (7.85, 23.8) .. (7.8, 23.7);
  \path[draw=black,line width=0.02cm] (3.0, 27.3).. controls (6.6, 23.7) and (7.5, 23.7) .. (8.0, 23.7).. controls (8.5, 23.7) and (9.4, 23.7) .. (13.0, 27.3);
  
  \path[draw=black,line width=0.02cm,dashed] (8.0, 21.7) -- (7.8, 23.7);
  \path[draw=black,line width=0.02cm,dashed] (8.0, 21.7) -- (8.0, 26.6);
  \path[draw=black,line width=0.02cm,dashed] (10.7, 29.7) -- (8.0, 21.7) -- (6.8, 25.2);
  \path[draw=black,line width=0.02cm,dashed] (8.0, 27.3) ellipse (5.6cm and 0.4cm);
  \path[draw=black,line width=0.02cm,dashed] (8.0, 27.3) ellipse (5.0cm and 0.3cm);

  \path[fill=black,line width=0.02cm] (7.8, 23.7) circle (2pt);
  \path[fill=black,line width=0.02cm] (8.0, 26.6) circle (2pt);
  \path[fill=black,line width=0.02cm] (10.7, 29.7) circle (2pt);
  \path[fill=black,line width=0.02cm] (6.8, 25.2) circle (2pt);
  \path[fill=black,line width=0.02cm] (7.044, 24.5) circle (2pt);
  \path[fill=black,line width=0.02cm] (8.95, 24.5) circle (2pt);
  \path[fill=black,line width=0.02cm] (8.0, 24.3) circle (2pt);
  
  \node[text=black,anchor=south west,line width=0.02cm] (text17) at (7.5, 23.3){$\overline{\xi}_2$};
  \node[text=black,anchor=south west,line width=0.02cm] (text18) at (9.1, 24.4){$\overline{\xi}_1$};
  \node[text=black,anchor=south west,line width=0.02cm] (text19) at (6.7, 24.4){$\overline{\xi}_0$};
  \node[text=black,anchor=south west,line width=0.02cm] (text24) at (8.2, 24.){$\overline{\mu}$};
  \node[text=black,anchor=south west,line width=0.02cm] (text25) at (6.3, 25.2){$t\overline{\xi}_0$};
  \node[text=black,anchor=south west,line width=0.02cm] (text26) at (7.5, 26.5){$c\overline{\mu}$};
  \node[text=black,anchor=south west,line width=0.02cm] (text27) at (10.1, 29.7){$t'\overline{\xi}_1$};
  \node[text=black,anchor=south west,line width=0.02cm] (text27) at (11.3, 26.3){$\mathbb{H}^2$};
  \node[text=black,anchor=south west,line width=0.02cm] (text27) at (13.2, 26.5){$\mathbb{R}^{2,1}$};
\end{tikzpicture}
\caption{}
\label{fig:comparison situation-cat(-1)}
\end{figure}

If we further assume that $t,t'>0$ are sufficiently large such that $\overline{\xi}_2\ll t\overline{\xi}_0\ll t'\overline{\xi}_1$, then a straightforward computation, coupled with \autoref{pop:asymptotic time separation} and the global $\CBA(0)$ condition in $Y$ yield
\begin{align*}
\tau_{\mathbb{R}^{2,1}}(\overline{\xi}_2,c\overline{\mu})^2 &= \frac{t}{t+t'}\tau_{\mathbb{R}^{2,1}}(\overline{\xi}_2,t'\overline{\xi}_1)^2 + \frac{t'}{t+t'}\tau_{\mathbb{R}^{2,1}}(\overline{\xi}_2,t\overline{\xi}_0)^2 - \frac{tt'}{(t+t')^2}\tau_{\mathbb{R}^{2,1}}(t\overline{\xi}_0,t'\overline{\xi}_1)^2\\
&=\lim_{s\to\infty}\frac{1}{s^2}\left( 
\frac{t}{t+t'}\tau(\xi_2(s),\xi_1(t's))^2 + \frac{t'}{t+t'}\tau(\xi_2(s),\xi_0(ts))^2 - \frac{tt'}{(t+t')^2}\tau(\xi(ts),\xi_1(t's))^2
\right)\\
&\leq \lim_{s\to \infty} \frac{1}{s^2} \tau\left(\xi_2(s),m_{t,t'}(s)\right)^2
\end{align*}
where 
\[
m_{t,t'}(s) = [\xi_0(ts),\xi_1(t's)]\left(\frac{t}{t+t'}\right).
\]
Now observe that 
\begin{equation}\label{eq:cat(1) almost intermediate point 1}
\lim_{s\to \infty} \frac{1}{s}|\tau(\xi_0(ts),m_{t,t'}(s)) - \tau(\xi_0(ts),\mu(cs))| = 0
\end{equation}
and
\begin{equation}\label{eq:cat(1) almost intermediate point 2}
\lim_{s\to \infty} \frac{1}{s}|\tau(m_{t,t'}(s),\xi_1(t's)) - \tau(\mu(cs),\xi_1(t's))| = 0.
\end{equation}
Since $\lim_{s\to\infty} \frac{1}{s^2}\tau(\xi_0(ts),\xi_1(t's))^2 = t^2+t'^2-2tt'\cosh\ma(\xi_0,\xi_1) \in (0,\infty)$, then equations \eqref{eq:cat(1) almost intermediate point 1} and \eqref{eq:cat(1) almost intermediate point 2} imply that for every $\varepsilon>0$, the following holds for sufficiently large $s$:
\[
|\tau(\xi_0(ts),m_{t,t'}(s)) - \tau(\xi_0(ts),\mu(cs))| < \varepsilon\tau(\xi_0(ts),\xi_1(t's))
\]
\[
|\tau(m_{t,t'}(s),\xi_1(t's)) - \tau(\mu(cs),\xi_1(t's))| < \varepsilon\tau(\xi_0(ts),\xi_1(t's)).
\]
By \autoref{pop:almost intermediate points are close}, there exist $\delta(\varepsilon)>0$ such that $\lim_{\varepsilon\to 0} \delta(\varepsilon) = 0$ and such that 
\[
\mu(cs) \in J(p_\varepsilon(s),q_\varepsilon(s))
\]
for some $p_\varepsilon(s),q_\varepsilon(s)\in [\xi_0(ts),\xi_1(t's)]$ with $p_\varepsilon(s)\ll m_{t,t'}(s)\ll q_\varepsilon(s)$ and 
\[\tau(p_\varepsilon(s),q_\varepsilon(s)) < \delta(\varepsilon)\tau(\xi_0(ts),\xi_1(t's)).\]
By the reverse triangle inequality, 
\[
|\tau(\xi_2(s),m_{t,t'}(s))-\tau(\xi_2(s),\mu(cs))|\leq |\tau(\xi_2(s),p_\varepsilon(s))-\tau(\xi_2(s),q_\varepsilon(s))|
\]
which implies
\begin{equation}\label{eq:cat(1) final step}
\lim_{s\to \infty} \left|\frac{1}{s} \tau(\xi_2(s),m_{t,t'}(s)) - \frac{1}{s} \tau(\xi_2(s),\mu(cs))\right|
\leq \lim_{s\to \infty}\frac{1}{s}|\tau(\xi_2(s),p_\varepsilon(s))-\tau(\xi_2(s),q_\varepsilon(s))|.
\end{equation}
By the proof of \autoref{pop:almost intermediate points are close}, we can choose $p_\varepsilon(s)$ and $q_\varepsilon(s)$ such that they divide the segment $[\xi_0(ts),\xi_1(t's)]$ into fixed ratios. Therefore, by \autoref{lem:angle bisectors converge to midpoint ray} and \autoref{pop:midpoints}, $p_\varepsilon(s)\to p_\varepsilon$ and $q_\varepsilon(s)\to q_\varepsilon$ in the cone topology and
\[
\widetilde{\ma}_{p}(p_\varepsilon(s),q_\varepsilon(s) )\to \ma(p_\varepsilon,q_\varepsilon) 
\]
and by the Lorentzian law of cosines, $\ma(p_\varepsilon,q_\varepsilon) \to 0$ as $\varepsilon \to 0$. This, combined with equation \eqref{eq:cat(1) final step}, yields 
\[
\lim_{s\to \infty}\frac{1}{s} \tau(\xi_2(s),m_{t,t'}(s)) = \lim_{s\to \infty}\frac{1}{s} \tau(\xi_2(s),\mu(cs))
\]
and  the result follows.
\end{proof}

\section{Timelike ideal boundaries of product spaces}\label{sec:cones}

In this section we consider an example of space where the timelike ideal boundary can be described completely. Namely, product spaces of the form $\smash{\prescript{-}{}{\R}\times_f X}$, where $X$ is a proper $\CAT(0)$ space.

\begin{thm}\label{thm:minkowski product}
Let $Y=\prescript{-}{}{\mathbb{R}}\times_1 X$ be the Lorentzian product of $\mathbb{R}$ with a proper $\mathrm{CAT}(0)$ space $X$. Then $Y$ is a proper, globally hyperbolic and regular \LLS satisfying $\CBA(0)$ globally and $\bd^+Y$ is isometric to $[0,\infty)\times_{\sinh}\bd X$ where $\bd X$ is the ideal boundary of $X$ as a $\mathrm{CAT}(0)$ space. Analogously with $\bd^-Y$. 
\end{thm}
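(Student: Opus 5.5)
The plan is to make everything explicit in the product $Y=\prescript{-}{}{\mathbb{R}}\times_1 X$. Here the time separation is
\[
\tau\bigl((t,x),(s,y)\bigr)=\sqrt{(s-t)^2-d(x,y)^2}\quad\text{whenever } s-t>d(x,y),
\]
and zero otherwise.

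\textbf{Structure of $Y$.} Since $X$ is proper and geodesic, \autoref{thm:structure-generalizedcones} makes $Y$ a globally hyperbolic, regular Lorentzian length space with continuous $\tau$. Properness of $Y$ follows from properness of $X$ and $\mathbb{R}$. By \autoref{thm:properties-geodesics-generalizedcones}, with $f\equiv 1$, maximal timelike geodesics have the form $s\mapsto (t_0+s\cosh\theta,\sigma(s\sinh\theta))$ with $\sigma$ a unit-speed geodesic of $X$. Since $X$ is uniquely geodesic, so is $Y$. Local $\CBA(0)$ comes from \autoref{thm:curvature_cones} with $K=0$ and $\inf\{Kf^2-(f')^2\}=0$. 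Global $\CBA(0)$ then follows from \cite[Theorems~4.5, 4.8]{beran-napper-rott2025}, as in the remark after \autoref{thm:curvature_cones}. Alternatively, one can verify the four-point comparison directly: a timelike triangle spans a flat strip over a geodesic triangle in $X$, and the CAT(0) inequality for $d$ gives the reverse inequality for $\tau$.

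\textbf{Classifying rays and asymptotic classes.} A future-directed timelike geodesic ray has the form $\gamma_{\theta,\rho}(s)=(t_0+s\cosh\theta,\rho(s\sinh\theta))$. Here either $\theta=0$ and $\rho$ is constant, or $\theta>0$ and $\rho$ is a geodesic ray in $X$, since the geodesic must be defined and maximal on all of $[0,\infty)$.

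Two such rays satisfy $\gamma\ll\gamma'$ iff $\theta=\theta'$ and, when $\theta>0$, $\rho$ and $\rho'$ stay at bounded distance. For the "if" direction: the time gap grows linearly in the shift $c$, while the $X$-distance stays bounded by the CAT(0) convexity of $d(\rho(s),\rho'(s))$. For the "only if" direction: if $\theta\neq\theta'$, the $X$-distance or the time lag grows linearly and eventually violates $\gamma(s)\ll\gamma'(s+c)$ in one of the two directions. Hence $\bd^+Y$ is the quotient $\bigl([0,\infty)\times\bd X\bigr)/(\{0\}\times\bd X)$, which is the underlying set of $[0,\infty)\times_{\sinh}\bd X$. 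One also checks that every ray has past equal to all of $Y$, because $t$ grows like $s\cosh\theta$ and $d$ like $s\sinh\theta$. Thus $I^-(\xi)=Y$ for all $\xi$, and all angles are finite by \autoref{def:angular-distance}.

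\textbf{Computing the metric.} This is the main step. Rather than computing the supremum in \eqref{eq:def-angular distance} directly, I will use \autoref{pop:properties of angle}(\ref{item:alternative-definition-angular}), so that
\[
\ma(\xi,\xi')=\lim_{\substack{s,s'\to\infty\\(s,s')\in A}}\widetilde{\ma}_x\bigl(\gamma(s),\gamma'(s')\bigr)
\]
for rays from a common base point $x=(0,o)$. Write $\gamma=\gamma_{\theta,\rho}$ and $\gamma'=\gamma_{\theta',\rho'}$, and take $s'=cs$ with $c$ large, which is allowed by \autoref{pop:asymptotic time separation}. Then
\[
\frac{\tau(\gamma(s),\gamma'(cs))^2}{s^2}=(c\cosh\theta'-\cosh\theta)^2-\frac{d\bigl(\rho(s\sinh\theta),\rho'(cs\sinh\theta')\bigr)^2}{s^2}.
\]
The metric analogue \cite[Proposition~II.9.8(4)]{BH} gives
\[
\frac{d\bigl(\rho(as),\rho'(bs)\bigr)^2}{s^2}\to a^2+b^2-2ab\cos\ma_X(\rho(\infty),\rho'(\infty)),
\]
where $\ma_X$ is the angular metric of $\bd X$, which takes values in $[0,\pi]$. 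Inserting this into the Lorentzian law of cosines yields
\[
\cosh\ma(\xi,\xi')=\cosh\theta\cosh\theta'-\sinh\theta\sinh\theta'\cos\ma_X(\rho(\infty),\rho'(\infty)).
\]
This is exactly the distance formula of the hyperbolic cone $[0,\infty)\times_{\sinh}\bd X$, using fiber distance truncated at $\pi$. The degenerate cases $\theta=0$ or $\theta'=0$ follow directly, giving $\ma=|\theta-\theta'|$ and confirming the identification of the cone apex with the vertical class.

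The step I expect to be delicate is this limit. One must check that the base point $o$ plays no role, which follows from independence of $x$ in \autoref{pop:properties of angle}. One must also check that the CAT(0) asymptotic formula applies with different speeds $a=\sinh\theta$ and $b=c\sinh\theta'$, which is exactly the statement of \cite[Proposition~II.9.8(4)]{BH}. The past case follows by time reversal.
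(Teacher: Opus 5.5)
Your proposal is correct. The structural part and the classification of asymptotic classes follow the paper: the paper also extracts $\dot\alpha_1=\dot\alpha_2$ from the two one-sided asymptoticity inequalities, deduces metric asymptoticity of the projections, and checks that every ray has past $Y$. The genuine difference is in computing the metric.

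Both arguments rest on the same exact identity. For rays from $x=(t_0,o)$ with rapidities $\theta,\theta'$, and with $S=s\sinh\theta$, $T=t\sinh\theta'$,
\[
\cosh\widetilde{\ma}_x\bigl(\gamma(s),\gamma'(t)\bigr)=\cosh\theta\cosh\theta'-\sinh\theta\sinh\theta'\,\frac{S^2+T^2-d\bigl(\rho(S),\rho'(T)\bigr)^2}{2ST},
\]
where the last fraction is the cosine of the Euclidean comparison angle at $o$.

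\begin{itemize}
\item The paper evaluates this identity infinitesimally. Letting $s,t\searrow 0$ expresses the angle at each $y$ through the Alexandrov angle at $\mathrm{pr}_X y$. The supremum over $y\in Y$ then becomes a supremum over points of $X$, which is the angular metric of $\bd X$ by \eqref{eq:def-metric-angular-distance}.
\item You evaluate it at infinity from a single base point. You use the ``supremum equals limit at infinity'' results on both sides: \autoref{pop:properties of angle}(\ref{item:alternative-definition-angular}) in $Y$ and \cite[Proposition~II.9.8]{BH} in $X$.
\end{itemize}

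Your route is shorter, avoids the supremum over base points, and is the same mechanism the paper later uses for \autoref{thm:main2}. The paper's route is self-contained, needs none of the machinery of \autoref{sec:angular-metric}, and also gives the angle at every base point.

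Your route does inherit the hypotheses of \autoref{pop:properties of angle} and \autoref{pop:asymptotic time separation}.
\begin{itemize}
\item You need strong causality and local causal closedness of $Y$. These are immediate here: the causal relation $(t,x)\le(s,y)\iff s-t\ge d(x,y)$ is closed.
\item You need $\ma(\xi,\xi')<\infty$, and your justification for it is wrong. A common past only makes $\ma$ defined as a supremum, as in \autoref{def:angular-distance}; it does not make it finite.
\end{itemize}

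The fix is easy. In the proof of \autoref{pop:properties of angle}(\ref{item:alternative-definition-angular}), finiteness is used only to obtain $I^-(\xi)=I^-(\xi')$, which you already have directly. Also, $\gamma(s)\ll\gamma'(cs)$ for all $s>0$ can be checked by hand once $c>e^{\theta+\theta'}$, using $d\bigl(\rho(s\sinh\theta),\rho'(cs\sinh\theta')\bigr)\le s\sinh\theta+cs\sinh\theta'$. Finiteness of $\ma$ then follows from your formula.
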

\begin{proof}
The first assertion, namely that $Y$ is a proper, globally hyperbolic and regular \LLS satisfying $\CBA(0)$ globally is an immediate consequence of Theorems~\ref{thm:structure-generalizedcones} and \ref{thm:curvature_cones}.

Let us therefore address the remaining assertions. In this case, the time separation is given for $y_i=(t_i,x_i)$, by $\tau(y_1,y_2) = \sqrt{(t_2-t_1)^2-d_X(x_1,x_2)^2}$ whenever the radicand is non-negative and $t_2\geq t_1$, and by $\tau(y_1,y_2)=0$ otherwise.

For each $x\in X$ and $a\in \R$, denote by $\gamma_{x,a}\colon [0,\infty)\to Y$ the \textit{vertical geodesic} ray given by \[\gamma_{x,a}(t)=(t+a,x)\] for all $t$. It is immediate to check that $\gamma_{x_1,a}$ and $\gamma_{x_2,b}$ are asymptotic for every $x_1,x_2\in X$ and $a,b\in \R$. Indeed, by the constancy of the warping function, it suffices to consider the case $a=b=0$. In such a case, it is enough to take a shift in the parameter by any $c>d_X(x_1,x_2)$. Call $\xi_0$ the equivalence class containing all vertical rays.

Consider now $\xi\in \bd^+Y$ and $\gamma=(\alpha,\beta)\in \xi$. Then by \autoref{thm:properties-geodesics-generalizedcones} we have that $\beta=\mathrm{pr}_X\gamma$ is minimizing and $v_\beta$ is constant. Therefore, either $\beta$ is (upon parametrization by its arclength) a geodesic ray in $X$ or it is a constant path. The latter case of course occurs when and only when $\gamma$ is vertical.

Consider $\xi\in\bd^+Y$ and $\gamma_1,\gamma_2\in \xi$. There exists then some parameter shift $c\in\R^+$ such that $\gamma_i(t)\ll \gamma_j(t+c)$ for all $t\in[0,\infty)$. Or, what is the same, with the notation $\gamma_i=(\alpha_i,\beta_i)$,
\begin{equation}\label{eq:ineq0-proof-straightcone}
\alpha_j(t+c)-\alpha_i(t)>d_X\bigl( \beta_i(t),\beta_j(t+c)\bigr),\quad \forall t.
\end{equation}

As $\gamma_i$'s are always parametrized by $\tau$-length, we have that $\dot{\alpha}_i^2=1+v_{\beta_i}^2$, which is constant, so $\smash{\alpha_i(t)=t\dot{\alpha}_i+k_i}$ for some constants $k_i$, and the previous conditions become 
\begin{equation}\label{eq:ineq1-proof-straightcone}
    t\left(\dot{\alpha}_2 -\dot{\alpha}_1\right)+c\dot{\alpha}_2+k_2-k_1 >
    d_X\bigl(\beta_1(t),\beta_2(t+c)\bigr) \geq d_X\bigl(\beta_1(t),\beta_2(t)\bigr) - d_X\bigl(\beta_2(t),\beta_2(t+c)\bigr)
\end{equation}
for all $t\in [0,\infty)$, and similarly (taking $\tilde{t}=t+c$),
\begin{equation}\label{eq:ineq2-proof-straightcone}
    \tilde{t} \left(\dot{\alpha}_1 -\dot{\alpha}_2\right)+c\dot{\alpha}_2+k_1-k_2 >
    d_X\bigl(\beta_1(\tilde{t}),\beta_2(\tilde{t}-c)\bigr) \geq d_X\bigl(\beta_1(\tilde{t}),\beta_2(\tilde{t})\bigr) - d_X\bigl(\beta_2(\tilde{t}),\beta_2(\tilde{t}-c)\bigr).
\end{equation}

The last terms in the right-hand sides of equations \eqref{eq:ineq1-proof-straightcone} and \eqref{eq:ineq2-proof-straightcone} are constantly $cv_{\beta_2}$, so calling $\delta=\dot{\alpha}_2-\dot{\alpha}_1$ we have that for all $t\geq c$, the term $d_X\bigl(\beta_1(t),\beta_2(t)\bigr)$ is bounded above by some constant plus $t\delta$ and by some other constant plus $-t\delta$. As a consequence, we must have $\delta=0$, and thus $v_{\beta_1}=v_{\beta_2}$. Now we have two cases: either $v_{\beta_1}=v_{\beta_2}=0$ or $v_{\beta_1}=v_{\beta_2}\neq0$. In the first case both rays $\gamma_i$ are vertical. In particular, we have proved that no non-vertical future directed timelike geodesic ray can be asymptotic to a vertical one. In other words, $\xi_0$ contains exclusively vertical rays. In the second case, we still have from any of the equations \eqref{eq:ineq1-proof-straightcone} and \eqref{eq:ineq2-proof-straightcone}, that $d_X\bigl(\beta_1(t),\beta_2(t)\bigr)$ is bounded above by a constant. Therefore, $\beta_1$ and $\beta_2$ (or, more precisely, their parametrizations by arclength) are asymptotic in the metric sense (see \autoref{def:metric-geodesic-rays-asymptoticity}).

With a similar argument, one can deduce that the past of every future directed timelike geodesic ray is the whole space $Y$, something which will be useful later on. Indeed (see also \autoref{lem:cones-past-whole-space} for the general argument) consider a point $p=(t_0,x)\in Y$ and a geodesic ray $\gamma=(\alpha,\beta)$. Then, $p$ is in the timelike past of $\gamma(s)$ for some $s\in[0,\infty)$ if and only if $d_X(x,\beta(s))<\alpha(s)-t_0$. We know that the left hand side is bounded above by $d_X(x,\beta(0))+d_X(\beta(0),\beta(s)):=d_0+v_\beta s$, whereas from $\dot{\alpha}^2=1+v_{\beta}^2$ one deduces there exists some $\tilde{s}\geq 0$ such that for every $s>\tilde{s}$,
\[
d_X(x,\beta(s)) \leq d_0+ s\, v_\beta  < \alpha(0) + s \sqrt{1+v_\beta^2}-t_0 = \alpha(s)-t_0.
\]

For each non-vertical $\xi\in\bd^+Y$, define the equivalence class $\xi_X=\mathrm{pr}_X\xi:=[\mathrm{pr}_X \gamma] \in \bd X$, where $\gamma$ is \textit{any} future directed timelike geodesic ray in $\xi$. Here, we are implicitly considering the arclength parametrization of $\mathrm{pr}_X \gamma$, so that it is a geodesic ray and we can consider the class of geodesic rays asymptotic to it. The map $\xi\mapsto \xi_X$ for non-vertical $\xi$ is well-defined by the previous considerations, i.e., asymptotic future-directed timelike rays have asymptotic (reparametrized) projections. For the vertical ideal point $\xi_0$ we can proceed similarly: the constant paths $\smash{\mathrm{pr}_X (\gamma_{x,a}) \colon [0,\infty]\to X}$ remain all at finite constant distance from one another, so we can call $\smash{\xi_X^0}$ the set of all constant paths, for future convenience.

Therefore we can define the map
 \[
 \begin{aligned}
 \Phi\colon &Y\to [0,\infty)\times_{\sinh}\bd X\\
 &\xi \longmapsto \bigl(\theta(\xi),\xi_X \bigr),
 \end{aligned}
 \]
 where $\theta(\xi):=\ma(\xi_0,\xi)$ and, as before, $\xi_0$ is the equivalence class of vertical rays. Notice that, as $\sinh(0)=0$, the warped product $[0,\infty)\times_{\sinh}\bd X$ has an \textit{apex} $P$, where $\bd X$ is collapsed into a point. This point is attained by $\Phi$ precisely at the vertical ray, i.e., $\smash{P=\Phi(\xi_0)=(0,\xi_X^0)}$. In a couple of lines we will show that the angle $\theta(\xi)$ is always finite. With these considerations, the map is readily seen to be well-defined. The rest of the proof will be to show that this map is an isometry.

The angle $\theta(\xi)=\ma(\xi,\xi_0)$, being an angle between classes of asymptotic geodesic rays, is defined as in \autoref{def:angular-distance}, that is,
\[
\theta (\xi) = \sup_{y \in Y} \bigl\{\ma_y (\xi_y, (\xi_0)_y)  \bigr\}.
\]

Recall here that the past of every future directed timelike geodesic ray is the whole space $Y$. We will now show that these angles are independent of choice of the point $y$.

Indeed, take some point $y\in Y$ and consider representatives $\xi_y$ and $(\xi_0)_y$ of $\xi$ and $\xi_0$, respectively. Then, as in \autoref{def:upper-angle},
\[
\ma_y\bigl(\xi_y, (\xi_0)_y\bigr)=\limsup_{\substack{(s,t) \in A \\ s,t \searrow 0}} 
       \widetilde{\ma}_{y}\bigl(\xi_y(s),(\xi_0)_y(t)\bigr).
\]
In turn, calling $\xi_y=(\alpha,\beta)$ for simplicity, one has for every $(s,t)\in A$ (see \autoref{def:upper-angle}),
\[
\begin{aligned}
    \cosh\Bigl(\widetilde{\ma}_{y}\bigl(\xi_y(s),(\xi_0)_y(t)\bigr)\Bigr) &=
        \frac{\tau\bigl(y,\xi_y(s)\bigr)^2+ \tau\bigl(y, (\xi_0)_y(t)\bigr)^2 -\tau\bigl(\xi_y(s),(\xi_0)_y(t)\bigr)^2}{2\tau\bigl(y,\xi_y(s)\bigr)\tau\bigl(y, (\xi_0)_y(t)\bigr)}
    =\\
    & = 
    \frac{s^2+t^2-\left(t^2+s^2-2ts\sqrt{1+v_\beta^2}\,\right)}{2ts}
    = \sqrt{1+v_\beta^2}
\end{aligned}
\]
and, therefore, by a previous discussion in this proof, it is independent of $y$. Moreover, this connects directly $\theta$ with $v_\beta$, making it possible to compute one from the other one \textit{via} 
\begin{equation}\label{eq:rel-theta-v}
\cosh^2\theta=1+v_\beta^2, \qquad \text{and thus also} \qquad
v_\beta=\sinh\theta
\end{equation}
and, in particular, we deduce that $\theta=\theta(\xi)$ is finite.

This already ensures that $\Phi$ is injective. Indeed, consider $\xi_1,\xi_2\in\bd^+Y$ such that $\Phi(\xi_1)=\Phi(\xi_2)$. If the common image is the apex $P$, then $\theta_1=\theta_2=0$, and thus both rays are the vertical ray. Otherwise, we still have $\theta_1=\theta_2$ and therefore the previous equality ensures that for every choice of representatives $\gamma_i=(\alpha_i,\beta_i)$ of $\xi_i$, one has $v_{\beta_1}=v_{\beta_2}$, which we will denote $v$ for easier reading. Moreover, $\beta_1$ and $\beta_2$, after reparametrization by arclength, will be asymptotic in the metric sense, i.e., $d_X\bigl(\beta_1(t),\beta_2(t)\bigr)\leq k$ for some constant $k$. So consider some shift $c\in\R^+$ in the parameter. We have 
\[
\alpha_j(t+c)-\alpha_i(t)=c\,\sqrt{1+v^2}\quad \text{and} \quad
d_X\bigl(\beta_i(t),\beta_j(t+c)\bigr) \geq d_X\bigl(\beta_j(t),\beta_j(t+c)\bigr)-d_X\bigl(\beta_i(t),\beta_j(t)\bigr)\geq cv-k.
\]
So, we deduce that for $c$ large enough we will have the condition for the rays being asymptotic, i.e., the left hand side before being strictly greater than the right hand side for all $t$. To sum up, $\xi_1=\xi_2$, as we wanted to conclude.

Proving that $\Phi$ is also surjective is almost immediate. Indeed, consider $(\theta,\Xi)\in[0,\infty)\times_{\sinh} \bd X$. Take a representative $\smash{\hat{\beta}\in \Xi}$, which is a geodesic ray $\smash{\hat{\beta}\colon [0,\infty)\to X}$ parametrized by arclength. Reparametrize $\smash{\hat{\beta}}$ in such a way that its (constant) metric speed, which we denote for simplicity $v_\beta$, satisfies equation~\eqref{eq:rel-theta-v}. Denote by $\beta$ such reparametrization and define $\alpha\colon [0,\infty)\to \R$ by $\smash{\alpha(t)=t(1+v_\beta^2)^{1/2}}$. The curve $\gamma=(\alpha,\beta)\colon [0,\infty)\to Y$ is a future directed timelike geodesic ray whose equivalence class is sent by $\Phi$ to $(\theta,\Xi)$.

Let us finally prove that $\Phi$ preserves distances. The distance in the warped product $[0,\infty)\times_{\sinh} \bd X$ is given by \cite[Definition~I.5.6, $k=-1$]{BH}:
\begin{equation}\label{eq:distance-elliptic-cone}
\cosh d\bigl((\theta_1,\Xi_1),(\theta_2,\Xi_2)\bigr) =
\cosh\theta_1\cosh\theta_2 - \sinh\theta_1\sinh\theta_2 \cos \bigl(d_{\bd X}(\Xi_1,\Xi_2)\bigr),
\end{equation}
where in turn, the distance on $\bd X$ is its \textit{angular distance}, as introduced in \autoref{subs:metric-ideal-boundary}.

So consider $\xi_1,\xi_2\in\bd^+Y$ and a point $y\in Y$, and take the corresponding representatives $\xi_y^i=(\alpha_{i,y},\beta_{i,y})$. Call $v_{i,y}=\sinh\theta_i$ the metric speed of the $\beta$'s and $\smash{\hat{\beta}_{i,y}}$ the corresponding reparametrizations by arclength, i.e., $\smash{\beta_{i,y}(t)=\hat{\beta}_{i,y}(tv_{i,y})}$. We have, for $\xi_y^1(s)\ll\xi_y^2(t)$ or vice-versa,
\[
\begin{aligned}
\cosh\widetilde{\ma}_y\bigl(\xi_y^1(s),\xi_y^2(t)\bigr) &=
\frac{\tau\bigl(y,\xi_y^1(s)\bigr)^2+ \tau\bigl(y,\xi_y^2(t)\bigr)^2-\tau\bigl(\xi_y^1(s),\xi_y^2(t)\bigr)^2}{2 \tau\bigl(y,\xi_y^1(s)\bigr) \tau\bigl(y,\xi_y^2(t)\bigr)}=\\
&=\frac{s^2 + t^2- 
\Bigl[\Bigl(s\sqrt{1+v_{1,y}^2}- t\sqrt{1+v_{2,y}^2}\, \Bigr)^2-
d_X\bigl(\beta_{1,y}(s),\beta_{2,y}(t)\bigr)^2\Bigr]}{2ts}\\
&=\frac{2ts\sqrt{1+v_{1,y}^2}\sqrt{1+v_{2,y}^2} + d_X\bigl(\beta_{1,y}(s),\beta_{2,y}(t)\bigr)^2-s^2v_{1,y}^2 -t^2v_{2,y}^2}{2ts}.
\end{aligned}
\]

After dividing by $2ts$, the first term in the right hand side is precisely $\cosh\theta_1\cosh\theta_2$. Substituting the $\beta$'s by their arclength reparametrizations and calling $S_y=sv_{1,y}$ and $T_y=tv_{2,y}$ we obtain
\[
\cosh\widetilde{\ma}_y\bigl(\xi_y^1(s),\xi_y^2(t)\bigr)=\cosh\theta_1\cosh\theta_2-\frac{S_y^2+T_y^2-d_X\bigl(\hat{\beta}_{1,y}(S_y),\hat{\beta}_{2,y}(T_y)\bigr)^2}{2T_yS_y}v_{1,y}v_{2,y}.
\]

Notice that the fraction on the right hand side of this equation is precisely the cosine at $x:=\mathrm{pr}_X y$ of the (metric) comparison angle $\smash{\widetilde{\ma}_x^X\bigl(\hat{\beta}_{1,y}(S_y),\hat{\beta}_{2,y}(T_y)\bigr)}$. The angle subtended by the curves $\hat{\beta}_{1,y}$ and $\hat{\beta}_{2,y}$ at $x$ is the $\limsup$ as $S_y,T_y\searrow 0$ of such an expression. Moreover, as $X$ is a $\CAT(0)$ space, the $\limsup$ is a limit. So, taking the $\limsup$ in the previous expression and noticing that $\cosh$ is continuous and strictly increasing on $[0,\infty)$, we deduce
\[
\cosh\ma_y(\xi_y^1,\xi_y^2)=
\limsup_{\substack{(s,t) \in A \\ s,t \searrow 0}}  \cosh\widetilde{\ma}_y \bigl(\xi_y^1(s),\xi_y^2(t)\bigr)= 
\cosh\theta_1\cosh\theta_2 - 
\sinh\theta_1\sinh\theta_2 \cos\ma_x^X(\beta_{1,y},\beta_{2,y}).
\]

Finally, again using that $\cosh$ is increasing and continuous on $[0,\infty)$ and that $\cos$ is decreasing and continuous on $[0,\pi]$, we deduce
\begin{equation}\label{eq:angular-distance-product}
\begin{aligned}
\cosh\ma(\xi_1,\xi_2)&=\sup_y \cosh \ma_y(\xi_y^1,\xi_y^2)=\cosh\theta_1 \cosh\theta_2 - 
\sinh\theta_1\sinh\theta_2 \inf_y \Bigl( \cos\ma_x^X(\beta_{1,y},\beta_{2,y})\Bigr)=\\
&=\cosh\theta_1 \cosh\theta_2 - 
\sinh\theta_1\sinh\theta_2 \cos\Bigl(\sup_y\ma_x^X(\beta_{1,y},\beta_{2,y})\Bigr).
\end{aligned}
\end{equation}

To conclude, notice that the supremum being over all $y\in Y$ is the same as being over all $x\in X$, as the projections $\beta_{i,y}$ depend not on $y$ but on $x=\mathrm{pr}_X y$. Indeed, if $y,y'\in Y$ have $\mathrm{pr}_X y=\mathrm{pr}_X y'$ then $\beta_{i,y}(\infty)=\beta_{i,y'}(\infty)=\xi_X^i$, and $\beta_{i,y}(0)=\beta_{i,y'}(0)=x$ by construction, so the uniqueness of representatives of a boundary point in the metric setting (see \autoref{subs:metric-ideal-boundary}) gives us $\beta_{i,y}=\beta_{i,y'}$. In other words, the $\sup$ on the right hand side is precisely the (metric) angular distance between $\xi_X^1$ and $\xi_X^2$, and therefore $\Phi$ is an isometry.
\end{proof}

\begin{rem}\label{rem:non-unit-cones}    
Notice that the fact that the (constant) warping function takes the value $1$ is unimportant. Indeed, the generalized cone $\smash{\prescript{-}{}{\mathbb{R}}\times_L X}$, for $L>0$, is equivalent to the generalized cone $\smash{\prescript{-}{}{\mathbb{R}}\times_1 \widetilde{X}}$, where $\smash{\widetilde{X}}$ is the metric space with rescaled distance $\smash{\tilde{d}:=Ld_X}$. This equivalence is in the sense that both have the same Lorentzian structures (see \autoref{def:generalized-cones}). However, Equation~\eqref{eq:rel-theta-v} relating the angle with the vertical ray and the horizontal speed will be changed in the former case to $\cosh^2\theta=1+L^2v_\beta^2$ and $\sinh\theta=Lv_\beta$.
\end{rem}

\begin{rem}
    The angular distance in the metric ideal completion $\bd X$ of a proper $\CAT(0)$ space $X$ takes values in $[0,\pi]$. Consider a Lorentzian product $Y=\prescript{-}{}{\mathbb{R}}\times_1 X$ and two future-directed timelike geodesic rays $\xi_i$ whose projections $\Xi_i$ to $X$ are at angular distance $0$ or $\pi$. Then, the (Lorentzian) angular distance between the timelike rays (see Equation~\eqref{eq:angular-distance-product}) becomes, respectively
    \[
     \ma(\xi_1,\xi_2) = \cosh^{-1}\left(
    \cosh\theta_1\cosh\theta_2 \mp \sinh\theta_1\sinh\theta_2\right)= |\theta_1\mp\theta_2|.
    \]

    In particular, when $X$ is a complete $\CAT(-1)$ space the (metric) angular distance between two different geodesic rays is always $\pi$ (see~\cite[Example~II.9.12(1)]{BH}) and, therefore, one has
    \[
    \ma(\xi_1,\xi_2) =\left\{
    \begin{aligned}
    |&\theta_1-\theta_2|, &&\text{if } \Xi_1=\Xi_2, \\
    &\theta_1+\theta_2,     &&\text{otherwise}. 
    \end{aligned}\right.
    \]
    In other words, in that case the future ideal boundary is isometric to the cone over the metric space $(\partial X,d_{\bd X})$ \cite[Proposition~3.6.12]{BBI}, where $d_{\bd X}$ is the discrete distance rescaled by $\pi$. Equivalently, it is isometric to the metric bouquet formed by gluing all segments of the form $[0,\infty)\times\{\xi\}$ where $\xi\in\bd X$, at the points $\{0\}\times \{\xi\}$.
\end{rem}

We also obtain the following partial converse of \autoref{thm:minkowski product} as an immediate consequence of results from \cite{barton-beran-che-gieger-roehrig-rott}. This is can be regarded as a Lorentzian analogue of \cite[Theorem~II.9.24]{BH}.

\begin{thm}\label{thm:rigidity}
Let $Y$ be proper, globally hyperbolic, strongly causal, locally causally closed, regularly localizable Lorentzian pre-length space satisfying $\CBA(0)$ globally and assume that 
$\bd^+ Y \cup \bd^-Y$ is isometric to a pseudo-metric space of the form
\[
\{1,-1\}\times [0,\infty)\times_{\sinh } S
\]
where $\bd^\pm Y$ is mapped to $\{\pm 1\}\times [0,\infty)\times_{\sinh} S$ under this isometry, and the distance between $(-1,0,a)$ and $(1,0,a)$ is 0. Then $Y = \prescript{-}{}{\mathbb{R}}\times X$ for some $\CAT(0)$ space $X$ such that $\bd X$ is isometric to $S$.
\end{thm}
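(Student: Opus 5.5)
The plan is to reduce the statement to the Lorentzian splitting theorem for spaces satisfying $\CBA(0)$ globally from \cite{barton-beran-che-gieger-roehrig-rott}, and then to read off the boundary of the fiber from \autoref{thm:minkowski product}.

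First, I construct a timelike line out of the two apexes. Let $\xi_0\in\bd^+Y$ and $\xi_0'\in\bd^-Y$ be the preimages of $(1,0,a)$ and $(-1,0,a)$ under the given isometry. By hypothesis $\ma(\xi_0,\xi_0')=0<\infty$. By the second part of \autoref{pop:different-pasts-infinite-angle}, which uses regular localizability, this forces $I^-(\xi_0)=I^+(\xi_0')$. Then the second part of \autoref{pop:main1} gives, for every $x$ in this common set, a unique complete timelike geodesic line $\gamma_x\colon\R\to Y$ with $\gamma_x(0)=x$, $\gamma_x(\infty)=\xi_0$ and $\bar\gamma_x(\infty)=\xi_0'$. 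In particular $Y$ contains a complete timelike line. This is exactly the input of the splitting theorem of \cite{barton-beran-che-gieger-roehrig-rott}, so $Y\cong\prescript{-}{}{\R}\times X$ for some $\CAT(0)$ space $X$. The remaining standing hypotheses are met by assumption: properness, global hyperbolicity, strong causality, local causal closedness, regularity and the global $\CBA(0)$ bound. Properness of $X$ should follow from properness of $Y$, since $X$ embeds as a closed slice. I will also check that the splitting is along the family $\{\gamma_x\}$, so that the vertical lines $t\mapsto(t+c,x)$ are precisely these lines. This means the vertical class in $\bd^+Y$ is $\xi_0$, which the isometry sends to the apex.

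Next I identify the boundaries. By \autoref{thm:minkowski product}, the map $\Phi$ gives an isometry $\bd^+Y\cong[0,\infty)\times_{\sinh}\bd X$ sending the vertical class $\xi_0$ to the apex. Composing with the given isometry yields an isometry $\Psi\colon[0,\infty)\times_{\sinh}\bd X\to[0,\infty)\times_{\sinh}S$ that maps apex to apex. Hence $\Psi$ preserves the distance to the apex, which equals the first coordinate $\theta$. So $\Psi$ restricts to a bijection between the spheres $\{\theta=1\}\times\bd X$ and $\{\theta=1\}\times S$. By \eqref{eq:distance-elliptic-cone}, on such a sphere
\[
\cosh d = \cosh^2 1-\sinh^2 1\,\cos d_{\bd X}(\Xi_1,\Xi_2),
\]
and the same formula holds with $S$ in place of $\bd X$. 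This recovers $d_{\bd X}$ from $d$ as long as the angular distance lies in $[0,\pi]$, which holds on $\bd X$. On $S$ I take the hypothesis that the space is of that warped-product form to mean that distances are truncated at $\pi$, as in the definition of the metric cone. Therefore the induced bijection $\bd X\to S$ is an isometry.

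The main obstacle is the first step. I must verify that the splitting theorem of \cite{barton-beran-che-gieger-roehrig-rott} applies with exactly the available hypotheses, in particular the required notion of ``timelike line'' and any completeness or timelike-cocompactness assumptions. I must also verify that the resulting splitting is compatible with the line through the apexes, so that $\xi_0$ really is the vertical class. My first attempt would be to use the asymptotic rays from \autoref{pop:existence of pointwise rep} and the uniqueness from \autoref{pop:uniqueness of pointwise rep}. The aim is to show that the lines $\gamma_x$ are mutually parallel and fill $Y$, which is the structure the splitting proof builds via Busemann functions. A secondary point is to confirm that the common past $I^-(\xi_0)$ is all of $Y$; otherwise the splitting would only be obtained on that open set. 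For this I would argue that, in the split picture, the boundary of $I^-(\xi_0)$ would be an achronal set invariant under the vertical flow, contradicting the argument of \autoref{lem:cones-past-whole-space}.
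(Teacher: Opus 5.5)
\textbf{The gap.} The problem is the step from ``$Y$ contains a complete timelike line'' to ``$Y\cong\prescript{-}{}{\R}\times X$''. Under an \emph{upper} timelike curvature bound, a single line does not split the space along it. The result of \cite{barton-beran-che-gieger-roehrig-rott} is the analogue of the $\CAT(0)$ splitting theorem \cite[Theorem~II.2.14]{BH}, which only splits the union of lines parallel to a given one. It is not the Ricci-type splitting theorem, where Busemann functions of one line do all the work.

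For instance, in $\prescript{-}{}{\R}\times_1\mathbb{H}^2$ take the non-vertical timelike line $t\mapsto(t\cosh\theta,\beta(t\sinh\theta))$, with $\beta$ a geodesic line of $\mathbb{H}^2$. All lines weakly parallel to it lie in the flat plane $\R\times\beta(\R)$, so the space does not split along it.

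\textbf{What you must supply instead.} The input to \cite[Theorem~1.1]{barton-beran-che-gieger-roehrig-rott} is a family of pairwise weakly parallel lines passing through every point of $Y$. You actually produce this family and then discard it. \autoref{pop:main1} gives a line $\gamma_x$ through each $x\in U:=I^-(\xi_0)=I^+(\xi_0')$, all with the same ideal endpoints $\xi_0,\xi_0'$. These lines are pairwise weakly parallel: their forward halves all represent $\xi_0$, their backward halves all represent $\xi_0'$, and the two shifts can be merged into a single shift valid on all of $\R$. The paper cites \cite[Corollary~5.11]{barton-beran-che-gieger-roehrig-rott} for this.

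This is exactly where $\ma(\xi_0,\xi_0')=0$ is used at \emph{every} point, not merely to produce one line. Your last paragraph mentions ``mutually parallel and fill $Y$'', but treats it as something the splitting proof will build. In fact it is the hypothesis you have to provide.

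\textbf{Showing the lines fill $Y$.} For this you also need $U=Y$, and the argument you sketch is circular. The split picture is only available on the union of the $\gamma_x$, i.e.\ on $U$ itself, and \autoref{lem:cones-past-whole-space} concerns generalized cones, not $Y$. Argue directly instead. $U$ is open and is both a past set and a future set. By regular localizability it is also closed: if $y\in\overline{U}$, take $y'\ll y$. Since $I^+(y')$ is a neighbourhood of $y$, it meets $U$, so $y'\in I^-(U)\subseteq U$, and then $y\in I^+(U)\subseteq U$. Hence $U=Y$ as soon as $Y$ is connected.

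\textbf{The boundary identification.} With the splitting taken along $\{\gamma_x\}$, the vertical class is $\xi_0$. Your identification of $\bd X$ with $S$ (apex to apex, then comparing distances on the spheres $\{\theta=1\}$ via \eqref{eq:distance-elliptic-cone}) is then correct. It is more explicit than the paper's one-line appeal to \autoref{thm:minkowski product}.
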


\begin{proof}
By \autoref{pop:different-pasts-infinite-angle}, $Y = I^-(\xi) = I^+(\xi')$ for any $\xi \in \bd^+Y$ and $\xi'\in \bd^-Y$. Moreover, if $\xi\in \bd^+ Y$ and $\xi'\in \bd^-Y$ are the ideal points corresponding to $(1,0,a)$ and $(-1,0,a)$, respectively, under the given identification of $\bd^+Y\cup \bd^-Y$ with $\{1,-1\}\times [0,\infty)\times_{\sinh } S$, then $\ma(\xi,\xi')=0$. \autoref{pop:main1} implies that for any $x\in Y$ there is a timelike geodesic line $c_x\colon \mathbb{R} \to Y$ such that $c_x(\infty) = \xi$, $\bar{c}_x(\infty)=\xi'$ and $c_x(0)=x$, where $\bar{c}_x(t)=c_x(-t)$. By \cite[Corollary~5.11]{barton-beran-che-gieger-roehrig-rott}, it follows that the lines $c_x$ are pairwise weakly parallel, and by \cite[Theorem~1.1]{barton-beran-che-gieger-roehrig-rott}, it follows that $Y$ is isometric to a product $\prescript{-}{}{\mathbb{R}}\times_1 X$. \autoref{thm:minkowski product} implies that $\bd X$ is isometric to $S$.
\end{proof}

\section{Timelike ideal boundaries of generalized cones}

In this final section we consider generalized cones of the form $\smash{\prescript{-}{}{\R}\times_f X}$, as in \autoref{def:generalized-cones}. Most of the results presented here can immediately be transferred to the case  $\smash{\prescript{-}{}{(a,b)}\times_f X}$, where $b=\infty$ when we deal with the future ideal boundary, and $a=-\infty$ for the case of the past ideal boundary. Moreover, we will mainly focus on the future ideal boundary, since the past case can be treated similarly.

Though we will care mostly about generalized cones of the form $\smash{\prescript{-}{}{\R}\times_f X}$, the following result ensures that the treatment of the case $\smash{\prescript{-}{}{(a,\infty)}\times_f X}$ is completely analogous, under the assumption of the global $\CBA(0)$ condition.

\begin{lem}\label{lem:restricted-cone}
    Let $(X,d)$ be a complete locally compact length space and $f\colon \R\to (0,\infty)$ continuous. Denote $Y:=\smash{\prescript{-}{}{\R}\times_f X}$ and $Y_a:=\smash{\prescript{-}{}{(a,\infty)}\times_f X}$, where in this last expression we implicitly consider the restriction of $f$ to $(a,\infty)$, and assume that $Y$ (and therefore also $Y_a$) satisfies $\CBA(0)$ globally. Then the future timelike ideal boundaries of $Y$ and $Y_a$ with their respective angular distances are isometric. The past case is analogous.
\end{lem}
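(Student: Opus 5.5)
The plan is to define a map $\Psi\colon \bd^+Y_a\to\bd^+Y$ by $\Psi(\gamma(\infty))=\gamma(\infty)$ and to show that it is a well-defined, bijective isometry. Four facts about $Y_a$ sitting inside $Y$ will be used. First, $Y_a=(a,\infty)\times X$ is a future set of $Y$: if $y\in Y_a$ and $y\leq z$, then $z\in Y_a$, because future-directed causal curves have strictly increasing time coordinate. Second, the causal structure and $\tau$ of $Y_a$ are the restrictions of those of $Y$. Indeed, every causal curve in $Y$ between points of $Y_a$ stays in $Y_a$, again by monotonicity of the time coordinate, so $\tau_{Y_a}=\tau_Y|_{Y_a\times Y_a}$. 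Third, maximal geodesics in $Y_a$ are exactly the maximal geodesics of $Y$ with endpoints in $Y_a$. Fourth, for every $p\in Y_a$ we have $I^\pm_{Y_a}(p)=I^\pm_Y(p)\cap Y_a$ and $I^+_{Y_a}(p)=I^+_Y(p)$.

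\emph{Well-definedness and injectivity.} Every future-directed timelike ray of $Y_a$ is a ray in $Y$, by the fact about geodesics. Asymptoticity in $Y_a$ and in $Y$ coincide, since $\alpha(t)\ll\beta(t+c)$ only involves points of $Y_a$ and the relations agree there. Hence $\Psi$ is well defined and injective.

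\emph{Surjectivity.} Take a future-directed ray $\gamma=(\alpha,\beta)$ in $Y$. Its time coordinate $\alpha$ is strictly increasing, and I expect $\alpha(t)\to\infty$. The argument would be that if $\alpha$ stayed bounded, then $\gamma$ would lie in a slab in which the warped $\tau$ is controlled, contradicting $\tau(\gamma(0),\gamma(t))=t\to\infty$. To make this rigorous I would use that $f$ is bounded below on compact intervals, so that $\tau\big((t_0,x),(t_1,x')\big)\leq t_1-t_0$ for any points of the product. This gives $t\leq\alpha(t)-\alpha(0)$, so in fact $\alpha(t)\geq t+\alpha(0)\to\infty$. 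Hence some tail $\gamma|_{[T,\infty)}$ lies in $Y_a$, and after reparametrizing it is a ray of $Y_a$ asymptotic to $\gamma$ in $Y$. A tail and its original ray are asymptotic via the shift $c=T$ plus one, using the reverse triangle inequality along $\gamma$.

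\emph{Isometry.} For $\xi,\xi'\in\bd^+Y_a$, the past of $\xi$ satisfies $I^-_{Y_a}(\xi)=I^-_Y(\xi)\cap Y_a$. So the two pasts meet in $Y_a$ if and only if they meet in $Y$, since any point in $I^-_Y(\xi)\cap I^-_Y(\xi')$ can be pushed up along a representative ray into $Y_a$ while remaining in both pasts. This handles the case of infinite distance. When the distance is finite, \autoref{pop:properties of angle}(\ref{item:alternative-definition-angular}) shows that $\ma(\xi,\xi')=\overline{\ma}_x(\xi,\xi')$ is independent of the base point $x$ in the common past. Choosing $x\in Y_a$, the representatives $\xi_x$ and $\xi'_x$ are the same in $Y_a$ and in $Y$ by \autoref{pop:uniqueness of pointwise rep}. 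The comparison angles are computed from the same values of $\tau$, so the two angular distances agree. Alternatively, I could use monotonicity (\autoref{pop:properties of angle}(\ref{item:increasing-angle})) to see that the supremum in \eqref{eq:def-angular distance} may be taken over points in $Y_a$.

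The main obstacle is justifying that $\tau_{Y_a}=\tau_Y$ on $Y_a$. This is needed both so that the hypotheses required by the earlier propositions transfer to $Y_a$, and so that the existence of representatives from \autoref{pop:existence of pointwise rep} holds in $Y_a$. I would argue it directly from \autoref{def:lorentzian-length-cones} and the monotonicity of the time component of causal curves.
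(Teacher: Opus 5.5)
Your treatment of the boundaries as sets is correct and in one respect more careful than the paper's. You map $\bd^+Y_a\to\bd^+Y$ and prove surjectivity through $\tau\bigl((t_0,x),(t_1,x')\bigr)\leq t_1-t_0$, hence $\alpha(t)\geq t+\alpha(0)$. This supplies exactly the non-emptiness of $\xi_a=\{\gamma\in\xi:\gamma\subset Y_a\}$ that the paper's injectivity argument for $\Psi_a$ uses tacitly.

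The gap is in the isometry step. You claim that $I^-_Y(\xi)\cap I^-_Y(\xi')$ meets $Y_a$ whenever it is non-empty, and this is false in general. Pushing $y$ up along $\xi_y$ keeps you in $I^-(\xi)$, but nothing keeps $\xi_y(t)$ in $I^-(\xi')$ unless the two pasts coincide. This exit from the other past is precisely what the proof of \autoref{pop:different-pasts-infinite-angle} exploits. For example, in $\prescript{-}{}{\R}\times_{e^t}\R$ the vertical rays over $x_1\neq x_2$ have pasts $\{(s,x):|x-x_i|<e^{-s}\}$. Their intersection is non-empty but contained in $\{s<\ln(2/|x_1-x_2|)\}$, hence disjoint from $Y_a$ for large $a$. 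There $\ma^{Y_a}=\infty$ by definition, and nothing in your argument shows $\ma^{Y}=\infty$. More generally, infinite angular distance is not the same as disjoint pasts, so the sentence ``this handles the case of infinite distance'' does not cover that case.

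The missing ingredient is \autoref{pop:different-pasts-infinite-angle}, with the case split made, as in the paper, on whether the pasts coincide.
\begin{itemize}
\item If $I^-_Y(\xi)\neq I^-_Y(\xi')$, then $\ma^Y(\xi,\xi')=\infty$. Since $I^-_Y(\xi)=I^-_Y\bigl(I^-_Y(\xi)\cap Y_a\bigr)$, the $Y_a$-pasts differ as well. Hence $\ma^{Y_a}(\xi,\xi')=\infty$, either by disjointness or by the same proposition applied in $Y_a$.
\item If $I^-_Y(\xi)=I^-_Y(\xi')=:S$, then $S\cap Y_a$ is the common past in $Y_a$. Pushing $y\in S$ along $\xi_y$ into $Y_a$ now does stay in $S$, and \autoref{pop:properties of angle}(\ref{item:increasing-angle}) gives $\ma_y(\xi,\xi')\leq\ma_{\xi_y(t)}(\xi,\xi')$, so $\ma^Y\leq\ma^{Y_a}$. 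The reverse inequality holds because angles at points of $Y_a$ agree.
\end{itemize}
So the route you list as an alternative is the one that actually closes the argument. Your primary route through \autoref{pop:properties of angle}(\ref{item:alternative-definition-angular}) presupposes finiteness of $\ma$ in the space where it is applied. That finiteness passes from $Y$ to $Y_a$, but not automatically from $Y_a$ to $Y$.
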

\begin{proof}
    The map given by
    \[
    \begin{aligned}
    \Psi_a\colon \bd^+&Y\to\bd^+Y_a\\
    &\xi\longmapsto \xi_a
    \end{aligned}
    \]
    where $\xi_a=\{\gamma\in\xi\mid \gamma\subset Y_a\}$, is well-defined, i.e., $\xi_a\in\bd^+Y_a$. Indeed, on the one hand, $\gamma_1\sim\gamma_2$ holds trivially for all $\gamma_1,\gamma_2\in\xi_a$. On the other hand, if a future-directed timelike geodesic ray $\gamma\subset Y_a$ is asymptotic to a ray $\gamma'\in\xi_a$, then $\gamma\sim\gamma'$ in $Y$, therefore, $\gamma\in\xi_a$. 
    
    Furthermore, $\Psi_a$ is a bijection: surjectivity is immediate, whereas injectivity follows from the fact that $\xi$ is an equivalence class and therefore different classes are disjoint.

    Let us prove that $\Psi_a$ preserves the angular metric. One has
    \[
    \ma^Y(\xi_1,\xi_2)=\sup_{y\in Y} \ma^Y_y(\xi_{1},\xi_{2})\geq \sup_{y\in Y_a} \ma^Y_y(\xi_{1},\xi_{2})=
    \sup_{y\in Y_a} \ma^{Y_a}_y(\xi_{1},\xi_{2})= \ma^{Y_a}\bigl(\Psi_a(\xi_1),\Psi_a(\xi_2)\bigr),
    \]
    where in the third step one just needs to notice that the expressions for the angles between geodesics are the same if they are ``seen'' in $Y$ or in $Y_a$.
    
    For the converse inequality, consider first that $\xi_1,\xi_2\in\bd^+Y$ generate different timelike pasts. Then, also the pasts of the rays in $\Psi_a(\xi_1)$ and $\Psi_a(\xi_2)$ have to be different, and \autoref{pop:different-pasts-infinite-angle} gives that \[\ma^{Y_a}\bigl(\Psi_a(\xi_1), \Psi_a(\xi_2)\bigr)=\ma^{Y}\bigl(\xi_1,\xi_2\bigr)=\infty.\]

    Otherwise, consider that the rays $\xi_1,\xi_2\in\bd^+Y$ generate the same past and take a point $y$ in such a common past. We will show that there exists $y'\in Y_a$ such that 
    \[
    \ma^Y_y(\xi_{1},\xi_{2})\leq
    \ma^{Y_a}_{y'}(\xi_{1},\xi_{2}).
    \]

    Indeed, denoting $\xi_{1,y}=\gamma=(\alpha,\beta)$, one has that $\dot{\alpha}\geq 1$ and, therefore $\alpha(t)\to\infty$ as $t\to\infty$. In particular, there is some $t\in \R$ such that $\alpha(t)>a$ or, in other words, $y':=\xi_{1,y}(t)\in Y_a$. As $\xi_1,\xi_2$ generate the same past, \autoref{pop:existence of pointwise rep} ensures that there exists a representative $\xi_{2,y'}$ asymptotic to $\xi_2$ and starting at $y'$. Notice that being $(X,d)$ complete and locally compact length space, it is proper, and therefore so is $Y$ with the product distance (see \autoref{def:generalized-cones}). Moreover, $(X,d)$ is geodesic, and thus $Y$ is globally hyperbolic and regular. 
    
    Now, by \autoref{pop:properties of angle}(\ref{item:increasing-angle}) we know that the map
    \[
    s\to \ma^Y_{\gamma(s)}(\xi_{1},\xi_{2}),
    \]
    which is well-defined for all $s\in [0,\infty)$ by the previous reasonings, is non-decreasing. As a consequence, we deduce the desired converse inequality.
\end{proof}

Later on, using an explicit characterization of the future ideal boundary in some special cases, we will be able to prove that, under some additional conditions on $f$ and $X$, the same result is true even when only $Y_a$ is assumed to be $\CBA(0)$ globally (see \autoref{pop:restricted-cone-2}). 

We present here two auxiliary results which will be useful further on: the first one to ensure that there exists some non-vertical ray, and the second one to ensure that the past of vertical rays is the whole space.
\begin{lem}\label{lem:auxiliary}
    Let $f\colon \R\to(0,\infty)$ be locally Lipschitz, satisfying either $f(t)\to0$ as $t\to\infty$ and $\int_0^\infty f(t) dt=\infty$, or $f$ is bounded away from $0$, i.e., $\liminf_{t\to\infty}f(t)>0$. Then, for every $A>0$, the initial value problem $\dot{\alpha}^2=1+A^2(f\circ \alpha)^{-2}$ with $\dot\alpha>0$ and $\alpha(0)=t_0$ has a maximal solution whose domain of definition is unbounded above.
\end{lem}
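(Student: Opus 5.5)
The plan is to treat this as a scalar autonomous ODE and reduce it to a separable one. Write $F(\alpha):=\sqrt{1+A^2f(\alpha)^{-2}}$. Since $f$ is locally Lipschitz and positive, $1/f$ is locally Lipschitz, and hence so is $F$, because $F\geq 1$ keeps the square root away from its singular point. The requirement $\dot\alpha>0$ turns the problem into $\dot\alpha=F(\alpha)$, $\alpha(0)=t_0$. Picard--Lindelöf then gives a unique maximal solution on some interval $(\omega_-,\omega_+)$ with $0$ in its interior. The goal is to show $\omega_+=\infty$.

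Since $F>0$, $\alpha$ is strictly increasing and separation of variables applies. With
\[
G(s):=\int_{t_0}^{s}\frac{dr}{F(r)},
\]
$G$ is strictly increasing and $G(\alpha(t))=t$. Because $F\geq1$, $\dot\alpha\geq1$, so $\alpha(t)\geq t_0+t$ and $\alpha$ cannot blow up in finite time without also leaving every compact set. The standard escape criterion for maximal solutions of autonomous ODEs with locally Lipschitz right-hand side, defined on all of $\R$, says that if $\omega_+<\infty$ then $\alpha(t)\to\infty$ as $t\to\omega_+$. By the inverse relation this would force
\[
\omega_+=\lim_{s\to\infty}G(s)=\int_{t_0}^{\infty}\frac{dr}{F(r)}.
\]
So the whole statement reduces to showing $\int_{t_0}^\infty F(r)^{-1}\,dr=\infty$. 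This integral is the main step, and it is handled by the two hypotheses on $f$:

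\emph{Case $\liminf f>0$.} There are $\delta>0$ and $R$ with $f\geq\delta$ on $[R,\infty)$. There $F\leq\sqrt{1+A^2/\delta^2}$, a constant, so $1/F$ is bounded below by a positive constant and the integral diverges.

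\emph{Case $f\to0$ and $\int_0^\infty f=\infty$.} Here
\[
\frac1F=\frac{f}{\sqrt{f^2+A^2}}.
\]
For $r$ large, $f(r)\leq1$, hence $\sqrt{f^2+A^2}\leq\sqrt{1+A^2}$ and $1/F\geq f/\sqrt{1+A^2}$. Divergence of $\int f$ then gives divergence of $\int F^{-1}$. (Local integrability near $t_0$ is not an issue, since $1/F$ is continuous.)

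In both cases $\omega_+=\infty$, so the maximal solution is defined on an interval unbounded above. The only delicate point is justifying the escape-to-infinity criterion, i.e.\ that a bounded monotone solution extends. If $\alpha$ stayed bounded on $[0,\omega_+)$ it would converge to a finite limit at which $F$ is finite and Lipschitz, and the solution could be continued past $\omega_+$, contradicting maximality.
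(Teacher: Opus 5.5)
Your proof is correct and follows essentially the same route as the paper: reduce to $\dot\alpha = F(\alpha)$ with $F$ locally Lipschitz, use separation of variables to identify $\omega_+ = \int_{t_0}^{\infty} F(r)^{-1}\,dr$, and show this integral diverges under each of the two hypotheses on $f$ by bounding $f/\sqrt{f^2+A^2}$ from below. Your explicit escape argument (a bounded monotone maximal solution could be continued) fills in a step the paper only asserts, namely that $\alpha(s)\to\infty$ as $s\to s_{\max}$, which does not follow from $\dot\alpha\geq 1$ alone when $s_{\max}<\infty$.
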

\begin{proof}
    Since $f$ is continuous and locally Lipschitz with values in $(0,\infty)$, the function
    \[
        g(t):=\sqrt{1+\frac{A^2}{f(t)^2}}
    \]
    is locally Lipschitz on $\R$, and hence the initial value problem of the statement admits a unique (forward) maximal solution defined on $(-\varepsilon,s_{\max})$. We will prove that actually $s_{\max}=\infty$.

    We have that $\dot{\alpha}\geq 1$ and therefore $\alpha(s)\to\infty$ as $s\to s_{\max}$. On the other hand,
    \[
    s=\int_{0}^{s}du=\int_{0}^{s}\frac{\dot{\alpha}(u)}{g(\alpha(u))}du=\int_{t_0}^{\alpha(s)}\frac{du}{g(u)}\implies
    s_{\max}=\int_{t_0}^{\infty}\frac{du}{g(u)}.
    \]

    Now assume the first possibility, namely that $f(t)\to0$ as $t\to\infty$ and $\int_0^\infty f(t) dt=\infty$. Let $R>0$ be such that $f(t)\leq A$ whenever $t\geq R$. Then, calling $a=\int_{t_0}^{R}\frac{du}{g(u)}$ one has
    \[
    s_{\max}=a+\int_{R}^{\infty}\frac{f(u)\, du}{\sqrt{f(u)^2+A^2}}\geq 
    a+\int_{R}^{\infty}\frac{f(u)\, du}{A\sqrt{2}}=\infty.
    \]

    Assume otherwise that $f$ is bounded away from $0$. Then $g(u)$ is bounded above by some constant $C>0$ and, as a consequence, $s_{\max}\geq \int_{t_0}^\infty \frac{1}{C}du=\infty$.
\end{proof}
    
\begin{lem}\label{lem:vertical-rays-whole-past}
    Let $(X,d)$ be a geodesic space, let $f\colon \R\to(0,\infty)$ be continuous satisfying $\smash{\int_0^\infty \frac{dt}{f(t)}=\infty}$ and consider the generalized cone $Y=\prescript{-}{}{\mathbb{R}}\times_f X$. Then the timelike past of every future-directed vertical ray in $Y$ is the whole space.
\end{lem}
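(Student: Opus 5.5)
The plan is to show directly that every point $p=(t_0,x)\in Y$ lies in $I^-(\gamma(s))$ for $s$ large, where $\gamma(s)=(s+a,x_1)$ is a future-directed vertical ray. It is enough to find a single future-directed timelike curve from $p$ to some point $(T,x_1)$ with $T>t_0$. By transitivity and the vertical segment from $(T,x_1)$ up along the ray, that point is in the past of $\gamma(s)$ for all large $s$.

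Since $X$ is geodesic, I fix a minimizing geodesic $\sigma\colon[0,D]\to X$ from $x$ to $x_1$, parametrized by arclength, with $D=d(x,x_1)$. If $D=0$ the vertical segment $u\mapsto(u,x)$ already works. Otherwise I look for a curve $c(u)=(u,\beta(u))$, $u\in[t_0,T]$, with $\beta(u)=\sigma(\phi(u))$ and $\phi$ nondecreasing, $\phi(t_0)=0$, $\phi(T)=D$. Then $v_\beta=\dot\phi$ almost everywhere. By \autoref{def:generalized-cones}, the curve is future-directed timelike provided $f(u)\dot\phi(u)<1$ a.e. So I choose
\[
\dot\phi(u)=\frac{1}{2f(u)},\qquad \phi(u)=\int_{t_0}^u\frac{dr}{2f(r)}.
\]
Here $\phi$ is $C^1$ because $f$ is continuous and positive, and the timelike condition holds because $f\dot\phi=\tfrac12<1$.

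The key step, and the only place the hypothesis enters, is reaching the fiber distance $D$ in finite time. Because $\int_0^\infty\frac{dt}{f(t)}=\infty$ and $1/f$ is locally integrable, we get $\phi(u)\to\infty$ as $u\to\infty$. By continuity there is $T>t_0$ with $\phi(T)=D$.

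After time $T$ I continue vertically: $(u,x_1)$ for $u\in[T,T']$, which is timelike since $v_\beta=0$. The concatenation is an absolutely continuous future-directed timelike curve, which gives $p\ll (T',x_1)$. Choosing $T'=s+a$ for any $s$ with $s+a>T$ yields $p\ll\gamma(s)$, hence $p\in I^-(\gamma)$ and the timelike past of the ray is all of $Y$.

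I expect no real obstacle beyond checking absolute continuity of $\beta=\sigma\circ\phi$. This holds because $\sigma$ is $1$-Lipschitz and $\phi$ is $C^1$, so the metric speed is $\dot\phi$ almost everywhere. The divergence of $\int 1/f$ is exactly what guarantees that the horizontal distance $D$ can be covered, however large it is.
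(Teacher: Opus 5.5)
Your proof is correct. It rests on the same mechanism as the paper's: divergence of $\int^\infty \frac{du}{f(u)}$ lets a future-directed timelike curve cover any fixed distance in the fiber.

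The difference is in how the timelike relation is established. The paper argues in one line, using the criterion $(t_1,x_1)\ll(t_2,x_2)$ if and only if $\int_{t_1}^{t_2}\frac{du}{f(u)}>d_X(x_1,x_2)$, which it takes from the theory of generalized cones without proof. It then notes that the left-hand side diverges while the right-hand side is constant. You instead build the curve explicitly along a minimizing fiber geodesic with horizontal speed $\frac{1}{2f}$. This proves by hand exactly the ``if'' half of that criterion, which is all this lemma needs.

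What each route buys:
\begin{itemize}
\item Your version is self-contained. The factor $\tfrac12$ costs nothing because the integral diverges.
\item The paper's appeal to the full equivalence also supplies the converse direction. That direction is used later, e.g.\ in \autoref{rem:vertical_rays} and \autoref{lem:vertical-not-asymptotic}.
\end{itemize}

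One cosmetic point: you should take $s\geq 0$, so that $\gamma(s)$ lies in the domain of the ray, in addition to $s+a>T$. This is possible by simply taking $s$ large.
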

\begin{proof}
    Consider a vertical ray $\gamma=(\alpha,\beta):[0,\infty)\to Y$. A point $p=(s,x)$ is in the timelike past of $\gamma$ if there is some $t\in[0,\infty)$ such that 
    \begin{equation}\label{eq:lem-vertical-rays}
            \int_{s}^{\alpha(t)}\frac{du}{f(u)}>d_X(x,\beta(t)).
    \end{equation}
    But the left hand side diverges as $t\to \infty$ by hypothesis (notice that $\alpha$ is linear with slope $1$), whereas the right hand side is constant as $\beta$ is.
\end{proof}
\begin{rem}\label{rem:vertical_rays}
    This result is known in the context of \textit{causal boundaries}. Indeed, the causal boundary of a generalized cone $\prescript{-}{}{\R} \times_f X$ in which the warping function satisfies $\smash{\int_0^\infty \frac{dt}{f(t)}=\infty}$ admits a maximal element $\smash{i^+}$, which corresponds to the whole space~$Y$ (see \cite{flores2013memoirs}). In other words, there are future directed timelike curves (for example, a future-directed vertical ray) whose past is the whole space. Notice also that if $\smash{\int_0^\infty \frac{dt}{f(t)}<\infty}$, then there are no future-directed vertical rays whose past is the whole space. Indeed, otherwise the left hand side in \eqref{eq:lem-vertical-rays} can be taken as small as desired by increasing $s$, whereas the right hand side is constant. This result will be strengthened in \autoref{lem:vertical-not-asymptotic}.
\end{rem}

\subsection{Existence or absence of non-vertical rays}
We now proceed to describe the existence or absence of future-directed vertical rays in the generalized product $Y=\prescript{-}{}{\mathbb{R}}\times_f X$ in terms of the behavior of $f$ at infinity.

The following result is a converse of \autoref{lem:auxiliary} when the hypotheses of that result are not met.

\begin{lem}\label{lem:vertical-rays-only-ones}
    Let $(X,d)$ be a geodesic length space, $f\colon \R\to(0,\infty)$ continuous satisfying $\smash{\int_0^\infty f(t)dt<\infty}$, and consider $Y=\prescript{-}{}{\mathbb{R}}\times_f X$. Then the only future directed timelike geodesic rays are the vertical ones. In particular, under the conditions of \autoref{pop:existence of pointwise rep}, $\bd^+Y$ consists of just a point. 
\end{lem}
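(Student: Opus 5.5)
We argue by contradiction, using the structure of maximal geodesics in generalized cones from \autoref{thm:properties-geodesics-generalizedcones}. Let $\gamma=(\alpha,\beta)\colon[0,\infty)\to Y$ be a future-directed timelike geodesic ray, parametrized by $\tau$-arclength. By \autoref{thm:properties-geodesics-generalizedcones}, $\beta$ is minimizing in $X$ and its metric speed satisfies $v_\beta = A\,(f\circ\alpha)^{-2}$ for some constant $A\geq 0$. The arclength condition $\dot\alpha^2-(f\circ\alpha)^2v_\beta^2=1$ then gives
\[
\dot\alpha^2 = 1+\frac{A^2}{(f\circ\alpha)^2},
\]
which is exactly the ODE of \autoref{lem:auxiliary}. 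We must show that $A=0$, since then $\beta$ is constant and $\gamma$ is vertical.

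Suppose $A>0$. Then $\dot\alpha\geq 1$, so $\alpha$ is a diffeomorphism of $[0,\infty)$ onto $[\alpha(0),\infty)$. Exactly as in the proof of \autoref{lem:auxiliary}, we can rewrite the parameter length as an integral in the $\alpha$-variable:
\[
\infty=\int_0^\infty ds=\int_{\alpha(0)}^{\infty}\frac{du}{g(u)},\qquad g(u)=\sqrt{1+A^2f(u)^{-2}}.
\]
Since $1/g(u)=f(u)/\sqrt{f(u)^2+A^2}\leq f(u)/A$, we get
\[
\int_{\alpha(0)}^{\infty}\frac{du}{g(u)}\leq \frac1A\int_{\alpha(0)}^\infty f(u)\,du .
\]
The right-hand side is finite: $\int_0^\infty f<\infty$ by hypothesis, and $f$ is continuous on the compact interval between $\alpha(0)$ and $0$. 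This contradicts the divergence of the left-hand side. In other words, any non-vertical timelike geodesic reaches $\alpha=\infty$ in finite proper time, so it cannot be a ray defined on all of $[0,\infty)$.

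One subtlety is that \autoref{thm:properties-geodesics-generalizedcones} gives $v_\beta\propto(f\circ\alpha)^{-2}$ only for the arclength parametrization on each compact subsegment. Since $\gamma$ is maximal between any two of its points, the proportionality constant is the same on every subsegment (they overlap), so a single $A$ works on all of $[0,\infty)$. Also, no regularity of $f$ beyond continuity is needed, because we never solve the ODE; we only integrate along an existing solution.

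For the final assertion, we combine this with the previous discussion. All vertical rays $t\mapsto(t+a,x)$ are asymptotic to one another. By \autoref{pop:existence of pointwise rep}, under its hypotheses every class has a representative at every point of its past, and all rays are vertical. It remains to check that two vertical rays $\gamma_{x_1,a},\gamma_{x_2,b}$ are asymptotic. It suffices that
\[
\int_{t}^{t+c}\frac{du}{f(u)}>d(x_1,x_2)
\]
for all $t$ and a suitable shift $c$. This holds because $f\to 0$ along a sequence (integrability forces $\liminf f=0$), but uniformity in $t$ needs care. We expect this to be the main obstacle, and we would first handle it via \autoref{pop:uniqueness of pointwise rep} and \autoref{pop:existence of pointwise rep}. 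If $\xi$ and $\xi'$ have a common past point $p$, both representatives at $p$ are vertical rays from $p$ and hence equal, so $\xi=\xi'$. It then remains only to argue that the pasts intersect, which is clear since both pasts contain all points sufficiently far down in time.
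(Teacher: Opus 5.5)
\paragraph{First part.} This is exactly the paper's argument. In $\tau$-arclength one has $\dot\alpha^2=1+A^2(f\circ\alpha)^{-2}$, and the change of variables $s\mapsto\alpha(s)$ together with $f/\sqrt{f^2+A^2}\le f/A$ shows that a geodesic with $A>0$ has finite parameter length. Your remark that $f$ is bounded on the interval between $\alpha(0)$ and $0$ is a detail the paper glosses over.

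\paragraph{The ``in particular''.} Your final route is also the paper's: representatives at a common past point, given by \autoref{pop:existence of pointwise rep}, must both be the vertical ray from that point. However, the one step it needs is not justified. You claim the pasts intersect ``since both pasts contain all points sufficiently far down in time''. Lowering the time coordinate is not what makes $(s,y)\ll(T,x)$, i.e.\ $\int_s^T du/f(u)>d(y,x)$, hold. For $f(t)=e^{-t}$, which satisfies the hypotheses, one has $\int_{-\infty}^T du/f(u)=e^T<\infty$ for each fixed $T$.

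The missing fact is $\int_t^\infty du/f(u)=\infty$ for every $t$. It follows from integrability of $f$ by Cauchy--Schwarz, since $(T-t)^2\le\int_t^T f\,\int_t^T du/f$. With it, every point of $Y$ lies in the past of every vertical ray (\autoref{lem:vertical-rays-whole-past}), which is precisely what the paper invokes. As you partly acknowledge, the heuristic ``$\liminf f=0$'' only makes $1/f$ large somewhere. It gives no uniform-in-$t$ lower bound on $\int_t^{t+c}du/f(u)$, so your opening assertion that all vertical rays are asymptotic is, at that point, unproved.

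\paragraph{Removing your ``main obstacle''.} The same estimate gives a cleaner proof of the last assertion. Take vertical rays $\gamma_{x_1,a},\gamma_{x_2,b}$ and write $\ell=b+c-a$. For all $t\ge 0$ one has $[t+a,t+a+\ell]\subset[a,\infty)$, hence
\[
\int_{t+a}^{t+b+c}\frac{du}{f(u)}\ \ge\ \frac{\ell^2}{\int_a^\infty f(u)\,du}.
\]
Choose $c$ so large that $\ell^2>d(x_1,x_2)\int_{\min\{a,b\}}^\infty f$, and make the symmetric choice for the other inclusion. This shows directly that all vertical rays are asymptotic. Combined with your first part, $\bd^+Y$ is a single point without using the hypotheses of \autoref{pop:existence of pointwise rep} at all. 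It also avoids the assumption $f\to 0$, which the paper's alternative argument via \autoref{lem:vertical-rays-asymptotic} uses but which does not follow from $\int_0^\infty f<\infty$.
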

\begin{proof}
    Consider $\gamma=(\alpha,\beta)\colon [0,a)\to Y$, for $a\in \R^+\cup\{+\infty\}$, a future directed timelike geodesic parametrized by arclength, i.e., in such a way that $\smash{\dot{\alpha}^2-v_\beta^2(f\circ\alpha)^2=1}$ (see \autoref{thm:properties-geodesics-generalizedcones}). As always, there is some constant $A\geq 0$ such that $\smash{\dot{\alpha}^2=1+A^2\left(f\circ\alpha\right)^{-2}}$. This is a condition just on the curve $\alpha\colon[0,a)\to\R$. 

    We will show that, if $A\neq 0$, then necessarily $a<\infty$. Indeed, under such an assumption
    \[
    a=\int_0^a dt= \int_0^a\frac{\dot{\alpha}(t)dt}{\sqrt{1+\frac{A^2}{f(\alpha(t))^2}}}= \int_{\alpha(0)}^{\alpha(t)} \frac{du}{\sqrt{1+\frac{A^2}{f(u)^2}}}\leq \int_{\alpha(0)}^\infty \frac{f(u)du}{\sqrt{f(u)^2+A^2}} < 
    \int_{\alpha(0)}^\infty \frac{1}{A}f(u)du<\infty.
    \]
    As a consequence, the curve $\gamma$ cannot be defined on the interval $[0,\infty)$ unless it is a vertical ray. 

    For the last assertion, we recall that under our hypotheses we have $\smash{\int_b^\infty \frac{1}{f(t)}dt=\infty}$, so every vertical ray has the whole space $Y$ as its timelike past (see \autoref{lem:vertical-rays-whole-past}). Consider a vertical ray $\gamma_p$. \autoref{pop:existence of pointwise rep} ensures then that for every $q=(t,x)\in Y$ there is a unique geodesic ray asymptotic to $\gamma_p$ starting at $q$. Such a ray has to be the vertical ray starting at $q$, as the only timelike rays are the vertical ones. In particular, all future directed timelike geodesic rays (being all vertical) are asymptotic. Alternatively, using that $f(t)\to 0$, one can directly show from the very definition that any two vertical rays are asymptotic (see \autoref{lem:vertical-rays-asymptotic} below), so that it is not necessary that the hypotheses of \autoref{pop:existence of pointwise rep} are satisfied.
\end{proof}

\begin{lem}\label{lem:existence-nonvertical-rays}
    Let $(X,d)$ be a proper geodesic space with infinite diameter, $f\colon \R\to(0,\infty)$ locally Lipschitz such that $\smash{\int_0^\infty f(t)dt=\infty}$. Then there are future directed timelike geodesic rays in the generalized cone $Y=\prescript{-}{}{\mathbb{R}}\times_f X$ which are non vertical.
\end{lem}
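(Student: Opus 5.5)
The plan is to write down an explicit candidate ray $\gamma=(\alpha,\beta)$ with non-constant $\beta$, check that it exists for all parameter values, and then prove maximality with a calibration argument that reduces to a two-dimensional warped plane.

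First, since $X$ is proper, geodesic and of infinite diameter, there is a geodesic ray $\hat\beta\colon[0,\infty)\to X$. This is the standard Arzelà–Ascoli argument. Fix $x_0$ and points $x_n$ with $d(x_0,x_n)\to\infty$. Take unit-speed minimizing segments $[x_0,x_n]$. Properness gives a subsequence converging locally uniformly to an isometric embedding of $[0,\infty)$.

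Second, fix $A>0$ and $t_0\in\R$, and let $\alpha$ be the maximal forward solution of $\dot\alpha^2=1+A^2(f\circ\alpha)^{-2}$ with $\dot\alpha>0$ and $\alpha(0)=t_0$. Local existence and uniqueness hold because $f$ is locally Lipschitz and positive. As in the proof of \autoref{lem:auxiliary}, the maximal time is
\[
s_{\max}=\int_{t_0}^\infty\frac{f(u)\,du}{\sqrt{f(u)^2+A^2}} .
\]
We do not need the extra hypotheses of that lemma. The integrand is at least $\frac{1}{\sqrt2}\min\{1,f(u)/A\}$. Hence either $\{f\ge A\}$ has infinite measure, or $\int_{\{f<A\}}f=\infty$ because $\int_0^\infty f=\infty$. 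In both cases $s_{\max}=\infty$.

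Third, set $\beta(s)=\hat\beta\bigl(\int_0^s A f(\alpha(u))^{-2}\,du\bigr)$. Then $v_\beta=A(f\circ\alpha)^{-2}$, which is the form prescribed by \autoref{thm:properties-geodesics-generalizedcones}. A direct computation gives $\dot\alpha^2-(f\circ\alpha)^2v_\beta^2=1$. So $\gamma$ is future-directed, timelike, $\tau$-arclength parametrized and defined on $[0,\infty)$. It is non-vertical because $A>0$.

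The main step, and the one I expect to be the real obstacle, is maximality between any two points $\gamma(s_1),\gamma(s_2)$ with $s_1<s_2$. I would use a calibration. Let $\sigma=(a,b)$ be any future-directed causal curve from $\gamma(s_1)$ to $\gamma(s_2)$, and set $x(u)=d\bigl(\beta(s_1),b(u)\bigr)$. The function $x$ is absolutely continuous with $|\dot x|\le v_b$, so $(a,x)$ is a causal curve in the two-dimensional cone $\prescript{-}{}{\R}\times_f\R$ whose Lorentzian length is at least $L(\sigma)$. Define
\[
\phi(t,x)=\int_{t_0}^t g(u)\,du-Ax,\qquad g=\sqrt{1+A^2f^{-2}} .
\]
Since $g^2-(A/f)^2=1$, the reverse Cauchy–Schwarz inequality in $\R^{1,1}$ gives, for $\dot a>0$,
\[
g\,\dot a-\frac{A}{f}\,(f\dot x)\ \ge\ \sqrt{\dot a^2-f^2\dot x^2}\ \ge\ \sqrt{\dot a^2-f^2v_b^2}
\]
almost everywhere. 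Integrating yields $L(\sigma)\le \phi(\text{end})-\phi(\text{start})$.

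The endpoint values of $\phi$ are the same for every competitor. Indeed, $\beta$ is a reparametrized geodesic ray, so $x(\text{end})=d(\beta(s_1),\beta(s_2))=\int_{s_1}^{s_2}v_\beta$. Along $\gamma$ itself both inequalities above are equalities. Therefore $L(\sigma)\le L(\gamma|_{[s_1,s_2]})$, and $\gamma$ is maximal. The points I would check carefully are the a.e. differentiability of $x$ along $\sigma$ and the use of only continuity of $f$ in this integration step. Both seem routine.
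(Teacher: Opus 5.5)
The gap is in your second step, where you derive $s_{\max}=\infty$ from $\int_0^\infty f=\infty$ alone. Your pointwise bound by $\frac{1}{\sqrt2}\min\{1,f/A\}$ is fine, but the dichotomy you draw from it is not. If $f$ is unbounded, $\int_0^\infty f$ can diverge entirely through $\int_{\{f\ge A\}}f$ while $\{f\ge A\}$ has finite measure. For example, take
\[
f(t)=e^{-t}+\sum_{n\ge 2}\max\bigl\{0,\,n-n^3|t-n|\bigr\}.
\]
This $f$ is positive and locally Lipschitz, with disjoint spikes of height $n$ and area $1/n$, so $\int_0^\infty f=\infty$. Yet the integrand of $s_{\max}$ is at most $1$ on the spikes, whose total width is $\sum_n 2/n^2$, and at most $e^{-t}/A$ elsewhere. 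Hence
\[
\int_0^\infty\frac{f(u)\,du}{\sqrt{f(u)^2+A^2}}\le\sum_{n\ge2}\frac{2}{n^2}+\frac1A<\infty\qquad\text{for every } A>0 .
\]
This cannot be repaired under the stated hypotheses. By the computation in the proof of \autoref{lem:vertical-rays-only-ones}, every $\tau$-arclength parametrized timelike geodesic with $A>0$ has forward parameter length at most $\int_{\alpha(0)}^\infty f(u)\bigl(f(u)^2+A^2\bigr)^{-1/2}\,du<\infty$. So for this $f$ there are no non-vertical timelike rays at all, and the statement as literally written is false.

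The paper's proof gets $s_{\max}=\infty$ by citing \autoref{lem:auxiliary}. That is, it tacitly uses that lemma's extra hypotheses: either $f\to0$ with $\int_0^\infty f=\infty$, or $\liminf_{t\to\infty}f(t)>0$. These hold in every case where the lemma is applied in the paper. You should add one of them, and then your step goes through. If $f\to0$, the set $\{f\ge A\}\cap[t_0,\infty)$ is bounded, so $\int_{\{f\ge A\}\cap[t_0,\infty)}f<\infty$ and hence $\int_{\{f<A\}\cap[t_0,\infty)}f=\infty$. If $\liminf f>0$, the integrand is eventually bounded below by a positive constant. More precisely, with your construction, non-vertical rays exist if and only if $\int_0^\infty\min\{1,f(t)\}\,dt=\infty$.

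The rest of your proposal is correct, and your maximality step takes a genuinely different route from the paper. The paper takes a maximizer $\eta=(\rho,\sigma)$ supplied by global hyperbolicity and writes $v_\sigma=B(f\circ\rho)^{-2}$ via \autoref{thm:properties-geodesics-generalizedcones}. It then forces $B=A$ using the strict monotonicity in $A$ of $\int\frac{A\,du}{f\sqrt{f^2+A^2}}$. Your calibration $\phi$, combined with the reverse Cauchy--Schwarz inequality, instead bounds the length of every causal competitor directly. It needs neither global hyperbolicity, nor regularity, nor the structure theorem for maximizers, and it uses only continuity of $f$.
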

\begin{proof}
    Choose a point $y=(t,x)\in Y$ and $A>0$. Define $\alpha\colon [0,a)\to \R$ by the initial value problem $\alpha(0)=t$ and 
    \[
    \dot{\alpha}(s)=\sqrt{1+\frac{A^2}{f(\alpha(s))^{2}}\,},\quad \forall s\in[0,a).
    \]
    
    Notice that, as $f$ is locally Lipschitz and positive, the right hand side of the previous equation is locally Lipschitz and, therefore, the initial value problem admits a locally unique solution. From \autoref{lem:auxiliary}, we know that the maximal domain of definition of $\alpha$ is unbounded above. Hence, we will henceforth assume that $\alpha$ is defined on $[0,\infty)$.

    Now consider in $X$ a geodesic ray $\tilde{\beta}$ starting at $x$, which exists because $X$ is proper, geodesic, and has infinite diameter. This assertion follows from a standard application of Arzelà--Ascoli Theorem and a diagonal argument.

    Let us build a non-vertical timelike geodesic ray $\gamma=(\alpha,\beta)$. To that end, define $\beta\colon[0,\infty)\to X$ by
    \begin{equation}\label{eq:proof-existence-nonvert-rays-reparam}
    \beta(s):=\tilde{\beta}\left(\int_0^s \frac{A\,du}{f(\alpha(u))^2}\right),  \quad \forall s\in[0,\infty).
    \end{equation}

    Let us now prove that $\gamma=(\alpha,\beta)$ is a future directed timelike geodesic. Take two points in the image of $\gamma$, say $\gamma(s)$ and $\gamma(t)$ for $s<t$. As the space is globally hyperbolic (see \autoref{thm:structure-generalizedcones}), there is some future directed timelike curve $\eta=(\rho,\sigma)$ joining them which is maximal, i.e., whose length is the time separation between the extremes. Take such a curve $\eta \colon [a,b]\to Y$ to be parametrized by arclength. Then $\sigma$ is minimizing in $X$ between $\beta(s)$ and $\beta(t)$, and in particular $L(\sigma)=L(\beta|_{[s,t]})$. Moreover, there exists some $B>0$ such that $v_\sigma=B(f\circ\rho)^{-2}$. So we have
    \[
    \int_s^t v_\beta(u)du=\int_a^b v_\sigma(u)du \implies \int_s^t \frac{A\, du}{f(\alpha(u))^2}=\int_a^b \frac{B\, du}{f(\rho(u))^2}. 
    \]

    Therefore, changing variables and using that $\rho$ and $\alpha$ agree on their endpoints, we obtain
    \[
    \int_{\alpha(s)}^{\alpha(t)} \frac{A \, du}{f(u)^2 \, \dot{\alpha}(\alpha^{-1}(u))}=\int_{\alpha(s)}^{\alpha(t)} \frac{B \, du}{f(u)^2 \, \dot{\rho}(\rho^{-1}(u))}.
    \]

    Now substituting the expressions for $\dot{\alpha}$ and $\dot{\rho}$ obtained from the arclength parametrization one deduces
    \[
    \int_{\alpha(s)}^{\alpha(t)} \frac{A\, du}{f(u)\sqrt{f(u)^2+A^2}}= \int_{\alpha(s)}^{\alpha(t)} \frac{B \, du}{f(u)\sqrt{f(u)^2+B^2}}.
    \]

    In these expressions the integrand is strictly increasing, for every fixed $u$, with the parameter $A$ or $B$. As a consequence, the integrals are strictly increasing with $A$ and $B$, respectively and, therefore, $A=B$. But now, $\alpha$ and $\rho$ (after a translation of the parameter of the latter) are given by the same initial value problem, so they coincide by the local Lipschitzness discussed above. Finally, $v_\beta$ and $v_\sigma$ also coincide, so the Lorentzian length of $\gamma|_{[s,t]}$ and $\eta$ agree, and therefore $\gamma|_{[s,t]}$ is a maximal geodesic. As $s<t$ are arbitrary, the result follows.
\end{proof}

\begin{rem}\label{rem:uniquely-geodesic}
    Notice that if $X$ were assumed to be proper and \textit{uniquely} geodesic, the proof of \autoref{lem:existence-nonvertical-rays} would imply that the generalized cone is \textit{uniquely} timelike geodesic. Moreover, to deduce such a statement the assumptions on the diameter of $X$ and on $\smash{\int_0^\infty f(t)dt}$ being infinite are never used.
\end{rem}

\begin{rem}
    In equation~\eqref{eq:proof-existence-nonvert-rays-reparam} of the previous proof we define a curve $\beta$ from $\tilde{\beta}$ in what seems to be a reparametrization. However, notice that the map $\smash{[0,\infty)\ni s\mapsto \int_0^s \frac{A\,du}{f(\alpha(u))^2}\in [0,\infty)}$ is not necessarily surjective. In other words, $\beta$ could be just a geodesic \textit{segment} instead of a geodesic \textit{ray}. This happens if $\smash{\int_a^\infty \frac{du}{f(u)^2}<\infty}$ (see \autoref{lem:horizontal-part-is-ray} below). In such a case, one does not need the existence in $X$ of a metric geodesic ray for the proof to work.
\end{rem}

\subsection{Pasts of timelike rays}
The following results study conditions under which the past of every future-directed timelike ray is the whole space.

\begin{lem}\label{lem:cones-past-whole-space}
    Let $(X,d)$ be a geodesic length space, $f\colon \R\to(0,\infty)$ continuous satisfying $\smash{\int_0^\infty \frac{dt}{f(t)}=\infty}$  and $\liminf_{t\to\infty} f(t)>0$, and consider $Y=\prescript{-}{}{\mathbb{R}}\times_f X$.
    Then every future directed timelike geodesic ray has the whole space $Y$ as its timelike past. 
\end{lem}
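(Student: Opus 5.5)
The plan is to reduce the statement to two ingredients already in hand: the structure of timelike geodesics in generalized cones (\autoref{thm:properties-geodesics-generalizedcones}) and an explicit criterion for lying in the timelike past of a point of $Y$. Fix a point $p=(s_0,x)\in Y$ and a future-directed timelike geodesic ray $\gamma=(\alpha,\beta)$, parametrized by $\tau$-arclength. The goal is to find $t$ with $p\ll\gamma(t)$.

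\textbf{Step 1: shape of the ray.} By \autoref{thm:properties-geodesics-generalizedcones}, $\beta$ is minimizing in $X$, and $v_\beta=A(f\circ\alpha)^{-2}$ for some constant $A\geq 0$, so that $\dot\alpha^2=1+A^2(f\circ\alpha)^{-2}$.
\begin{itemize}
\item If $A=0$ the ray is vertical, and \autoref{lem:vertical-rays-whole-past} already gives the claim, since it only uses $\int_0^\infty dt/f=\infty$.
\item So assume $A>0$. Since $\dot\alpha\geq 1$, we have $\alpha(t)\to\infty$.
\end{itemize}

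\textbf{Step 2: a sufficient criterion for $p\ll\gamma(t)$.} As in \eqref{eq:lem-vertical-rays}, it suffices to build a timelike curve from $p$ to $\gamma(t)$. Concatenate two pieces: first a vertical segment from $(s_0,x)$ up to $(\alpha(0),x)$, then a segment moving in $X$ along a geodesic from $x$ to $\beta(t)$. This shows that $p\ll\gamma(t)$ holds as soon as
\[
\int_{s_0'}^{\alpha(t)}\frac{du}{f(u)}>d_X\bigl(x,\beta(t)\bigr),
\]
where $s_0'=\max\{s_0,\alpha(0)\}$ (with minor adjustment if $s_0>\alpha(0)$). The exact criterion in a generalized cone with one-dimensional base is not needed; this one-sided sufficient version is enough, and I would justify it by explicit reparametrization of the horizontal geodesic.

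\textbf{Step 3: the comparison (main obstacle).} Write $d_X(x,\beta(t))\leq d_X(x,\beta(0))+\int_0^t v_\beta$. Changing variables $u=\alpha(r)$ gives
\[
\int_0^t v_\beta\,dr=\int_{\alpha(0)}^{\alpha(t)}\frac{A\,du}{f(u)\sqrt{f(u)^2+A^2}},
\]
and the left side of the criterion is $\int_{s_0'}^{\alpha(t)} du/f(u)$. So we must show that
\[
\int^{T}\frac1{f}\Bigl(1-\frac{A}{\sqrt{f^2+A^2}}\Bigr)du\longrightarrow\infty
\]
as $T\to\infty$, since the additive constants are harmless. Here the hypothesis $\liminf f>0$ enters.
\begin{itemize}
\item For large $u$ we have $f\geq m>0$, so $1-A/\sqrt{f^2+A^2}\geq 1-A/\sqrt{m^2+A^2}=:\kappa>0$.
\item Hence the integrand is at least $\kappa/f$, and $\int^\infty du/f=\infty$ finishes the argument.
\end{itemize}

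I expect Step 2 to need the most care, namely checking that the concatenated curve is genuinely timelike and future-directed. The analytic comparison in Step 3 is the crux, but it is elementary once the criterion is in place.
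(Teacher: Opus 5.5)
Your proposal is correct and follows essentially the paper's argument. Like the paper, you bound $d_X(x,\beta(t))$ by $d_X(x,\beta(0))+\int_0^t v_\beta$ with $v_\beta=A(f\circ\alpha)^{-2}$, and use $\liminf f>0$ to show that $\int 1/f$ exceeds this horizontal length by a divergent amount. The differences are only bookkeeping: you substitute $u=\alpha(r)$ first and compare integrands pointwise to get the factor $\kappa=1-A/\sqrt{m^2+A^2}$, whereas the paper stays in the ray parameter and uses $\sqrt{a^2+b^2}\le a+b$ together with an upper bound on $\dot\alpha$; and you justify, via an explicit curve, only the sufficient direction of the timelike criterion that the paper states as an equivalence.
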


\begin{rem}
    The two conditions in \autoref{lem:cones-past-whole-space} play opposite roles. Indeed, the condition about the integral of $1/f$ being infinite ensures that $f$ ``does not grow too fast", as opposed to the $\liminf$ one.
\end{rem}
\begin{proof}
    Let $y=(s,x)\in Y$ be any point, $\xi\in\bd^+Y$, $\gamma=(\alpha,\beta)\in\xi$. We have $\smash{\dot{\alpha}^2-\left(f\circ \alpha\right)^2 v_\beta^2=1}$, so $\dot{\alpha}(t)\geq 1$ for every $t\in[0,\infty)$. Thus $\alpha(t)\to\infty$ as $t\to \infty$. We want to show that there exists some parameter $t\in[0,\infty)$ such that $y\ll \gamma(t)$. This will happen if and only if
    \[
    \int_s^{\alpha(t)}\frac{du}{f(u)}=B+\int_{\alpha(0)}^{\alpha(t)}\frac{du}{f(u)}>d_X\bigl(x,\beta(t)\bigr),
    \]
    where $\smash{B=\int_s^{\alpha(0)}\frac{du}{f(u)}}$ is a constant (i.e., independent of $t$). Note that the integral in the middle term diverges by hypothesis. However, using the triangle inequality on the right hand side we obtain
    \[
    d_X\bigl(x,\beta(t)\bigr)\leq d_X\bigl(x,\beta(0)\bigr)+d_X\bigl(\beta(0),\beta(t)\bigr):=d_0+\int_0^t v_\beta(u) du,
    \]
    where $d_0$ is again independent of $t$ and we used in the last equality that $\beta$ is minimizing in $X$ (see \autoref{thm:properties-geodesics-generalizedcones}). From the same result we know that there is some $A\geq 0$ such that $v_\beta=A \left( f \circ \alpha \right)^{-2}$. Therefore,
    \[
    d_X\bigl(x,\beta(t)\bigr)\leq d_0+\int_0^t \frac{A}{f(\alpha(u))^2}\, du
    \]

    We will show that there is some $t$ such that 
    \[
    B+\int_{\alpha(0)}^{\alpha(t)}\frac{du}{f(u)}=B+\int_{0}^{t}\frac{\dot{\alpha}(u)}{f(\alpha(u))}\, du > d_0+\int_0^t \frac{A}{f(\alpha(u))^2}\, du,
    \]
    and we will be done.

    If $A=0$ there is nothing to prove, as $\alpha(t)\to\infty$ as $t\to \infty$, and $\smash{\int_a^\infty \frac{dt}{f(t)}=\infty}$ for every $a\in\R$. Otherwise assume that $\smash{\liminf_{t\to\infty} f(t) >0}$. Because $f$ is continuous, this means that for every $b\in\R$ there exists some $k_b>0$ such that $f(t)\geq k_b$ for every $t\geq b$. Then
    \[
    \dot{\alpha}(t)^2= 1+\frac{A^2}{f(\alpha(t))^2} \leq 1+\frac{A^2}{k_b^2}, \quad \forall t \text{ such that }\alpha(t)\geq b. 
    \]
    Thus,
    \begin{equation}\label{eq:past-whole-space}
    \begin{aligned}
    \int_0^t&\left[\frac{\dot{\alpha}(u)}{f(\alpha(u))} 
    -\frac{A}{f(\alpha(u))^2}\right] du 
    = \int_0^t\frac{\sqrt{f(\alpha(u))^2+A^2}-A}{f(\alpha(u))^2}\,du = 
    \int_0^t \frac{du}{\sqrt{f(\alpha(u))^2+A^2}+A}
    \\ 
    & \geq 
    \int_0^t \frac{du}{f(\alpha(u))+2A} \geq 
    \frac{1}{1+\frac{2A}{k_{\alpha(0)}}}\int_0^t \frac{du}{f(\alpha(u))}=
    \frac{1}{1+\frac{2A}{k_{\alpha(0)}}} \int_{\alpha(0)}^{\alpha(t)} \frac{du}{f(u)\, \dot{\alpha}\left(\alpha^{-1}(u)\right)}
    \\
    & \geq 
    \frac{1}{\bigl(1+\frac{2A}{k_{\alpha(0)}}\bigr)\sqrt{1+\frac{A^2}{k_{\alpha(0)}^2}}} \int_{\alpha(0)}^{\alpha(t)}\frac{du}{f(u)}\xrightarrow{t\to\infty} \infty,
    \end{aligned}
    \end{equation}
    where the first inequality follows from $\sqrt{x^2+y^2}\leq x+y$, for $x,y\geq0$, and in the second and third inequalities we used the bound on $\dot{\alpha}$. Therefore, the desired inequality holds for large enough~$t$.
\end{proof}

\begin{lem}\label{lem:cones-past-whole-space-ftozero}
    Let $(X,d)$ be a geodesic length space, $f\colon \R\to(0,\infty)$ continuous such that $f(t)\to 0$ as $t\to\infty$, but $\smash{\int_0^\infty f(t)dt=\infty}$, and consider $Y=\prescript{-}{}{\mathbb{R}}\times_f X$. Then every future directed timelike geodesic ray has the whole space $Y$ as its timelike past.
\end{lem}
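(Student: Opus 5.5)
We follow the strategy of \autoref{lem:cones-past-whole-space}. Fix $y=(s,x)\in Y$ and a future-directed timelike geodesic ray $\gamma=(\alpha,\beta)$ parametrized by $\tau$-arclength. By \autoref{thm:properties-geodesics-generalizedcones} there is a constant $A\geq 0$ with $v_\beta=A(f\circ\alpha)^{-2}$, $\dot\alpha^2=1+A^2(f\circ\alpha)^{-2}$ and $\beta$ minimizing. Hence $\dot\alpha\geq 1$ and $\alpha(t)\to\infty$. As before, $y\ll\gamma(t)$ holds as soon as
\[
B+\int_0^t\frac{\dot\alpha(u)}{f(\alpha(u))}\,du>d_0+\int_0^t\frac{A}{f(\alpha(u))^2}\,du,
\]
where $B=\int_s^{\alpha(0)}du/f(u)$ and $d_0=d_X(x,\beta(0))$ are constants. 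If $A=0$, the ray is vertical. Since $f\to0$, we have $\int^\infty du/f=\infty$, so \autoref{lem:vertical-rays-whole-past} applies directly.

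For $A>0$, we use the algebraic identity from \eqref{eq:past-whole-space}:
\[
\frac{\dot\alpha}{f(\alpha)}-\frac{A}{f(\alpha)^2}=\frac{1}{\sqrt{f(\alpha)^2+A^2}+A}.
\]
It therefore suffices to show $\int_0^\infty du/\bigl(\sqrt{f(\alpha(u))^2+A^2}+A\bigr)=\infty$. Since $f\to 0$ and $\alpha(u)\to\infty$, for $u$ large we have $f(\alpha(u))\leq A$. The integrand is then at least $1/\bigl((1+\sqrt2)A\bigr)$, a positive constant, so the integral over $[0,\infty)$ diverges. This requires the ray to be defined on all of $[0,\infty)$, which it is by definition of a ray. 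Note that the hypothesis $\int_0^\infty f=\infty$ is exactly what \autoref{lem:auxiliary} and \autoref{lem:vertical-rays-only-ones} show to be necessary for non-vertical rays with $A>0$ to exist. We use it only implicitly: if it failed, no such rays would exist and the claim would be vacuous for $A>0$.

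The difference of the two sides of the displayed inequality therefore tends to $+\infty$ as $t\to\infty$. Hence $y\ll\gamma(t)$ for some $t$, and since $y$ was arbitrary, $I^-(\gamma)=Y$.

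The main point of care is the reduction to the displayed integral criterion. Here $y\ll\gamma(t)$ in the generalized cone is characterized by $\int_s^{\alpha(t)}du/f(u)>d_X(x,\beta(t))$, with $d_X(x,\beta(t))\leq d_0+\int_0^t v_\beta$. This is the same characterization already used in \autoref{lem:cones-past-whole-space} and \autoref{lem:vertical-rays-whole-past}, and it relies on $X$ being geodesic. Everything else is the elementary estimate above.
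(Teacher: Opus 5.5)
Your proof is correct and follows the paper's argument. Both use the identity $\dot\alpha/f(\alpha)-A/f(\alpha)^2=1/\bigl(\sqrt{f(\alpha)^2+A^2}+A\bigr)$ and a uniform positive lower bound on it once $f(\alpha(u))\leq A$ (the paper gets $1/(3A)$, you get $1/((1+\sqrt2)A)$), with the vertical case $A=0$ handled via $\int^\infty du/f=\infty$; you are also right that the hypothesis $\int_0^\infty f=\infty$ plays no role in the estimate itself.
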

\begin{proof}
    As in the proof of \autoref{lem:cones-past-whole-space}, consider the integral
    \[
    g(t)=\int_0^t\left[\frac{\dot{\alpha}(u)}{f(\alpha(u))} 
    -\frac{A}{f(\alpha(u))^2}\right] du.
    \]
    
    Again, we want to show that it diverges, but now the proof is even simpler as the one of \autoref{lem:cones-past-whole-space}. Indeed, consider $g'(t)$, which is just the term in between square brackets in the previous equation. For large enough $t\in[0,\infty)$ one has $f(t)\leq A$. Therefore, following similar reasonings as in Equation~\eqref{eq:past-whole-space}, one has
    \[
    g'(t)=\frac{\sqrt{f(\alpha(u))^2+A^2}-A}{f(\alpha(u))^2}=
    \frac{1}{\sqrt{f(\alpha(u))^2+A^2}+A}\geq \frac{1}{2A+f(\alpha(u))}\geq \frac{1}{3A}>0,
    \]
    and as a consequence $g$ diverges, as we wanted to prove.
\end{proof}

The last part of \autoref{rem:vertical_rays} asserts that if the first hypothesis of \autoref{lem:cones-past-whole-space} is not satisfied, i.e., if $\int_a^\infty 1/f(s) ds<\infty$, then there are no vertical rays whose past is the whole space. However, it does not assert explicitly whether in such a case different vertical rays can have the same past. This is solved in the following lemma, in which the assumption of infinite diameter becomes unnecessary.
\begin{lem}\label{lem:vertical-not-asymptotic}
     Let $(X,d)$ be a geodesic length space and let $f\colon \R\to(0,\infty)$ be continuous satisfying $\smash{\int_0^\infty \frac{dt}{f(t)}<\infty}$. Consider $Y=\prescript{-}{}{\mathbb{R}}\times_f X$. Then different vertical rays (or more rigorously, vertical rays with different horizontal components) have different pasts and, in particular, they cannot be asymptotic.
\end{lem}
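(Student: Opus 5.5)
The plan is to compute the timelike past of a vertical ray explicitly and then show that it determines the horizontal component. Throughout, write $F(t):=\int_t^\infty \frac{du}{f(u)}$, which is finite, positive and strictly decreasing to $0$ by hypothesis.

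\emph{Step 1: describe the past of a vertical ray.} Let $\gamma(t)=(t+a,x_0)$ be vertical. As in the proof of \autoref{lem:vertical-rays-whole-past}, a point $p=(s,x)$ lies in $I^-(\gamma)$ if and only if there is $t$ with
\[
\int_s^{t+a}\frac{du}{f(u)}>d_X(x,x_0).
\]
Here we use that, in a generalized cone over a geodesic space, $(s,x)\ll(s',x')$ holds exactly when $\int_s^{s'}du/f(u)>d_X(x,x')$: move along a minimizing geodesic of $X$ with the conformal time $\int du/f$. Since the left-hand side increases to $F(s)$ as $t\to\infty$, we get
\[
I^-(\gamma)=\{(s,x): F(s)>d_X(x,x_0)\}.
\]
In particular the past is independent of $a$, and it depends only on $x_0$.

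\emph{Step 2: different horizontal components give different pasts.} Let $x_0\neq x_1$ and set $D=d_X(x_0,x_1)>0$. Choose $s$ with $F(s)\le D$; this is possible because $F(s)\to0$ as $s\to\infty$. Along a minimizing geodesic from $x_0$ to $x_1$, choose a point $x$ with
\[
d_X(x,x_0)<F(s)\le d_X(x,x_1).
\]
For instance, take $x$ at distance slightly less than $F(s)$ from $x_0$. Then $d_X(x,x_1)=D-d_X(x,x_0)>D-F(s)\ge 0$, and this alone is not enough, so we pick $s$ more carefully: take $s$ with $F(s)\le D/2$, and $x=x_0$. Then $d(x_0,x_0)=0<F(s)$ while $d(x_0,x_1)=D>F(s)$. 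Hence $(s,x_0)$ belongs to the past of the ray over $x_0$ but not to the past of the ray over $x_1$.

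\emph{Step 3: conclude.} By the remark following \autoref{def:asymptotic}, asymptotic rays have equal pasts, so these two vertical rays cannot be asymptotic.

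The only delicate point is the characterization of $\ll$ used in Step 1. The bound ``$\Leftarrow$'' comes from an explicit curve. The bound ``$\Rightarrow$'' needs the estimate $d_X(\beta(s),\beta(s'))\le\int v_\beta<\int \dot\alpha/f(\alpha)$ for timelike $(\alpha,\beta)$, which follows from $f(\alpha)v_\beta<\dot\alpha$.
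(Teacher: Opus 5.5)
Your argument is correct and is essentially the paper's: both rely on the characterization $(s,x)\ll(s',x')\iff\int_s^{s'}du/f(u)>d_X(x,x')$ together with $\int_s^\infty du/f(u)\to 0$, which for large $s$ gives a point $(s,x_0)$ lying in the past of the ray over $x_0$ but not in the past of the ray over $x_1$. Your explicit formula $I^-(\gamma)=\{(s,x): F(s)>d_X(x,x_0)\}$ and your sketch of why the characterization holds are useful additions, since the paper uses it without proof; the abandoned first attempt in Step 2 should simply be deleted, because with $x=x_0$ the condition $F(s)\le D$ already suffices (no need for $D/2$).
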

\begin{proof}
    Let $x_1,x_2\in X$ be distinct points and consider the vertical rays $\gamma_1,\gamma_2$ above them which, without loss of generality, can be taken such that $\gamma_i(0)=(0,x_i)$ and, therefore, $\alpha_i(t)=t$. We will show that there is some $\tilde{s}\geq 0$ such that $\gamma_1(s)\not\ll\gamma_2(t)$ for any $t\in[0,\infty)$ and $s>\tilde{s}$.

    Indeed, we know that
    \begin{equation}\label{eq:vertical-not-asymptotic1}
        \gamma_1(s)\ll\gamma_2(t) \iff d_X(x_1,x_2)<\int_s^t \frac{du}{f(u)}.
    \end{equation}
    However,
    \begin{equation*}
    \int_s^t \frac{du}{f(u)}<\int_s^\infty \frac{du}{f(u)} \xrightarrow{s\to\infty} 0.
    \end{equation*}
    As a consequence, for large enough $s$, the right-hand side of \eqref{eq:vertical-not-asymptotic1} is false for any $t$, since $d(x_1,x_2)>0$ is independent of $s$. In other words, for sufficiently large $s$ the timelike relation on the left-hand side of \eqref{eq:vertical-not-asymptotic1} cannot happen  for any $t$. Therefore, $\gamma_1(s)$ is not in the past of $\gamma_2$, whereas it is obviously in the past of $\gamma_1$.
\end{proof}

\begin{rem}\label{rem:causal-boundary-finite-integral}
    The previous result has also been obtained in the context of the causal boundary. Indeed, the future causal boundary $\hat{\bd}Y$ of a generalized cone $Y$ satisfying the hypotheses of \autoref{lem:vertical-not-asymptotic} consists of a copy of the metric completion of the fiber $X$ \cite{flores2007product}, and each point of $\hat{\bd}Y$ which corresponds to an element $x\in X$ (i.e., which is not in the metric boundary of $X$) is generated by a vertical ray over $x$.
\end{rem}

\subsection{Horizontal component of a timelike ray}

The horizontal part of a timelike ray, that is, its projection to the fiber $X$, is a minimizing curve by \autoref{thm:properties-geodesics-generalizedcones}. The following result addresses under which conditions on the warping function these projections are metric geodesic rays.

\begin{lem}\label{lem:horizontal-part-is-ray}
    Let $(X,d)$ be a geodesic length space, $f\colon \R\to(0,\infty)$ continuous satisfying either $\smash{\int_0^\infty \frac{dt}{f(t)^2}=\infty}$ and $\liminf_{f\to\infty}f(t)>0$, or $f(t)\to 0$ as $t\to\infty$, and consider $Y=\prescript{-}{}{\mathbb{R}}\times_f X$. Then, every non-vertical future directed timelike geodesic ray has a horizontal component with infinite length, i.e., it can be parametrized as a (metric) geodesic ray. On the contrary, if $\smash{\int_0^\infty \frac{dt}{f(t)^2}<\infty}$, then the horizontal part of every ray has finite length. 
\end{lem}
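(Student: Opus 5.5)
The plan is to reduce everything to a change of variables in the length integral of the horizontal component. Let $\gamma=(\alpha,\beta)\colon[0,\infty)\to Y$ be a non-vertical future-directed timelike geodesic ray parametrized by $\tau$-arclength. By \autoref{thm:properties-geodesics-generalizedcones}, $\beta$ is minimizing in $X$ and $v_\beta=A\,(f\circ\alpha)^{-2}$ for some constant $A>0$. The arclength normalization $\dot\alpha^2-(f\circ\alpha)^2v_\beta^2=1$ then gives
\[
\dot\alpha=\sqrt{1+A^2(f\circ\alpha)^{-2}}\geq 1,
\]
so $\alpha$ is a strictly increasing bijection of $[0,\infty)$ onto $[\alpha(0),\infty)$. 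Substituting $u=\alpha(s)$, the length of $\beta$ becomes
\[
L(\beta)=\int_0^\infty \frac{A\,ds}{f(\alpha(s))^2}=\int_{\alpha(0)}^\infty \frac{A\,du}{f(u)^2\sqrt{1+A^2f(u)^{-2}}}=\int_{\alpha(0)}^\infty \frac{A\,du}{f(u)\sqrt{f(u)^2+A^2}}.
\]
The whole statement then comes down to deciding when this integral diverges. Once $L(\beta)=\infty$, since $\beta$ is minimizing on every compact subinterval, its arclength reparametrization is an isometric embedding of $[0,\infty)$, i.e.\ a metric geodesic ray.

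Case $f(t)\to0$: for $u$ large we have $f(u)\leq A$, so the integrand is at least $A/(f(u)\sqrt2A)=1/(\sqrt2 f(u))$. As $f\to0$, eventually $1/f(u)\geq1$, and the integral diverges.

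Case $\int_0^\infty f^{-2}=\infty$ and $\liminf f>0$: continuity gives $f\geq k>0$ on $[\alpha(0),\infty)$ (as in \autoref{lem:cones-past-whole-space}). Then $\sqrt{f^2+A^2}\leq f\sqrt{1+A^2/k^2}$, so the integrand is bounded below by $\frac{A}{\sqrt{1+A^2/k^2}}f(u)^{-2}$. Its integral diverges by hypothesis, because the integral over $[\alpha(0),0]$, or over $[0,\alpha(0)]$, is finite by continuity.

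Case $\int_0^\infty f^{-2}<\infty$: the integrand is at most $A/(f\cdot A)\cdot$ — more carefully, $\sqrt{f^2+A^2}\geq A$ gives integrand $\leq 1/f(u)$, which need not be integrable. Instead, use $\sqrt{f^2+A^2}\geq f$, so the integrand is $\leq A/f(u)^2$, whose integral is finite. Hence $L(\beta)<\infty$ for every ray; vertical rays are trivially covered since they have $L(\beta)=0$.

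The only step I expect to need care is picking the right elementary bound on $\sqrt{f^2+A^2}$ in each regime: $A$-dominated when $f\to0$, $f$-dominated otherwise. Apart from that, the justification is routine: the change of variables is legitimate because $\alpha$ is absolutely continuous, strictly increasing, and $\alpha(s)\to\infty$.
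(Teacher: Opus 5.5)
Your proof is correct and follows the paper's argument: the same change of variables $u=\alpha(s)$ in the length integral of $\beta$, the lower bound via $f\geq k>0$ (as in \autoref{lem:cones-past-whole-space}) in the first case, and the bound $\sqrt{f^2+A^2}\geq f$ giving finiteness in the last case. Your explicit estimate by $1/(\sqrt{2}f(u))$ when $f\to 0$ spells out what the paper states directly from the integral formula. When writing it up, remove the aborted sentence at the start of the finite case.
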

\begin{proof}
    Consider a future directed timelike ray $\gamma=(\alpha,\beta)$. The length of its spatial projection is 
    \begin{equation}\label{eq:length-horizontal-part}
    d_X(\beta(0),\beta(t))=\int_0^t v_\beta(u)du=\int_0^t \frac{A\, du}{f(\alpha(u))^2}
    =
    \int_{\alpha(0)}^{\alpha(t)}\frac{A \, du}{f(u)^2\sqrt{1+\frac{A^2}{f(u)^2}}}.
    \end{equation}

    Using the same bound as in the proof of \autoref{lem:cones-past-whole-space} we have  
    \[
    d_X\bigl(\beta(0),\beta(t)\bigr)\geq \frac{A}{\sqrt{1+\frac{A^2}{k^2_{\alpha(0)}}}}\int_{\alpha(0)}^{\alpha(t)}\frac{du}{f(u)^2}\xrightarrow{t\to\infty} \infty,
    \]
    whenever $\gamma$ is non-vertical (i.e., whenever $A\neq 0$), as we wanted to prove. Moreover, from Equation~\eqref{eq:length-horizontal-part} the result also follows if $f(t)\to 0$ as $t\to\infty$. 
    
    Notice that, by \autoref{lem:vertical-rays-only-ones}, when $f(t)\to 0$ as $t\to\infty$ so fast that $\int_0^\infty f(t)dt<\infty$, there are no non-vertical timelike rays, so in those cases the lemma is vacuously true.

    For the last claim, consider a non-vertical future directed timelike ray $\gamma=(\alpha,\beta)$. From \eqref{eq:length-horizontal-part} we deduce
    \[
    d_X\bigl(\beta(0),\beta(t)\bigr)\leq \int_{\alpha(0)}^{\infty}\frac{A \, du}{f(u)^2\sqrt{1+\frac{A^2}{f(u)^2}}}\leq \int_{\alpha(0)}^{\infty}\frac{A\, du}{f(u)^2},
    \]
    and because the last term is finite and independent of $t$, we deduce the result.
\end{proof}

\subsection{Asymptoticity of vertical rays} 

The following results address conditions under which future-directed vertical rays are asymptotic.

\begin{lem}\label{lem:vertical-rays-asymptotic}
    Let $(X,d)$ be a geodesic length space, $f\colon \R\to(0,\infty)$ continuous such that $f(t)\to L\in[0,\infty)$ as $t\to\infty$, and consider $Y=\prescript{-}{}{\mathbb{R}}\times_f X$. Then vertical rays are all asymptotic.
\end{lem}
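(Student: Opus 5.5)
To prove the statement we verify the definition of asymptoticity (\autoref{def:asymptotic}) directly, with no curvature hypothesis. Take two vertical rays $\gamma_1,\gamma_2$ over $x_1,x_2\in X$. Since all vertical rays over a fixed point differ only by a parameter shift, and shifted rays over the same point are trivially asymptotic, we may assume $\gamma_i(t)=(t,x_i)$ and $x_1\neq x_2$. By transitivity of $\sim$ and symmetry, it then suffices to find a single $c>0$ such that $\gamma_1(t)\ll\gamma_2(t+c)$ for all $t\ge 0$.

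We use the same characterization of the timelike relation between points on vertical lines as in the proofs of \autoref{lem:vertical-rays-whole-past} and \autoref{lem:vertical-not-asymptotic}. For $X$ geodesic, one has $(s,x_1)\ll(t,x_2)$ if and only if
\[
\int_s^t\frac{du}{f(u)}>d_X(x_1,x_2).
\]
This criterion follows by taking a minimizing geodesic in $X$ and reparametrizing it along the time coordinate. The goal is therefore to show
\[
\inf_{t\ge0}\int_t^{t+c}\frac{du}{f(u)}>d_X(x_1,x_2)
\]
for $c$ large enough.

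The estimate rests on the limit hypothesis. Since $f$ is continuous, positive, and $f(t)\to L<\infty$, the function $f$ is bounded above on $[0,\infty)$, say by $M$. Indeed, it is bounded on $[0,T]$ by compactness and by $L+1$ beyond $T$. Consequently $1/f\ge 1/M$ on $[0,\infty)$, and
\[
\int_t^{t+c}\frac{du}{f(u)}\ge \frac{c}{M}.
\]
Choosing $c>M\,d_X(x_1,x_2)$ gives $\gamma_1(t)\ll\gamma_2(t+c)$ for every $t$. Swapping the roles of $\gamma_1$ and $\gamma_2$ gives the reverse relation, so $\gamma_1\sim\gamma_2$. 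When $L=0$ the same bound applies and is in fact stronger, since $1/f\to\infty$.

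The only delicate point is justifying the timelike-relation criterion in a generalized cone with $X$ merely geodesic. The step to check is that a horizontal geodesic of length $D=d_X(x_1,x_2)$, traversed with speed $v$ satisfying $f(\alpha)v<\dot\alpha$, yields a timelike curve exactly when $\int 1/f>D$. This amounts to checking the curve inequality of \autoref{def:generalized-cones} with $\alpha$ monotone and $v_\beta=\dot\alpha/(\lambda f\circ\alpha)$ for a constant $\lambda>1$. Only the direction ``$\Leftarrow$'' is needed here, and it is this direct construction.
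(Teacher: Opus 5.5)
Your proof is correct and follows the paper's argument. Like the paper, you reduce to vertical rays starting at the same time, use the criterion $\int_s^t du/f(u) > d_X(x_1,x_2)$ for timelike relation between points on vertical lines, and bound $f$ above to get $\int_t^{t+c} du/f(u)\ge c/M$. Two details are handled more carefully than in the paper: your global bound $M$ on $[0,\infty)$ (compactness plus the tail) covers the initial interval, whereas the paper's bound $f\le\max\{2L,1\}$ is only stated for $t\ge\tilde t$, and your explicit construction of the timelike curve justifies the one direction of the criterion that is actually needed.
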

\begin{proof}
    Consider two vertical rays $\gamma_1,\gamma_2$. By \autoref{lem:cones-past-whole-space}, they both have the whole space as their past. Without loss of generality we can assume that $\alpha_1(0)=\alpha_2(0)=:t_0$ and, therefore, $\alpha_1(s)=\alpha_2(s)=s+t_0$ for every $s\in[0,\infty)$. We want to prove that there is some $c\geq 0$ such that $\gamma_i(s)\ll\gamma_j(s+c)$ for every $s\in[0,\infty)$. This happens if and only if for some $c\geq 0$ one has
    \[
    \int_{\alpha(s)}^{\alpha(s+c)}\frac{dt}{f(t)} = \int_{s+t_0}^{s+c+t_0}\frac{dt}{f(t)} > d_X(\beta_i(s),\beta_j(s+c))=d_0, \quad \forall s\in[0,\infty).
    \]
        
    From the convergence of $f(t)$ as $t\to\infty$, we deduce that there exists some $\tilde{t}$ such that $f(t)\leq \max\{2L,1\}$ for every $t\geq \tilde{t}$. As a consequence, the second term in the previous equation is bounded below by $\smash{\frac{c}{\max\{2L,1\}}}$. So the choice of $c\geq d_0\max\{2L,1\}$ suffices.
\end{proof}

\begin{cor}\label{cor:vertical-not-asymptotic}
    If $f(t)\to \infty$ as $t\to\infty$, then different vertical rays cannot be asymptotic.
\end{cor}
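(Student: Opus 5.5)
The natural approach mirrors the proof of \autoref{lem:vertical-not-asymptotic}: we show that two vertical rays over distinct points of $X$ generate different timelike pasts. Asymptotic rays have the same past, so they cannot be asymptotic. Since $f(t)\to\infty$, there is $t_1$ with $f(t)\geq 1$ for $t\geq t_1$. However, this alone does not force $\int_0^\infty dt/f(t)<\infty$; for instance $f(t)=\log t$ keeps this integral infinite. So we cannot simply reduce to \autoref{lem:vertical-not-asymptotic}, and we instead argue directly from \autoref{def:asymptotic}.

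Let $\gamma_i(s)=(s,x_i)$ with $x_1\neq x_2$, and set $d_0=d_X(x_1,x_2)>0$. As in \eqref{eq:vertical-not-asymptotic1}, for any $c>0$ we have
\[
\gamma_1(s)\ll\gamma_2(s+c)\iff \int_s^{s+c}\frac{du}{f(u)}>d_0 .
\]
For fixed $c$, the left-hand side is bounded by $c/\inf_{[s,s+c]}f$, and this tends to $0$ as $s\to\infty$ because $f\to\infty$. Hence for every $c>0$ there is some $s$ at which $\gamma_1(s)\not\ll\gamma_2(s+c)$. So $\gamma_1\ll\gamma_2$ fails, and the two rays are not asymptotic.

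The one point to check is that \eqref{eq:vertical-not-asymptotic1} describes the timelike relation between points on vertical rays in a general geodesic fiber. This holds because in $\prescript{-}{}{\mathbb{R}}\times_f X$ the causal relation reduces to a one-dimensional problem along a minimizing segment of $X$: a curve $(\alpha,\beta)$ is timelike exactly when $\int d\alpha/f(\alpha)$ exceeds the $d_X$-length of $\beta$. I would state this explicitly, as in the proof of \autoref{lem:vertical-rays-whole-past}. Beyond that, no real obstacle is expected; the argument is a short direct estimate.
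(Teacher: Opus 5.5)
Your argument is correct and is essentially the paper's proof. Both work directly from \autoref{def:asymptotic} and show that for every shift $c$ the integral $\int_{s}^{s+c}du/f(u)$ eventually drops below $d_X(x_1,x_2)$, so no $c$ works. The paper keeps arbitrary starting times $\alpha_1(0),\alpha_2(0)$, while you normalize to $\gamma_i(s)=(s,x_i)$. That normalization is harmless: vertical rays over the same point with different starting times are asymptotic, and asymptoticity is an equivalence relation.
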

\begin{proof}
    Consider two distinct points $x_1,x_2\in X$ and any two vertical geodesic rays $\gamma_1,\gamma_2$ above them. Call $d_0=d_X(x_1,x_2)>0$. Then
    \[
    \gamma_1(s)\ll\gamma_2(s+c)\iff d_0<\int_{\alpha_1(s)}^{\alpha_2(s+c)}\frac{dt}{f(t)}= \int_{\alpha_1(0)+s}^{\alpha_2(0)+s+c}\frac{dt}{f(t)} \xrightarrow{s\to \infty} 0,
    \]
    so there is no choice of $c$ for which the integrals on the right hand side are strictly greater than $d_0$ for every $s\in[0,\infty)$. As a consequence, no two different vertical rays are asymptotic.
\end{proof}

Notice that in the subcase in which $f(t)\to \infty$ so fast that $\smash{\int_0^\infty \frac{dt}{f(t)}<\infty}$ we already knew (see \autoref{lem:vertical-not-asymptotic}) that different vertical rays had different pasts and, in particular, they cannot be asymptotic.

\subsection{Summary}

Let us now summarize the previous results. For simplicity, we establish the following nomenclature to distinguish the possible cases:

\begin{dfn}\label{def:nomenclature}
Let $f\colon \R\to (0,\infty)$ be continuous and fix $a\in\R$.
\begin{enumerate}
    \item If $f\to0$, we say that it converges to $0$ \textit{quickly} (resp.\ \textit{slowly}) if $\smash{\int_a^\infty f(t) dt<\infty}$ (resp.\ $=\infty$). 
    \item If $f\to\infty$, we say that it diverges \textit{quickly} if $\smash{\int_a^\infty \frac{dt}{f(t)}<\infty}$.
    \item If $f\to\infty$ but $\smash{\int_a^\infty \frac{dt}{f(t)}=\infty}$, we say that $f$ diverges \textit{slowly} if $\smash{\int_a^\infty \frac{dt}{f(t)^2}<\infty}$. Otherwise, we say that it diverges \textit{very slowly}.
\end{enumerate}
\end{dfn}

Of course, the definition is independent of the choice of $a\in\R$, because every condition involving it is satisfied for some choice of $a\in\R$ if and only if it is satisfied for every $a\in\R$.

\begin{rem}\label{rem:convex}
    Recall that if $f\colon \R\to (0,\infty)$ is convex, then either it converges to some non-negative value or it diverges. However, notice that $f$ cannot diverge very slowly according to the previous definition, as whenever it diverges it must do it at least linearly. 
\end{rem}

\begin{thm}
Let $(X,d)$ be a proper geodesic space with infinite diameter and let $f\colon \R\to (0,\infty)$ be locally Lipschitz. Consider the generalised cone $Y=\prescript{-}{}{\mathbb{R}}\times_f X$. Following the nomenclature from \autoref{def:nomenclature}, we summarize the results of the previous subsections in the following table:
\begin{table}[H]
    \centering
    \setlength{\extrarowheight}{2pt}
    \begin{tabular}{|p{3.5cm}||>{\centering\arraybackslash}p{2.8cm}|>{\centering\arraybackslash}p{2.4cm}|>{\centering\arraybackslash}p{2.5cm}|>{\centering\arraybackslash}p{2.4cm}|}\hline
    \setlength{\extrarowheight}{0.5cm}
        & Past of rays is the whole space & \,Vertical rays\newline are asymptotic & Not only vertical rays exist & Horizontal part is a ray\\ \hline\hline
        1. $f\to0$ quickly   & \checkmark (R.~\ref{rem:vertical_rays}, L.~\ref{lem:vertical-rays-only-ones}) & \checkmark (Lem.~\ref{lem:vertical-rays-asymptotic}) & \xmark (Lem.~\ref{lem:vertical-rays-only-ones}) & Not well defined \\ \hline
        2. $f\to0$ slowly   & \checkmark (Lem.~\ref{lem:cones-past-whole-space-ftozero}) & \checkmark (Lem.~\ref{lem:vertical-rays-asymptotic}) & \checkmark (Lem.~\ref{lem:existence-nonvertical-rays}) & \checkmark (Lem.~\ref{lem:horizontal-part-is-ray}) \\ \hline
        3. $f\to L>0$        & \checkmark (Lem.~\ref{lem:cones-past-whole-space}) & \checkmark (Lem.~\ref{lem:vertical-rays-asymptotic}) & \checkmark (Lem.~\ref{lem:existence-nonvertical-rays}) & \checkmark (Lem.~\ref{lem:horizontal-part-is-ray}) \\ \hline
        4. $f\to\infty$ very slowly    & \checkmark (Lem.~\ref{lem:cones-past-whole-space}) & \xmark (Cor.~\ref{cor:vertical-not-asymptotic}) & \checkmark (Lem.~\ref{lem:existence-nonvertical-rays}) & \checkmark (Lem.~\ref{lem:horizontal-part-is-ray}) \\ \hline
        5. $f\to\infty$ slowly & \checkmark (Lem.~\ref{lem:cones-past-whole-space}) & \xmark (Cor.~\ref{cor:vertical-not-asymptotic})  & \checkmark (Lem.~\ref{lem:existence-nonvertical-rays}) & \xmark (Lem.~\ref{lem:horizontal-part-is-ray})\\ \hline
        6. $f\to\infty$ quickly  & \xmark (Rem.~\ref{rem:vertical_rays})    & \xmark (Lem.~\ref{lem:vertical-not-asymptotic}) & \checkmark (Lem.~\ref{lem:existence-nonvertical-rays}) & \xmark (Lem.~\ref{lem:horizontal-part-is-ray})\\ \hline
    \end{tabular}
    \caption{Summary of the results \ref{lem:cones-past-whole-space}--\ref{cor:vertical-not-asymptotic}.}\label{tab:summary-cones-1}
\end{table}
\end{thm}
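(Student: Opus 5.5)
The statement is a summary theorem, so the proof consists of checking, row by row and column by column, that the standing hypotheses ($X$ proper geodesic of infinite diameter, $f$ locally Lipschitz and positive) imply the hypotheses of each cited lemma for the relevant asymptotic regime of \autoref{def:nomenclature}. First, fix the regime. Each row prescribes the limit behaviour of $f$ and the finiteness of $\int_a^\infty f$, $\int_a^\infty 1/f$, or $\int_a^\infty 1/f^2$. We record the elementary implications between these conditions:
\begin{itemize}
\item if $f\to0$, then $1/f\to\infty$, so $\int_a^\infty dt/f=\infty$;
\item if $f\to L>0$, then $f$ is bounded away from $0$ and bounded above, so all three integrals $\int f$, $\int 1/f$ and $\int 1/f^2$ diverge;
\item if $f\to\infty$, then $\liminf f>0$ and $\int f=\infty$.
\end{itemize}

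Column 1 (pasts of rays). In case 1, every timelike ray is vertical by \autoref{lem:vertical-rays-only-ones}, since $\int f<\infty$. Since $\int 1/f=\infty$, \autoref{lem:vertical-rays-whole-past} (see \autoref{rem:vertical_rays}) gives that the past of every such ray is $Y$. Case 2 is \autoref{lem:cones-past-whole-space-ftozero}. Cases 3, 4 and 5 follow from \autoref{lem:cones-past-whole-space}, because there $\int 1/f=\infty$ and $\liminf f>0$. In case 6, $\int 1/f<\infty$, so the last part of \autoref{rem:vertical_rays} shows that no vertical ray has all of $Y$ as its past. Vertical rays exist in every case, because $t\mapsto(t+a,x)$ is a maximal ray.

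Column 2 (asymptoticity of vertical rays). Cases 1, 2 and 3 are \autoref{lem:vertical-rays-asymptotic} with $L\ge0$. Cases 4 and 5 are \autoref{cor:vertical-not-asymptotic}. Case 6 is \autoref{lem:vertical-not-asymptotic}, or again the corollary.

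Column 3 (existence of non-vertical rays). Case 1 fails by \autoref{lem:vertical-rays-only-ones}. In cases 2 through 6 we have $\int f=\infty$ by the implications above, so \autoref{lem:existence-nonvertical-rays} applies. Its hypotheses (proper, geodesic, infinite diameter, locally Lipschitz $f$) are exactly the standing ones.

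Column 4 (horizontal part). In case 1 there are no non-vertical rays, hence the entry "not well defined". Case 2 is the $f\to0$ branch of \autoref{lem:horizontal-part-is-ray}. Cases 3 and 4 are its first branch, since $\int 1/f^2=\infty$ and $\liminf f>0$. Cases 5 and 6 are its last claim, since $\int 1/f^2<\infty$; in case 6 this follows because $1/f^2\le 1/f$ eventually.

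The only mildly delicate point is case 6 of column 1. The remark only speaks about vertical rays, so the cross should be read as saying that it is not true that every ray has past $Y$; one vertical counterexample suffices, and vertical rays do exist.
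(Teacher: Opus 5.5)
Your proposal is correct and is essentially the paper's own argument: the paper justifies this theorem only through the lemma citations in the table, and you carry out explicitly the routine check that each regime of \autoref{def:nomenclature} satisfies the hypotheses of the cited results (using the elementary implications between the integral conditions, the existence of vertical rays, and the eventual bound $1/f^2\le 1/f$ in row 6). Your reading of the row-6 cross in the first column as ``not every ray has past $Y$'' also agrees with the paper, since its citation of \autoref{rem:vertical_rays} likewise addresses only vertical rays.
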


As discussed above, we are interested in working with generalized cones satisfying a global curvature bound from above by $0$, so as to be able to use the results from the previous sections. Therefore, from now on we will almost exclusively work with convex warping functions and, in particular, warping functions will not diverge very slowly (\autoref{rem:convex}). 

\subsection{Structure of the ideal boundary of generalized cones}

The goal of the remaining part of the section is to obtain the structure of some classes of generalized cones, namely those described in \autoref{tab:summary-cones-1} except the fourth class.

\begin{lem}\label{lem:unique-vertical-same-past}
    Let $(X,d)$ be a complete geodesic space with infinite diameter, $f\colon \R\to(0,\infty)$ diverging quickly (\autoref{def:nomenclature}). For every future-directed timelike geodesic ray in $Y=\prescript{-}{}{\mathbb{R}}\times_f X$ there is a unique\footnote{Uniqueness of vertical rays is here and henceforth understood in the sense that any other vertical ray with the same past will have the same horizontal component.} vertical ray with the same past. 
\end{lem}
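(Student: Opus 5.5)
Fix a future-directed timelike geodesic ray $\gamma=(\alpha,\beta)$ in $Y$. The plan is to show that its horizontal component $\beta$ converges to a point $x_\infty\in X$, and that the vertical ray over $x_\infty$ has exactly the same timelike past as $\gamma$. Uniqueness will then follow from \autoref{lem:vertical-not-asymptotic}. Indeed, under $\int_0^\infty dt/f(t)<\infty$, vertical rays over distinct points of $X$ have distinct pasts, so at most one horizontal component can occur.

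\emph{Step 1 (convergence of $\beta$).} By \autoref{thm:properties-geodesics-generalizedcones}, $\beta$ is minimizing and $v_\beta=A(f\circ\alpha)^{-2}$ for some $A\geq 0$. Since $f$ diverges quickly, it also diverges slowly or quickly in the sense of \autoref{def:nomenclature}, and in particular $\int_0^\infty dt/f(t)^2<\infty$ (as $f\geq 1$ eventually). By \autoref{lem:horizontal-part-is-ray} and \eqref{eq:length-horizontal-part}, $\beta$ then has finite total length. Hence $\beta(t)$ is Cauchy as $t\to\infty$, and by completeness of $X$ it converges to some $x_\infty$. If $A=0$, then $\gamma$ is itself vertical and $x_\infty=\beta(0)$.

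\emph{Step 2 (equality of pasts).} Let $\sigma$ be the vertical ray over $x_\infty$, and recall that $\alpha(t)\to\infty$ since $\dot\alpha\geq 1$. We use the causality criterion from the proofs of \autoref{lem:vertical-rays-whole-past} and \autoref{lem:cones-past-whole-space}: $(s,x)\ll (t',y)$ if and only if $\int_s^{t'}du/f(u)>d_X(x,y)$. This criterion holds because $X$ is geodesic and maximal curves project to minimizers.

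Suppose $p=(s,x)\ll\gamma(t)$. Then $\int_s^{\alpha(t)}du/f>d_X(x,\beta(t))$. Moving further along $\gamma$ only enlarges the past, so we may take $t$ large. The slack in this strict inequality is at least the strictly positive quantity attained at a fixed $t_1$. Meanwhile $d_X(\beta(t),x_\infty)\leq\int_{\alpha(t)}^\infty A\,du/f^2\to 0$. Choose $T$ with $\alpha(t_1)<T$. Then
\[
\int_s^{T}\frac{du}{f(u)}\ \geq\ \int_s^{\alpha(t_1)}\frac{du}{f(u)}\ >\ d_X(x,\beta(t_1)).
\]
We need $d_X(x,x_\infty)$ on the right-hand side, and it exceeds $d_X(x,\beta(t_1))$ by at most $L(\beta|_{[t_1,\infty)})$.

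The cleaner route is to show directly that $\gamma(t_1)\ll\sigma(T)$ for large $T$, and conversely that $\sigma(T)\ll\gamma(t)$ for large $t$; transitivity then gives equality of pasts. For the first relation, it suffices that
\[
\int_{\alpha(t_1)}^{T}\frac{du}{f}\ >\ d_X(\beta(t_1),x_\infty),
\]
where the right-hand side is at most $\int_{\alpha(t_1)}^\infty A\,du/\bigl(f\sqrt{f^2+A^2}\bigr)$. This holds for $T$ large, because $\dfrac{A}{f\sqrt{f^2+A^2}}<\dfrac1f$ pointwise and the left-hand integral increases to $\int_{\alpha(t_1)}^\infty du/f$. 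For the second relation, we need
\[
\int_T^{\alpha(t)}\frac{du}{f}\ >\ d_X(x_\infty,\beta(t)),
\]
that is, we must compare $\int_T^{\alpha(t)}du/f$ with $\int_{\alpha(t)}^\infty A\,du/\bigl(f\sqrt{f^2+A^2}\bigr)$. Both sides are small, so this is where care is needed.

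\emph{Main obstacle.} The delicate point is this second inclusion, $\sigma(T)\ll\gamma(t)$ for some $t$. Both quantities tend to zero, and we must beat the tail of the horizontal length by a Lorentzian-type integral. The plan is to fix $T$ and let $t\to\infty$. Then the left-hand side tends to $\int_T^\infty du/f>0$, while the right-hand side tends to $0$. So for large $t$ the inequality holds. The obstacle therefore dissolves once $T$ is fixed first rather than chosen together with $t$. Combining both inclusions gives $I^-(\gamma)=I^-(\sigma)$, and uniqueness of the horizontal component follows from \autoref{lem:vertical-not-asymptotic}.
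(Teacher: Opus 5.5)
Your proposal is correct and follows the paper's proof essentially step for step. You get convergence of $\beta$ from finiteness of the horizontal length (\autoref{lem:horizontal-part-is-ray}) and completeness. You obtain the two inclusions of pasts from the criterion $\int du/f>d_X$: the strict pointwise bound $A/(f\sqrt{f^2+A^2})<1/f$ gives $\gamma(t_1)\ll\sigma(T)$, and the vanishing tail $d_X(\beta(t),x_\infty)\to 0$ with $T$ fixed first gives $\sigma(T)\ll\gamma(t)$. Uniqueness then comes from \autoref{lem:vertical-not-asymptotic}, exactly as in the paper.

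Two minor points. The abandoned first attempt in Step~2 can simply be deleted. The timelike criterion follows directly from the definition of timelike curves ($v_\beta<\dot\alpha/(f\circ\alpha)$ a.e.) together with $X$ being a length space, not from maximal curves projecting to minimizers.
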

\begin{proof}
    Take $\gamma=(\alpha,\beta)$ a future-directed timelike geodesic ray. If $\gamma$ is vertical there is nothing to prove, so assume that this is not the case. From \autoref{lem:horizontal-part-is-ray} and completeness of $X$, we deduce that $\beta(t)\to x$ for some $x\in X$. So let us consider the vertical ray $\gamma_x(t)=(t,x)$, for all $t\in[0,\infty)$. To prove the lemma it is enough to prove that each ray is contained in the past of the other one.

    Consider $t_0\in[0,\infty)$. One has $\gamma(t_0)\in I^-(\gamma_x)$ if and only if there is some $s\in [0,\infty)$ such that
    \[
    \int_{\alpha(t_0)}^s \frac{du}{f(u)}> d_X\bigl(\beta(t_0),x\bigr)=\int_{t_0}^\infty \frac{A\, du}{f(\alpha(u))^2}=\int_{\alpha(t_0)}^\infty \frac{A\,du}{f(u)\sqrt{f(u)^2+A^2}}\quad \left(<\int_{\alpha(t_0)}^\infty \frac{du}{f(u)}\right),
    \]
    from where one immediately deduces the existence of such a value $s$.

    For the converse inclusion, take again some $t_0\in [0,\infty)$. Similarly, $(t_0,x)\in I^-(\gamma)$ if and only if there is some $s\in[0,\infty)$ such that 
    \[
    \int_{t_0}^{\alpha(s)} \frac{du}{f(u)} > d_X\bigl(\beta(s),x),
    \]
    but the right hand side converges to $0$ as $s\to \infty$, whereas the left hand side is increasing with $s$.

    Finally, the uniqueness follows from the fact that, in these conditions, different vertical geodesic rays have different pasts (\autoref{lem:vertical-not-asymptotic}).
\end{proof}

\begin{rem}
    As in \autoref{rem:vertical_rays}, under the assumptions of \autoref{lem:unique-vertical-same-past}, the future causal boundary of the generalized cone $Y=\prescript{-}{}{\mathbb{R}}\times_f X$ (which, in our case, is the set of all timelike pasts of inextendible timelike curves) is a copy at infinity of the fiber $\hat{\bd} Y = \{\infty\} \times X$, being each of its elements the past of a vertical curve \cite{flores2007product}. Therefore, the past of every future-directed timelike curve coincides with the past of some vertical ray. In other words, \autoref{lem:unique-vertical-same-past} is also true by changing in the statement timelike geodesic \textit{ray} by timelike \textit{curve}.
\end{rem}

\begin{lem}\label{lem:same-past-thus-asymptotic}
    Let $(X,d)$ be a complete geodesic space, $f\colon \R\to(0,\infty)$ convex diverging quickly (\autoref{def:nomenclature}). If two future directed timelike geodesic rays in $Y=\prescript{-}{}{\mathbb{R}}\times_f X$ have the same past, then they are asymptotic. 
\end{lem}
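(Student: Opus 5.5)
The plan is to reduce everything to vertical rays and then verify the asymptoticity inequalities directly from the explicit form of $\tau$ in the generalized cone. Let $\gamma_1,\gamma_2$ be future-directed timelike rays with the same past. By the proof of \autoref{lem:unique-vertical-same-past}, the horizontal part $\beta_i$ of each $\gamma_i$ converges to a point $x_i\in X$, and $\gamma_i$ has the same past as the vertical ray $\gamma_{x_i}(t)=(t,x_i)$. Since $I^-(\gamma_1)=I^-(\gamma_2)$, the vertical rays over $x_1$ and $x_2$ have the same past. \autoref{lem:vertical-not-asymptotic} then forces $x_1=x_2=:x$. Because $\sim$ is an equivalence relation, it suffices to prove the following claim: \emph{every non-vertical future-directed timelike ray $\gamma=(\alpha,\beta)$ with $\beta(t)\to x$ is asymptotic to $\gamma_x$.}

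Before proving the claim, I record two facts.

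\emph{Growth of $f$.} Since $f$ is convex and $f\to\infty$, there are $m>0$ and $a_0$ with $f$ increasing and $f'\geq m$ (one-sided derivatives) on $[a_0,\infty)$. Hence $f(u)\geq f(a)+m(u-a)$ for $u\geq a\geq a_0$, which gives
\[
\int_a^\infty \frac{du}{f(u)^2}\leq \frac{1}{m f(a)} .
\]

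\emph{Asymptotics of $\alpha$.} We have $\dot\alpha=\sqrt{1+A^2/f(\alpha)^2}\geq 1$, so $\alpha(t)\to\infty$, and $\dot\alpha-1\leq A^2/(2f(\alpha)^2)$. Changing variables and using $\int^\infty f^{-2}<\infty$ (which follows from the previous bound, or simply because $f\to\infty$ and $\int^\infty f^{-1}<\infty$), we get $\alpha(t)=t+k+o(1)$ for some constant $k$. In particular $|\alpha(t)-t|$ is bounded, so parameter shifts of $\alpha$ and of $t$ differ by a bounded amount.

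\emph{First relation: $\gamma(t)\ll\gamma_x(t+c)$.} By the causal characterization used in \autoref{lem:unique-vertical-same-past}, this amounts to
\[
\int_{\alpha(t)}^{t+c}\frac{du}{f(u)}>d_X(\beta(t),x)=\int_{\alpha(t)}^\infty\frac{A\,du}{f\sqrt{f^2+A^2}}.
\]
Set $a=\alpha(t)$ and $c'=t+c-a$; by the asymptotics of $\alpha$, $c'\geq c-C_0$ for a fixed constant $C_0$. I split the right-hand side as $\int_a^{a+c'}+\int_{a+c'}^\infty$ and bound each piece.
\begin{itemize}
\item For large $a$, $A/f(a)\leq\tfrac12$, so the first piece is at most $\tfrac12\int_a^{a+c'}du/f$.
\item By the growth bound and convexity, $f'(a+c')\geq (f(a+c')-f(a))/c'$, so the second piece is at most
\[
\frac{A\,c'}{f(a+c')\bigl(f(a+c')-f(a)\bigr)}.
\]
\item Since $f(a+c')-f(a)\geq mc'$, choosing $c$ with $m(c-C_0)\geq 2A$ bounds this by $\tfrac12\, c'/f(a+c')$.
\item Finally $c'/f(a+c')\leq\int_a^{a+c'}du/f$, because $1/f$ is decreasing.
\end{itemize}
Adding up, the right-hand side is at most the left-hand side for all large $t$, with equality excluded by the strict inequality $A/f(a)<1$ beyond the threshold. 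For the finitely many remaining $t$ in a compact range, I enlarge $c$, which is possible since the left-hand side grows with $c$ and $\int^\infty 1/f$ is positive on any interval.

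\emph{Second relation: $\gamma_x(t)\ll\gamma(t+c)$.} This amounts to
\[
\int_{t}^{\alpha(t+c)}\frac{du}{f}>d_X(\beta(t+c),x).
\]
The right-hand side is a tail $\int_{\alpha(t+c)}^\infty$ of the same type as before, and the left-hand side now starts at $t$, which lies at least $c-C_0$ below $\alpha(t+c)$. So the same estimate, with the interval $[\alpha(t+c)-c',\alpha(t+c)]$ and $1/f$ decreasing, gives the inequality for $c$ large.

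The main obstacle is this comparison of a short integral of $1/f$ against the tail of $A/f^2$ in the first relation. For rapidly growing $f$, $1/f$ decays so fast that a naive bound of the form $c/f(a+c)$ against $1/(mf(a))$ fails. Convexity, through the lower bound $f(a+c')-f(a)\geq mc'$, is what rescues the tail estimate, and this is where the hypothesis that $f$ is convex is really used.
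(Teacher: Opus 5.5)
Your proof is correct and follows the paper's route. You reduce, via the limit point $x$ of the horizontal projection and \autoref{lem:vertical-not-asymptotic}, to showing that each ray is asymptotic to the vertical ray $\gamma_x$. You then check both timelike relations through the causal characterization $\int_{s_1}^{s_2}du/f>d_X(x_1,x_2)$, using three ingredients:
\begin{itemize}
\item convexity, which gives $f(u)\geq f(v)+m(u-v)$ and hence $\int_v^\infty A\,du/f^2\leq A/(mf(v))$;
\item the monotonicity of $1/f$;
\item the boundedness of $\alpha(t)-t$.
\end{itemize}
In your first relation you split $d_X(\beta(t),x)=\int_{\alpha(t)}^\infty$ at $t+c$ and absorb the near piece into half of $\int_{\alpha(t)}^{t+c}du/f$. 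This split is actually needed. The paper's part (b) bounds only the tail $\int_{t+c}^\infty$ while subtracting the whole $\int_{\alpha(t)}^\infty$, so your treatment is the more careful one there. The secant-slope step is superfluous, since you immediately weaken it back to $A/(mf(a+c'))$, which is your own growth bound.

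The one step whose justification fails as written is the range $t\in[0,T]$; this is an interval, not finitely many values. Because $f$ diverges quickly, $\int_{\alpha(t)}^{t+c}du/f\leq\int_{\alpha(t)}^\infty du/f<\infty$, so ``the left-hand side grows with $c$'' does not by itself beat the right-hand side. The correct reason is the pointwise inequality $A/\sqrt{f^2+A^2}<1$. With it, the difference of the two sides equals
\[
g(t)-\int_{t+c}^\infty\frac{du}{f},\qquad g(t):=\int_{\alpha(t)}^\infty\frac{1}{f}\Bigl(1-\frac{A}{\sqrt{f^2+A^2}}\Bigr)du>0.
\]
Here $g$ is continuous, hence $g\geq\eta>0$ on $[0,T]$. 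Meanwhile $\int_{t+c}^\infty du/f\leq\int_c^\infty du/f\to0$ uniformly in $t\geq 0$, so a single large $c$ works on all of $[0,T]$.

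Alternatively, you can avoid the case split altogether. The integrand $\frac{1}{f}\bigl(1-\frac{A}{\sqrt{f^2+A^2}}\bigr)$ is positive everywhere, so you may discard the part of $[\alpha(t),t+c]$ below a threshold $u_1\geq a_0$ with $f(u_1)\geq 2A$. Running your estimate on what remains gives a bound uniform in all $t\geq 0$.
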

\begin{proof}
    We will prove that every future directed timelike geodesic is asymptotic to the unique vertical ray with the same past (cf.\ \autoref{lem:unique-vertical-same-past}), which by transitivity will imply the result. 

    Consider $\gamma=(\alpha,\beta)$ a (non vertical, otherwise there is nothing to prove) future directed timelike geodesic ray and call $\gamma_x$ the unique vertical geodesic ray with the same past. Without loss of generality, one may assume that $\alpha(0)=0$ and take $\gamma_x$ to be such that $\gamma_x(t)=(t,x)$ for every $t\in[0,\infty)$. We want to show that there exists some $c>0$ such that for every $t\in [0,\infty)$ one has (a) $\gamma_x(t)\ll \gamma(t+c)$, and (b) $\gamma(t)\ll\gamma_x(t+c)$. In other words, for $v_\beta(s)=A f(\alpha(s))^{-2}$:
    
    \begin{alignat*}{3}
      \text{(a)} \quad  && d_X\bigl(\beta(t+c),x\bigr) &= \int_{\alpha(t+c)}^{\infty} \frac{A\, du}{f(u)\sqrt{f(u)^2+A^2}} & & < \int_{t}^{\alpha(t+c)} \frac{du}{f(u)}, \\
      \text{(b)} \quad && d_X\bigl(\beta(t),x\bigr) &= \int_{\alpha(t)}^{\infty} \frac{A\, du}{f(u)\sqrt{f(u)^2+A^2}} & &< \int_{\alpha(t)}^{t+c}\frac{du}{f(u)}.
    \end{alignat*}
    
    As $f$ is convex, in particular it is almost everywhere differentiable. In addition, $f(t)\xrightarrow{t\to\infty}\infty$, so that there exists some $u_0>0$ such that $m_0:=f'(u_0)>0$ and $f'(u)\geq m_0$ for every differentiability point $u\geq u_0$ of $f$. In particular, $f$ is strictly increasing in $[u_0,\infty)$. The convexity of $f$ also implies that $f(u)\geq f(v)+m_0(u-v)$ for every $u\geq v\geq u_0$. 
    
    Let us prove (a) first. Choose some ``auxiliary" $c>u_0$. On the one hand, one has $\dot{\alpha}\geq 1$ and $\alpha(0)=0$, so $\alpha(t+c)\geq t+c$. Therefore, 
    \[
    \int_t^{\alpha(t+c)}\frac{du}{f(u)}\geq \int_{\max\{t,u_0\}}^{t+c} \frac{du}{f(u)} \geq \frac{t+c-\max\{t,u_0\}}{f(t+c)},
    \]
    where in the last inequality we used that $f$ is increasing in $[u_0,\infty)$ and $t+ c \geq \max\{t,u_0\}$. On the other hand, one has $\sqrt{f(u)^2+A^2}\geq f(u)\geq f(v)+m_0(u-v)$ for every $u\geq v\geq u_0$, so taking $v=t+c$:
    \[
    \int_{\alpha(t+c)}^{\infty} \frac{A\, du}{f(u)\sqrt{f(u)^2+A^2}}\leq
    \int_{t+c}^{\infty} \frac{A\, du}{f(u)^2}\leq
    \int_{t+c}^{\infty} \frac{A\, du}{\bigl(f(t+c)+m_0(u-(t+c))\bigr)^2}=\frac{A}{m_0f(t+c)}.
    \]

    Combining both results one obtains that
    \[
    \int_{t}^{\alpha(t+c)} \frac{du}{f(u)} - \int_{\alpha(t+c)}^{\infty} \frac{A\, du}{f(u)\sqrt{f(u)^2+A^2}} \geq \frac{t+c-\max\{t,u_0\}-\frac{A}{m_0}}{f(t+c)},
    \]
    and in particular the difference is strictly positive whenever $c> u_0+\frac{A}{m_0}$, which is compatible with the previous condition $c>u_0$.

    Let us now prove (b). On the one hand, using that $1-\frac{1}{\sqrt{1+x}}\leq\frac{x}{2}$, one deduces
    \[
    \alpha(t)-t=\int_0^{\alpha(t)} \Biggl( 1-\frac{1}{\sqrt{1+\frac{A^2}{f(u)^2}}}\Biggr) du \leq \frac{A^2}{2} \int_0^\infty \frac{du}{f(u)^2}<\infty,
    \]
    and in particular, as the left hand side is increasing, the limit $0\leq L:=\lim_{t\to\infty} \bigl(\alpha(t)-t\bigr)$ exists and is finite. As a consequence, $\alpha(t)\in[t,t+L]$ and, for every $c\geq \max\{L,u_0\}$, one has
    \[
    \int_{\alpha(t)}^{t+c}\frac{du}{f(u)}\geq \int_{\max\{t+L,u_0\}}^{t+c}\frac{du}{f(u)} \geq \frac{c-\max\{L,u_0\}}{f(t+c)}.
    \]

    On the other hand, from the proof of (a), we already have
    \[
    \int_{t+c}^{\infty} \frac{A\, du}{f(u)\sqrt{f(u)^2+A^2}}\leq \int_{t+c}^{\infty} \frac{A\, du}{f(u)^2}\leq \frac{A}{m_0f(t+c)}.
    \]

    Combining again both results, we deduce that 
    \[
    \int_{\alpha(t)}^{t+c}\frac{du}{f(u)}-\int_{\alpha(t)}^{\infty} \frac{A\, du}{f(u)\sqrt{f(u)^2+A^2}} \geq \frac{c-\max\{L,u_0\}-\frac{A}{m_0}}{f(t+c)},
    \]
    so the difference is strictly positive whenever $c>\max\{L,u_0\}+\frac{A}{m_0}$, which is also compatible with the condition obtained in part (a).
\end{proof}

\begin{cor}\label{cor:unique-vertical-asymptotic}
    Let $(X,d)$ be a complete geodesic space and let $f\colon \R\to(0,\infty)$ be convex and diverging. For every future directed timelike geodesic ray in $Y=\prescript{-}{}{\mathbb{R}}\times_f X$ there is a unique vertical ray asymptotic to it. 
\end{cor}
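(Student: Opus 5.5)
By \autoref{rem:convex}, a convex $f$ that diverges does so at least linearly: there are $u_0$ and $m_0>0$ with $f(u)\geq f(v)+m_0(u-v)$ for $u\geq v\geq u_0$. Hence $\int^\infty du/f(u)^2<\infty$, so $f$ diverges either slowly or quickly (never very slowly). I will treat these two cases separately. Uniqueness is common to both and comes first. If a ray $\gamma$ were asymptotic to two vertical rays over distinct points, transitivity of $\sim$ would make those two vertical rays asymptotic. This contradicts \autoref{cor:vertical-not-asymptotic}, which only needs $f\to\infty$.

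\emph{Quickly diverging case.} Existence follows directly from \autoref{lem:unique-vertical-same-past} and \autoref{lem:same-past-thus-asymptotic}. Both lemmas are stated with completeness of $X$; the first also assumes infinite diameter, but that hypothesis is not used in its proof. The proof of \autoref{lem:same-past-thus-asymptotic} then shows that the ray is asymptotic to the vertical ray $\gamma_x$, where $x=\lim\beta(t)$.

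\emph{Slowly diverging case.} Here $\int^\infty du/f=\infty$ but $\int^\infty du/f^2<\infty$. Let $\gamma=(\alpha,\beta)$ be non-vertical, with $v_\beta=A(f\circ\alpha)^{-2}$. By \autoref{lem:horizontal-part-is-ray} the horizontal part has finite length, so completeness of $X$ gives $\beta(t)\to x$. I will check directly that $\gamma\sim\gamma_x$, reusing the two estimates (a) and (b) from the proof of \autoref{lem:same-past-thus-asymptotic}; they only used convexity, $f\to\infty$, and $\int^\infty du/f^2<\infty$. Normalize $\alpha(0)=0$ and $\gamma_x(t)=(t,x)$.

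\begin{itemize}
\item $d_X(\beta(s),x)\leq \int_{\alpha(s)}^\infty A\,du/f(u)^2\leq A/(m_0 f(\alpha(s)))$ once $\alpha(s)\geq u_0$. Since $\int_0^\infty\bigl(1-(1+A^2/f^2)^{-1/2}\bigr)du<\infty$, we get $\alpha(t)\in[t,t+L]$ for a finite $L$.
\item For (a), the left-hand side $\int_t^{\alpha(t+c)}du/f(u)$ is at least $(c-u_0)/f(t+c)$.
\item For (b), the left-hand side $\int_{\alpha(t)}^{t+c}du/f(u)$ is at least $(c-\max\{L,u_0\})/f(t+c)$.
\item The right-hand sides are at most $A/(m_0f(t+c))$, respectively $A/(m_0f(\alpha(t)))$.
\end{itemize}

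So choosing $c>\max\{L,u_0\}+A/m_0$ gives both relations for $t$ beyond some threshold. The main obstacle is that in (b) the right-hand side carries the factor $f(\alpha(t))\leq f(t+c)$ rather than $f(t+c)$. I would handle this by replacing the domination in (b) with $\int_{\alpha(t)}^\infty A\,du/(f\sqrt{f^2+A^2})<\int_{\alpha(t)}^\infty du/f$, exactly as in the first inclusion of \autoref{lem:unique-vertical-same-past}. The tail of the divergent integral $\int du/f$ over $[\alpha(t),t+c]$ then dominates once $c$ is large, because $f\sqrt{f^2+A^2}\geq f^2\gg Af$ far out, using the linear growth of $f$. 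Finally, for $t$ in the compact initial range, I would enlarge $c$ using continuity together with $\int^\infty du/f=\infty$. That yields a single uniform shift, completing existence.
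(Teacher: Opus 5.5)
Your proposal is correct and follows the paper's proof of this corollary in outline:
\begin{itemize}
\item the same split into quickly and slowly diverging $f$;
\item in the quick case, the citations of \autoref{lem:unique-vertical-same-past} and \autoref{lem:same-past-thus-asymptotic};
\item in the slow case, a rerun of estimates (a) and (b) from the proof of \autoref{lem:same-past-thus-asymptotic}, with $x=\lim_{t\to\infty}\beta(t)$;
\item uniqueness from \autoref{cor:vertical-not-asymptotic}.
\end{itemize}
Your transitivity version of uniqueness covers both cases at once. You are also right that the infinite-diameter hypothesis of \autoref{lem:unique-vertical-same-past} is never used in its proof.

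Where you differ is (b), and there you are right and the paper is not. The paper bounds only $\int_{t+c}^\infty\frac{A\,du}{f\sqrt{f^2+A^2}}\leq\frac{A}{m_0f(t+c)}$, but then subtracts $\int_{\alpha(t)}^\infty$. That integral is strictly larger, since $\alpha(t)\leq t+L<t+c$. So the paper's final inequality in (b) does not follow as written.

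Your repair is sound. Once $\alpha(t)\geq u_0$ and $f(\alpha(t))\geq 2A$, the integrand is at most $\frac{1}{2f}$ on $[\alpha(t),t+c]$. Only the part beyond $t+c$ then needs the bound $\frac{A}{m_0f(t+c)}$, and $c>L+2A/m_0$ suffices. The condition $\int^\infty du/f=\infty$ then absorbs the compact initial range.

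Two points to tighten:
\begin{itemize}
\item Say explicitly that this threshold in $t$ does not depend on $c$, so enlarging $c$ afterwards is harmless.
\item Your earlier sentence claiming that $c>\max\{L,u_0\}+A/m_0$ gives both relations is only justified for (a).
\end{itemize}

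Note that your quick case still imports the same unrepaired (b) through \autoref{lem:same-past-thus-asymptotic}. There your compact-range step is unavailable, because $\int^\infty du/f<\infty$.

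A uniform fix handles both regimes. Write the difference in (b) as
\[
\int_{\alpha(t)}^{t+c}\frac{1}{f}\Bigl(1-\frac{A}{\sqrt{f^2+A^2}}\Bigr)du-\int_{t+c}^\infty\frac{A\,du}{f\sqrt{f^2+A^2}}.
\]
The first integrand is positive everywhere. Choose $u_1\geq u_0$ with $f\geq A$ on $[u_1,\infty)$; there the first integrand is at least $(1-2^{-1/2})/f$. For $c>\max\{L,u_1\}$ this gives the lower bound
\[
\frac{(1-2^{-1/2})(c-\max\{L,u_1\})-A/m_0}{f(t+c)}
\]
for every $t\geq 0$, which is positive once $c$ is large. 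The argument uses only convexity, $f\to\infty$, and $\int^\infty du/f^2<\infty$. It therefore yields $\gamma\sim\gamma_x$ directly for every convex diverging $f$, so the quick/slow split is not needed for existence at all.
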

\begin{proof}
    Let $\gamma$ be a future directed timelike geodesic ray in $Y$. Assume in the first place that $f$ diverges quickly (\autoref{def:nomenclature}). Then, there is a unique vertical ray with the same past (\autoref{lem:unique-vertical-same-past}) and that ray is asymptotic to $\gamma$ (\autoref{lem:same-past-thus-asymptotic}). Any other vertical ray has a different past and therefore cannot be asymptotic to $\gamma$.

    If $f$ diverges slowly (\autoref{def:nomenclature}), every future directed timelike geodesic ray $\gamma$ has the whole space as its timelike past. However, the length of the horizontal part of $\gamma$ is finite (\autoref{lem:horizontal-part-is-ray}) and thus completeness of $X$ guarantees $\beta(t)\to x$ for some $x\in X$. The rest of the proof of \autoref{lem:same-past-thus-asymptotic} still works, so we deduce that $\gamma_x$ is asymptotic to $\gamma$. Furthermore, \autoref{cor:vertical-not-asymptotic} ensures that different vertical rays cannot be asymptotic, from where we deduce the uniqueness.
    
    In the proof of \autoref{lem:same-past-thus-asymptotic}, the condition $\smash{\int_a^\infty \frac{dt}{f(t)^2}<\infty}$ is needed in order for the limit $L$ to be finite. In other words, the proof would not work if $f$ diverged very slowly. In any case, this cannot happen if $f$ is assumed to be convex and diverging (\autoref{rem:convex}). 
\end{proof}

The last few results (Lemmata~\ref{lem:unique-vertical-same-past} and \ref{lem:same-past-thus-asymptotic}, and \autoref{cor:unique-vertical-asymptotic}) deal with diverging warping functions. In these cases, we have proved that the ideal boundary is a copy of the base $X$. Moreover in the case in which the warping function diverges quickly, the timelike pasts generated by different ideal points are different and, by \autoref{pop:different-pasts-infinite-angle}, the angular distance between any pair of different ideal points is infinite.

Now we turn to the remaining possibilities for a convex warping function $f\colon \R\to\infty$: namely that it converges to $0$ or a positive value. The first of these results (\autoref{lem:asymptotic-thus-same-A}) shows that if two rays are asymptotic, then, up to translation of the parameter, their vertical parts go to infinity at the same rate. Similarly, the subsequent two results (Lemmata~\ref{lem:lorentzian-asymptotic-thus-metric-asymptotic-caseL} and \ref{lem:lorentzian-asymptotic-thus-metric-asymptotic-case0}) show that, under appropriate conditions, if two rays are asymptotic, then their horizontal parts are asymptotic in the metric sense.

\begin{lem}\label{lem:asymptotic-thus-same-A}
    Let $(X,d)$ be a geodesic space and $f\colon \R\to (0,\infty)$ continuous and non-diverging. Let $\gamma_i=(\alpha_i,\beta_i)$ be two asymptotic future-directed timelike geodesic rays and call $A_i\geq0$ the constants such that $v_{\beta_i}=A_i(f\circ\alpha_i)^{-2}$. Then $A_1=A_2$ and, in particular, there exists some $a\in \R$ such that $\dot{\alpha}_1(t)=\dot{\alpha}_2(t+a)$.
\end{lem}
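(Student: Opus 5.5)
The plan is to translate the asymptoticity relation into a statement about the time coordinates alone. Then we show that two different values of the constants $A_i$ would force the proper-time parametrizations of the two rays, viewed as functions of the time coordinate, to drift apart without bound. Throughout I read ``non-diverging'' as: $f$ is bounded above on some half-line $[u_0,\infty)$.

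\emph{Step 1 (time coordinates).} By \autoref{def:generalized-cones}, future-directed causal curves have strictly increasing first component, so $y_1\ll y_2$ forces $\mathrm{pr}_\R y_1<\mathrm{pr}_\R y_2$. Since $\gamma_1\sim\gamma_2$, there is $c>0$ with
\[
\alpha_1(t)<\alpha_2(t+c)\quad\text{and}\quad \alpha_2(t)<\alpha_1(t+c)\qquad\text{for all } t\geq 0.
\]
By \autoref{thm:properties-geodesics-generalizedcones} and the $\tau$-arclength normalization, $\dot\alpha_i=g_i(\alpha_i)$ with $g_i(u)=\sqrt{1+A_i^2f(u)^{-2}}\geq 1$. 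Hence $\alpha_i$ is a strictly increasing bijection onto $[\alpha_i(0),\infty)$, with inverse
\[
s_i(u)=\int_{\alpha_i(0)}^u\frac{dv}{g_i(v)}=\int_{\alpha_i(0)}^u\frac{f(v)\,dv}{\sqrt{f(v)^2+A_i^2}}.
\]
Apply $s_2$ to the first inequality and $s_1$ to the second. Writing $u=\alpha_1(t)$, resp.\ $u=\alpha_2(t)$, this gives $s_2(u)<s_1(u)+c$ and $s_1(u)<s_2(u)+c$ for all large $u$. Therefore $|s_1(u)-s_2(u)|$ is bounded as $u\to\infty$.

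\emph{Step 2 (contradiction if $A_1\neq A_2$).} Suppose without loss of generality that $A_1<A_2$. Then
\[
s_1(u)-s_2(u)=\mathrm{const}+\int_{u_0}^u h\bigl(f(v)\bigr)\,dv,\qquad h(x)=\frac{x}{\sqrt{x^2+A_1^2}}-\frac{x}{\sqrt{x^2+A_2^2}}>0 .
\]
I would split into two cases according to $\int^\infty f$.

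If $\int_0^\infty f=\infty$, then $h(x)\geq \kappa\, x$ on $(0,M]$ for some $\kappa>0$, where $M=\sup_{[u_0,\infty)}f$. Indeed $h(x)/x=(x^2+A_1^2)^{-1/2}-(x^2+A_2^2)^{-1/2}$ is continuous and positive on $[0,M]$; when $A_1=0$ it even blows up at $0$, which only helps. Hence the integral diverges, contradicting Step 1.

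If $\int_0^\infty f<\infty$, then by \autoref{lem:vertical-rays-only-ones} every timelike ray is vertical, so $A_1=A_2=0$ anyway.

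In either case $A_1=A_2=:A$.

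\emph{Step 3 (the shift).} With $A_1=A_2$, both $\alpha_i$ solve the same autonomous equation $\dot\alpha=g(\alpha)$. Since $\alpha_2$ is onto $[\alpha_2(0),\infty)$, assume first $\alpha_1(0)\geq\alpha_2(0)$ and pick $a\geq 0$ with $\alpha_2(a)=\alpha_1(0)$. The functions $t\mapsto\alpha_1(t)$ and $t\mapsto\alpha_2(t+a)$ then satisfy the same equation with the same initial value. Invoking uniqueness here would need local Lipschitzness of $f$, but it is not needed: both are inverses of $u\mapsto\int_{\alpha_1(0)}^u dv/g(v)$, so they coincide. Hence $\dot\alpha_1(t)=\dot\alpha_2(t+a)$. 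The case $\alpha_1(0)<\alpha_2(0)$ is symmetric and gives $a\leq 0$.

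The only delicate point is Step 2: showing the integral of $h\circ f$ diverges. This is where the non-divergence of $f$ (the bound $f\leq M$) and the dichotomy on $\int f$ enter.
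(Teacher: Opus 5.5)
Your proposal is correct and follows essentially the paper's route. Asymptoticity bounds the difference of the inverse time functions $\alpha_i^{-1}$; the case $\int^\infty f<\infty$ is dispatched by \autoref{lem:vertical-rays-only-ones}; otherwise the derivative of that difference has divergent integral. The differences are minor: you treat $f\to L>0$ and $f\to 0$ slowly at once via $h(x)\geq\kappa x$ on $(0,M]$, where the paper uses the limit of $\Delta'$ and a mean value estimate separately, and you justify the shift $\dot\alpha_1(t)=\dot\alpha_2(t+a)$ by inverting $u\mapsto\int^u dv/g(v)$, a step the paper leaves implicit.
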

\begin{proof}
    If $f\to 0$ quickly only vertical rays exist, then $A_1=A_2=0$ (\autoref{lem:vertical-rays-only-ones}) and there is nothing else to prove. So we will work in the other cases, i.e., if $f\to 0$ slowly or $f\to L>0$.

    Call $T_i:=\alpha_i^{-1}\colon [\alpha_i(0),\infty)\to [0,\infty)$, which is well-defined as the $\alpha_i$'s are strictly increasing and diverging. Denote $\alpha_0=\max\{\alpha_1(0),\alpha_2(0)\}$ and $\Delta=T_2-T_1\colon [\alpha_0,\infty)\to\R$. The condition $\gamma_1\sim \gamma_2$ implies that there is some $c>0$ such that for every $t\in[0,\infty)$ one has $\alpha_i(t+c) > \alpha_j(t)$. Applying the maps $T_1,T_2$ we deduce, for every $t\geq t_0:=\max\{T_1(\alpha_0),T_2(\alpha_0)\}$,
    \[
    t+c>T_i(\alpha_j(t))\implies T_i(\alpha_j(t))-T_j(\alpha_j(t))<c\implies \bigl|\Delta(\alpha_j(t))\bigr|<c,
    \]
    and as a consequence, for $u\geq\max\{\alpha_1(t_0),\alpha_2(t_0)\}$ or, what matters, for large enough $u$, one has $|\Delta(u)|<c$. Notice also that 
    \[
    \Delta'(u)=T_2'(u)-T_1'(u)=\frac{f(u)}{\sqrt{f(u)^2+A_2^2}}-\frac{f(u)}{\sqrt{f(u)^2+A_1^2}}.
    \]

    Assume now that $f\to L>0$ and, for a contradiction, assume that $A_1>A_2$. Then $\lim_{u\to\infty}\Delta'(u)>0$. In particular, for every $\varepsilon>0$ there exists some $u_0>0$ such that $\Delta'(u)\geq \varepsilon$, for every $u\geq u_0$. So for every $u\geq u_0$ one has
    \[
    \Delta(u)=\Delta(u_0)+\int_{u_0}^u \Delta'(x)dx\geq \Delta(u_0)+\varepsilon(u-u_0)\xrightarrow{u\to\infty} \infty,
    \]
    which contradicts the assumption that $|\Delta(u)|<c$ for large enough $u$.

    Assume otherwise that $f\to 0$ slowly. Assume again for a contradiction that $A_1>A_2$. As in the previous step, we would like to find a uniform lower bound on $\Delta'(u)$ to achieve a contradiction. To that end, consider $\lambda>0$ and define $\smash{\phi(A;\lambda):=\frac{\lambda}{\sqrt{A^2+\lambda^2}}}$. Notice that $T_i'(u)=\phi(A_i;f(u))$. By the Mean Value Theorem, there exists some $\tilde{A}\in(A_2,A_1)$ such that
    \[
    \phi(A_2;\lambda)-\phi(A_1;\lambda)=(A_1-A_2)\frac{\tilde{A}\lambda}{(\lambda^2+\tilde{A}^2)^{3/2}}.
    \]
    
    For fixed $\lambda$, the fraction on the right decreases (w.r.t.\ $\tilde{A}$) for $\smash{\tilde{A}\geq \frac{\lambda}{\sqrt{2}}}$, so taking $\lambda$ so small that both $\smash{A_1,A_2\geq \frac{\lambda}{\sqrt{2}}}$, one can bound it by
    \[
    \phi(A_2;\lambda)-\phi(A_1;\lambda)\geq(A_1-A_2)\frac{A_1\lambda}{(\lambda^2+A_1^2)^{3/2}}\geq \frac{A_1-A_2}{2^{3/2}A_1^2}\lambda.
    \]
    
    Now, as $f\to 0$, there exists some $u_0>0$ such that $f(u)\leq \smash{\frac{\lambda}{\sqrt{2}}}$ for every $u\geq u_0$. So for $u\geq u_0$, one has
    \[
    \Delta(u)=\Delta(u_0)+\int_{u_0}^u \Delta'(x)dx\geq \Delta(u_0)+\frac{A_1-A_2}{2^{3/2}A_1^2} \int_{u_0}^u f(s)ds\xrightarrow{u\to\infty} \infty,
    \]
    which again contradicts the assumption that $|\Delta(u)|<c$ for large enough $u$. In both steps, the case $A_2>A_1$ is analogous.
\end{proof}

\begin{lem}\label{lem:lorentzian-asymptotic-thus-metric-asymptotic-caseL}
    Let $(X,d)$ be a geodesic space and $f\colon \R\to (0,\infty)$ convex and converging to $L>0$. Let $\gamma_1,\gamma_2$ be two non-vertical asymptotic future-directed timelike geodesic rays. Then, the arclength parametrizations $\tilde{\beta}_1,\tilde{\beta}_2$ of their horizontal components $\beta_1,\beta_2$ are asymptotic in the metric sense (\autoref{def:metric-geodesic-rays-asymptoticity}).
\end{lem}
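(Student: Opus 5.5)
We have to show that $d_X\bigl(\tilde\beta_1(s),\tilde\beta_2(s)\bigr)$ stays bounded in $s$. The plan has three parts: use \autoref{lem:asymptotic-thus-same-A} to get equal constants $A_1=A_2=:A>0$, control the vertical and horizontal arclengths along each ray, and then feed the asymptoticity inequality into the timelike condition in $Y$.

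\textbf{Step 1: comparing arclengths.} By \autoref{lem:asymptotic-thus-same-A} we have $A_1=A_2=A$, so $v_{\beta_i}=A(f\circ\alpha_i)^{-2}$ with the same $A$. Set $S_i(t):=\int_0^t v_{\beta_i}$, so that $\beta_i(t)=\tilde\beta_i(S_i(t))$. Changing variables via $\alpha_i$ gives
\[
S_i(t)=\int_{\alpha_i(0)}^{\alpha_i(t)}\frac{A\,du}{f(u)\sqrt{f(u)^2+A^2}}=:G(\alpha_i(t))-G(\alpha_i(0)),
\]
with the same primitive $G$ for $i=1,2$. Since $f\to L>0$ and $f$ is continuous and positive, $f$ is bounded above and below by positive constants on $[\min_i\alpha_i(0),\infty)$. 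Hence $G$ is bi-Lipschitz there, and $|S_1(t)-S_2(t')|\le C\,|\alpha_1(t)-\alpha_2(t')|+C'$.

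\textbf{Step 2: the asymptoticity condition.} Asymptoticity gives some $c>0$ with $\gamma_1(t)\ll\gamma_2(t+c)$ for all $t\ge0$. In the cone this means
\[
d_X\bigl(\beta_1(t),\beta_2(t+c)\bigr)<\int_{\alpha_1(t)}^{\alpha_2(t+c)}\frac{du}{f(u)}\le C''\,|\alpha_2(t+c)-\alpha_1(t)|.
\]
It therefore suffices to show that $|\alpha_2(t+c)-\alpha_1(t)|$ is bounded. From the proof of \autoref{lem:asymptotic-thus-same-A} we have $|\Delta(u)|<c$ for large $u$, where $\Delta=T_2-T_1$. Since $\dot\alpha_i\le\sqrt{1+A^2/\inf f^2}$ is bounded, bounded $\Delta$ gives $|\alpha_2(t)-\alpha_1(t)|\le C_3$. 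The shift by $c$ adds at most a bounded amount, so $|\alpha_2(t+c)-\alpha_1(t)|\le C_4$.

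\textbf{Step 3: conclusion.} Steps 1 and 2 give $d_X\bigl(\tilde\beta_1(S_1(t)),\tilde\beta_2(S_2(t+c))\bigr)\le K$ and $|S_1(t)-S_2(t+c)|\le K'$. The triangle inequality along the unit-speed ray $\tilde\beta_2$ then yields $d_X\bigl(\tilde\beta_1(s),\tilde\beta_2(s)\bigr)\le K+K'$ for $s$ in the range of $S_1$. \autoref{lem:horizontal-part-is-ray} says this range is all of $[S_1(0),\infty)$, because $\int 1/f^2=\infty$ and $\liminf f>0$. Since $\tilde\beta_1,\tilde\beta_2$ are geodesic rays parametrized by arclength, this is exactly asymptoticity in the sense of \autoref{def:metric-geodesic-rays-asymptoticity}.

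The main obstacle is Step 2, namely extracting a uniform bound on $\alpha_2(t+c)-\alpha_1(t)$ from $|\Delta|<c$. This needs care about the domains where $T_i$ are defined and about large versus small $t$; small $t$ is handled by compactness and continuity. Convexity of $f$ is used only to ensure monotone convergence to $L$, and hence the uniform positive bounds on $f$.
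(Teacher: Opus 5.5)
Your proof is correct and takes essentially the same route as the paper. After \autoref{lem:asymptotic-thus-same-A} gives $A_1=A_2$, both arguments bound $d_X(\tilde\beta_1,\tilde\beta_2)$ by the triangle inequality through the intermediate point $\beta_2(\,\cdot+c)$ supplied by asymptoticity, plus a bounded parameter mismatch along $\tilde\beta_2$. The only difference is bookkeeping: you control the mismatch with the common primitive $G$ of $A/(f\sqrt{f^2+A^2})$ and uniform positive bounds on $f$, which hold for any continuous $f\to L>0$, so convexity is not actually needed; the paper instead uses the explicit time shift $\alpha_2(t)=\alpha_1(t+d)$ and the monotonicity of $f$ coming from convexity.
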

\begin{proof}
    From \autoref{lem:asymptotic-thus-same-A}, we have that $A_1=A_2=:A$ and, in particular, there exists some $d\in \R$ such that $\alpha_2(t)=\alpha_1(t+d)$, for every $t\in[\max\{0,-d\},\infty)$. For simplicity, assume that $\alpha_1(0)\leq \alpha_2(0)$. In this case, $d=\alpha_1^{-1}(\alpha_2(0))\geq 0$.
  
    Let us now assume that $f\to L>0$. By the convexity of $f$, it must be decreasing. In particular, there is some value $u_0>0$ such that $f(u)\leq 2L$ for every $u\geq u_0$. Moreover, $\dot{\alpha}_i$ is monotone increasing and converges to $m:=\sqrt{1+A^2/L^2}<\infty$.
    
    Consider the arclength parametrizations $\tilde{\beta}_i$ of $\beta_i$. One can easily check (see Equation~\eqref{eq:proof-existence-nonvert-rays-reparam}) that for every $t\in[0,\infty)$,
    \begin{equation}\label{eq:cones-reparam-T}
    \tilde{\beta}_i(t)=\beta_i\bigl(T_i^{-1}(t)\bigr),\qquad \text{where} \quad T_i(s)=\int_0^s v_{\beta_i}(u)\,du.
    \end{equation}

    Notice that the relation between $\alpha_1$ and $\alpha_2$ gives that $T_2(t)=T_1(t+d)-T_1(d)$. Call $b=T_1(d)$. In other words, $\tilde{\beta}_2(t)=\beta_2\bigl(T_1^{-1}(t+b)-d\bigr)$, for all $t\in[0,\infty)$.

    We want to prove that $\tilde{\beta}_1\sim\tilde{\beta}_2$ or, what is the same, that $d_X(\tilde{\beta}_1(t),\tilde{\beta}_2(t))$ is bounded. From the asymptoticity of $\gamma_1$ and $\gamma_2$ we know that there exists some $c>0$ such that
    \[
    d_X\bigl(\beta_i(t),\beta_j(t+c)\bigr)<\int_{\alpha_i(t)}^{\alpha_j(t+c)} \frac{du}{f(u)}, \qquad \forall t\in[0,\infty).
    \]

    By the triangle inequality of $d_X$, we have
    \[
    \begin{aligned}
    d_X\bigl(\tilde{\beta}_1(t),\tilde{\beta}_2(t)\bigr)&
    = 
    d_X\Bigl(\beta_1\bigl(T_1^{-1}(t)\bigr), \beta_2\bigl(T_1^{-1}(t+b)-d\bigr)\Bigr)\leq\\
    &\leq
    d_X\Bigl(\beta_1\bigl(T_1^{-1}(t)\bigr),\beta_2\bigl(T_1^{-1}(t)+c\bigr)\Bigr) + d_X\Bigl(\beta_2\bigl(T_1^{-1}(t)+c\bigr), \beta_2\bigl(T_1^{-1}(t+b)-d\bigr)\Bigr).
    \end{aligned}
    \]

    On the one hand, to bound the first term we can use the asymptoticity of $\gamma_1$ and $\gamma_2$ to obtain
    \[
    d_X\Bigl(\beta_1\bigl(T_1^{-1}(t)\bigr),\beta_2\bigl(T_1^{-1}(t)+c\bigr)\Bigr) <
    \int^{\alpha_1(T_1^{-1}(t)+c+d)}_{\alpha_1(T_1^{-1}(t))} \frac{du}{f(u)} = \int_{T_1^{-1}(t)}^{T_1^{-1}(t)+c+d} \frac{\dot{\alpha}_1(s)\, ds}{f(\alpha_1(s))}\leq (c+d)\frac{m}{L}.
    \]

    On the other hand, to bound the second term we have
    \[
    d_X\Bigl(\beta_2\bigl(T_1^{-1}(t)+c\bigr), \beta_2\bigl(T_1^{-1}(t+b)-d\bigr)\Bigr) =
    \left|
    \int_{T_1^{-1}(t)+c}^{T_1^{-1}(t+b)-d} \frac{A\, du}{f(\alpha_2(u))^2}
    \right| \leq
    \left|
    T_1^{-1}(t+b)-d-T_1^{-1}(t)-c
    \right| \frac{A}{L^2},
    \]
    so that one only needs to bound the term inside the absolute value in the right hand side. To that end, notice that for $t$ large enough (explicitly, such that $\alpha_1(T_1^{-1}(t))\geq u_0$), one has
    \[
    b=\int_{T_1^{-1}(t)}^{T_1^{-1}(t+b)} \frac{A\, du}{f(\alpha_1(u))^2} \geq \bigl(T_1^{-1}(t+b)-T_1^{-1}(t)\bigr)\frac{A}{4L^2},
    \]
    from where one deduces that $T_1^{-1}(t+b)-T_1^{-1}(t)>0$ is bounded.
\end{proof}

\begin{lem}\label{lem:lorentzian-asymptotic-thus-metric-asymptotic-case0}
    Let $(X,d)$ be a complete $\CAT(0)$ space and $f\colon \R\to (0,\infty)$ convex and converging to $0$ slowly (see \autoref{def:nomenclature}). Let $\gamma_1,\gamma_2$ be two non-vertical asymptotic future-directed timelike geodesic rays. Then, the arclength parametrizations $\smash{\tilde{\beta}_1,\tilde{\beta}_2}$ of their horizontal components $\beta_1,\beta_2$ are asymptotic in the metric sense (\autoref{def:metric-geodesic-rays-asymptoticity}).
\end{lem}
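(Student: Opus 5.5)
Our plan follows the proof of \autoref{lem:lorentzian-asymptotic-thus-metric-asymptotic-caseL}. The bounded-increment estimates there break down because $f\to 0$, so we replace them by an argument based on convexity of the distance function in the $\CAT(0)$ space $X$.

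First, \autoref{lem:asymptotic-thus-same-A} gives $A_1=A_2=:A>0$, and there is a shift $d\ge 0$ with $\alpha_2(t)=\alpha_1(t+d)$. Hence $v_{\beta_2}(t)=v_{\beta_1}(t+d)$, and with $T_i$ as in \eqref{eq:cones-reparam-T} we have $T_2(t)=T_1(t+d)-b$, where $b=T_1(d)$. By \autoref{lem:horizontal-part-is-ray}, both $\tilde\beta_i$ are metric geodesic rays, since $T_i(s)\to\infty$. In a $\CAT(0)$ space the function $t\mapsto d_X(\tilde\beta_1(t),\tilde\beta_2(t))$ is convex. It therefore suffices to show that it grows sublinearly, i.e.\ $\liminf_{t\to\infty}\frac1t d_X(\tilde\beta_1(t),\tilde\beta_2(t))=0$. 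A convex function with $\liminf$ of slope $0$ at infinity is non-increasing, hence bounded by its value at $0$.

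To obtain sublinear growth we use the asymptoticity condition $\gamma_1(s)\ll\gamma_2(s+c)$. It gives
\[
d_X\bigl(\beta_1(s),\beta_2(s+c)\bigr)<\int_{\alpha_1(s)}^{\alpha_1(s+c+d)}\frac{du}{f(u)}\le (c+d)\,\frac{\sup\dot\alpha_1}{\inf_{[s,s+c+d]}f\circ\alpha_1}.
\]
Here $\dot\alpha_1=\sqrt{1+A^2/f(\alpha_1)^2}\approx A/f(\alpha_1)$, so the right side is of order $(c+d)A/f(\alpha_1(s))^2$. This is comparable to $(c+d)\,v_{\beta_1}(s)$, which is the arclength of $\beta_1$ traversed over a parameter interval of length $c+d$. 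Convexity of $f$ (decreasing to $0$) gives $f(\alpha_1(s+c+d))\ge f(\alpha_1(s))-|f'|\cdot(\dots)$, so $f(\alpha_1(s+c+d))/f(\alpha_1(s))\to1$ and the ratio is controlled. Writing $t=T_1(s)$, the first term is thus $O\bigl(T_1(s+c+d)-T_1(s)\bigr)$. Together with the second term $d_X\bigl(\beta_2(s+c),\tilde\beta_2(t)\bigr)=|T_1(s+c+d)-b-t|$, which has the same order, we get
\[
d_X\bigl(\tilde\beta_1(t),\tilde\beta_2(t)\bigr)\le C\bigl(T_1(s+c+d)-T_1(s)\bigr)+b .
\]

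The main obstacle is showing that $T_1(s+c+d)-T_1(s)=o(T_1(s))$, equivalently $v_{\beta_1}(s)=o\bigl(\int_0^s v_{\beta_1}\bigr)$, i.e.\ $f(\alpha_1(s))^{-2}=o\bigl(\int_0^s f(\alpha_1)^{-2}\bigr)$. We plan to obtain this from convexity of $f$: $f$ is decreasing with $|f'|\to0$, so $f(\alpha_1(u))\le f(\alpha_1(s))(1+o(1))$ on $[s-K,s]$ for every fixed $K$. Hence $\int_0^s f(\alpha_1)^{-2}\ge K f(\alpha_1(s))^{-2}(1+o(1))$, and since $K$ is arbitrary the ratio tends to $0$. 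The time change from $s$ to $\alpha_1(s)$ is harmless because $\dot\alpha_1\sim A/f$ and $|f'|\to 0$ makes $f\circ\alpha_1$ also slowly varying; this step should be checked carefully. Combining sublinearity with the convexity argument concludes that $\tilde\beta_1\sim\tilde\beta_2$.
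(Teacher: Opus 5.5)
Your reduction is fine: convexity of $t\mapsto d_X(\tilde\beta_1(t),\tilde\beta_2(t))$ in a $\CAT(0)$ space means sublinear growth suffices. The step you flag as the main obstacle, however, is false under the stated hypotheses.

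\textbf{Counterexample to the key step.} Take $f(u)=1/u$ for $u\ge 1$, extended convexly. This $f$ is convex and tends to $0$ slowly. Then $\dot\alpha_1=\sqrt{1+A^2\alpha_1^2}$, so $\alpha_1(s)=A^{-1}\sinh(As+k)$ and $v_{\beta_1}=A\alpha_1^2$. Hence $T_1(s+K)/T_1(s)\to e^{2AK}$ for every fixed $K>0$. So $T_1(s+c+d)-T_1(s)$ is comparable to $T_1(s)$, not $o(T_1(s))$. Your estimate then only gives $d_X(\tilde\beta_1(t),\tilde\beta_2(t))\le Ct$, which carries no information.

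\textbf{Where the heuristic breaks.} You treat $f\circ\alpha_1$ as slowly varying in $s$. But $\frac{d}{ds}\log f(\alpha_1(s))\approx -A|f'(\alpha_1)|/f(\alpha_1)^2$, and this tends to $0$ only when $1/f$ grows sublinearly. That holds for $f=u^{-p}$ with $p<1$, but not for $1/u$ or $1/(u\log u)$. The fact that $|f'|\to 0$ is not enough. For the same reason, your claim $f(\alpha_1(s+c+d))/f(\alpha_1(s))\to1$ is false. That one is harmless, though, because of the exact identity
$\int_{\alpha_1(s)}^{\alpha_1(s+K)}\frac{du}{f(u)}=T_1(s+K)-T_1(s)+\int_s^{s+K}\frac{du}{\sqrt{f(\alpha_1(u))^2+A^2}+A}$, whose last term is at most $K/(2A)$.

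\textbf{What is lost.} The damage comes from the triangle inequality
$d_X(\tilde\beta_1(r_1),\tilde\beta_2(r_1))\le d_X(\tilde\beta_1(r_1),\tilde\beta_2(r_2))+(r_2-r_1)$, with $r_1=T_1(s)$ and $r_2=T_2(s+c)$. The gap $r_2-r_1$ can be of order $r_1$. What asymptoticity actually gives, via the identity above, is
$d_X(\tilde\beta_1(r_1),\tilde\beta_2(r_2))<r_2-r_1+\frac{s+c}{2A}$. The error $\frac{s+c}{2A}$ is $o(r_1)$, because $T_1(s)/s\to\infty$ when $f\to0$. So from a base point $p$ the triangle inequality is \emph{almost an equality}.

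\textbf{The paper's route.} The paper exploits this with a Euclidean comparison triangle for $(p,x,y)$, where $x=\tilde\beta_1(r_1)$ and $y=\tilde\beta_2(r_2)$. Write $a=d_X(p,x)$, $b=d_X(p,y)$, $\ell=d_X(x,y)$, and let $\varphi$ be the comparison angle at $p$. The law of cosines gives $\ell-(b-a)\ge a(1-\cos\varphi)$. Hence $1-\cos\varphi\le C\,s/T_1(s)\to0$. The paper then shows that the limiting rays from $p$ make angle $0$ for every $p$, so the angular distance between $[\tilde\beta_1]$ and $[\tilde\beta_2]$ in $\bd X$ vanishes.

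\textbf{Repairing your ending.} If you want to keep your convexity conclusion, take $p=\tilde\beta_2(0)$. The $\CAT(0)$ inequality gives $d_X(\tilde\beta_1(r_1),\tilde\beta_2(a))\le 2a\sin(\varphi/2)=o(a)$, and $a=r_1+O(1)$. This is exactly the sublinear growth your argument needs. The missing ingredient is this comparison-angle estimate, not a slow-variation property of $f$.
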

\begin{proof}
    Using the notation from the previous proof, we define 
    \[
    h_i(t):=\int_0^{\alpha_i(t)}\frac{ds}{f(s)}-T_i(t).
    \]

    One can readily check that $h_i'(t)\in(0,\frac{1}{2A}]$ and, therefore, $h_i(t)\leq \frac{t}{2A}$. 

    The asymptoticity condition of $\gamma_1$ and $\gamma_2$ implies the existence of some $c>0$ such that for every $t\in[0,\infty)$ one has 
    \[
    d(\beta_1(t),\beta_2(t+c))<h_2(t+c)-h_1(t)+T_2(t+c)-T_1(t)
    \]
    or, for the arclength parametrizations and calling for simplicity $r_1:=T_1(t)$ and $r_2:=T_2(t+c)$,
    \[
    d(\tilde{\beta}_1(r_1),\tilde{\beta}_2(r_2))<h_2(t+2)-h_1(t)+r_2-r_1\leq r_2-r_1+\frac{t+c}{2A}.
    \]

    Now fix some $p\in X$ and define $x_t:=\tilde{\beta}_1(r_1)$, $y_t:=\tilde{\beta}_2(r_2)$, $a_t:=d(p,x_t)$, $b_t:=d(p,y_t)$, $c_t:=d(x_t,y_t)$. We will work with the Euclidean comparison triangle $\triangle px_ty_t$, of side lengths $a_t$, $b_t$ and $c_t$.

    Call $\varphi_t$ the comparison angle at $p$. By the Law of Cosines, we have
    \[
    c_t^2=a_t^2+b_t^2-2a_tb_t\cos(\varphi_t)\implies c_t-(b_t-a_t)=\frac{2a_tb_t(1-\cos\varphi_t)}{c_t+(b_t-a_t)}\geq a_t(1-\cos\varphi_t),
    \]
    where in the last step we used the triangle inequality $c_t\leq a_t+b_t$.

    On another note, we have
    \[
    |b_t-a_t-(r_2-r_1)|\leq |b_t-r_2|+|a_t-r_1|\leq a_0+b_0=:D,
    \]
    where $a_0:=d(p,\tilde{\beta}_1(0))$ and $b_0:=d(p,\tilde{\beta}_2(0))$, and again we used the triangle inequality $|a_t-r_1|\leq a_0$ and $|b_t-r_2|\leq b_0$. To sum up, we have
    \[
    a_t(1-\cos\varphi_t)\leq c_t-(b_t-a_t)\leq c_t-(r_2-r_1)+D<\frac{t+c}{2A}+D.
    \]

    Now, using that $1-\cos\varphi\geq \frac{\varphi^2}{9}$ whenever $|\varphi|\leq \pi$, and that $a_t\geq r_1-a_0$, we deduce
    \[
    \varphi_t^2\leq \frac{9}{2A}\left(\frac{t+c+2AD}{r_1-a_0}\right)\xrightarrow{t\to\infty}0, 
    \]
    where in the last step we used the fact that $r_1$ grows faster than linearly as
    \[
    \frac{r_1}{t}=\frac{T_1(t)}{t}\geq\frac{1}{t}\int_{t/2}^t \frac{A\, ds}{f(\alpha_1(s))^2}\geq \frac{A}{2f(\alpha_1(t/2))^2}\xrightarrow{t\to \infty}\infty.
    \]

    From the convergence of the angle $\varphi_t\to0$ we will deduce that the rays $\tilde{\beta}_1$ and $\tilde{\beta}_2$ are asymptotic in the metric sense. Indeed, as in the proof of \cite[Proposition~II.8.2]{BH}, the geodesic intervals $[p,x_n]$ and $[p,y_n]$ converge pointwise to geodesic rays $\beta'_{1,p},\beta'_{2,p}$ which can be shown to be (cf.\ the aforementioned result) the unique geodesic rays asymptotic to $\smash{\tilde{\beta}_1}$ and $\smash{\tilde{\beta}_2}$, respectively, starting at $p$.

    Moreover, the continuity of the angle between curves starting at a fixed point \cite[Proposition~I.3.3]{BH} ensures that
    \[
    \ma_p(\beta_{1,p}',\beta_{2,p}')=\lim_{t\to\infty} \varphi_t=0.
    \]

    From the arbitrariness of the point $p$, we deduce that (see Eq.~\eqref{eq:def-metric-angular-distance}) 
    \[
    \ma\bigl([\tilde{\beta}_1],[\tilde{\beta}_2]\bigr)=\sup_{p\in X}\ma_p(\beta'_{1,p},\beta'_{2,p})=0,
    \]
    and because the angular distance in a complete $\CAT(0)$ space is indeed a distance \cite[Proposition~II.9.5]{BH}, we deduce that  $\smash{\tilde{\beta}_1\sim\tilde{\beta}_2}$.
\end{proof}

We turn now to the description of the ideal boundary of generalized cones with convex and non-diverging warping function.

\begin{lem}\label{lem:bijection-product-idealboundary-caseL}
    Let $(X,d)$ be a geodesic space and $f\colon \R\to (0,\infty)$ convex and converging to $L>0$. Let $Y=\prescript{-}{}{\mathbb{R}}\times_f X$ and denote $\sim$ the minimal equivalence relation in $[0,\infty)\times \bd X$ which identifies points whose first component is $0$. For a ray $\gamma=(\alpha,\beta)$, let $A\geq 0$ be the constant such that $v_\beta=A(f\circ\alpha)^{-2}$. Then the map
    \[
    \begin{aligned}
    \Phi\colon \bd^+Y &\longrightarrow \faktor{[0,\infty)\times \bd X}{\sim}\\
     \xi \;\; &\longmapsto  \bigl[\bigl(\sinh A,[\mathrm{pr}_X \xi]\bigr)\bigr],
    \end{aligned}
    \]
    is bijective.
\end{lem}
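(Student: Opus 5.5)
We organize the proof in three steps: well-definedness, injectivity and surjectivity. The map sends an ideal point to the class of the pair formed by a quantity determined by the constant $A$ and the metric asymptotic class of the horizontal projection. Here $A$ is only a bijective stand-in for the eventual angle; any strictly increasing function of $A$ vanishing at $0$ works for bijectivity.

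\textbf{Well-definedness.} Let $\gamma_1,\gamma_2\in\xi$. By \autoref{lem:asymptotic-thus-same-A} they share the same constant $A$. If $A=0$, both rays are vertical and the image is the apex class $[(0,\cdot)]$, so the second component is irrelevant. If $A>0$, both rays are non-vertical. By \autoref{lem:horizontal-part-is-ray} (the case $\liminf f>0$, where $\int 1/f^2=\infty$ since $f\to L>0$), the horizontal components have infinite length. Their arclength reparametrizations are therefore metric geodesic rays, by \autoref{thm:properties-geodesics-generalizedcones}. By \autoref{lem:lorentzian-asymptotic-thus-metric-asymptotic-caseL} these rays are metric-asymptotic, so $[\mathrm{pr}_X\xi]\in\bd X$ is well defined.

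\textbf{Injectivity.} Suppose $\Phi(\xi_1)=\Phi(\xi_2)$, so the two classes share the same $A$.

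If $A=0$, both classes contain vertical rays. These are all asymptotic by \autoref{lem:vertical-rays-asymptotic}, hence $\xi_1=\xi_2$.

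If $A>0$, take representatives $\gamma_i=(\alpha_i,\beta_i)$. The $\alpha_i$ solve the same ODE $\dot\alpha^2=1+A^2f(\alpha)^{-2}$, so after a parameter shift we may assume $\alpha_2(t)=\alpha_1(t+a)$. The arclength reparametrizations $\tilde\beta_i$ satisfy $d_X(\tilde\beta_1(r),\tilde\beta_2(r))\le k$. We must produce $c$ with $\gamma_i(t)\ll\gamma_j(t+c)$, which amounts to
\[
d_X\bigl(\beta_i(t),\beta_j(t+c)\bigr)<\int_{\alpha_i(t)}^{\alpha_j(t+c)}\frac{du}{f(u)}.
\]
We estimate the left side by $k$ plus the horizontal length travelled over a parameter interval of length $c+|a|$. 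Since $f\to L$, the horizontal speed $A/f^2$ is eventually at most $2A/L^2$; early times are handled by compactness, as everything is bounded there. We estimate the right side from below by the vertical gain. The key point is that the vertical gain beats the horizontal increment by a margin linear in $c$, namely
\[
\frac{\dot\alpha}{f}-\frac{A}{f^2}=\frac{1}{\sqrt{f^2+A^2}+A}\ \ge\ \frac{1}{\sup f+2A}>0,
\]
exactly as in \eqref{eq:past-whole-space}. Here $\sup f<\infty$ on $[\alpha(0),\infty)$ because $f$ is convex and converges, hence decreasing. Choosing $c$ large therefore absorbs the constants $k$ and $a$.

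\textbf{Surjectivity.} Given $(s,\Xi)$ with $s>0$, let $A$ be the value with image $s$ and take a metric ray $\tilde\beta\in\Xi$. The construction in \autoref{lem:existence-nonvertical-rays}, applicable via \autoref{lem:auxiliary} since $f$ is bounded away from $0$, produces a timelike ray with constant $A$. Its horizontal part is the reparametrization \eqref{eq:proof-existence-nonvert-rays-reparam} of $\tilde\beta$, and it is surjective onto the ray by \autoref{lem:horizontal-part-is-ray}. The apex class is the image of any vertical ray.

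The main obstacle is the injectivity estimate. There we must control uniformly in $t$ the mismatch between the parameter $t$ and the arclength parameter $T_i(t)$ of the horizontal part, so that the bound $k$ on $d_X(\tilde\beta_1(r),\tilde\beta_2(r))$ at equal arclength transfers to the bound above at Lorentzian parameters $t$ and $t+c$. We plan to reuse the bounds on $T_1^{-1}(t+b)-T_1^{-1}(t)$ from the proof of \autoref{lem:lorentzian-asymptotic-thus-metric-asymptotic-caseL}, which follow from $L\le f\le 2L$ eventually.
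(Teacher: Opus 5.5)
Your proposal is correct and follows essentially the same route as the paper: well-definedness via \autoref{lem:asymptotic-thus-same-A} and \autoref{lem:lorentzian-asymptotic-thus-metric-asymptotic-caseL}, surjectivity by lifting metric rays as in \autoref{lem:existence-nonvertical-rays}, and injectivity via the pointwise bound $\frac{\dot\alpha}{f}-\frac{A}{f^2}=\frac{1}{\sqrt{f^2+A^2}+A}\geq m>0$, which holds because $f$ is non-increasing. The obstacle you anticipate at the end is not really there: since $T_2(t)=T_1(t+a)-T_1(a)$, the triangle inequality along $\tilde\beta_2$ directly gives $d_X(\beta_1(t),\beta_2(t+c))\leq k+T_1(a)+\int_t^{t+c+a}\frac{A\,ds}{f(\alpha_1(s))^2}$, and this integral cancels exactly against the corresponding part of $\int_{\alpha_1(t)}^{\alpha_2(t+c)}\frac{du}{f(u)}$, so you need no bounds on $T_1^{-1}$, no eventual speed bound and no compactness argument (indeed $f\geq L$ everywhere, since $f$ is non-increasing).
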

\begin{proof}
    The map $\Phi$ is well-defined by Lemmata~\ref{lem:asymptotic-thus-same-A} and \ref{lem:lorentzian-asymptotic-thus-metric-asymptotic-caseL}. Notice that the codomain of $\Phi$ is just the set $[0,\infty)\times \bd X$ with the subset $\{0\}\times \bd X$ identified to a point, which is usually called the \textit{apex}.
    
    Now, surjectivity follows from the proof of \autoref{lem:existence-nonvertical-rays}: morally, for every $A>0$, every horizontal ray $\tilde{\beta}$ can be lifted (after appropriate reparametrization) to a future-directed timelike geodesic ray $\gamma$ such that the metric speed of its horizontal part $\beta$ is $v_\beta=A(f\circ\alpha)^{-2}$; such a ray is sent by $\Phi$ to $\smash{(\sinh A,\tilde{\beta}(\infty))}$. For $A=0$, notice that the vertical rays are sent by $\Phi$ to the apex.

    To prove injectivity notice first that the only rays sent by $\Phi$ to the apex are the vertical ones, i.e., those with $A=0$. These are all asymptotic by \autoref{lem:vertical-rays-asymptotic}. So for the rest of the proof assume that $A>0$ and take $\tilde{\beta}_1,\tilde{\beta}_2$ asymptotic geodesic rays (parametrized by arclength) in $X$, and two real numbers $\alpha_1,\alpha_2$.

    Let us lift $\tilde{\beta}_1,\tilde{\beta}_2$ to two future-directed timelike geodesic rays $\gamma_i=(\alpha_i,\beta_i)$ as in the proof of \autoref{lem:existence-nonvertical-rays} by declaring
    \[
    \dot{\alpha}_i^2=1+\frac{A^2}{(f\circ \alpha_i)^2},\qquad \alpha_i(0)=\alpha_i,\quad \text{and}\quad 
    \beta_i(t)=\tilde{\beta}_i(T_i(t)), \quad\text{where}\quad 
    T_i(t)=\int_0^t \frac{A\,du}{f(\alpha_i(u))^2}.
    \]

    By the previous results, these rays $\gamma_1,\gamma_2$ are sent via $\Phi$ to the same point. We will now prove that they are asymptotic, which will conclude the proof.

    First assume, without loss of generality, that $\alpha_1\leq\alpha_2$. Then we have, as in the proof of \autoref{lem:lorentzian-asymptotic-thus-metric-asymptotic-caseL}, that $\alpha_2(t)=\alpha_1(t+d)$ for some fixed $d\geq0$. We also have that $T_2(t)=T_1(t+d)-T_1(d)$. Let us call $D:=\sup\{d(\tilde{\beta}_1(t),\tilde{\beta}_2(t))\mid t\in [0,\infty)\}$, which is finite by the asymptoticity $\tilde{\beta}_1\sim\tilde{\beta}_2$.

    On the one hand, for $c>0$ and calling $b:=T_1(d)$ as before, we have
    \[
    d(\beta_1(t),\beta_2(t+c))\leq D+\bigl|T_1(t+c+d)-T_1(d)-T_1(t)\bigr|\leq D+b+\int_t^{t+c+d} \frac{A\, ds}{f(\alpha_1(s))^2}.
    \]

    On the other hand, one can check that 
    \[
    \int_{\alpha_1(t)}^{\alpha_2(t+c)} \frac{du}{f(u)}=\int_t^{t+c+d} \frac{\sqrt{f(\alpha_1(s))^2+A^2}}{f(\alpha_1(s))^2}\,ds=
    \int_t^{t+c+d}\left(
    \frac{1}{\sqrt{f(\alpha_1(s))^2+A^2}+A}+\frac{A}{f(\alpha_1(s))^2}
    \right)ds.
    \]

    Notice that, as $f$ is decreasing, the first fraction in the right hand side of the previous equation is increasing with $s$. In particular, it is not smaller than its value at $s=0$. Call $m$ such a value. So subtracting the previous equations we obtain
    \[
    \int_{\alpha_1(t)}^{\alpha_2(t+c)} \frac{du}{f(u)}-d(\beta_1(t),\beta_2(t+c))\geq 
    \int_t^{t+c+d}
    \frac{1}{\sqrt{f(\alpha_1(s))^2+A^2}+A}-D-b\geq m(c+d)-D-b,
    \]
    which is strictly positive whenever $c>\frac{D+b}{m}-d$. As a consequence, $\gamma_1$ and $\gamma_2$ are asymptotic
\end{proof}

\begin{lem}\label{lem:bijection-product-idealboundary-case0}
    Let $(X,d)$ be a complete $\CAT(0)$ space and $f\colon \R\to (0,\infty)$ convex and converging to $0$ slowly. Let $Y=\prescript{-}{}{\mathbb{R}}\times_f X$ and, as in \autoref{lem:bijection-product-idealboundary-caseL}, let $\sim$ be the minimal equivalence relation in $[0,\infty)\times \bd X$ which identifies points whose first component is $0$. For a ray $\gamma=(\alpha,\beta)$, let $A\geq 0$ be the constant such that $v_\beta=A(f\circ\alpha)^{-2}$. Then, the map
    \[
    \begin{aligned}
    \Phi\colon \bd^+Y &\longrightarrow \faktor{[0,\infty)\times \bd X}{\sim}\\
     \xi \;\; &\longmapsto  \bigl[\bigl(\sinh A,[\mathrm{pr}_X \xi]\bigr)\bigr],
    \end{aligned}
    \]
    is bijective.
\end{lem}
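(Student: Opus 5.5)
The plan is to follow the same scheme as the proof of \autoref{lem:bijection-product-idealboundary-caseL}, replacing the two ingredients that used $f\to L>0$ by their slowly-converging-to-$0$ counterparts.

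\emph{Well-definedness.} By \autoref{lem:asymptotic-thus-same-A}, asymptotic rays share the constant $A$. For non-vertical rays, \autoref{lem:lorentzian-asymptotic-thus-metric-asymptotic-case0} (which is where $X$ being complete $\CAT(0)$ enters) shows that their arclength-reparametrized horizontal parts are metrically asymptotic. These horizontal parts are genuine rays of infinite length by \autoref{lem:horizontal-part-is-ray}, since $f\to 0$. So $[\mathrm{pr}_X\xi]\in\bd X$ is well defined, and vertical rays go to the apex.

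\emph{Surjectivity.} We lift as in \autoref{lem:existence-nonvertical-rays}: \autoref{lem:auxiliary} applies because $\int_0^\infty f=\infty$, so for each $A>0$ the ODE for $\alpha$ has a solution on $[0,\infty)$. We then compose a given arclength ray $\tilde\beta$ with $T(t)=\int_0^t A f(\alpha)^{-2}$. Since $T\to\infty$ (again by \autoref{lem:horizontal-part-is-ray}), the whole ray class is hit.

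\emph{Injectivity.} Rays mapped to the apex are vertical, and these are all asymptotic by \autoref{lem:vertical-rays-asymptotic}. Now take $A>0$, metrically asymptotic $\tilde\beta_1,\tilde\beta_2$ with $D=\sup_t d(\tilde\beta_1(t),\tilde\beta_2(t))<\infty$, and lifts $\gamma_i$ with $\alpha_2(t)=\alpha_1(t+d)$, $d\ge0$, and $T_2(t)=T_1(t+d)-T_1(d)$. As before,
\[
d(\beta_1(t),\beta_2(t+c))\le D+b+\int_t^{t+c+d}\frac{A\,ds}{f(\alpha_1(s))^2},
\]
while
\[
\int_{\alpha_1(t)}^{\alpha_2(t+c)}\frac{du}{f(u)}=\int_t^{t+c+d}\Bigl(\frac{1}{\sqrt{f(\alpha_1)^2+A^2}+A}+\frac{A}{f(\alpha_1)^2}\Bigr)ds .
\]
Convexity together with $f\to0$ forces $f$ to be decreasing. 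Hence the first integrand is increasing in $s$ and bounded below by its value $m>0$ at $s=0$ (in fact it tends to $1/(2A)$). The difference is then at least $m(c+d)-D-b>0$ for large $c$, which gives $\gamma_1(t)\ll\gamma_2(t+c)$. The reverse relation $\gamma_2(t)\ll\gamma_1(t+c')$ follows symmetrically: bound $|T_1(t+c')-T_1(t+d)+b|$ by the analogous integral and use the same lower bound $m$. Two rays with the same image but different initial heights differ only by the shift $d$ handled above, so injectivity follows.

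\emph{Main obstacle.} The delicate point is that $\int A f^{-2}$ now grows superlinearly, so the horizontal displacement terms are large. The argument works only because they cancel exactly against the $A/f^2$ part of the $\int 1/f$ term, leaving a remainder with a uniform positive lower bound. I would check carefully that the monotonicity of $f$ (from convexity) really gives this uniform bound in both directions of the shift, including the case where the shift is negative.
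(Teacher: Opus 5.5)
Your proposal is correct and is essentially the paper's own proof. The paper simply reruns the argument of \autoref{lem:bijection-product-idealboundary-caseL}, with well-definedness now supplied by \autoref{lem:asymptotic-thus-same-A} and \autoref{lem:lorentzian-asymptotic-thus-metric-asymptotic-case0}, and you are slightly more explicit than the paper in checking that $T\to\infty$ for surjectivity and in verifying both directions of the asymptoticity relation in the injectivity step (the paper only writes out $\gamma_1(t)\ll\gamma_2(t+c)$).
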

\begin{proof}
    The proof is essentially the same as the proof of \autoref{lem:bijection-product-idealboundary-caseL}. The hypotheses on $(X,d)$ serve to ensure that $\Phi$ is well defined via Lemmata~\ref{lem:asymptotic-thus-same-A} and \ref{lem:lorentzian-asymptotic-thus-metric-asymptotic-case0}.
\end{proof}

\begin{pop}\label{pop:restricted-cone-2}
    The same conclusion of \autoref{lem:restricted-cone}, namely that the future timelike ideal boundaries of $Y:=\smash{\prescript{-}{}{\R}\times_f X}$ and $Y_a:=\smash{\prescript{-}{}{(a,\infty)}\times_f X}$ are isometric, is obtained if one requires $f$ to be convex and converging to $L>0$ but requires only that there is some $a\in \R$ such that $Y_a$ satisfies $\CBA(0)$ globally, rather than the whole cone $Y$ satisfying $\CBA(0)$ globally. Analogously, if one requires $f$ to be convex and converging to $0$ and $X$ to be $\CAT(0)$.
\end{pop}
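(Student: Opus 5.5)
The plan is to reuse the bijection $\Psi_a\colon\bd^+Y\to\bd^+Y_a$, $\xi\mapsto\xi_a=\{\gamma\in\xi\mid\gamma\subset Y_a\}$, from the proof of \autoref{lem:restricted-cone}. This time, however, every metric statement about $\bd^+Y$ will be computed by explicit formulas instead of the global $\CBA(0)$ condition on $Y$, which is no longer available. Since $\ma^Y$ need not even be well defined a priori, we take as the angular quantity on $\bd^+Y$ the expression $\sup_{y\in Y}\ma_y^Y(\xi_1,\xi_2)$, computed with the rays constructed explicitly. The isometry statement then reduces to two facts: each class of $\bd^+Y$ meets $Y_a$ in exactly one class of $\bd^+Y_a$, and the supremum over $Y$ is attained in the limit along points of $Y_a$.

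For the bijection, we use \autoref{lem:bijection-product-idealboundary-caseL} (case $L>0$) and \autoref{lem:bijection-product-idealboundary-case0} (case $f\to 0$ slowly, $X$ $\CAT(0)$). Neither lemma uses curvature of the cone, so both apply to $Y$ and to $Y_a$ with $f|_{(a,\infty)}$. Both identify the respective boundaries with the same quotient $[0,\infty)\times\bd X/\!\sim$ through the invariants $(A,[\mathrm{pr}_X\xi])$. A ray of $Y_a$ is a ray of $Y$ with the same $A$ and the same horizontal class. Moreover, asymptoticity in $Y_a$ implies asymptoticity in $Y$, since $\tau_{Y_a}\le\tau_Y$, and the witnesses of $\ll$ for points in $Y_a$ lie in the future. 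Hence $\Psi_a$ commutes with the two $\Phi$'s and is a bijection. The case $f\to 0$ quickly is trivial, because each boundary is a single point by \autoref{lem:vertical-rays-only-ones}.

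For distances, fix $\xi_1,\xi_2$ and $y\in Y$. In the spirit of the proof of \autoref{thm:minkowski product}, the representatives $\xi_{i,y}$ are explicit lifts of the metric rays $\tilde\beta_{i}$ from $x=\mathrm{pr}_Xy$ with speeds determined by $A_i$. By \autoref{lem:vertical-rays-asymptotic} and \autoref{lem:cones-past-whole-space} (or \autoref{lem:cones-past-whole-space-ftozero}), every point lies in the common past, so such representatives exist at every $y$. A first-order computation of the comparison angle at $y$ as $s,t\searrow0$, using only the infinitesimal behaviour of $\tau$ near $y$, i.e.\ the local structure $-dt^2+f(t)^2d_X^2$, expresses $\cosh\ma_y$ in terms of $A_1/f(t_y)$, $A_2/f(t_y)$ and the metric angle at $x$ between the $\tilde\beta_i$. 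Concretely,
\[
\cosh\ma_y(\xi_1,\xi_2)=\cosh\theta_1(y)\cosh\theta_2(y)-\sinh\theta_1(y)\sinh\theta_2(y)\cos\ma^X_x(\tilde\beta_{1,x},\tilde\beta_{2,x}),
\]
with $\sinh\theta_i(y)=A_i/f(t_y)$, in analogy with \autoref{rem:non-unit-cones}. The inequality $\ma^{Y_a}\le\ma^Y$ is immediate, since the supremum over $Y_a$ is part of the supremum over $Y$ and the local angles coincide.

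The main obstacle is the reverse inequality. Here we must show that the expression above, as a function of $t_y$, does not decrease when $y$ is moved into $Y_a$. Convexity of $f$ together with convergence to a finite limit forces $f$ to be non-increasing. Hence $A_i/f(t_y)$ is non-decreasing in $t_y$, and the right-hand side is non-decreasing in both $\theta_i$ for fixed metric angle, because the expression equals $\cosh$ of the hyperbolic-cone distance, which is monotone in the radii. This last point needs some care when the angle is small, and we would check it via the derivative in $\theta_1$. The metric angle term depends only on $x$, so for each $y\in Y$ the vertical translate $y'=(t',x)$ with $t'>\max(a,t_y)$ satisfies $\ma_{y'}\ge\ma_y$. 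Taking suprema gives $\ma^Y\le\ma^{Y_a}$, which finishes the proof in both cases.
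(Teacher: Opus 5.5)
Your route is the paper's. You keep the bijection $\Psi_a$, whose construction in \autoref{lem:restricted-cone} never used curvature: causal curves between points of $Y_a$ stay in $Y_a$, so rays and $\ll$ agree in $Y$ and $Y_a$, and the detour through the two $\Phi$'s is harmless but unnecessary. You note that all pasts are the whole space, compute $\cosh\ma_y$ at $y=(t_y,x)$ by the hyperbolic law of cosines with $\sinh\theta_i=A_i/f(t_y)$, and push $y$ vertically into $Y_a$ using that $f$ is non-increasing. Your ``first-order computation'' is what the paper carries out by sandwiching $\tau_{f(t_y)}\le\tau_f\le\tau_{f(t_y)-\varepsilon}$ on a thin slab above $y$ and letting $\varepsilon\to0$. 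You would still have to write that part out, including control of the index set over which the $\limsup$ in \autoref{def:upper-angle} is taken.

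The step that fails as you justify it is the monotonicity. Write $\phi:=\ma^X_x(\tilde\beta_{1,x},\tilde\beta_{2,x})$. The expression $\cosh\theta_1\cosh\theta_2-\sinh\theta_1\sinh\theta_2\cos\phi$ is not non-decreasing in each radius separately. Its $\theta_1$-derivative is $\sinh\theta_1\cosh\theta_2-\cosh\theta_1\sinh\theta_2\cos\phi$, which for $\phi=0$ equals $\sinh(\theta_1-\theta_2)<0$ whenever $\theta_1<\theta_2$; the cone distance is then $|\theta_1-\theta_2|$. So the check ``via the derivative in $\theta_1$'' breaks down exactly in the small-angle regime you flagged. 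What rescues the argument is that the two radii are coupled through one parameter. Put $u:=f(t_y)^{-2}$, which is non-decreasing in $t_y$. Then $\cosh\ma_y=F(u):=\sqrt{(1+A_1^2u)(1+A_2^2u)}-A_1A_2u\cos\phi$, and
\[
F'(u)\;\ge\;\frac{A_1^2+A_2^2+2A_1^2A_2^2u}{2\sqrt{(1+A_1^2u)(1+A_2^2u)}}-A_1A_2\;\ge\;0.
\]
The first inequality uses $\cos\phi\le1$. The second follows by squaring, since $(A_1^2+A_2^2+2A_1^2A_2^2u)^2-4A_1^2A_2^2(1+A_1^2u)(1+A_2^2u)=(A_1^2-A_2^2)^2$. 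With this joint-scaling computation in place of the per-radius claim, you get $\ma_{y'}\ge\ma_y$ for $y'=(t',x)$ with $t'>\max\{a,t_y\}$, and your proof goes through. The paper states this monotonicity without computation, so it is worth recording.
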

\begin{proof}
    The only part of the corresponding proof in which the global $\CBA(0)$ condition of $Y$ is used is to prove that when the rays $\xi_1,\xi_2\in\bd^+Y$ generate the same timelike past one has for every $y\in I^-(\xi_i)$ there exists some $y'\in Y_a$ such that
    \[
    \ma^Y_y(\xi_{1},\xi_{2})\leq
    \ma^{Y_a}_{y'}(\xi_{1},\xi_{2}).
    \]

    Here we will prove, using the previous characterization of the ideal boundary as a set, that the angle between two rays at a point $y=(t,x)$ is non-decreasing with $t$, giving the desired inequality for any $y'=(t',x)$ with any $t'>\max\{a,t_0\}$.

    Consider $y=(t_0,x)$ a point in $Y$ and call $f_0:=f(t_0)$. Let $\varepsilon>0$ and call $f_1:=f_0-\varepsilon$. As $f$ is decreasing, there exists $\delta>0$ such that $f(r)\in [f_1, f_0]$ for every $r\in [t_0,t_0+\delta)$. 

    Let us consider the products $Y_{f_0}:=\prescript{-}{}{\mathbb{R}}\times_{f_0} X$ and $Y_{f_1}:=\prescript{-}{}{\mathbb{R}}\times_{f_1} X$. Let $\gamma=(\alpha,\beta)$ be any future-directed timelike curve in $Y_{f_0}$, i.e., $\gamma$ is absolutely continuous (with respect to the product distance), $\dot{\alpha}>0$ and $\dot{\alpha}>f_0v_\beta$ almost everywhere (\autoref{def:generalized-cones}). Assume moreover that the $t$-coordinate of $\gamma$ is contained in $[t_0,t_0+\delta)$. Then, being $f_1\leq f\leq f_0$, $\gamma$ is also future-directed timelike for the Lorentzian structures in $Y_f$ and $Y_{f_1}$.

    The Lorentzian lengths of $\gamma$ (\autoref{def:lorentzian-length-cones}) with respect to the different Lorentzian structures of a curve $\gamma$ whose $t$-coordinate is in the interval $[t_0,t_0+\delta)$, are related by
    \[
    \mathcal{L}_{f_0}(\gamma)\leq \mathcal{L}_f(\gamma)\leq \mathcal{L}_{f_1}(\gamma),
    \]
    where we use the curly notation to distinguish the Lorentzian length from the limit of $f$.

    Similarly, any two points $p,q\in Y$ whose $t$-coordinate is in the interval $[t_0,t_0+\delta)$ which are timelike related with respect to the Lorentzian structure in $Y_{f_0}$ are also timelike related with respect to the Lorentzian structures in $Y_f$ and $Y_{f_1}$. Moreover, their time separations satisfy
    \begin{equation}\label{eq:cones-relation-timesep}
    \begin{aligned}
    \tau_{f_0}(p,q)&=
    \sup\bigl\{
        \mathcal{L}_{f_0}(\gamma)\mid \gamma \text{ is } f_0\text{-timelike from } p \text{ to } q
    \bigr\}\\
    &\leq \sup\bigl\{
        \mathcal{L}_{f_0}(\gamma)\mid \gamma \text{ is } f\text{-timelike from } p \text{ to } q
    \bigr\}\\
    &\leq \sup\bigl\{
        \mathcal{L}_f(\gamma)\mid \gamma \text{ is } f\text{-timelike from } p \text{ to } q
    \bigr\}=\tau_f(p,q)\\
    &\leq \sup\bigl\{
        \mathcal{L}_{f_1}(\gamma)\mid \gamma \text{ is } f\text{-timelike from } p \text{ to } q
    \bigr\}\\
    &\leq \sup\bigl\{
        \mathcal{L}_{f_1}(\gamma)\mid \gamma \text{ is } f_1\text{-timelike from } p \text{ to } q
    \bigr\}=\tau_{f_1}(p,q).
    \end{aligned}
    \end{equation}

    Furthermore, the time separation in the case of a constant warping function is easily computed to be, for $p=(t,x)\ll (s,y)=q$
    \[
    \tau_L\bigl((t_1,x_1),(t_2,x_2)\bigr)=\sqrt{(t_2-t_1)^2-L^2d_X(x_1,x_2)}.
    \]

    The idea now is to consider two pairs $(B_i,\Xi_i)\in [0,\infty)\times \bd X$, build the corresponding future-directed timelike geodesic rays in $Y_f$ with the bijection $\Phi$ from \autoref{lem:bijection-product-idealboundary-caseL} or \autoref{lem:bijection-product-idealboundary-case0}, and bound appropriately the angle between them from above and from below.

    So, consider $(B_i,\Xi_i)\in [0,\infty)\times \bd X$ and call $A_i=\sinh^{-1}(B_i)$. The corresponding geodesic rays $\gamma_i=(\alpha_i,\beta_i)$ are given by $\smash{\dot{\alpha}_i^2=1+A_i^2(f\circ\alpha_i)^{-2}}$ and, calling $\smash{\tilde{\beta}_i}$ the unique (metric) geodesic ray in $X$ (thus parametrized by arclength) in the equivalence class $\Xi_i$ starting at $x$ \cite[Proposition~II.8.2]{BH}, then $\beta_i$ is the reparametrization with metric speed $v_\beta=A_i(f\circ\alpha_i)^{-2}$ (see Equation~\eqref{eq:proof-existence-nonvert-rays-reparam}).
    
    We have (see \autoref{def:upper-angle})
    \begin{equation}\label{eq:estimation-angle-cone}
    \cosh\ma_y^{Y_f}(\gamma_1,\gamma_2)= 
    \limsup_{\substack{(s,t) \in A_f \\ s,t \searrow 0}} \frac{s^2+t^2-\tau_f(\gamma_1(s),\gamma_2(t))^2}{2st}\geq 
    \limsup_{\substack{(s,t) \in A_f \\ s,t \searrow 0}}\frac{s^2+t^2-\tau_{f_1}(\gamma_1(s),\gamma_2(t))^2}{2st},
    \end{equation}
    where in the last inequality we used Equation~\eqref{eq:cones-relation-timesep}. One can easily obtain the analogous bound from above with $\tau_{f_0}$. The time separation on the right hand side can be explicitly computed to be, for points $\gamma_1(s),\gamma_2(t)$ such that $\alpha_1(s),\alpha_2(t)\in[t_0,t_0+\delta)$,
    \[
    \begin{aligned}
    &\tau_{f_1}(\gamma_1(s),\gamma_2(t))^2=(\alpha_2(t)-\alpha_1(s))^2-f_1^2 d_X(\beta_1(s),\beta_2(t))^2\\
    &=\left(\int_0^t \dot{\alpha}_2(u)\,du-\int_0^s \dot{\alpha}_1(u)\,du\right)^2-f_1^2 d_X(\beta_1(s),\beta_2(t))^2\\
    &\leq s^2\left(1+\frac{A_1^2}{f_1^2}\right)+t^2\left(1+\frac{A_2^2}{f_1^2}\right)-2st\sqrt{1+\frac{A_1^2}{f_0^2}}\sqrt{1+\frac{A_2^2}{f_0^2}}-f_1^2d_X(\beta_1(s),\beta_2(t))^2,
    \end{aligned}
    \]
    where in the last step we used the expression of $\dot{\alpha}_i$ and the fact that $f_1\leq f\leq f_0$ for the concerned points.

    Using this bound on the Equation~\eqref{eq:estimation-angle-cone} we obtain
    \[
    \cosh\ma_y^{Y_f}(\gamma_1,\gamma_2)\geq 
    \sqrt{1+\frac{A_1^2}{f_0^2}}\sqrt{1+\frac{A_2^2}{f_0^2}}-
    \liminf_{\substack{(s,t) \in A_f \\ s,t \searrow 0}} \frac{s^2\frac{A_1^2}{f_1^2}+t^2\frac{A_2^2}{f_1^2}- f_1^2 d_X(\beta_1(s),\beta_2(t))^2}{2st}.
    \]

    Notice that the square roots on the right hand side are each precisely $\cosh\theta_i^{f_0}$, where $\theta_i^{f_0}$ are the angles with the vertical ray in $Y_{f_0}$ (see \autoref{rem:non-unit-cones}). Now, calling $\tilde{s}=T_1(s)$ and $\tilde{t}=T_2(t)$, where $T_i$ is defined as in Equation~\eqref{eq:cones-reparam-T}, one has $\smash{s\frac{A_1}{f_0^2}\leq \tilde{s}\leq s\frac{A_1}{f_1^2}}$, and similarly for $\smash{\tilde{t}}$. On another note, notice that the numerator in the fraction of the last term need not be positive.

    Let us denote for simplicity $d=d_X(\beta_1(s),\beta_2(t))=d_X(\tilde{\beta}_1(\tilde{s}),\tilde{\beta}_2(\tilde{t}))$, where the $\tilde{\beta}_i$'s are the arclength parametrizations of the $\beta_i$'s if they are not constant (i.e., if the corresponding $A_i$ is not $0$) and a constant map otherwise. One can bound the fraction in last term of the previous equation by the following, where the $\max$ is needed to choose the bound on the denominator, as the numerator could be positive or negative:
    \[
    \begin{aligned}
    &\frac{s^2\frac{A_1^2}{f_1^2}+t^2\frac{A_2^2}{f_1^2} - f_1^2 d^2}{2st}\leq\max\biggl\{
    \frac{\frac{f_0^4}{f_1^2}(\tilde{s}^{\,2}+\tilde{t}^{\,2}) - f_1^2 d^2}{2\tilde{s}\tilde{t}\frac{f_1^4}{A_1A_2}},\:
    \frac{\frac{f_0^4}{f_1^2}(\tilde{s}^{\,2}+\tilde{t}^{\,2}) - f_1^2d^2}{2\tilde{s}\tilde{t}\frac{f_0^4}{A_1A_2}}
    \biggr\}\\
    &=
    \max\biggl\{
    \sinh\theta_1^{f_1}\sinh\theta_2^{f_1}
    \frac{\bigl(\frac{f_0}{f_1}\bigr)^4(\tilde{s}^{\,2}+\tilde{t}^{\,2})- d^2}{2\tilde{s}\tilde{t}},\:
    \sinh\theta_1^{f_0} \sinh\theta_2^{f_0} {\textstyle\bigl(\frac{f_1}{f_0}\bigr)^2} \frac{\bigl(\frac{f_0}{f_1}\bigr)^4 (\tilde{s}^{\,2}+\tilde{t}^{\,2})- d^2}{2\tilde{s}\tilde{t}}
    \biggr\},
    \end{aligned}
    \]
    where in the last step we used again \autoref{rem:non-unit-cones}.

    The fraction on both terms of the $\max$ on the last line can be decomposed as 
    \begin{equation}\label{eq:auxiliary-decomp-fraction}
    \frac{\bigl(\frac{f_0}{f_1}\bigr)^4 (\tilde{s}^{\,2}+\tilde{t}^{\,2})- d^2}{2\tilde{s}\tilde{t}}=
    \frac{\tilde{s}^{\,2}+\tilde{t}^{\,2}- d_X(\tilde{\beta}_1(\tilde{s}),\tilde{\beta}_2(\tilde{t}))^2}{2\tilde{s}\tilde{t}}+
    a_\varepsilon\,\frac{\tilde{s}^{\,2}+\tilde{t}^{\,2}}{2\tilde{s}\tilde{t}},
    \end{equation}
    where $a_\varepsilon=(1+\frac{\varepsilon}{f_1})^4-1$. Notice that, on the one hand, the fraction in the first term of the right hand side has a limit as $\smash{\tilde{s},\tilde{t}\searrow 0}$ because of the curvature bound of $X$. Moreover, such a limit is, by definition, the cosine of the angle at $x$ formed by the geodesic rays $\tilde{\beta}_1$ and $\tilde{\beta}_2$. On the other hand, we can bound from above the $\liminf$ of the fraction in the second term of the right hand side with the following considerations:
    
    -- The set $A_f$ (see \autoref{def:upper-angle}) corresponding to the Lorentzian structure of $Y_f$ contains the set $A_{f_0}$, corresponding to the Lorentzian structure of $Y_{f_0}$.

    -- We will find a convenient set $G$ contained in $A_{f_0}$, so that we will have $\liminf_{(s,t) \in A_f;\: s,t\searrow0}\leq \liminf_{(s,t) \in G;\: s,t\searrow0}$ and we will be able to bound this last term from above:
    \[
    \begin{aligned}
        A_{f_0}&\supseteq
        \biggl\{
        (s,t)\mid s,t>0,\; \int_0^s \dot{\alpha}_1(u)\,du-\int_0^t \dot{\alpha}_2(u)\,du\geq f_0\Bigl(\int_0^s v_{\beta_1}(u)\,du+\int_0^t v_{\beta_2}(u)\,du\Bigr)
        \biggr\}\\
        &\supseteq 
        \biggl\{
        (s,t)\mid s,t>0,\; 
        s\sqrt{1+\frac{A_1^2}{f_0^2}}-t\sqrt{1+\frac{A_2^2}{f_1^2}}\geq f_0\frac{sA_1+tA_2}{f_1^2}
        \biggr\}\\
        &\supseteq
        \biggl\{
        (s,t)\mid s,t>0,\; 
        sf_1\sqrt{f_0^2+A_1^2}-tf_0\sqrt{f_1^2+A_2^2}=\frac{f_0^2}{f_1}(sA_1+tA_2)
        \biggr\}=:G.
    \end{aligned}
    \]
    Notice that, given $A_1>0$, one has that $G\neq\varnothing$ whenever $\varepsilon>0$ is small enough.
    
    -- On another note, reversing the variable changes between $s/t$ and $\tilde{s}/\tilde{t}$ we have
    \[
    \frac{\tilde{s}^{\,2}+\tilde{t}^{\,2}}{2\tilde{s}\tilde{t}}\leq 
    \Bigl(\frac{f_0}{f_1}\Bigr)^4\:\frac{s^2A_1^2+t^2A_2^2}{2stA_1A_2}.
    \]
    
    -- Now it is readily checked that 
    \[
    \liminf_{\substack{(s,t) \in A_f \\ \tilde{s},\tilde{t} \searrow 0}}
    \frac{\tilde{s}^{\,2}+\tilde{t}^{\,2}}{2\tilde{s}\tilde{t}}\leq
    \Bigl(\frac{f_0}{f_1}\Bigr)^4
    \liminf_{\substack{(s,t) \in G \\ s,t \searrow 0}} \frac{s^2A_1^2+t^2A_2^2}{2stA_1A_2}= 
    \Bigl(\frac{f_0}{f_1}\Bigr)^4 \frac{A_1^2+A_2^2k}{2A_1A_2k}=:C_\varepsilon,
    \]
    where $k$ is a positive value (it is the slope of the line $ks=t$ defining $G$, so it is positive for $\varepsilon$ small enough) which depends only on $f-0$, $\varepsilon$, $A_1$, and $A_2$. Moreover, as $\varepsilon\to0^+$, $k$ converges to a strictly positive value and, therefore, $C_\varepsilon$ also converges to a strictly positive value $C$.

    Finally, using the elementary inequality $\liminf(A_n+B_n)\leq \limsup A_n+\liminf B_n$ on Equation~\eqref{eq:auxiliary-decomp-fraction}, we can continue with the previous bounds by
    \[
    \begin{aligned}
    &\liminf_{\substack{(s,t) \in A_f \\ s,t \searrow 0}}  \frac{s^2\frac{A_1^2}{f_1^2}+t^2\frac{A_2^2}{f_1^2}- f_1^2d_X(\beta_1(s),\beta_2(t))^2}{2st}\leq\\
    &\leq
    \max\Bigl\{
    \sinh\theta_1^{f_1}\sinh\theta_2^{f_1} \bigl(\cos\ma_x^X(\tilde{\beta}_1,\tilde{\beta}_2)+a_\varepsilon C_\varepsilon\bigr),\,
    \sinh\theta_1^{f_0} \sinh\theta_2^{f_0} {\textstyle\bigl(\frac{f_1}{f_0}\bigr)^2} \bigl(\cos\ma_x^X(\tilde{\beta}_1,\tilde{\beta}_2)+a_\varepsilon C_\varepsilon\bigr)
    \Bigr\}.
    \end{aligned}
    \]

    Notice that from the arbitrariness of $\varepsilon>0$, together with the facts that $a_\varepsilon\to 0$, $\cosh\theta_i^{f_1}\to\cosh\theta_i^{f_0}$ and $\sinh\theta_i^{f_1}\to \sinh\theta_i^{f_0}$ as $\varepsilon\searrow0$, one deduces that 
    \[
    \begin{aligned}
    \cosh\ma_y^{Y_f}(\gamma_1,\gamma_2)\geq
    \cosh\theta_1^{f_0}\cosh\theta_2^{f_0}-\sinh\theta_1^{f_0}\sinh\theta_2^{f_0}
    \cos\ma_x^X(\tilde{\beta}_1,\tilde{\beta}_2).
    \end{aligned}
    \]

    For the opposite inequality one can work similarly to obtain an identical bound from above. Notice that in that case it is not necessary to define an analogous set to $G$, as now we just need that 
    \[
    \liminf_{\substack{(s,t) \in A_f \\ \tilde{s},\tilde{t} \searrow 0}}
    \frac{\tilde{s}^{\,2}+\tilde{t}^{\,2}}{2\tilde{s}\tilde{t}}
    \geq
    \liminf_{\tilde{s},\tilde{t} \searrow 0}
    \frac{\tilde{s}^{\,2}+\tilde{t}^{\,2}}{2\tilde{s}\tilde{t}}=1.
    \]

    Having both inequalities, one must have
    \[
    \cosh\ma_y^{Y_f}(\gamma_1,\gamma_2)=\cosh\theta_1^{f_0}\cosh\theta_2^{f_0}-\sinh\theta_1^{f_0}\sinh\theta_2^{f_0}
    \cos\ma_x^X(\tilde{\beta}_1,\tilde{\beta}_2).
    \]

    Therefore, if $x$ is kept constant, the angle between the representatives of the ideal points $\xi_i$ is increasing with the $t$-coordinate of $y=(t_0,x)$. So considering the point $y'=(t',x)$, where $t'>\max\{t_0,a\}$, we obtain the result.
\end{proof}

\begin{lem}
    Let $(X,d)$ be a proper geodesic space and $f\colon \R\to (0,\infty)$ convex and converging to $L>0$. Assume that there exists $\varepsilon>0$ such that $X\in\CAT(-\varepsilon)$. Let $Y_f:=\prescript{-}{}{\mathbb{R}}\times_f X$ and let $Y_L:=\prescript{-}{}{\mathbb{R}}\times_L X$. Then $\bd^+Y_f$ and $\bd^+Y_L$ are isometric. In particular, $\bd^+Y_f$ is isometric to the metric warped product $[0,\infty)\times_{\sinh}\bd X$.
\end{lem}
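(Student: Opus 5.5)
The plan is to build the isometry from the set bijections already available and then check that angular distances agree. By \autoref{lem:bijection-product-idealboundary-caseL}, $\bd^+Y_f$ is in bijection with $[0,\infty)\times\bd X/\!\sim$ via $\xi\mapsto[(\sinh A,[\mathrm{pr}_X\xi])]$. The same bijection, applied to the constant function $L$, describes $\bd^+Y_L$. I therefore define $\Psi\colon\bd^+Y_f\to\bd^+Y_L$ as the composition of one bijection with the inverse of the other. Concretely, $\Psi$ sends the class of the lift with parameter $A$ and horizontal class $\Xi$ to the class of the $Y_L$-lift with the same $A$ and the same $\Xi$.

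The core computation is already contained in the proof of \autoref{pop:restricted-cone-2}. For $y=(t_0,x)$ and rays $\gamma_i\in\xi_i$ starting at $y$ with parameters $A_i$, that proof gives
\[
\cosh\ma^{Y_f}_y(\gamma_1,\gamma_2)=\cosh\theta_1^{f(t_0)}\cosh\theta_2^{f(t_0)}-\sinh\theta_1^{f(t_0)}\sinh\theta_2^{f(t_0)}\cos\ma^X_x(\tilde\beta_1,\tilde\beta_2),
\]
where $\sinh\theta_i^{f(t_0)}=A_i/f(t_0)$ by \autoref{rem:non-unit-cones}. I will first check that this formula holds for every $y$, using only the local structure of $f$ and the $\CAT(0)$ property of $X$. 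The point $y$ lies in the common past by \autoref{lem:cones-past-whole-space}, so the representatives exist.

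Next I take the supremum over $y$, following \autoref{pop:properties of angle}(\ref{item:alternative-definition-angular}) and \eqref{eq:angular-distance-product}. Since $f$ is convex and converges to $L>0$, it is non-increasing with infimum $L$. Hence $A_i/f(t_0)$ increases to $A_i/L$ as $t_0\to\infty$, so $\theta_i^{f(t_0)}\nearrow\theta_i^L$. The right-hand side above is a hyperbolic law of cosines. For fixed metric angle $\varphi\in[0,\pi]$, the quantity $\cosh a\cosh b-\sinh a\sinh b\cos\varphi$ is the hyperbolic distance between two points at distances $a,b$ from a vertex with angle $\varphi$. This distance is non-decreasing in $a$ and $b$ when $\varphi\ge$ the appropriate threshold, but not always, for example when $\varphi=0$. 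I expect this monotonicity to be the main obstacle.

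To get around it, I would not rely on monotonicity. Instead I take the supremum jointly over $t_0$ and $x$. The $\CAT(-\varepsilon)$ hypothesis gives rigidity: angular distances in $\bd X$ are $0$ or $\pi$, in the spirit of the remark after \autoref{rem:non-unit-cones}, or at least $\sup_x\ma^X_x$ equals the Tits angle. If $\Xi_1=\Xi_2$, then $\ma_x^X=0$ for all $x$, and $\ma_y=|\theta_1^{f(t_0)}-\theta_2^{f(t_0)}|$. I need to check that this quantity tends to $|\theta_1^L-\theta_2^L|$ and that the supremum is this limit, using monotonicity of $\ma_{\gamma(t)}$ from \autoref{pop:properties of angle}(\ref{item:increasing-angle}). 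If $\Xi_1\neq\Xi_2$, then $\sup_x\ma^X_x=\pi$, and the expression becomes $\theta_1^{f(t_0)}+\theta_2^{f(t_0)}$, which increases to $\theta_1^L+\theta_2^L$. In both cases the answer is exactly the $Y_L$ value given by \autoref{thm:minkowski product} and \autoref{rem:non-unit-cones}. Rescaling the metric of $X$ by $L$ then identifies $\bd^+Y_L$ with $[0,\infty)\times_{\sinh}\bd X$, with $\bd X$ carrying the $\CAT(0)$ angular metric, which proves the final claim.
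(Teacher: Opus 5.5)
Your argument is correct, but it handles the supremum over base points differently from the paper.

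\textbf{The paper's route.} It takes the same local formula from the proof of Proposition~\ref{pop:restricted-cone-2}. It then asserts that, for fixed $x$, $\cosh\ma^{Y_f}_{(t,x)}(\gamma_1,\gamma_2)$ increases monotonically to the $L$-value as $t\to\infty$. Finally it takes the supremum over $x$, which turns $\cos\ma^X_x(\tilde\beta_1,\tilde\beta_2)$ into $\cos\ma^X(\Xi_1,\Xi_2)$. This works for an arbitrary fiber angle. The hypothesis $\CAT(-\varepsilon)$ enters only through Remark~\ref{rem:cones-global-curvature}, to make a truncation $Y_a$ globally $\CBA(0)$.

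\textbf{Your route.} You use $\CAT(-\varepsilon)$ a second time, through the discreteness $\ma^X(\Xi_1,\Xi_2)\in\{0,\pi\}$ for distinct boundary points. This reduces everything to the explicit expressions $|\theta_1-\theta_2|$ and $\theta_1+\theta_2$. That is legitimate.

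\textbf{The obstacle you anticipated is not real.} The angles $\theta_1,\theta_2$ are not independent: both are driven by the single parameter $w=f(t_0)^{-2}$ through $\sinh\theta_i=A_i\sqrt{w}$. Put $F(w)=\sqrt{(1+A_1^2w)(1+A_2^2w)}-A_1A_2w\cos\varphi$. Then AM--GM gives $F'(w)\geq A_1A_2(1-\cos\varphi)\geq 0$ for every $\varphi\in[0,\pi]$, including $\varphi=0$. Since convexity together with $f\to L$ forces $f$ to be non-increasing, $w$ is non-decreasing in $t_0$. This is exactly the monotonicity the paper invokes without writing out the computation, so your case split is unnecessary. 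The paper's argument is more general, because it does not need a discrete boundary angle. Yours trades one short computation for a second use of negative curvature and only covers the fiber-angle values $0$ and $\pi$.

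\textbf{A caution on your case $\Xi_1=\Xi_2$.} You cite Proposition~\ref{pop:properties of angle}(\ref{item:increasing-angle}) there, but it requires $\CBA(0)$ globally, which you only know for $Y_a$. So it tells you nothing about base points with $t_0\leq a$, and those still enter the supremum defining $\ma$. Use the explicit derivative instead. For $A_1>A_2$, $\frac{d}{du}\bigl(\sinh^{-1}(A_1u)-\sinh^{-1}(A_2u)\bigr)>0$, because $A\mapsto A(1+A^2u^2)^{-1/2}$ is increasing. This is just the case $\varphi=0$ of the computation above.
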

\begin{proof}
    We will show that the map $\Phi$ from \autoref{lem:bijection-product-idealboundary-caseL}, which is a bijection, is also distance preserving. We will denote $\Phi_f$ the one corresponding to $\bd^+Y_f$, and $\Phi_L$ the one corresponding to $\bd^+Y_L$. 

    Following the reasonings in \autoref{rem:cones-global-curvature} we deduce that there exists some $a\in \R$ such that $Y_a:=\prescript{-}{}{(a,\infty)}\times_f X$ has timelike curvature globally bounded above by $0$. From the proof of \autoref{pop:restricted-cone-2}, we know that for any two pairs $(B_i,\Xi_i)\in [0,\infty)\times \bd X$ one has
    \[
    \cosh\ma_{(t,x)}^{Y_f}(\gamma_1,\gamma_2)\xrightarrow{t\to \infty} \cosh\theta_1^{L}\cosh\theta_2^{L}-\sinh\theta_1^{L}\sinh\theta_2^{L}
    \cos\ma_x^X(\tilde{\beta}_1,\tilde{\beta}_2),
    \]
    where the curves $\gamma_i$ are the representatives at $y=(t,x)$ of $\xi_i^f:=\Phi^{-1}_f(B_i,\Xi_i)\in \bd^+Y_f$. Moreover, the convergence is in a monotonically increasing way. As a consequence, 
    \[
    \cosh\ma^{Y_f}(\xi_1^f,\xi_2^f)=\cosh\theta_1^{L}\cosh\theta_2^{L}-\sinh\theta_1^{L}\sinh\theta_2^{L}
    \cos\ma^X(\tilde{\beta}_1,\tilde{\beta}_2)=
    \cosh\ma^{Y_L}(\xi_1^L,\xi_2^L),
    \]
    from where the result follows.
\end{proof}

\begin{lem}\label{lem:isometry-product-idealboundary-case0}
    Let $(X,d)$ be a complete locally compact $\CAT(-\varepsilon)$ space for some $\varepsilon>0$, and $f\colon \R\to (0,\infty)$ convex and converging to $0$ slowly. Let $Y_f:=\prescript{-}{}{\mathbb{R}}\times_f X$. Then every two different ideal points $\xi_0\in\bd^+Y_f$ are at infinite angular distance from each other. In particular, $\bd^+Y_f$ is not isometric to $[0,\infty)\times_{\sinh}\bd X$, but to the quotient set of \autoref{lem:bijection-product-idealboundary-case0} endowed with the discrete (extended) distance with value $\infty$.
\end{lem}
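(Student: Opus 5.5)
The strategy is to compute the angle at a point $y=(t_0,x)$ between two rays exactly as in the proof of \autoref{pop:restricted-cone-2}, and then let $t_0\to\infty$, where $f(t_0)\to 0$. Since the angular distance is a supremum over base points, it suffices to show that this angle diverges for any two distinct ideal points.

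First we set up the representatives. By \autoref{lem:bijection-product-idealboundary-case0}, distinct ideal points correspond to distinct classes $[(B_i,\Xi_i)]$ with $A_i=\sinh^{-1}B_i$. By \autoref{lem:cones-past-whole-space-ftozero} and \autoref{lem:vertical-rays-whole-past}, every ray has the whole of $Y_f$ as its timelike past. Hence the angle $\ma_y(\xi_1,\xi_2)$ is defined at every $y=(t_0,x)$, and \autoref{def:angular-distance} gives $\ma(\xi_1,\xi_2)\geq \ma_y(\xi_1,\xi_2)$. On a region $t_0>a$ where $Y_a$ is global $\CBA(0)$, the curvature assumption on $X$ makes $f$ convex and decreasing to $0$, so $\sup|f'|$ is small, and \autoref{thm:curvature_cones} together with the globalization results applies. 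Because only angles at points with large $t_0$ are used, restricting to such $t_0$ is harmless by \autoref{pop:restricted-cone-2}. The $\varepsilon$-sandwich argument in the proof of \autoref{pop:restricted-cone-2}, which compares $\tau_f$ with $\tau_{f_0}$ and $\tau_{f_1}$ near $t_0$, yields
\[
\cosh\ma_y(\xi_1,\xi_2)=\cosh\theta_1(t_0)\cosh\theta_2(t_0)-\sinh\theta_1(t_0)\sinh\theta_2(t_0)\cos\ma^X_x(\tilde\beta_1,\tilde\beta_2),
\]
where $\sinh\theta_i(t_0)=A_i/f(t_0)$ by \autoref{rem:non-unit-cones}.

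Next we split into cases.
\begin{enumerate}
\item If $A_1\neq A_2$, the cosine term is at most $1$, so the right-hand side is at least $\cosh(\theta_1-\theta_2)$. With $f_0=f(t_0)\to0$ we have $\theta_i\approx\log(2A_i/f_0)$. This gives $\theta_1-\theta_2\to\log(A_1/A_2)$, which is finite, so this case needs more care. The expansion $\cosh\theta_1\cosh\theta_2-\sinh\theta_1\sinh\theta_2\cos\varphi\approx \tfrac{A_1A_2}{f_0^2}(1-\cos\varphi)+\tfrac{A_1^2+A_2^2}{2A_1A_2}$ shows that divergence requires $\varphi>0$ at some base point $x$.
\item If one of the points is the apex ($A_2=0$, vertical), then $\cosh\ma_y=\cosh\theta_1\to\infty$.
\end{enumerate}

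The genuine obstacle is therefore the case of two non-apex points with $\cos\varphi=1$ at every $x$, that is, $\Xi_1=\Xi_2$ but $A_1\neq A_2$. In that case the formula yields a bounded angle, and the claim (infinite distance) must instead come from different pasts. The first thing to try is to show, via \autoref{lem:asymptotic-thus-same-A}-type estimates, that $I^-(\xi_1)\neq I^-(\xi_2)$ fails to be the issue. Every ray has the whole space as its past, so \autoref{pop:different-pasts-infinite-angle} gives nothing. Instead we estimate $\overline{\ma}_x$ from \autoref{pop:properties of angle}(\ref{item:alternative-definition-angular}) directly along the rays. Since the horizontal separation grows like $|T_1(t)-T_2(t)|$ and $T_i(t)/t\to\infty$ (as in the proof of \autoref{lem:lorentzian-asymptotic-thus-metric-asymptotic-case0}), the comparison angles $\widetilde\ma_x(\gamma_1(t),\gamma_2(t'))$ are unbounded, or the points are eventually not even causally related. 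Either outcome forces the supremum to be infinite. For $\Xi_1\neq\Xi_2$ the $\CAT(-\varepsilon)$ hypothesis gives $\ma^X_x(\tilde\beta_1,\tilde\beta_2)>0$ for suitable $x$ (indeed the metric angular distance is $\pi$), so letting $t_0\to\infty$ in the displayed formula gives divergence. Finally, the identification with the discrete $\infty$-metric quotient follows from the bijection of \autoref{lem:bijection-product-idealboundary-case0}. Non-isometry to $[0,\infty)\times_{\sinh}\bd X$ is then immediate, since the latter has finite distances between distinct points.
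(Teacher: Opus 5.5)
Your diagnosis of case (1) is correct, and it is sharper than the paper's own proof. The paper asserts $\cosh\bigl(\theta_1^{f(t)}-\theta_2^{f(t)}\bigr)\to\infty$ whenever $B_1\neq B_2$. But $\sinh\theta_i^{f(t)}=A_i/f(t)$, so $\theta_1^{f(t)}-\theta_2^{f(t)}\to\log(A_1/A_2)$ as soon as both $A_i>0$, and that argument only works when one of the two points is the apex.

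\textbf{The gap is in your repair of the case $\Xi_1=\Xi_2$, $A_1\neq A_2$, $A_1A_2\neq 0$.} Your claim that the comparison angles $\widetilde\ma_x(\gamma_1(t),\gamma_2(t'))$ are unbounded contradicts your own displayed formula. Every ray has past $Y_f$, so \autoref{def:angular-distance} makes $\ma(\xi_1,\xi_2)$ the supremum of the pointwise angles $\ma_y(\xi_1,\xi_2)$. Both horizontal parts are reparametrizations of the same ray from $x$, so $\cos\ma^X_x=1$. Your formula then gives $\ma_y(\xi_1,\xi_2)=|\theta_1^{f(t_0)}-\theta_2^{f(t_0)}|$. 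This increases as $f(t_0)$ decreases, and its supremum is $|\log(A_1/A_2)|$. Hence $\ma(\xi_1,\xi_2)=|\log(A_1/A_2)|<\infty$, and \autoref{pop:properties of angle}(\ref{item:alternative-definition-angular}) bounds every comparison angle along the rays by this number.

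The alternative that the points are eventually not causally related does not help either. $\overline{\ma}_x$ is a supremum over related pairs only, so unrelated pairs contribute nothing. Superlinear growth of the horizontal separation is also not the relevant quantity, because the vertical separations grow too. On the sheet over $\tilde\beta$, both rays become asymptotically null in the same direction, with relative rapidity tending to $\log(A_1/A_2)$.

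Consequently this case cannot be repaired. The two points are distinct by \autoref{lem:asymptotic-thus-same-A}, yet they are at finite angular distance $|\log(A_1/A_2)|$. So the statement is false as written, and the same flaw sits in the paper's proof.

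What your argument does establish:
\begin{itemize}
\item The apex is at infinite distance from every other point.
\item Points with $\Xi_1\neq\Xi_2$ are at infinite distance. Here you rightly choose a base point $x$ with $\ma^X_x(\tilde\beta_1,\tilde\beta_2)>0$; the paper glosses over this by identifying $\cos\ma^X_x=1$ with $\Xi_1=\Xi_2$.
\end{itemize}
The correct description is that the finiteness components of $(\bd^+Y_f,\ma)$ are the apex and the sets $\{(A,\Xi):A>0\}$ for fixed $\Xi\in\bd X$. Each of the latter is isometric to $\R$ via $A\mapsto\log A$.
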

\begin{proof}
    Again, from the reasonings in \autoref{rem:cones-global-curvature} we deduce that there exists some $a\in \R$ such that $\smash{Y_a:=\prescript{-}{}{(a,\infty)}\times_f X}$ has timelike curvature globally bounded above by $0$. Now, consider $(B_i,\Xi_i)$ in the quotient space $[0,\infty)\times \bd X$ under the equivalence relation that identifies points with first component $0$. From the proof of \autoref{pop:restricted-cone-2}, we know that, calling $\smash{\xi^f_i:=\Phi^{-1}_f(B_i,\Xi_i)\in \bd^+Y_f}$, one has
    \[
    \cosh\ma_{(t,x)}^{Y_f}(\gamma_1,\gamma_2)= \cosh\theta_1^{f(t)}\cosh\theta_2^{f(t)}-\sinh\theta_1^{f(t)}\sinh\theta_2^{f(t)}\cos\ma_x^X(\tilde{\beta}_1,\tilde{\beta}_2).
    \]
    where the curves $\gamma_i$ are the representatives at $y=(t,x)$ of the corresponding rays.
    
    Assume, on the one hand, that $B_1\neq B_2$. Then 
    \[
    \cosh\ma_{(t,x)}^{Y_f}(\gamma_1,\gamma_2)\geq \cosh\bigl(\theta_1^{f(t)}-\theta_2^{f(t)}\bigr) \xrightarrow{t\to\infty}\infty,
    \]
    and the convergence is monotone increasing. Therefore, 
    \[
    \ma^{Y_f}(\xi_1^f,\xi_2^f)=\infty.
    \]
    
    If on the other hand $B_1=B_2$, we have
    \[
    \cosh\ma_{(t,x)}^{Y_f}(\gamma_1,\gamma_2)= 1+ \bigl(1-\cos\ma_x^X(\tilde{\beta}_1,\tilde{\beta}_2)\bigr) \sinh^2\bigl(\theta_1^{f(t)}\bigr),
    \]
    and this term is identically $1$ if the cosine is $1$ (i.e., if $\Xi_1=\Xi_2$), and diverges as $t\to \infty$ otherwise.
\end{proof}

\begin{rem}\label{rem:cone-topology-angular-topology}
    This class of generalized cones provides an example of ideal boundary in which the topology induced by the angular metric does not coincide with the restriction of the cone topology, i.e., the former is strictly finer than the latter (see \autoref{pop:cone-top-coarser}). Indeed, consider $Y_f=\prescript{-}{}{\mathbb{R}}\times_f X$ as in \autoref{lem:isometry-product-idealboundary-case0}, a sequence of different positive real numbers $A_n\to A$ with $A_n\neq A$ for all $n$, and a metric ideal ray $\Xi\in\bd X$. This provides a sequence $(\xi_n)_n$ of timelike ideal points via the map $\smash{\Phi^{-1}}$ from \autoref{lem:bijection-product-idealboundary-case0} by considering $\xi_n:=\smash{\Phi^{-1}}(A_n,\Xi)$. Define also $\xi:=\smash{\Phi^{-1}}(A,\Xi)$. With respect to the angular metric the sequence does not converge, as $\ma(\xi_m,\xi_n)=\infty$ whenever $m\neq n$. However, considering a point $p=(t,x)\in Y_f$ (which will be in the timelike past of every ray in the sequence, as all of them generate the same past), one has
    \[
    \cosh\ma_{(t,x)}(\xi_n,\xi)=
    \cosh\theta_n^{f(t)}\cosh\theta^{f(t)}-\sinh\theta_n^{f(t)}\sinh\theta^{f(t)}=
    \sqrt{1+\frac{A_n^2}{f(t)^2}}\sqrt{1+\frac{A^2}{f(t)^2}}-\frac{A_n\, A}{f(t)^2}\,,
    \]
    and therefore $\ma_{(t,x)}(\xi_n,\xi)\xrightarrow{n\to\infty}0$. Consequently, for every $R,\varepsilon>0$ and every point $p\in Y_f$, the basic neighborhood $V_{p,\varepsilon,R}(\xi)$ contains a tail of the sequence $\xi_n$. In other words, $\xi_n\to\xi$ with respect to the cone topology.
\end{rem}

We summarize the structure of the future ideal boundary of a generalized cone, depending on the behavior of its warping function, in the following result.
\begin{thm}
Let $(X,d)$ be a complete geodesic space and let $f\colon \R\to (0,\infty)$ be convex. We have the following about the future ideal boundary $\bd^+Y$:
\begin{enumerate}
    \item If $f\to0$ quickly, it consists of just a point.
    \item If $f\to0$ slowly and $X$ is locally compact and $\CAT(-\varepsilon)$ for some $\varepsilon>0$, then $\bd^+Y$ is isometric to the quotient of the set $[0,\infty)\times \bd X$ under the equivalence relation $(0,\xi)\sim(0,\xi')$, endowed with the discrete distance with value infinity.
    \item If $f\to L>0$ and $X$ is locally compact and $\CAT(-\varepsilon)$ for some $\varepsilon>0$, then, $\bd^+Y$ is isometric to the metric warped product $[0,\infty)\times_{\sinh} \bd X$.
    \item If $f\to\infty$ slowly, it is in bijection with the set $X$. 
    \item If $f\to\infty$ quickly, it is isometric to the set $X$ endowed with the discrete distance with value infinity. 
\end{enumerate}
\end{thm}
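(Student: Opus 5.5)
This summary theorem is assembled case by case from the results of the preceding subsections, using \autoref{rem:convex} to know that a convex $f$ either converges to some $L\geq 0$ or diverges at least linearly. In particular $f$ cannot diverge very slowly, so the five items exhaust the possibilities.

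\emph{Case (1).} When $f\to 0$ quickly, \autoref{lem:vertical-rays-only-ones} shows that only vertical rays exist. By \autoref{lem:vertical-rays-asymptotic} (limit $L=0$), all vertical rays are asymptotic, so $\bd^+Y$ is a single point.

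\emph{Cases (2) and (3).} For the underlying sets I would invoke the bijections $\Phi$ of \autoref{lem:bijection-product-idealboundary-case0} and \autoref{lem:bijection-product-idealboundary-caseL}. A complete, locally compact $\CAT(-\varepsilon)$ space is in particular $\CAT(0)$ and proper, so those hypotheses hold. For the metrics I would cite \autoref{lem:isometry-product-idealboundary-case0} in case (2), which gives the discrete metric with value $\infty$. In case (3) I would cite the lemma immediately preceding it, which identifies $\bd^+Y_f$ with $\bd^+Y_L\cong[0,\infty)\times_{\sinh}\bd X$ through \autoref{thm:minkowski product} and \autoref{rem:non-unit-cones}.

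The step I expect to need care is that these lemmas are proved only for a tail $Y_a$ satisfying $\CBA(0)$ globally, obtained via \autoref{rem:cones-global-curvature}. I would check that smoothness of $f$ and future one-connectedness are available there, or else state them as implicit standing assumptions. I would then transfer the result to all of $Y$ using \autoref{pop:restricted-cone-2}, which needs only the tail condition.

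\emph{Cases (4) and (5).} When $f$ diverges, \autoref{cor:unique-vertical-asymptotic} assigns to each ray a unique asymptotic vertical ray. By \autoref{cor:vertical-not-asymptotic}, distinct vertical rays are never asymptotic. Hence $\xi\mapsto$ (the base point of its vertical representative) is a bijection $\bd^+Y\to X$, and it is surjective because every $x\in X$ carries a vertical ray. This settles (4).

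For (5), where $f$ diverges quickly, \autoref{lem:vertical-not-asymptotic} shows that distinct vertical rays have distinct pasts. Since asymptotic rays share their past, distinct ideal points then have distinct pasts. \autoref{pop:different-pasts-infinite-angle} gives $\ma=\infty$ between them, and this again requires the global curvature bound, handled as in cases (2)–(3). Together with $\ma(\xi,\xi)=0$, this gives the discrete infinite metric.
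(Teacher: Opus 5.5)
Your case-by-case assembly is the same as the paper's. The paper gives no separate proof of this summary beyond the preceding lemmas and the paragraph following \autoref{cor:unique-vertical-asymptotic}. Items (1)--(4), and your caveat about smoothness and future one-connectedness in (2)--(3), agree with it.

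\textbf{The gap in (5).} The step that does not go through is the metric part of (5), where you say the global curvature bound needed for \autoref{pop:different-pasts-infinite-angle} is ``handled as in cases (2)--(3)''. In (2)--(3) the bound on a tail $Y_a$ comes from $X\in\CAT(-\varepsilon)$ together with $f'\to 0$, via \autoref{thm:curvature_cones}. That theorem needs $X$ to have curvature bounded above by $-\sup_{(a,\infty)}(f')^2$. In (5) this is unavailable for three reasons.
\begin{itemize}
\item No curvature bound on $X$ is assumed. Nor is local compactness, so properness and global hyperbolicity of $Y$ are missing as well.
\item More decisively, a convex $f$ diverging quickly has $f'(t)\to\infty$. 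Indeed, if $f'\le M$ on $[a,\infty)$, then $f(t)\le f(a)+M(t-a)$ and $\int_a^\infty dt/f(t)=\infty$. So $\sup_{(a,\infty)}(f')^2=\infty$ on every tail, and \autoref{thm:curvature_cones} could only apply to fibers that are $\CAT(\kappa)$ for all $\kappa$, essentially $\R$-trees.
\item \autoref{pop:restricted-cone-2} is proved only for convergent $f$.
\end{itemize}
Hence your argument secures neither the hypotheses of \autoref{pop:different-pasts-infinite-angle} nor the well-definedness of $\ma$ (existence and uniqueness of representatives) in (5).

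\textbf{Relation to the paper and possible repairs.} The paper has the same hole: at this point it simply cites \autoref{pop:different-pasts-infinite-angle}. So the source does not cover it either, but your proposed repair does not work. You must either read (5) under the hypotheses of that proposition imposed directly on $Y$, or prove the blow-up of angles by hand.

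Disjointness of pasts does not help, because pasts of distinct ideal points intersect in general. For example, with $f(t)=1+e^t$ one has $\int_s^\infty dt/f=\log(1+e^{-s})\to\infty$ as $s\to-\infty$.

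A direct route could run as follows. Let $\xi_1\neq\xi_2$ have base points $x_1\neq x_2$, and fix $y_0$ with $d(y_0,x_1)<d(y_0,x_2)$ and $(s,y_0)$ in the common past for some $s$.
\begin{itemize}
\item Let $s$ increase towards the boundary of $I^-(\xi_2)$. The horizontal constant $A_2$ of the representative of $\xi_2$ at $(s,y_0)$, determined by $d(y_0,x_2)=\int_s^\infty A_2\,du/\bigl(f\sqrt{f^2+A_2^2}\bigr)$, tends to $\infty$, while $A_1$ stays bounded.
\item The local computation in the proof of \autoref{pop:restricted-cone-2} then gives $\cosh\ma_y\ge\cosh(\theta_1-\theta_2)$ with $\sinh\theta_i=A_i/f(s)$, which is unbounded.
\end{itemize}
This still requires two things: constructing those representatives without the global hyperbolicity that the paper's maximality argument uses, and adapting that local angle estimate to increasing $f$.
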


\bibliographystyle{alphaurl}
\bibliography{references}
\end{document}